\newtheorem{definition}{Definition}[section]
\newtheorem{theorem}[definition]{Theorem}
\newtheorem{remark}[definition]{Remark}
\newtheorem{final Remarks}[definition]{Final Remarks}
\newtheorem{lemma}[definition]{Lemma}
\newtheorem{question}[definition]{Question}
\newtheorem{problem}[definition]{Problem}
\newtheorem{conjecture}[definition]{Conjecture}
\newtheorem{addendum}[definition]{Addendum}
\newtheorem{proposition}[definition]{Proposition}
\newtheorem{example}[definition]{Example}
\newtheorem{corollary}[definition]{Corollary}
\numberwithin{equation}{section}
\begin{document}
\title{Smooth and PL-Rigidity Problems on Locally Symmetric Spaces}
\vspace{2cm}
\author{Ramesh Kasilingam}

\email{rameshkasilingam.iitb@gmail.com  ; mathsramesh1984@gmail.com  }
\address{Statistics and Mathematics Unit,
Indian Statistical Institute,
Bangalore Centre, Bangalore - 560059, Karnataka, India.}
\date{}
\subjclass [2010] {
Primary: {53C24, 57R65, 57N70, 57R05, 57Q25, 57R55, 57R50; Secondary: 58D27, 58D17}}
\keywords{
Locally symmetric spaces, exotic smooth structure, hyperbolic manifolds, negatively curved Riemannian metrics, tangential maps.}
\maketitle
\begin{abstract}
This is a survey on known results and open problems about Smooth and PL-Rigidity Problem for negatively curved locally symmetric spaces. We also review some developments about studying the basic topological properties of the space of negatively curved Riemannian metrics and the Teichmuller space of negatively curved metrics on a manifold. 
\end{abstract}

\section{\large Introduction}
A fundamental problem in geometry and topology is the following :
\begin{problem}\label{genprorig}
Let $f : N\to M$ denote a homotopy equivalence between two closed Riemannian manifolds.
Is $f$ homotopic to a diffeomorphism, a PL-homeomorphism or a homeomorphism?
\end{problem}
In an earlier article \cite{Ram15}, we discussed Problem \ref{genprorig} for topological rigidity. In this paper, we review the status of the above problem \ref{genprorig} for closed locally symmetric spaces of noncompact type, its recent developments and many related interesting open question. We also discuss the constructions of counterexamples to smooth and PL-rigidity Problem \ref{genprorig} for negatively curved locally symmetric spaces  given by \cite{FJ89a, FJ93a, FJ94, Ont94, FJO98, FJO98a, Oku02, AF03, AF04, Ram14}.\\
\indent\\
Here is an outline of the material:\\
In section 2, we give the notation and state the basic definitions and results that will be used throughout the paper.\\
In section 3, we review smooth and PL-rigidity problem \ref{genprorig} for closed real hyperbolic manifolds. We also discuss the constructions of examples given by \cite{FJ89a, Ont94}, which provide counterexamples to the smooth (topological) Lawson-Yau Conjecture and the smooth analogue of Borel's Conjecture. Lawson and Yau conjecture states that if $M$ and $N$ are closed negatively curved such that $\pi_1(M)\cong \pi_1(N)$, then $M$ is CAT(Diff, PL or Top)-isomorphic to $N$. Roughly speaking the manifold $N$ in the above counterexample will be $M\#\Sigma$, where $\Sigma$ is an exotic sphere and $M$ is a stably parallelizable, real hyperbolic manifold (with $m\geq 5$) and having sufficiently large injectivity radius \cite{FJ89a}. Such manifold $M$ exists due to Sullivan, building on joint work with Pierre Deligne \cite{Sul79}.\\
In section 4, we review smooth rigidity problem \ref{genprorig} for finite volume but non-compact real hyperbolic manifolds and closed complex hyperbolic manifolds. We also discuss examples of negatively curved Riemannian manifolds, which are homeomorphic but not diffeomorphic to finite volume real \cite{FJ89a}, or closed complex \cite{FJ94, Ram14} hyperbolic manifolds. A possible way to change smooth structure on a smooth manifold $M^n$, without changing its homeomorphism type, is to take its connected sum $M^n\#\Sigma^n$ with a homotopy sphere $\Sigma^n$. A surprising observation reported in \cite{FJ93a} is that connected sums can never change the smooth structure on a connected, noncompact smooth manifold. Farrell and Jones used a different method in \cite{FJ93a}, which can sometimes change the smooth structure on a noncompact manifold $M^n$. The method is to remove an embedded tube $\mathbb{S}^1\times \mathbb{D}^{n-1}$ from $M^n$ and then reinsert it with a "twist". The problem \ref{genprorig} for closed complex hyperbolic 
manifolds is considered in \cite{FJ94} and was solved by using connected sums method. Here we give the outline of these constructions.\\
In section 5, we discuss series of examples of exotic smooth structures on compact locally symmetric spaces of noncompact type given by \cite{Oku02}. The examples are obtained by taking the connected sum with an exotic sphere. To detect the change of the smooth structure Boris Okun \cite{Oku02} used a tangential map from the locally symmetric space to its dual compact type symmetric space. This method was subsequently used by C.S. Aravinda and F.T. Farrell \cite{AF03, AF04} in their construction of exotic smooth structures on certain quaternionic and Cayley hyperbolic manifolds supporting metrics of strict negative curvature. Here we discuss how we can look at the problem of detecting when $M^n\#\Sigma^n$ and $M^n$ are not diffeomorphic, where $M$ is a closed locally symmetric space of noncompact type is essentially reduced to look at the problem of detecting exotic structure on the dual symmetric space $M_u$ of $M$. We also discuss how to recover exotic smoothings of Farrell-Jones-Aravinda \cite{FJ89a, FJ94, AF03, AF04} from 
Okun results \cite{Oku02}.\\
Let $\mathcal{MET}^{sec<0}(M)$ is the space of all Riemannian metrics on $M^n$ that have all sectional curvatures less that 0 and $\mathcal{T}(M)$ and $ \mathcal{T}^{\epsilon}(M)$ are the Teichmuller space of metric and $\epsilon$-pinched negatively curved metrics on $M$ respectively. $\mathcal{M}^{sec<0}(M)$ is the moduli space of negatively curved metrics on $M$.\\
In section 6, we want to study the space of negatively curved metrics and geometries. We also discuss the following problems \cite{FO09, FO10}. 
\begin{itemize}
\item Is the space $\mathcal{MET}^{sec<0}(M)$ path connected ?
\item Is (each path component of ) the space $\mathcal{MET}^{sec<0}(M)$ path connected ?
\item Is $\pi_k(\mathcal{MET}^{sec<0})$ trivial?
\item Is the inclusion $\mathcal{T}^{\epsilon}(M)\mapsto \mathcal{T}(M)$ null homotopic ?
\item Is the map $\pi_k(\mathcal{MET}^{sec<0}(M))\longrightarrow \pi_k(\mathcal{M}^{sec<0}(M))$ non-zero?
\item Is the map $\widetilde{H}_k(\mathcal{MET}^{sec<0}(M))\longrightarrow \widetilde{H}_k(\mathcal{M}^{sec<0}(M))$ non-zero?
\end{itemize}
In section 7, we review many interesting open problems along this direction.
\section{\large Basic Definitions and results}
In this section, we review some basic definitions, results and notation to be used throughout the article.\\
We write $\rm{Diff}$ for the category of smooth manifolds, $\rm{PL}$ for the category of piecewise-linear manifolds, and $\rm{Top}$ for the category of topological manifolds. We generically write $\rm{CAT}$ for any one of these geometric categories.\\
Let $\mathbb{K}=\mathbb{R}$, $\mathbb{C}$, $\mathbb{H}$ or $\mathbb{O}$ denote the real, complex, quaternion and Cayley numbers, viz., four division algebras $\mathbb{K}$ over the real numbers whose dimensions over $\mathbb{R}$ are $d = 1$, $2$, $4$ and $8$. For every prime $p$, the ring of integers modulo $p$ is a finite field of order $p$ and is denoted $\mathbb{Z}_p$. Let $I=[0, 1]$ be the closed interval in $\mathbb{R}$. $\mathbb{R}^n$ is $n$-dimensional Euclidean space, $\mathbb{D}^n$ is the unit disk, $\mathbb{S}^n$ is the unit sphere, $\Sigma_g$ is the closed orientable surface of genus $g$ and $T^n=\mathbb{S}^1\times \mathbb{S}^1\times....\times \mathbb{S}^1$ ($n$-factors) is $n$-dimensional torus with its natural smooth structures and orientation. Let $\textbf{H}^n= \{(x_1,x_2,...,x_n)\in \mathbb{R}^n : x_1\geq 0\}$, $\mathbb{C}^n$ be the $n$-dimensional complex space, and $\mathbb{H}^n$ be the  $n$-dimensional quaternionic space. Let $\rm{Aut}(G)$ denotes the group of automorphisms of group $G$ and 
$\rm{Out}(G)$ be
the group of outer automorphisms of group $G$. Let $\rm{Top}(X)$  be the group of all self-homeomorphisms of a topological space $X$, $\rm{Isom}(M)$ be the group of isometries of a Riemannian Manifold $M$. The general linear group $GL(n,\mathbb{K})$ is the group consisting of all invertible $n\times n$ matrices over the field $\mathbb{K}$, and the group of orthogonal $n\times n$ real matrices be denoted by $O(n)$.\\
Topological spaces are typically denoted by $X$, $Y$, $Z$. Manifolds tend to be denoted by $M^n$, $N^n$, where $n$ indicates the dimension.
\begin{definition}\rm{
A Riemannian manifold $M^{dn}$ is called a real, complex, quarternionic or Cayley hyperbolic manifold provided its universal cover is isometric to $\mathbb{R}\textbf{H}^n$, $\mathbb{C}\textbf{H}^n$, $\mathbb{H}\textbf{H}^n$ and $\mathbb{O}\textbf{H}^2$, respectively. (Note that $n\geq 2$ and when $\mathbb{K}= \mathbb{O}$, $n$ can only be $2$.)}
\end{definition}
The following definition is taken from \cite{Oku01}:
\begin{definition}\rm{
Given a symmetric space $G/K$ of non-compact type, we define the dual symmetric space in the following manner. Let $G_c$ denote the complexification of the semi-simple Lie group $G$, and let $G_u$ denote the maximal compact subgroup in $G_c$. Since $K$ is compact, under the natural inclusions $K\subset G\subset G_c$, we can assume that $K\subset G_u$ (up to conjugation). The symmetric space dual to $G/K$ is defined to be the symmetric space $G_u/K$. By abuse of language, if $X=\Gamma\setminus G/K$ is a locally symmetric space modelled on the symmetric space $G/K$, we will say that $X$ and $G_u/K$ are dual spaces.}
\end{definition}
\begin{remark}\rm{
The dual symmetric spaces of real, complex, quarternionic or Cayley hyperbolic manifolds are the sphere, complex projective space, quaternionic projective space or Cayley projective plane respectively.}
\end{remark}
\begin{example}\label{exm.loc4}{\rm \cite{Oku01}}\rm{
Suppose $G=G_c$ is a complex semi simple Lie group, and let $G_u$ denote its maximal compact subgroup. The complexification of $G_c$ is then isomorphic to $G_c\times G_c:$
$$(G_c)_c= G_c\times G_c$$
The maximal compact subgroup of the complexification is $G_c\times G_c$. So we have dual symmetric spaces:
\begin{align*}
X&=\Gamma\setminus G_c/G_u\\
X_u&=G_u=G_c\times G_c/G_u
\end{align*}
In particular, if $G=SL(n,\mathbb{C})$, we have a pair:
\begin{align*}
X&=\Gamma\setminus SL(n,\mathbb{C})/SU(n)\\
X_u&=SU(n)
\end{align*}}
\end{example}
\begin{definition}\rm{
The dimension of the maximal torus of a compact Lie group $G$ is called the rank of $G$.}
\end{definition}
\begin{theorem}(Whitney Embedding Theorem)(\cite{Whi36, Whi44})
If $f : N^n\to M^m$ is a map of manifolds such that 
\begin{center}
either $2n + 1\leq m$\\
or $m = 2n\geq 6$ and $\pi_1(M)=0$
\end{center}
then $f$ is homotopic to an embedding $N^n\hookrightarrow M^m$.
\end{theorem}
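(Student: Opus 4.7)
The plan is to approximate $f$ by a smooth map, put it into general position via transversality, and then remove the resulting self-intersections, treating the two hypotheses as two separate regimes. First I would replace $f$, up to homotopy, by a smooth map $\tilde{f}\colon N^n \to M^m$ using standard smooth approximation. Then, applying transversality, I may assume $\tilde{f}$ is an immersion whose singularities consist of transverse double points; generically the self-intersection locus has dimension $2n - m$.

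In the first regime $2n + 1 \leq m$, the inequality $2n - m \leq -1$ forces the self-intersection set of a generic perturbation to be empty, and the condition $m > 2n$ makes the differential generically of full rank, so $\tilde{f}$ is automatically an immersion. A generic smooth $\tilde{f}$ is therefore an injective immersion, which on a closed manifold $N$ is an embedding; the noncompact case needs only a minor proper-homotopy adjustment. This disposes of the easier range by general position alone.

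The substantive case is $m = 2n \geq 6$ with $\pi_1(M) = 0$, where general position still leaves a finite set of transverse double points that must be cancelled. My approach here is the classical Whitney trick. Given a pair of double points $p, q$ of opposite local intersection sign, I would connect them by two embedded arcs $\alpha_1, \alpha_2$, one in each sheet of $\tilde{f}(N)$ running between $p$ and $q$, forming a loop $\gamma = \alpha_1 \cup \alpha_2 \subset M$. Since $\pi_1(M) = 0$, the loop $\gamma$ bounds a disk in $M$; since $m \geq 5$, a generic such disk may be chosen embedded, with interior disjoint from $\tilde{f}(N)$ and with trivial normal bundle. A regular homotopy of $\tilde{f}$ supported in a neighborhood of this Whitney disk cancels the pair $\{p, q\}$ without introducing new double points. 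Iterating over all pairs yields the sought embedding.

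The main obstacle is the Whitney trick in the critical dimension. Three ingredients must simultaneously be controlled: the double points must be pairable with cancelling intersection signs (which requires careful bookkeeping of orientations, and may need a preliminary homotopy to kill an algebraic self-intersection class); the Whitney loop must bound, which is precisely where $\pi_1(M) = 0$ is used; and the Whitney disk must be embedded in general position with the correct normal framing, which is precisely where $m \geq 5$ (equivalently $n \geq 3$) enters. The hypothesis $m = 2n \geq 6$ is exactly the threshold at which these requirements become mutually compatible, and removing either the simple-connectivity or the dimension bound brings in genuine obstructions known from classical examples.
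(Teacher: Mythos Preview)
The paper does not give its own proof of this statement: the Whitney Embedding Theorem is stated in Section~2 as a background result with citations \cite{Whi36, Whi44} and is then invoked as a black box (in the proof of Theorem~\ref{brum} and of Theorem~\ref{tubular}, both times only in the easy range $m\geq 2n+1$). There is therefore nothing in the paper to compare your argument against.

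That said, your outline is the standard classical route: general position alone in the range $m\geq 2n+1$, and the Whitney trick to cancel transverse double points in the critical dimension $m=2n$, with simple connectivity of $M$ used to bound the Whitney circle and the dimension bound $n\geq 3$ used to embed the Whitney disk in general position. One point you flag but do not resolve is the pairing step: you note that the double points ``must be pairable with cancelling intersection signs'' and that this ``may need a preliminary homotopy to kill an algebraic self-intersection class,'' but you do not say how this is achieved. In the classical setting one introduces local kinks (finger moves) to adjust the algebraic count; without making this explicit your sketch has a gap precisely at the place where the $m=2n$ case becomes delicate. Since the paper only ever uses the $m\geq 2n+1$ regime, this subtlety is in any case irrelevant to the applications made later.
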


\begin{definition}\rm{
A smooth manifold $M^n$ is stably parallelizable if its tangent bundle $TM$ is stably trivial.}
\end{definition}
\begin{definition}\rm{
A group $G$ is residually finite if for every nontrivial element $g$ in $G$ there is a homomorphism $h$ from $G$ to a finite group, such that $h(g)\neq 1$. }
\end{definition}
\begin{theorem}(Sullivan, \cite{Sul79})\label{sullivan}
Every closed real hyperbolic manifold has a stably parallelizable finite sheeted cover.
\end{theorem}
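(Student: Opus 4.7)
The plan is to realize the stable tangent bundle of $M$ as a flat vector bundle with linear monodromy, and then apply the Deligne--Sullivan trivialization theorem to find a finite cover on which this flat bundle becomes topologically trivial.

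\textbf{Step 1: A flat structure on $TM\oplus\epsilon^1$.} Write $M = \Gamma\setminus\mathbb{R}\textbf{H}^n$, where $\Gamma$ is a torsion-free, cocompact discrete subgroup of $\text{Isom}(\mathbb{R}\textbf{H}^n) \subset O(n,1)$. Realize $\mathbb{R}\textbf{H}^n$ as one sheet of the hyperboloid $\{x\in\mathbb{R}^{n+1} : -x_0^2+x_1^2+\cdots+x_n^2 = -1\}$. The position vector trivializes the normal line bundle of this hypersurface in $\mathbb{R}^{n+1}$, giving an $O(n,1)$-equivariant isomorphism $T\mathbb{R}\textbf{H}^n\oplus\nu \cong \mathbb{R}\textbf{H}^n\times\mathbb{R}^{n+1}$, where $O(n,1)$ acts on $\mathbb{R}^{n+1}$ via its standard representation. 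Passing to the $\Gamma$-quotient exhibits $TM\oplus\epsilon^1$ as the flat vector bundle $\Gamma\setminus(\mathbb{R}\textbf{H}^n\times\mathbb{R}^{n+1})$, with monodromy the standard inclusion $\rho\colon\Gamma\hookrightarrow GL_{n+1}(\mathbb{R})$.

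\textbf{Step 2: Trivialization on a finite cover.} Since $M$ is compact, $\Gamma$ is finitely generated, so the subring of $\mathbb{R}$ generated by the matrix entries of $\rho(\Gamma)$ is finitely generated. The Deligne--Sullivan trivialization theorem (the core analytic result of \cite{Sul79}) asserts that a flat vector bundle on a finite CW complex, whose monodromy has entries in such a finitely generated ring, becomes topologically trivial after pulling back to some finite-sheeted cover. Applying this to the bundle produced in Step 1 yields a finite-index subgroup $\Gamma'\subset\Gamma$ such that on the cover $M' = \Gamma'\setminus\mathbb{R}\textbf{H}^n$ the bundle $TM'\oplus\epsilon^1$ is trivial; hence $M'$ is a stably parallelizable finite-sheeted cover of $M$.

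\textbf{Main obstacle.} The substantive content is entirely in Step 2; Step 1 is a transparent geometric construction. One cannot trivialize the bundle \emph{as a flat bundle}, since the standard representation of $\Gamma\subset O(n,1)$ is faithful and no finite-index subgroup has trivial monodromy, so one is forced to work at the topological level. By Chern--Weil, the rational characteristic classes of any flat bundle vanish, so the obstructions to topological triviality lie in torsion mod-$p$ characteristic classes and finer $K$-theoretic invariants. These must be killed by exploiting residual finiteness of the finitely generated linear group $\Gamma$ (Malcev's theorem) to pass to successive finite covers; the finiteness of the CW structure on $M$ bounds the number of obstructions and ensures that a single sufficiently deep cover suffices.
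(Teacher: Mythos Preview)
The paper does not prove this theorem; it states it with attribution to \cite{Sul79} and later remarks (in the paragraph following Theorem~\ref{finite sheeted tang}) that Sullivan's argument rests on the Deligne--Sullivan theorem on flat complex vector bundles \cite{DS75}. Your two-step outline---realize $TM\oplus\epsilon^1$ as a flat bundle via the hyperboloid model, then trivialize on a finite cover via Deligne--Sullivan---is precisely the strategy the paper points to, so there is no independent proof here to compare against.

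One refinement is worth making explicit. The theorem of \cite{DS75} is stated for flat \emph{complex} vector bundles, so Step~2 as written literally yields triviality of $(TM'\oplus\epsilon^1)\otimes\mathbb{C}$ on the cover, not of the underlying real bundle. Passing from trivial complexification to real stable triviality is not automatic: the M\"obius line bundle over $S^1$ has trivial complexification but nonvanishing $w_1$. This gap is exactly where the torsion obstructions you flag in your ``Main obstacle'' paragraph enter, and Sullivan does address it in \cite{Sul79}; your sketch would be cleaner if it separated the complexification step from the descent back to $KO$, rather than phrasing Deligne--Sullivan as a statement about arbitrary flat real bundles.
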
 
 \begin{theorem}(Borel, \cite{Bor63})\label{recomhy}
There exists closed real hyperbolic manifolds in every dimension $n\geq2$, as well as closed complex hyperbolic manifolds in every even (real) dimension.
\end{theorem}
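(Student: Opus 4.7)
The plan is to produce, for each relevant dimension, a torsion-free cocompact discrete subgroup $\Gamma$ of the isometry group of the appropriate hyperbolic space, and then set $M = \Gamma \backslash \mathbb{R}\textbf{H}^n$ (respectively $\Gamma \backslash \mathbb{C}\textbf{H}^n$). Since the isometry group acts transitively, $M$ inherits a Riemannian metric whose universal cover is isometric to $\mathbb{R}\textbf{H}^n$ or $\mathbb{C}\textbf{H}^n$; cocompactness of $\Gamma$ makes $M$ closed, and torsion-freeness makes the action free so that $M$ is a manifold. So the entire content lies in producing $\Gamma$.

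First, for the real case, I would realize $\mathbb{R}\textbf{H}^n$ as $SO(n,1)^{\circ}/SO(n)$ and construct $\Gamma$ as an arithmetic lattice from a quadratic form argument. Let $F$ be a totally real number field of degree $\geq 2$ (for instance $F=\mathbb{Q}(\sqrt{2})$), and choose $\alpha \in F$ with $\alpha>0$ under one embedding $\sigma_1:F\hookrightarrow \mathbb{R}$ and $\sigma_i(\alpha)<0$ under all other embeddings. Consider the quadratic form
\[
q(x_0,x_1,\dots,x_n) \;=\; -\alpha x_0^2 + x_1^2 + \cdots + x_n^2
\]
over $F$. Under $\sigma_1$ this has signature $(n,1)$, so $SO(q;\mathbb{R})\cong SO(n,1)$, while under every other $\sigma_i$ it is positive definite, so $SO(q;\mathbb{R})$ is compact. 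Then $\Gamma_0 = SO(q,\mathcal{O}_F)$, viewed as a subgroup of $SO(n,1)$ via $\sigma_1$, is a discrete subgroup. Its cocompactness follows from the Godement compactness criterion once I verify that $q$ is anisotropic over $F$ (which it is, since an $F$-rational isotropic vector would remain isotropic under every embedding, contradicting positive-definiteness at any $\sigma_i$ for $i\geq 2$). Finally I invoke Selberg's lemma: any finitely generated subgroup of $GL(n,\mathbb{C})$ has a torsion-free subgroup of finite index. Applying this to $\Gamma_0$ produces the required torsion-free cocompact $\Gamma$.

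For the complex case, I would run exactly the analogous argument with Hermitian rather than quadratic forms. Take an imaginary quadratic extension $E/F$ of a totally real field $F$ (e.g.\ $E=F(\sqrt{-1})$ with $F=\mathbb{Q}(\sqrt{2})$), and choose a Hermitian form $h$ on $E^{n+1}$ of signature $(n,1)$ at one archimedean place of $F$ and positive definite at every other archimedean place. Then $SU(h;\mathbb{R})\cong SU(n,1)$ at the distinguished place, $\Gamma_0=SU(h,\mathcal{O}_E)$ is a discrete subgroup of $SU(n,1)$, anisotropy of $h$ over $E$ (guaranteed by the positive-definiteness at the other places) yields cocompactness via Godement, and Selberg's lemma again provides a torsion-free finite-index subgroup. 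Note that $\mathbb{C}\textbf{H}^n$ has real dimension $2n$, which is the source of the parity restriction in the statement.

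I expect the main obstacle to be verifying the cocompactness of $\Gamma_0$ rigorously: one needs the Godement/Borel--Harish-Chandra compactness criterion, which is a substantial piece of reduction theory for arithmetic groups and not at all formal. Everything else (choosing $F$, checking signatures, invoking Selberg) is essentially bookkeeping once that theorem is granted. An alternative I would mention is the much more elementary Poincar\'e polyhedron method in low dimensions, but for arbitrary $n$ the arithmetic route above seems unavoidable and is indeed the one used in \cite{Bor63}.
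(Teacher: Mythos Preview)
Your proposal is correct and is essentially Borel's original arithmetic construction from \cite{Bor63}. Note, however, that the paper itself does not supply a proof of this theorem: it is simply stated with attribution to Borel, as background for the later existence results. So there is no ``paper's own proof'' to compare against here; your sketch accurately reconstructs the argument in the cited reference.
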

\begin{theorem}(Malcev, \cite{MR81})\label{malcev}
Let $M$ be a closed Riemannian manifold which is either real or complex hyperbolic, then $\pi_{1}M$ is residually finite. 
\end{theorem}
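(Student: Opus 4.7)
The plan is to exhibit $\pi_1(M)$ as a finitely generated linear group over a field of characteristic zero and then invoke Malcev's general theorem that every such group is residually finite.

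First, in both the real and complex hyperbolic cases the universal cover $\widetilde{M}$ is isometric to $\mathbb{R}\mathbf{H}^n$ or $\mathbb{C}\mathbf{H}^n$, so $\pi_1(M)$ acts by deck transformations as a discrete, cocompact group of isometries. The relevant isometry groups are $O(n,1)$ and (up to finite index) $PU(n,1)$, both of which are real matrix Lie groups. Hence we obtain an embedding $\pi_1(M)\hookrightarrow GL(N,\mathbb{R})$ for some $N$. Since $M$ is compact, it has the homotopy type of a finite CW complex, so $\pi_1(M)$ is finitely generated.

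It therefore suffices to prove the general statement: \emph{any finitely generated subgroup $\Gamma\subset GL(N,F)$ with $F$ a field of characteristic zero is residually finite.} Given $g\in\Gamma$ with $g\neq 1$, I would let $R\subset F$ be the subring generated over $\mathbb{Z}$ by all matrix entries of a fixed finite generating set $g_1^{\pm 1},\dots,g_k^{\pm 1}$ together with those of $g$. Then $\Gamma\subset GL(N,R)$, and $R$ is a finitely generated $\mathbb{Z}$-algebra contained in the field $F$, hence an integral domain. Because $g-I\neq 0$ in $M_N(R)$, some entry $a\in R$ of $g-I$ is nonzero. Since $R$ is Jacobson with finite residue fields at every maximal ideal (the arithmetic form of Hilbert's Nullstellensatz / Zariski's lemma), there exists a maximal ideal $\mathfrak{m}\subset R$ with $a\notin\mathfrak{m}$. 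Reduction modulo $\mathfrak{m}$ yields a homomorphism
\[
\Gamma \hookrightarrow GL(N,R)\longrightarrow GL(N,R/\mathfrak{m}),
\]
whose target is a finite group and which does not kill $g$. This produces the finite quotient required by the definition of residual finiteness.

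The main obstacle is the commutative-algebra input: the fact that every maximal ideal in a finitely generated $\mathbb{Z}$-algebra has finite residue field. I would quote this rather than reprove it. The remaining separation step, namely the existence of a maximal ideal of $R$ avoiding the nonzero element $a$, is routine: in a Jacobson ring the intersection of all maximal ideals is the nilradical, and $a$ is not nilpotent since $R$ is a domain. Putting the reduction together with the first paragraph gives the theorem for both real and complex hyperbolic closed manifolds uniformly.
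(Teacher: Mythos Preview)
The paper does not supply a proof of this theorem; it is simply quoted as a known background result with a reference to \cite{MR81} and then invoked later (e.g.\ in the proofs of Theorems~\ref{hype.smooth} and~\ref{compinch}). So there is no ``paper's own proof'' to compare against.

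Your argument is correct and is exactly the classical proof of Malcev's theorem. Two minor comments. First, your parenthetical ``(up to finite index) $PU(n,1)$'' is a bit loose: the point is not an index issue but that $PU(n,1)$, being the adjoint form of a semisimple real Lie group, embeds faithfully in $GL(\mathfrak{su}(n,1))$ via the adjoint representation, hence is linear; this is what justifies your claim $\pi_1(M)\hookrightarrow GL(N,\mathbb{R})$ in the complex hyperbolic case. Second, you might note explicitly that since you included the entries of the $g_i^{-1}$ in $R$, every $\gamma\in\Gamma$ has $\det\gamma\in R^\times$, so reduction modulo $\mathfrak{m}$ genuinely lands in $GL(N,R/\mathfrak{m})$ rather than merely $M_N(R/\mathfrak{m})$. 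With those two clarifications your write-up is a complete and standard proof.
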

\begin{definition}\label{smoo.stru}(Smooth structure sets)
Let $M$ be a closed topological manifold. We define the smooth structure set $\mathcal{S}^{s}(M)$ to be the set of equivalence classes of pairs $(N,f)$ where $N$ is a closed smooth manifold and $f: N\to M$ is a homotopy equivalence.
And the equivalence relation is defined as follows :
\begin{center}
  $(N_1,f_1)\sim  (N_2,f_2)$ if there is a diffeomorphism $h:N_1\to N_2$ \\
  such that $f_2\circ h$ is homotopic to $f_1$.
\end{center}
 \end{definition}
\begin{definition}\rm{
Let $M$ be a topological manifold. Let $(N,f)$ be a pair consisting of a smooth manifold $N$ together with a homeomorphism $f:N\to M$. Two such pairs $(N_{1},f_{1})$ and $(N_{2},f_{2})$ are concordant provided there exists a diffeomorphism $g:N_{1}\to N_{2}$ such that the composition $f_{2}\circ g$ is topologically concordant to $f_{1}$ i.e., there exists a homeomorphism $F: N_{1}\times [0,1]\to M\times [0,1]$ such that $F_{|N_{1}\times 0}=f_{1}$ and $F_{|N_{1}\times 1}=f_{2}\circ g$.\\
Equivalently, let $N_1$, $N_2$ be two smooth structures on $M$. $N_1$ is said to be (topologically) concordant to $N_2$ if there is a smooth structure $\bar{M}$ on $M\times [0, 1]$ such that $\partial_{-}\bar{M}=N_1$ and $\partial_{-}\bar{M}=N_2.$}. The set of all such concordance classes is denoted by $\mathcal{C}(M)$.
\end{definition}
\indent The key to analyzing $\mathcal{C}(M)$ is the following result due to Kirby and Siebenmann :
\begin{theorem}\label{kirby}{\rm \cite[p.194]{KS77}}
There exists a connected $H$-space $Top/O$ such that for any smooth manifold $M$ with $\dim M\geq5$, there is a bijection between $\mathcal{C}(M)$ and $[M; Top/O]$. Furthermore, the equivalence class of $(M, id_{M})$ corresponds to the homotopy class of the constant map under this bijection.
\end{theorem}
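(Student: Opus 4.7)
The plan is to invoke Kirby--Siebenmann smoothing theory to reduce the classification of smooth structures on $M$ to a bundle-theoretic lifting problem for the stable topological tangent microbundle $\tau_{M} \colon M \to BTop$.

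First I would apply the Product Structure Theorem of Kirby and Siebenmann: for $\dim M \geq 5$, concordance classes of smoothings on $M$ correspond bijectively to concordance classes of smoothings of $\tau_M$. A smoothing of $\tau_M$ is in turn the same data as a vertical homotopy class of lifts of the classifying map $\tau_M$ through the forgetful map $BO \to BTop$; thus $\mathcal{C}(M)$ is identified with the set of vertical homotopy classes of such lifts.

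Since both $BO$ and $BTop$ are infinite loop spaces and the comparison map is an infinite loop map, the homotopy-fiber sequence $Top/O \to BO \to BTop$ is principal, so the set of vertical homotopy classes of lifts of any fixed map into $BTop$ is either empty or a torsor over $[M; Top/O]$. The ambient smooth structure on $M$ furnishes a tautological lift --- namely the classifying map of the smooth tangent bundle of $M$ via the differential of $id_M$ --- which pins down the torsor at the identity element and yields a canonical bijection $\mathcal{C}(M) \cong [M; Top/O]$. Under this bijection the class of $(M, id_M)$ corresponds to the constant map, and the $H$-space structure on $Top/O$ is inherited from Whitney sum of stable microbundles and is compatible with the identification.

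The main obstacle is clearly the Product Structure Theorem itself: the passage from concordance of smoothings of $M$ to concordance of smoothings of $\tau_M$. This rests on the Concordance Extension Theorem together with Kirby's torus trick, used to straighten chart-level smoothings up to concordance, and it is precisely here that the hypothesis $\dim M \geq 5$ enters, through the use of the $s$-cobordism theorem on thickened tori. Once the Product Structure Theorem is in hand, the remaining steps are formal homotopy theory applied to the principal fibration $Top/O \to BO \to BTop$.
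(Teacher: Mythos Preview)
The paper does not give its own proof of this theorem; it is quoted verbatim as a result of Kirby and Siebenmann \cite[p.~194]{KS77} and used as a black box throughout. Your outline is a faithful sketch of the Kirby--Siebenmann argument --- Product Structure Theorem reducing concordance of smoothings to concordance of stable tangent-microbundle smoothings, then obstruction theory for the principal fibration $Top/O \to BO \to BTop$ --- so there is nothing to compare; you are summarizing the cited source rather than diverging from it.
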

\indent We recall some terminology from \cite{KM63} :
\begin{definition}\rm{
\begin{itemize}
\item[(a)] A homotopy $n$-sphere $\Sigma^n$ is an oriented smooth closed manifold homotopy equivalent to $\mathbb{S}^n$.
\item[(b)]A homotopy $n$-sphere $\Sigma^n$ is said to be exotic if it is not diffeomorphic to $\mathbb{S}^n$.
\item[(c)] Two homotopy $n$-spheres $\Sigma^{n}_{1}$ and $\Sigma^{n}_{2}$ are said to be equivalent if there exists an orientation preserving diffeomorphism $f:\Sigma^{n}_{1}\to \Sigma^{n}_{2}$.
\end{itemize}
The set of equivalence classes of homotopy $n$-spheres is denoted by $\Theta_n$. The equivalence class of $\Sigma^n$ is denoted by [$\Sigma^n$]. When $n\geq 5$, $\Theta_n$ forms an abelian group with group operation given by connected sum $\#$ and the zero element represented by the equivalence class  of the round sphere $\mathbb{S}^n$. M. Kervaire and J. Milnor \cite{KM63} showed that each  $\Theta_n$ is a finite group; in particular,  $\Theta_{8}$, $\Theta_{14}$ and  $\Theta_{16}$ are cyclic groups of order $2$, $\Theta_{10}$ and $\Theta_{20}$ are cyclic groups of order 6 and 24 respectively and $\Theta_{18}$ is a group of order 16. }
\end{definition}
\indent Start by noting that there is a homeomorphism $h: M^n\#\Sigma^n \to M^n$ $(n\geq5)$ which is the inclusion map outside of homotopy sphere $\Sigma^n$ and well defined up to topological concordance. We will denote the class in $\mathcal{C}(M)$ of $(M^n\#\Sigma^n, h)$ by $[M^n\#\Sigma^n]$. (Note that $[M^n\#\mathbb{S}^n]$ is the class of $(M^n, id_{M^n})$.) Let $f_{M}:M^n\to \mathbb{S}^n $  be a degree one map. Note that $f_{M}$ is well-defined up to homotopy. Composition with $f_{M}$ defines a homomorphism
$$f_{M}^*:[\mathbb{S}^n, Top/O]\to [M^n ,Top/O],$$ and in terms of the identifications
\begin{center}
$\Theta_n=[\mathbb{S}^n, Top/O]$ and $\mathcal{C}(M^n)=[M^n ,Top/O]$
\end{center}
given by Theorem \ref{kirby}, $f_{M}^*$ becomes $[\Sigma^n]\mapsto [M^n\#\Sigma^n]$.
\begin{theorem}(Vanishing Theorem, \cite{FJ91})\label{whigro}
 Let $M$ be a closed (connected) non-positively curved Riemannian manifold. Then $Wh(\pi_1(M))=0$, where $Wh(G)$ denotes the whitehead group at any group $G$.
\end{theorem}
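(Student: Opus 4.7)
The plan is to prove $Wh(\pi_1(M))=0$ by combining two ingredients: torsion-freeness of $\Gamma:=\pi_1(M)$, and a vanishing result for the relevant assembly map in algebraic $K$-theory, obtained via the geodesic flow. Since $M$ is complete and non-positively curved, the Cartan--Hadamard theorem gives a diffeomorphism $\widetilde{M}\cong \mathbb{R}^n$ with $\Gamma$ acting freely, properly discontinuously and cocompactly by isometries. In particular $M$ is aspherical, so $\Gamma$ is torsion-free and $B\Gamma\simeq M$ is a finite $CW$-complex. The whole difficulty is therefore concentrated in the computation of $K_*(\mathbb{Z}\Gamma)$ in low degrees.

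My approach would be to implement the Farrell--Jones \emph{geodesic flow / foliated control} scheme. Represent an arbitrary element $\tau\in Wh(\Gamma)$ by a smooth $h$-cobordism $(W;M,M')$, and pull it back along the unit sphere bundle projection $p:SM\to M$ to obtain an $h$-cobordism $\widehat{W}$ over $SM$. A Becker--Gottlieb style transfer argument (using that the Euler characteristic of the fibre $\mathbb{S}^{n-1}$ is exploitable, with a small adjustment in odd fibre dimension) shows that $p^{*}:Wh(\pi_1M)\to Wh(\pi_1 SM)$ is rationally, and in fact integrally for this purpose, split injective; hence it suffices to show $p^{*}\tau=0$. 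The next step is to flow: the geodesic flow $\phi_t$ on $SM$ lifts to $S\widetilde{M}$, and for large $t$ the pulled-back $h$-cobordism can be made arbitrarily small in the direction of the flow orbits because non-positive sectional curvature yields subexponential, and in fact at most linear, separation of nearby geodesics in directions transverse to the orbit (Jacobi field control).

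The main obstacle, and the technical heart of the proof, is the \emph{foliated control theorem}: one must show that an $h$-cobordism over $SM$ which is arbitrarily controlled along the one-dimensional foliation by geodesic orbits represents the trivial element of the controlled Whitehead group of the foliation. This is precisely where the dynamics of the geodesic flow, and not merely the Riemannian geometry of $M$, enters the picture; the argument proceeds by producing a controlled inverse $h$-cobordism through a long-time flowing and squeezing construction, and it is exactly this ingredient that required Farrell and Jones to develop their controlled topology machinery in full generality. Granting the foliated control theorem, combining it with the split injectivity of $p^{*}$ and with the torsion-freeness of $\Gamma$ yields $\tau=0$, so $Wh(\pi_1(M))=0$ as claimed.
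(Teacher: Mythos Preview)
The paper does not give a proof of this theorem; it is stated with a citation to \cite{FJ91} and used as a black box in later arguments (e.g., in the proof of Theorem~\ref{Lemma 2.1}). There is therefore no ``paper's own proof'' to compare your proposal against.

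That said, your sketch is in the right family of ideas (transfer to the unit sphere bundle, geodesic flow, foliated control), but two points are garbled and would need to be fixed before this could count as an outline of the actual Farrell--Jones argument.

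First, the geometric claim is backwards. You write that non-positive sectional curvature yields ``subexponential, and in fact at most linear, separation of nearby geodesics''. The Rauch comparison theorem says the opposite: in non-positive curvature, Jacobi fields grow \emph{at least} linearly, so nearby geodesics spread \emph{at least} as fast as in Euclidean space. The control in the Farrell--Jones argument does not come from a bound on divergence; it comes from the \emph{stable} (asymptotic) foliation of the geodesic flow. Two unit tangent vectors determining forward-asymptotic geodesics are driven together by the flow, and it is this contraction along the stable leaves---combined with the flow direction itself---that allows one to gain control after flowing for a long time. Your paragraph conflates transverse spreading with the mechanism that actually produces smallness.

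Second, the remark about $p^{*}:Wh(\pi_1 M)\to Wh(\pi_1 SM)$ being split injective via a Becker--Gottlieb transfer and the Euler characteristic of $\mathbb{S}^{n-1}$ is not the relevant step. For $\dim M\geq 3$ the fibre $\mathbb{S}^{n-1}$ is simply connected, so $\pi_1(SM)\cong\pi_1(M)$ and $p^{*}$ is already an isomorphism of Whitehead groups; nothing is gained at the algebraic level. The transfer that matters in \cite{FJ91} is a \emph{geometric} (controlled) transfer of the $h$-cobordism itself to $SM$, which is what puts it in a position to be flowed. The subtlety is not injectivity of a group homomorphism but compatibility of the transfer with the controlled/foliated structure.
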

\begin{theorem}\label{nilrig}\cite{FH83}
Let $M$ be a closed aspherical manifold whose fundamental group is virtually nilpotent. Let $N$ be a topological manifold (possibly with boundary) and let $H: N\to M\times \mathbb{D}^m$ be a homotopy equivalence which is a homeomorphism on the boundary. Assume $m+n>4$. Then $H$ is homotopic to a homeomorphism $\overline{H}:N \to M\times \mathbb{D}^m$ relative to the boundary.
\end{theorem}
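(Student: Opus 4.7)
The plan is to reduce the theorem to the vanishing of the rel-boundary topological structure set of $X := M \times \mathbb{D}^m$. Since $H|_{\partial N}$ is already a homeomorphism, $H$ defines a class in $\mathcal{S}^{TOP}(X, \partial X)$, and the conclusion of the theorem is precisely that this class is the basepoint. As a preliminary, I would record that $Wh(\Gamma) = 0$ for $\Gamma := \pi_1 M$ virtually polycyclic (in particular virtually nilpotent); this is an earlier theorem of Farrell and Hsiang, and it makes simple and non-simple versions of surgery theory coincide and eliminates Whitehead-torsion obstructions when the $s$-cobordism theorem is invoked (legal since $m + n > 4$).

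Next I would write down the Sullivan--Wall topological surgery exact sequence for $(X, \partial X)$, using $\pi_1(X) = \Gamma$:
\[
L_{m+n+1}(\mathbb{Z}\Gamma) \longrightarrow \mathcal{S}^{TOP}(X, \partial X) \longrightarrow [X/\partial X,\, G/TOP] \xrightarrow{\ \sigma\ } L_{m+n}(\mathbb{Z}\Gamma).
\]
Because $M$ is closed aspherical and $X/\partial X \simeq \Sigma^m M_+$, the normal-invariant term identifies, via the Quinn--Ranicki assembly picture, with $H_{m+n}(B\Gamma;\, \mathbb{L}_{\bullet})$, and $\sigma$ becomes the classical $L$-theoretic assembly map
\[
A \colon H_{*}(B\Gamma;\, \mathbb{L}_{\bullet}) \longrightarrow L_{*}(\mathbb{Z}\Gamma).
\]
Triviality of the structure set then reduces to showing that $A$ is an isomorphism in degrees $m+n$ and $m+n+1$, after which the claim that $H$ is homotopic rel $\partial$ to a homeomorphism follows at once from exactness.

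The hard step, which carries essentially all the content, is proving this assembly isomorphism for every virtually nilpotent $\Gamma$; this is the main theorem of \cite{FH83}. I would follow their approach by induction on a suitable complexity of $\Gamma$. For torsion-free nilpotent $\Gamma$ one inducts on the nilpotency class by choosing a central extension $1 \to \mathbb{Z} \to \Gamma \to \Gamma' \to 1$ and combining the Shaneson--Wall splitting
\[
L_{*}(\mathbb{Z}[\Gamma' \times \mathbb{Z}]) \cong L_{*}(\mathbb{Z}\Gamma') \oplus L_{*-1}(\mathbb{Z}\Gamma')
\]
with controlled surgery over $B\Gamma'$, exploiting that $B\Gamma \to B\Gamma'$ is modelled by a circle bundle, to propagate the isomorphism up the extension. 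The general virtually nilpotent case is then reduced to the torsion-free nilpotent case by a transfer argument along a torsion-free nilpotent finite-index subgroup. The principal obstacle is the controlled-surgery input feeding the central-extension inductive step; the fact that such an input is available for virtually nilpotent $\Gamma$ (and is not available in general) is what makes virtual nilpotence the natural hypothesis of the theorem.
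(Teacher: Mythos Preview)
The survey does not prove Theorem~\ref{nilrig}; it is quoted from \cite{FH83} as a background rigidity result and later used as a black box (e.g.\ in the proof of Lemma~\ref{lem.nonpinch}). So there is no ``paper's own proof'' to compare against, and your proposal is really a sketch of the argument in \cite{FH83} itself.

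Your reduction is correct: the conclusion is equivalent to the vanishing of $\mathcal{S}^{TOP}(M\times\mathbb{D}^m,\partial)$, which via the surgery exact sequence and the identification of normal invariants with $\mathbb{L}$-homology comes down to the $L$-theoretic assembly map for $\Gamma=\pi_1 M$ being an isomorphism in the two relevant degrees. That is indeed what \cite{FH83} establishes. But your sketch of the inductive step has a genuine slip. You take a central extension $1\to\mathbb{Z}\to\Gamma\to\Gamma'\to 1$ and then invoke the Shaneson formula for $L_*(\mathbb{Z}[\Gamma'\times\mathbb{Z}])$; a nontrivial central extension is not a product, so that formula does not compute $L_*(\mathbb{Z}\Gamma)$. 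The induction that actually works runs the other way: a finitely generated torsion-free nilpotent group is poly-$\mathbb{Z}$, so one takes $\mathbb{Z}$ as a \emph{quotient}, writing $1\to\Gamma'\to\Gamma\to\mathbb{Z}\to 1$ and hence $\Gamma\cong\Gamma'\rtimes_\alpha\mathbb{Z}$, and then applies the twisted Shaneson/Ranicki splitting together with codimension-one geometric splitting of the associated mapping torus. Second, a bare transfer to a torsion-free nilpotent finite-index subgroup gives only rational control of the assembly map; the passage from nilpotent to virtually nilpotent in the Farrell--Hsiang program uses hyperelementary (Dress/Frobenius) induction, not transfer alone. With those two corrections your outline matches the strategy of \cite{FH83}.
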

\begin{theorem}\label{gentoprigd}(Topological Rigidity Theorem, \cite{FJ93})
Let $M$ be a closed connected $n$-dimensional Riemannian manifold with non positive sectional
curvature. Let $N$ be a topological manifold (possibly with boundary) and let $H: N\to M\times \mathbb{D}^m$ be a homotopy equivalence which is a homeomorphism on the boundary. Assume $m+n\neq 3, 4$. Then $H$ is homotopic to a homeomorphism $\overline{H}:N \to M\times \mathbb{D}^m$ relative to the boundary.
\end{theorem}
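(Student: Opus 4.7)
The plan is to prove this via the surgery exact sequence, the vanishing of obstructions in that sequence being the geometric content. Since we are working rel boundary with $M\times \mathbb{D}^m$ aspherical, showing $H$ is homotopic rel $\partial$ to a homeomorphism is equivalent to showing that the topological structure set $\mathcal{S}^{\mathrm{Top}}(M\times \mathbb{D}^m\ \mathrm{rel}\ \partial)$ is trivial. By Theorem \ref{whigro}, $Wh(\pi_1(M))=Wh(\pi_1(M\times \mathbb{D}^m))=0$, so the $h$- and $s$-decorations coincide and the $s$-cobordism theorem is available; this lets me pass freely between concordance-type and homotopy-type statements and justifies working with the $L^s$-version of the Wall surgery sequence.

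Next I would fit the problem into the Sullivan--Wall surgery exact sequence
\begin{equation*}
\cdots \to L^s_{n+m+1}(\mathbb{Z}\pi_1 M) \to \mathcal{S}^{\mathrm{Top}}(M\times \mathbb{D}^m,\partial) \to [M\times \mathbb{D}^m/\partial,\, G/\mathrm{Top}] \to L^s_{n+m}(\mathbb{Z}\pi_1 M),
\end{equation*}
and reduce triviality of the structure set to showing that the assembly map
\begin{equation*}
\alpha\colon H_*(M;\mathbb{L}_\bullet) \longrightarrow L^s_*(\mathbb{Z}\pi_1 M)
\end{equation*}
is an isomorphism; the hypothesis $m+n\neq 3,4$ is exactly what is needed to avoid the low-dimensional failure of surgery and to stabilize using $\times \mathbb{D}^m$. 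Once $\alpha$ is an isomorphism, the normal invariant $[H]\in [M\times \mathbb{D}^m/\partial,G/\mathrm{Top}]$ is determined by its image in $L$-theory and can be surgered away rel boundary, yielding the required homeomorphism $\overline{H}$.

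The core of the argument, and the main obstacle, is establishing that $\alpha$ is an isomorphism for $\Gamma=\pi_1(M)$. Here I would invoke the Farrell--Jones machinery of controlled and foliated surgery: the geodesic flow on the unit tangent bundle $SM$, together with the non-positive curvature hypothesis, provides the asymptotic expansion of cells needed to squeeze controlled surgery obstructions to zero along the leaves of the flow foliation. The outline is (i) pass from $L^s(\mathbb{Z}\Gamma)$ to a controlled $L$-theory over a compactification of the universal cover on which $\Gamma$ acts with suitable dynamics; (ii) use the expanding/contracting behavior of the geodesic flow to prove a foliated control theorem, trivializing controlled obstructions; (iii) descend back to conclude that $\alpha$ is a split injection, and combine with transfer/splitting arguments (and Theorem \ref{nilrig} as the virtually nilpotent base case, relevant when cusps or flat factors appear) to obtain surjectivity.

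Finally I would assemble these pieces: with $\alpha$ an isomorphism and $Wh=0$, the surgery sequence forces $\mathcal{S}^{\mathrm{Top}}(M\times \mathbb{D}^m,\partial)=0$, so the homotopy equivalence $H$ (already a homeomorphism on $\partial$) represents the trivial element and is therefore homotopic rel boundary to a homeomorphism $\overline{H}$. I expect the hardest, most technical step by far to be the foliated control theorem in (ii): it is where non-positive curvature enters in an essential, not merely formal, way, and it is what distinguishes this rigidity result from a purely algebraic surgery statement.
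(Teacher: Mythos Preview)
The paper does not prove this theorem; it is stated in Section~2 (``Basic Definitions and results'') as a background result quoted from \cite{FJ93} and is used repeatedly as a black box throughout the survey. Your outline is an accurate high-level sketch of the original Farrell--Jones argument: reduce to triviality of the topological structure set via the surgery exact sequence, identify this with bijectivity of the $L$-theory assembly map for $\pi_1(M)$, and establish the latter by controlled/foliated surgery using the dynamics of the geodesic flow on the unit tangent bundle of a nonpositively curved manifold. One small correction: since $M$ here is closed, there are no cusps, so the invocation of Theorem~\ref{nilrig} as a ``cusp'' base case is not needed in this setting (though virtually abelian subgroups do enter the Farrell--Jones induction in a different guise).
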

\begin{theorem}\label{borel}(Borel Conjecture, 1955)
Let $f:M\to N$ be a homotopy equivalence where both $M$ and $N$ are closed aspherical manifolds. Then $f$ is homotopic to a homemorphism.
\end{theorem}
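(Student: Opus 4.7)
My plan is to acknowledge upfront that in the full generality stated, the Borel Conjecture is still an open problem; what can be done is to sketch the surgery-theoretic attack and to record the one case where the argument actually closes. That case is already in hand: if $N$ (equivalently $M$, since they are homotopy equivalent) carries a Riemannian metric of nonpositive sectional curvature, then Theorem \ref{gentoprigd} applied with $m=0$ immediately yields the conclusion, the boundary hypothesis being vacuous. This subsumes every aspherical manifold of the kind addressed in this survey.

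For the general aspherical case, the plan is to show that the topological structure set $\mathcal{S}^{TOP}(N)$ is a single point and to read off the conclusion from the surgery exact sequence
\[
\cdots \to L_{n+1}(\mathbb{Z}[\pi_{1}(N)]) \to \mathcal{S}^{TOP}(N) \to [N, G/TOP] \to L_{n}(\mathbb{Z}[\pi_{1}(N)]).
\]
Since $N$ is aspherical, $N \simeq B\pi_{1}(N)$, so the normal-invariant term $[N,G/TOP]$ is controlled by the generalized homology $H_{n}(B\pi_{1}(N); \mathbf{L}(\mathbb{Z}))$. The argument then reduces to two ingredients: first, the vanishing of the Whitehead group $Wh(\pi_{1}(N))$, which collapses the distinction between $h$- and $s$-cobordisms and selects the decorated $L$-groups appropriate for surgery on closed manifolds; and second, the assertion that the $L$-theoretic assembly map $H_{n}(B\pi_{1}(N); \mathbf{L}(\mathbb{Z})) \to L_{n}(\mathbb{Z}[\pi_{1}(N)])$ is an isomorphism. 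Granted both, the surgery sequence collapses and $\mathcal{S}^{TOP}(N)$ is a single point, so every homotopy equivalence $f\colon M \to N$ is homotopic to a homeomorphism.

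The main obstacle is this assembly-map isomorphism, which is precisely the $L$-theoretic half of the Farrell--Jones Isomorphism Conjecture. For the nonpositively curved setting central to this paper the Whitehead-group vanishing is supplied by Theorem \ref{whigro}, and the assembly map is known to be an isomorphism through the work of Bartels, L\"uck, Reich and their collaborators, who have established Farrell--Jones for Gromov-hyperbolic groups, CAT(0) groups, and lattices in virtually connected Lie groups; this lets the argument close completely for the fundamental groups appearing here. For an arbitrary Poincar\'e duality group both ingredients remain conjectural, and this is where a proof in complete generality would have to break new ground.
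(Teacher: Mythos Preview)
Your assessment is correct and, in fact, more than the paper itself offers: the paper states the Borel Conjecture without any proof or proof sketch, since in full generality it remains open. Your discussion---acknowledging the open status, deducing the nonpositively curved case from Theorem~\ref{gentoprigd} with $m=0$, and outlining the surgery-theoretic reduction to the Farrell--Jones assembly-map isomorphism---is accurate and goes well beyond what the paper provides for this particular statement.
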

\begin{theorem}\label{bieberbachrig}(Bieberbach's Rigidity Theorem, 1912)
Let $f:N\to M$ be a homotopy equivalence between closed flat Riemannian manifolds. Then $f$ is homotopic to an affine diffeomorphism.
\end{theorem}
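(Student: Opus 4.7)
The plan is to reduce the geometric statement to the algebraic rigidity of Bieberbach groups, and then to use obstruction theory for aspherical targets to pass back from the algebraic map to a homotopy. First I would write $N=\mathbb{R}^n/\Gamma_N$ and $M=\mathbb{R}^n/\Gamma_M$, where $\Gamma_N,\Gamma_M\subset\mathrm{Isom}(\mathbb{R}^n)=\mathbb{R}^n\rtimes O(n)$ are torsion-free cocompact discrete subgroups (Bieberbach groups) acting by deck transformations on the universal covers. Since $\mathbb{R}^n$ is contractible, $N$ and $M$ are aspherical, so the homotopy equivalence $f$ induces an isomorphism $\phi\colon\Gamma_N\xrightarrow{\cong}\Gamma_M$, well defined up to conjugation by an element of $\Gamma_M$.

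Next I would invoke Bieberbach's second theorem on crystallographic groups, which asserts that any abstract isomorphism $\phi\colon\Gamma_N\to\Gamma_M$ is realized by conjugation by an element $\alpha$ of the affine group $\mathrm{Aff}(\mathbb{R}^n)=\mathbb{R}^n\rtimes GL(n,\mathbb{R})$; that is, $\phi(\gamma)=\alpha\gamma\alpha^{-1}$ for every $\gamma\in\Gamma_N$. The standard argument goes through Bieberbach's first theorem: the pure-translation subgroup $T_N=\Gamma_N\cap\mathbb{R}^n$ is the unique maximal abelian normal subgroup of $\Gamma_N$ of finite index, and hence is characteristic, so $\phi(T_N)=T_M$. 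The resulting isomorphism of full-rank lattices $T_N\to T_M$ extends uniquely to $A\in GL(n,\mathbb{R})$, and one then verifies that for a suitable $v\in\mathbb{R}^n$ the element $\alpha=(v,A)\in\mathrm{Aff}(\mathbb{R}^n)$ conjugates $\Gamma_N$ onto $\Gamma_M$. The map $\alpha$ descends to an affine diffeomorphism $\bar\alpha\colon N\to M$.

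Finally I would conclude by standard obstruction theory for maps into a $K(\pi,1)$. Since $M$ is aspherical, the set $[N,M]$ of homotopy classes of maps is in natural bijection with $\mathrm{Hom}(\Gamma_N,\Gamma_M)/\Gamma_M$-conjugation. By construction $\bar\alpha_{\ast}$ and $f_{\ast}$ agree up to conjugation in $\Gamma_M$ (adjusting $\alpha$ by a deck translation if necessary does not change $\bar\alpha$), so $f$ is homotopic to the affine diffeomorphism $\bar\alpha$.

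The main obstacle is the algebraic step of producing $\alpha\in\mathrm{Aff}(\mathbb{R}^n)$: one needs that an \emph{abstract} isomorphism of two Bieberbach groups is necessarily geometric, conjugating the two translation lattices by a genuine real-linear map. This is not formal; it rests on the fact that the translation subgroup is detectable purely from the group structure, which is the substance of Bieberbach's theorems. In the survey setting I would quote this classical result as a black box and spend the exposition on the two easier bookending steps.
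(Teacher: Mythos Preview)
Your argument is correct and is the standard proof of Bieberbach's rigidity theorem. Note, however, that the paper does not supply its own proof of this statement: it is listed in Section~2 among the background results and simply attributed to Bieberbach (1912). So there is nothing to compare against; your write-up is exactly what one would expect if a proof were to be inserted. The three steps---identifying $N$ and $M$ as $K(\Gamma,1)$'s, invoking Bieberbach's second theorem to realize the abstract isomorphism $\Gamma_N\cong\Gamma_M$ by conjugation in $\mathrm{Aff}(\mathbb{R}^n)$, and then using that $[N,M]\cong\mathrm{Hom}(\Gamma_N,\Gamma_M)/\text{conj.}$ for aspherical $M$---are all sound, and your emphasis that the real content lies in the characteristic nature of the translation lattice is well placed.
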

\begin{theorem}\label{mostow}(Mostow Rigidity Theorem, \cite{Mos73})
Let $M$ and $N$ be compact, locally symmetric Riemannian manifolds with everywhere nonpositive curvature having no closed 1 or 2-dimensional geodesic subspaces which are locally direct factors. If  $f:M\to N$ is a homotopy equivalence, then $f$ is homotopic to an isometry. 
\end{theorem}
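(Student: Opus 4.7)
The plan is to reduce the theorem to a rigidity statement about boundaries of universal covers. Since both $M$ and $N$ are locally symmetric of nonpositive curvature, their universal covers $\tilde{M}$ and $\tilde{N}$ are symmetric spaces of noncompact type and hence contractible, so $M$ and $N$ are aspherical. Consequently, any continuous map between them is determined up to homotopy by the induced homomorphism on fundamental groups, so it suffices to show that the isomorphism $\phi = f_* : \pi_1(M) \to \pi_1(N)$ is realized by an isometry. First I would lift $f$ to a $\phi$-equivariant map $\tilde{f} : \tilde{M} \to \tilde{N}$ and, using cocompactness of the two deck group actions together with compactness of $M$, show that $\tilde{f}$ is a pseudo-isometry: there are constants $A \geq 1$ and $B \geq 0$ with
\[ A^{-1}\, d(x,y) - B \;\leq\; d\bigl(\tilde{f}(x),\tilde{f}(y)\bigr) \;\leq\; A\, d(x,y) + B. \]

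The next step is to extend $\tilde{f}$ to a $\phi$-equivariant map $\partial\tilde{f} : \partial\tilde{M} \to \partial\tilde{N}$ of geodesic boundaries. In the rank one case the ambient geometry is CAT($-1$), and the Morse lemma guarantees that quasi-geodesics stay within bounded distance of honest geodesics; thus $\tilde{f}$ sends asymptotic rays to asymptotic rays and induces a homeomorphism of visual boundaries, which moreover is quasi-conformal with respect to the natural conformal structure on the sphere at infinity. In higher rank, the boundary carries the structure of a spherical Tits building, and one argues that $\partial\tilde{f}$ respects this chamber structure on a generic subset, yielding a combinatorial isomorphism of buildings.

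The heart of the argument, and what I expect to be the main obstacle, is upgrading this boundary data to genuine rigidity. In rank one the key input is Mostow's ergodicity theorem: the diagonal action of $\pi_1(M)$ on $\partial\tilde{M}\times\partial\tilde{M}$ is ergodic, so a $\pi_1(M)$-equivariant quasi-conformal map must be conformal almost everywhere, and then by Liouville's theorem (or its analogue in the complex, quaternionic, or Cayley hyperbolic case) it extends to an isometry of $\tilde{N}$. In higher rank one invokes Tits's fundamental theorem on irreducible spherical buildings of rank at least two: any combinatorial isomorphism between such buildings is induced by an isomorphism of the underlying algebraic groups, which again produces an isometry of symmetric spaces. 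The hypothesis excluding $1$- and $2$-dimensional locally direct geodesic factors is precisely what eliminates the nonrigid situations, namely circle factors and hyperbolic surface factors, where Teichm\"uller deformations defeat rigidity.

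Finally, having produced a $\phi$-equivariant isometry $\tilde{F} : \tilde{M} \to \tilde{N}$, it descends to an isometry $F : M \to N$. Since $F_* = \phi = f_*$ on fundamental groups and $N$ is aspherical, $f$ is homotopic to $F$, which completes the proof.
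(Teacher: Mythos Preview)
The paper does not prove this theorem at all: it is stated as a background result and attributed to Mostow \cite{Mos73}, with no argument given. So there is no ``paper's own proof'' to compare against.

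That said, your sketch is a reasonable high-level outline of the classical approach, covering the lift to a pseudo-isometry of universal covers, extension to the boundary, the rank-one conformal/ergodicity argument, and the higher-rank building argument via Tits's theorem. As a sketch it is fine; as a proof it would of course require substantial work to make each step rigorous, particularly the ergodicity of the boundary action and the regularity theory for quasi-conformal maps in the various rank-one geometries. But since this is a survey paper that simply invokes the result, no proof was expected here.
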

\section{\large Detecting Exotic Structures on Hyperbolic Manifolds}
In this section we discuss smooth and PL-rigidity problem \ref{genprorig}. In particular, we review Problem \ref{genprorig} for negatively curved locally symmetric spaces. First we focus on the following problem :
\begin{problem}\label{prorig}
Let $f:N\to M$ denote a homotopy equivalence between closed smooth manifolds. Is $f$ homotopic to a diffeomorphism ?
\end{problem}
\begin{remark}\rm{
\indent
 \begin{itemize}
\item[\bf{1.}] Every homotopy equivalence of 2-dimensional closed manifolds is homotopic to a diffeomorphism, by the classification of surfaces. A homotopy equivalence of 3-dimensional closed manifolds is not in general homotopic to a diffeomorphism. The first examples of such homotopy equivalences appeared in the classification of the 3-dimensional lens spaces in the 1930's : the Reidemeister torsion of a lens space is a diffeomorphism invariant which is not homotopy invariant([RA02]). Algebraic $K$-theory invariants such as Reidemeister and Whitehead torsion are significant in the classification of manifolds with finite fundamental group, and in deciding if $h$-cobordant manifolds are diffeomorphic (via s-Cobordism Theorem), but they are too special to decide if an arbitrary homotopy equivalence of closed manifolds is homotopic to a diffeomorphism.
\item[\bf{2.}] In 1956, Milnor \cite{Mil56} constructed an exotic sphere $\Sigma^7$ with homotopy equivalence (in fact a homeomorphism) $\Sigma^7\mapsto \mathbb{S}^7$ which is not homotopic to a diffeomorphism.
\item[\bf{3.}] If both closed manifolds in Problem \ref{prorig} are real hyperbolic and of dimension greater than $2$, Mostow’s Rigidity Theorem \ref{mostow} says that they are isometric, in particular diffeomorphic. If $M$ is flat manifold in Problem \ref{prorig}, then it follows from the works of Gromoll and Wolf \cite{GW71}, and Yau \cite{Yau71}, that $N$ is flat manifold; thus $f$ in Problem \ref{prorig} must be homotopic to an affine diffeomorphism by Bieberbach's Rigidity Theorem \ref{bieberbachrig}. If $M$ is an irreducible locally symmetric space of rank $\geq 2$, Gromov \cite{BGS85} has shown that after rescaling the metric on $M$, $f$ will be homotopic to an isometry. Eberlein \cite{Ebe83, Ebe83a} independently proved the same result under the hypothesis that the universal cover of $M$ is reducible. 
\end{itemize}}
\end{remark}
Recall that by an infranilmanifold we mean the quotient of a simply-connected nilpotent Lie group $G$ by the action of a torsion free discrete subgroup $\Gamma$ of the semidirect product of $G$ with a compact subgroup of $Aut(G)$. A weaker property than flatness is admitting an infranil structure. Lee and Raymond \cite{LR85} proved the smooth rigidity for infranilmanifolds:
\begin{theorem}\label{infranil}
 Let $f:M\to N$ be a homotopy equivalence between two closed infranilmanifolds $M$ and $N$. Then $f$ is homotopic to a diffeomorphism.
\end{theorem}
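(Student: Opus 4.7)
The plan is to reduce smooth rigidity for infranilmanifolds to an algebraic rigidity statement about almost crystallographic groups, in direct analogy with the classical flat case (Theorem~\ref{bieberbachrig}). First I would observe that any infranilmanifold $M=\Gamma\backslash G$ is aspherical: the exponential map identifies the simply connected nilpotent Lie group $G$ with $\mathbb{R}^n$, and $\Gamma$ acts freely and properly discontinuously, so $M$ is a $K(\pi_1(M),1)$. Consequently, a map $M\to N$ between infranilmanifolds is determined up to homotopy by the induced homomorphism on $\pi_1$, modulo an inner automorphism of $\pi_1(N)$. It therefore suffices to exhibit a diffeomorphism $h:M\to N$ whose effect on $\pi_1$ matches $f_\ast$ up to conjugation; the homotopy $f\simeq h$ will then be automatic.

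To produce such an $h$ I would invoke the almost-crystallographic version of Bieberbach's rigidity theorem, due to Auslander and refined by Lee--Raymond: every isomorphism $\phi:\Gamma_M\to\Gamma_N$ between the fundamental groups of two infranilmanifolds modelled on $G$ is realized by conjugation by an affine automorphism $\alpha\in G\rtimes\mathrm{Aut}(G)$. The argument has two layers. On the pure translation subgroups $\Gamma_M\cap G$ and $\Gamma_N\cap G$, which are lattices in $G$, Mal'cev rigidity of torsion-free finitely generated nilpotent groups extends $\phi$ uniquely to a Lie group automorphism of $G$. One then promotes this across the finite holonomy quotient $\Gamma_{\bullet}/(\Gamma_{\bullet}\cap G)$ so that the full $\phi$ is realized by a single element of $G\rtimes\mathrm{Aut}(G)$. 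The resulting $\alpha$ descends to a diffeomorphism $\bar\alpha:M\to N$ that realizes $\phi$ on fundamental groups, and the asphericity step above converts this into the desired homotopy from $f$ to $\bar\alpha$.

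The main obstacle is the second layer of the algebraic step: extending the Mal'cev lift of $\phi$ over the holonomy. This requires showing that the abstract isomorphism $\phi$ intertwines the two holonomy representations (after the Mal'cev extension) and that a cohomological obstruction to a simultaneous affine realization vanishes. In the Lee--Raymond treatment, this is packaged via the Seifert fibered structure of an infranilmanifold over a flat orbifold, and the argument ultimately reduces to a combination of Mal'cev rigidity with Bieberbach's original theorem applied to the holonomy quotient. Once this algebraic input is in hand, the reduction to homotopy is purely formal and requires no separate dimension restriction, so the theorem holds in all dimensions uniformly.
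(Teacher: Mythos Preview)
The paper does not give its own proof of this theorem; it merely states the result and attributes it to Lee--Raymond \cite{LR85}. Your outline is essentially the Lee--Raymond argument that the paper is citing: reduce to $\pi_1$ by asphericity, then realize the abstract isomorphism by an affine diffeomorphism via the generalized Bieberbach theorem for almost crystallographic groups (Mal'cev rigidity on the nilpotent lattice, extended across the finite holonomy). So your proposal is correct and coincides with the cited source; there is nothing further to compare.
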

Let $\rm{Aff(S)}$ denote the group of all affine motions of $S$; i.e., $\rm{Aff(S)}$ is the semi-direct product $S\rtimes \rm{Aut(S)}$.
\begin{definition}\rm{
A compact infrasolvmanifold $M$ is a compact orbit space of the form $M=\Gamma\setminus S$ where $S$ is solvable and simply connected Lie group and closed torsion free subgroup $\Gamma \subset$ Aff$(S)$ satisfying 
\begin{itemize}
\item[(i)] $\Gamma^0$ is contained in the nil radical of $S$ and 
\item[(ii)] the closure of the image of $\Gamma$ in $\rm{Aut(S)}$ is compact.
\end{itemize}}
\end{definition}
\begin{remark}\rm{
\indent
\begin{itemize}
\item[\bf{1}.] The notion of compact infrasolvmanifold generalizes the notion of compact Riemannian flat manifold and compact infranilmanifold which are its special cases when $S$ is respectively abelian and nilpotent. It also generalizes the notion of compact solvmanifold which is the special case when $\Gamma \subset S$. 
\item[\bf{2.}] The fundamental group of an infrasolvmanifold is a virtually polycyclic group. A result of Farrell and Jones \cite{FJ88} on aspherical manifolds with virtually polycyclic fundamental group shows that infrasolvmanifolds are topologically rigid. A weaker property than that of admitting an infranil structure is admitting an infrasolv structure. F. T. Farrell and L. E. Jones \cite{FJ97} proved the smooth rigidity for infrasolvmanifolds: Here is the result:
\end{itemize}}
\end{remark}
\begin{theorem}\label{FJ-infrasol}
Any two compact infrasolvmanifolds, of dimension different from $4$, whose fundamental groups are isomorphic are smoothly diffeomorphic. In fact, any given isomorphism of fundamental groups is induced by a smooth diffeomorphism.
\end{theorem}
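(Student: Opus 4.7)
The plan is to decouple the problem into three stages: realizing $\varphi$ by a homotopy equivalence, upgrading to a homeomorphism, and finally upgrading to a diffeomorphism. First, because every compact infrasolvmanifold is aspherical (its universal cover is the contractible simply-connected solvable Lie group $S$), the given isomorphism $\varphi:\pi_1(M)\to \pi_1(N)$ is induced by a homotopy equivalence $f:M\to N$, unique up to homotopy, by standard Eilenberg--MacLane theory. Second, since the fundamental group of a compact infrasolvmanifold is virtually polycyclic, the Farrell--Jones topological rigidity theorem for aspherical manifolds with virtually polycyclic $\pi_1$ (cited in the remark preceding the statement) gives a homeomorphism $h:M\to N$ homotopic to $f$, provided the dimension is different from $4$. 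This reduces the problem to showing that the smooth structure on a compact infrasolvmanifold is unique up to concordance, i.e.\ that $h$ is topologically concordant to a diffeomorphism.

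For the smooth upgrade I would induct on the derived length of $S$ using the Mostow fibration as the inductive step. After passing to a finite normal cover, $M$ and $N$ become solvmanifolds $\Gamma_0\backslash S$ and $\Gamma_0'\backslash S'$, and each admits a smooth fiber bundle structure
\begin{equation*}
F \longrightarrow \Gamma_0\backslash S \longrightarrow T^k,
\end{equation*}
with infranilmanifold fiber $F$. The base case of the induction is the Lee--Raymond rigidity Theorem~\ref{infranil}, which handles the infranilmanifold fibers directly, while the torus base is handled by the classical smoothing theory of tori in dimensions $\neq 4$. The inductive step would isotope the lifted homeomorphism so that it respects the two Mostow fibrations, smooth the induced map between base tori, apply the inductive hypothesis fiberwise, and then glue the resulting fiberwise diffeomorphisms into a global bundle map, finally descending back to $M$ and $N$ by checking equivariance under the deck actions of the finite covers.

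The hard part will be the reassembly. Concretely this amounts to showing that the fiberwise smoothings can be chosen consistently, which requires two controlled vanishing statements: (i) analyzing the concordance class of $h$ via the Kirby--Siebenmann identification $\mathcal{C}(M)\cong [M,\mathrm{Top}/\mathrm{O}]$ of Theorem~\ref{kirby} and showing it pulls back trivially through the torus bundle projection, and (ii) killing the $s$-cobordism obstructions encountered when gluing fiber-by-fiber, which needs the vanishing of $\mathrm{Wh}(\pi_1 M)$ for virtually polycyclic groups (an extension of the type of vanishing phenomenon recorded in Theorem~\ref{whigro}, established separately by Farrell--Jones). Both steps require the total dimension to be at least $5$, which is precisely why dimension $4$ must be excluded from the statement; the remaining low-dimensional cases $n\leq 3$ are covered classically by the Bieberbach and Seifert classifications.
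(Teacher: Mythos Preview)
The paper does not contain a proof of this theorem; it is a survey article, and Theorem~\ref{FJ-infrasol} is simply quoted from \cite{FJ97} without argument. So there is no ``paper's own proof'' against which to compare your proposal.

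That said, your outline is a plausible high-level strategy and the first two stages are solid: asphericity plus Eilenberg--MacLane theory gives the homotopy equivalence, and the Farrell--Jones topological rigidity for virtually polycyclic fundamental groups (cited in the remark just before the theorem) upgrades this to a homeomorphism away from dimension~$4$. The third stage, however, is where the real content lies and your sketch is too optimistic. The step ``showing [the concordance class] pulls back trivially through the torus bundle projection'' is not a formality: $[M,\mathrm{Top}/\mathrm{O}]$ does not vanish for infrasolvmanifolds in general, and there is no a~priori reason a bundle decomposition makes the specific class coming from $h$ vanish. Likewise, the ``induction on derived length via Mostow fibration'' does not package as cleanly as you suggest---the fibers of a Mostow fibration are nilmanifolds, not infranilmanifolds, and descending from the finite cover back to the infrasolvmanifold while preserving smoothness requires controlling how the finite holonomy interacts with the fiberwise smoothings, which is exactly the delicate point. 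The actual argument in \cite{FJ97} proceeds rather differently, exploiting specific structural results about lattices in solvable Lie groups and their affine crystallographic actions, together with the surgery-theoretic input, rather than a pure smoothing-theory bootstrap of the kind you describe.
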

\begin{remark}\rm{
\indent
\begin{itemize}
\item[\bf{1}.] In the cases of compact Riemannian flat manifolds and compact infranilmanifolds, Theorem \ref{FJ-infrasol} was previously proven by Bieberbach \cite{Bie12} and Lee-Raymond \cite{LR85}, respectively. The isomorphism in these cases is in fact induced by an affine diffeomorphism (see \cite{Bie12} and \cite{Bro65}) (A map is affine provided it sends geodesics to geodesics). And Mostow \cite{Mos54} previously showed that Theorem \ref{FJ-infrasol} is also true for solvmanifolds; however an affine diffeomorphism is not always possible in this case. One difficulty preventing an affine diffeomorphism is that a group $\Gamma$ can be lattice in two different simply connected solvable Lie groups; but in atmost one simply connected nilpotent Lie group because of Malcev's Rigidity Theorem \cite{Rag72}.  
\item[\bf{2}.] Wilking \cite{Wil00} improved on Theorem \ref{FJ-infrasol}  by showing the condition “ $\dim=4$” can be dropped. Results of Wilking on rigidity properties of isometric actions on solvable Lie-groups \cite{Wil00} imply the smooth rigidity of infrasolvmanifolds in all dimensions.
\item[\bf{3}.] In well known cases, smooth rigidity properties of geometric manifolds are closely connected to rigidity properties of lattices in Lie groups. Oliver Baues \cite{Bau04} proved the smooth rigidity of infrasolvmanifolds from natural rigidity properties of virtually polycyclic groups in linear algebraic groups. More generally, he proved  rigidity results for manifolds which are constructed using affine, not necessarily isometric, actions of virtually polycyclic groups on solvable Lie groups. This approach leads us to a new proof of the rigidity of infrasolvmanifolds, and also to a geometric characterization of infrasolvmanifolds in terms of polynomial actions on the affine space $\mathbb{R}^n$.
\item[\bf{4}.] Indeed, the theory of harmonic maps had been very successful in showing rigidity results, see for instance Siu \cite{Siu80}, Sampson \cite{Sam86}, Hernandez \cite{Her91}, Corlette \cite{Cor92}, Gromov and Schoen \cite{GS92}, Jost and Yau \cite{JY93}, and Mok, Sui and Yeung \cite{MSY93}. Because of this evidence it seems reasonable that Lawson and Yau conjectured that the answer to Problem \ref{prorig} was affirmative.
\end{itemize}}
\end{remark}
\begin{conjecture} (Lawson-Yau Conjecture)
Let $M_{1}$ and $M_{2}$ be  negatively curved manifolds. If $\pi_{1}(M_{1})=\pi_{1}(M_{2})$, then $M_{1}$ is diffeomorphic to $M_{2}$.
\end{conjecture}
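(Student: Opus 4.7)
The plan is to reduce the Lawson--Yau Conjecture to the question of smooth rigidity by a two--step chain: first upgrade the isomorphism of fundamental groups to a homotopy equivalence, then upgrade the homotopy equivalence to a diffeomorphism. The first step is essentially free. Since $M_1$ and $M_2$ are negatively curved, the Cartan--Hadamard theorem implies that their universal covers are diffeomorphic to Euclidean space, so both $M_1$ and $M_2$ are closed aspherical manifolds. Together with the given isomorphism $\pi_1(M_1) \cong \pi_1(M_2)$, standard obstruction theory for $K(\pi,1)$--spaces produces a homotopy equivalence $f : M_1 \to M_2$ realising that isomorphism (unique up to homotopy).

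For the second step I would invoke the Topological Rigidity Theorem \ref{gentoprigd} with $m=0$: since $M_2$ carries a non-positively (in fact negatively) curved Riemannian metric and $f$ is a homotopy equivalence which is trivially a homeomorphism on the empty boundary, $f$ is homotopic to a homeomorphism $\bar f : M_1 \to M_2$, provided $\dim M_i \notin \{3,4\}$. The excluded dimensions are handled separately: dimension $2$ by the classification of surfaces; dimension $3$ by Perelman's geometrization, which together with Mostow rigidity for hyperbolic $3$--manifolds promotes the homotopy equivalence to an isometry in the relevant geometric cases; dimension $4$ would require a separate discussion that I would set aside.

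The genuinely hard step is to upgrade $\bar f$ from a homeomorphism to a diffeomorphism. My first attempt would be analytic: apply the harmonic map theory of Eells--Sampson together with the refinements of Siu, Sampson, Corlette, Gromov--Schoen, and Mok--Siu--Yeung (as referenced in the paper). The idea is to replace $\bar f$ by its unique harmonic representative and then use a Bochner--type identity, together with the strict negativity of sectional curvature of the target, to argue that this harmonic map has nowhere--degenerate differential and hence is a diffeomorphism. My second attempt would be topological/obstruction--theoretic: by Theorem \ref{kirby}, the obstruction to $\bar f$ being isotopic to a diffeomorphism is a class in $\mathcal{C}(M_2) \cong [M_2, \mathrm{Top}/O]$, and one would try to show this class vanishes for purely curvature--theoretic reasons, perhaps by comparing with a model smooth structure coming from Mostow rigidity in the locally symmetric case.

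I expect this last step to be the main obstacle, and in fact the organisation of the paper tells me it is fatal: the harmonic map argument requires very specific curvature pinching, and the obstruction class in $[M_2, \mathrm{Top}/O]$ need not vanish. Concretely, the mechanism which will defeat the plan is that for a closed real hyperbolic $M^n$ with $n \geq 5$ which is stably parallelisable (cf.\ Theorem \ref{sullivan}) and has sufficiently large injectivity radius, the connected sum $M^n \# \Sigma^n$ with an exotic sphere $\Sigma^n \in \Theta_n$ can be shown to support a negatively curved Riemannian metric while being non--diffeomorphic to $M^n$; the induced map $f_M^\ast : \Theta_n \to \mathcal{C}(M^n)$ is non--trivial in favourable cases. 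So the proof proposal is really a road map whose final milestone cannot be reached, and identifying and constructing the resulting counterexamples is the subject of the sections that follow.
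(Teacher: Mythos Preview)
Your analysis is exactly right, and in fact mirrors the paper's treatment: the Lawson--Yau Conjecture is \emph{not} proved in the paper --- it is stated and then shown to be \emph{false}. The paper does not attempt the upgrade from homeomorphism to diffeomorphism you describe; instead it immediately cites Farrell--Jones \cite{FJ89a} for counterexamples, and the surrounding material (Theorems \ref{hype.exto}, \ref{hype.metric}, \ref{hype.smooth}) carries out precisely the construction you anticipate in your final paragraph: pass to a stably parallelisable finite cover $\widehat{M}$ of a closed real hyperbolic manifold (Theorem \ref{sullivan}), form $\widehat{M}\#\Sigma$ with $\Sigma\in\Theta_n$ exotic, show via Theorem \ref{brum} that $f_M^*:\Theta_n\to\mathcal{C}(\widehat{M})$ is monic so the connected sum is not diffeomorphic to $\widehat{M}$, and then equip $\widehat{M}\#\Sigma$ with a pinched negatively curved metric by the local surgery of Theorem \ref{hype.metric}.

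So your ``proof proposal'' is really a correct diagnosis of \emph{why no proof exists}, and your identification of the obstruction --- nonvanishing of the class in $[M,\mathrm{Top}/O]$ detected by $f_M^*$ --- is exactly the mechanism the paper exploits. The only thing to add is that you should not frame this as a proof attempt at all: the statement is a conjecture that the paper refutes, and your last paragraph is the substance of that refutation.
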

\rm{ Recall that the obvious smooth analogue of Borel's Conjecture is false. Namely, Browder had shown in \cite{Bro65} that it is false even in the basic case where $M=T^n$ is the $n$-torus (see also \cite[Remark 4.7]{Ram15}). In fact, it was shown in \cite{Wal71} that $T^n$ and $T^n\#\Sigma^n$ ($n\geq5$) are homeomorphic but not diffeomorphic if $n\geq 5$ and $\Sigma^n$ is any exotic $n$-sphere. However, when it is assumed that both $M$ and $N$ in Borel Conjecture \ref{borel} are non-positively curved Riemannian manifolds, then smooth rigidity frequently happens. The most fundamental instance of this is an immediate consequence of Mostow's Strong Rigidity Theorem \ref{mostow}. Mostow's Rigidity Theorem for hyperbolic space forms gives a positive answer to Lawson-Yau Conjecture when both $M_1$ and $M_2$ have constant sectional curvature and more generally when both $M_1$ and $M_2$ are locally symmetric spaces. See Siu \cite{Siu80} and Hamenstadt \cite{Ham91} for generalizations of Mostow's Rigidity Theorem relevant to Lawson-Yau 
Conjecture. Also see Mostow and Siu \cite{Mos80} and Gromov and Thurston \cite{GT87} for other examples of negatively curved manifolds which are not diffeomorphic to a locally symmetric space. For Lawson-Yau Conjecture, Farrell and Jones gave counterexamples \cite{FJ89a}, which was loosely  motivated by Farrell and Jones construction \cite{FJ78} used to prove the following results:}
\begin{definition}\rm{
Let $M$ be a closed smooth manifold. A self-map $f:M\to M$ is said to be an expanding endomorphism provided $M$ supports a Riemannian metric such that $|df(v)|>|v|$ for every non-zero vector $v$ tangent to $M$}.
\end{definition}
Here is the following important result relative to this definition:
\begin{theorem}(Cartan) Let $M$ be non-positively  curved and $x_0\in M$ be a base point. Then $\exp :T_{x_0}M\to M$ is an expanding map. Furthermore it is a covering projection and hence a diffeomorphism when $\pi_1(M)=0$.
\end{theorem}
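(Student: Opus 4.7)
The plan is to establish the three conclusions in sequence: non-contraction of $d\exp_{x_0}$, then covering, then diffeomorphism under simple connectivity. All three rest on a Jacobi field comparison that exploits $\sec \leq 0$.

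First I would fix $v\in T_{x_0}M$ and $w\in T_v(T_{x_0}M)\cong T_{x_0}M$, and build the one-parameter variation $\gamma_s(t)=\exp_{x_0}(t(v+sw))$ of the radial geodesic $\gamma(t)=\exp_{x_0}(tv)$. The variation field $J(t)=\partial_s|_{s=0}\gamma_s(t)$ is a Jacobi field along $\gamma$ with $J(0)=0$ and $J'(0)=w$, and $J(1)=d\exp_{x_0}(v)(w)$. Differentiating twice, the Jacobi equation gives
\[
\tfrac{d^2}{dt^2}|J|^2 \;=\; 2|J'|^2 - 2\langle R(J,\gamma')\gamma',J\rangle \;\geq\; 2|J'|^2,
\]
since $\sec\leq0$ makes the curvature term non-positive. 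Hence $|J|^2$ is a convex function of $t$ vanishing at $0$, and a straightforward comparison with the flat model (or one application of Rauch) yields $|J(t)|\geq t\,|J'(0)|$ for $t\geq 0$. Setting $t=1$ gives $|d\exp_{x_0}(v)(w)|\geq |w|$, which is the expanding property.

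Next I would observe that the same inequality shows $d\exp_{x_0}(v)$ has trivial kernel at every $v\in T_{x_0}M$, hence is a linear isomorphism by dimension count; equivalently, $\gamma$ has no conjugate points. Therefore $\exp_{x_0}$ is a local diffeomorphism everywhere. Now pull the Riemannian metric on $M$ back to $T_{x_0}M$ via $\exp_{x_0}$; this is legitimate precisely because the differential is invertible, and with respect to this pull-back metric $\exp_{x_0}$ is a local isometry. The radial rays $t\mapsto tv$ remain geodesics in the pulled-back metric (their images are geodesics in $M$ starting at $x_0$) and extend for all $t\in\mathbb{R}$, so $T_{x_0}M$ is geodesically complete. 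Applying the standard fact that a local isometry from a complete Riemannian manifold to a connected Riemannian manifold is a covering projection, I conclude that $\exp_{x_0}:T_{x_0}M\to M$ is a covering.

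Finally, if $\pi_1(M)=0$, any connected covering of $M$ must be one-sheeted, so $\exp_{x_0}$ is a bijective local diffeomorphism, hence a diffeomorphism. The main obstacle is the Jacobi field estimate in the first step; the passage from local diffeomorphism to covering and then to diffeomorphism is a routine application of Hopf--Rinow together with the completeness of the pulled-back metric, while the simply connected case is purely topological. One minor subtlety to handle carefully is that the inequality $|J(t)|\geq t|J'(0)|$ should be extracted from the convexity of $|J|^2$ in a way that avoids dividing by $|J|$ where it vanishes; working with the comparison against the Euclidean Jacobi field of the same initial data side-steps this and gives the cleanest argument.
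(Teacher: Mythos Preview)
The paper states this theorem as a classical result attributed to Cartan and offers no proof, so there is nothing to compare your argument against. Your outline is the standard Cartan--Hadamard argument and is correct: the Jacobi field estimate $|J(t)|\geq t\,|J'(0)|$ under $\sec\leq 0$ gives $|d\exp_{x_0}(v)(w)|\geq |w|$, hence $\exp_{x_0}$ is a local diffeomorphism; the pull-back metric on $T_{x_0}M$ is complete because the radial lines are geodesics defined for all time, and a local isometry with complete domain is a covering; simple connectivity of $M$ then forces the covering to be trivial. Two small points worth making explicit when you write this up: you are tacitly assuming $M$ is complete (so that $\exp_{x_0}$ is defined on all of $T_{x_0}M$), which the paper's statement omits but clearly intends; and the inequality you obtain is $\geq$ rather than $>$, which is all that non-positive curvature gives and all that is needed for the subsequent steps, even though the paper's nearby definition of ``expanding endomorphism'' uses a strict inequality.
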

\begin{question}\label{expand.exi}
 What closed smooth manifolds support expanding endomorphisms?
\end{question}
The question is answered up to topological classification as follows by results due to Shub \cite{Shu69}, Franks \cite{Fra70} and Gromov \cite{Gro81}.
\begin{theorem}\label{expand}
 If a closed smooth manifold $M$ supports an expanding endomorphism, then $M$ is homeomorphic to an infranilmanifold.
\end{theorem}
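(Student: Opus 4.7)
The plan is to combine the three named contributions into one chain: Shub identifies the universal cover, Franks forces polynomial growth of the fundamental group, Gromov converts that into virtual nilpotence, and finally Theorem \ref{nilrig} upgrades a homotopy equivalence to a homeomorphism.

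First I would lift the expanding endomorphism $f:M\to M$ to a map $\tilde f:\tilde M\to\tilde M$ of the Riemannian universal cover, equipped with the lifted metric. Because $f$ expands lengths of tangent vectors, $\tilde f$ is a local diffeomorphism that uniformly expands the distance function, so it is injective and proper, hence a diffeomorphism of $\tilde M$ onto itself with a globally attracting (in backward time) fixed point $\tilde x_0$. Following Shub, the backward iterates $\tilde f^{-k}$ provide diffeomorphisms that contract $\tilde M$ onto $\tilde x_0$; combined with the fact that $\exp_{\tilde x_0}$ is a local diffeomorphism one deduces that $\tilde M$ is diffeomorphic to $\mathbb{R}^n$. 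In particular $M$ is aspherical and $\pi_1(M)$ is torsion free.

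Second, I would exploit the induced endomorphism $f_\#:\pi_1(M)\to\pi_1(M)$. Since $\tilde f$ is a diffeomorphism, $f$ is a covering of degree $d=|\pi_1(M)/f_\#\pi_1(M)|$, and $d>1$ because $f$ strictly expands volumes. Fix a finite generating set $S\subset\pi_1(M)$ and realize it by loops at a basepoint; the expansion constant $\lambda>1$ of $f$ together with compactness of $M$ gives a uniform bound showing that every element of the word-ball $B_S(r)$ lies, after a bounded number of iterations of $f_\#^{-1}$ applied to coset representatives, inside $B_S(Cr/\lambda)$. Counting cosets of $f_\#^k\pi_1(M)$ by $d^k$ and combining with this length estimate (this is the Franks argument) yields a polynomial bound $|B_S(r)|\le C r^{\log d/\log\lambda}$. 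Hence $\pi_1(M)$ is of polynomial growth.

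Third, I would invoke Gromov's polynomial growth theorem to conclude that $\pi_1(M)$ is virtually nilpotent, and then appeal to the Malcev/Auslander theory of lattices in simply connected nilpotent Lie groups to realize $\pi_1(M)$ as the fundamental group of a closed infranilmanifold $N$ of the same dimension as $M$. Since $M$ and $N$ are both closed aspherical manifolds with the same (virtually nilpotent) fundamental group, there exists a homotopy equivalence $H:M\to N$ by obstruction theory on the classifying space $B\pi_1(M)$.

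Finally, applying Theorem \ref{nilrig} to $H$ (with $m=0$ and trivial boundary condition) promotes $H$ to a homeomorphism $M\to N$, which is the desired conclusion. The main obstacle in this plan is the second step: turning the metric expansion constant into a genuine polynomial bound on word-length growth requires a careful comparison between the word metric on $\pi_1(M)$ and the Riemannian distance on $\tilde M$, using the Svarc--Milnor lemma together with the uniform expansion of $\tilde f$; the other two steps are either nearly formal (Shub) or cited black boxes (Gromov, Theorem \ref{nilrig}).
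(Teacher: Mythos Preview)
Your three-step skeleton (Shub $\Rightarrow$ $\widetilde M\cong\mathbb{R}^n$; Franks $\Rightarrow$ polynomial growth; Gromov $\Rightarrow$ virtual nilpotence) matches exactly the outline the paper gives in the Remark following the theorem. The divergence is in how you close the argument. The paper attributes the final homeomorphism step to Franks as well: once $\pi_1M$ is known to be virtually nilpotent (Franks only needed virtually solvable), Franks' 1970 argument shows directly, by a dynamical conjugacy construction for expanding maps, that $M$ is homeomorphic to an infranilmanifold. You instead manufacture a model infranilmanifold $N$ via Malcev/Auslander, produce a homotopy equivalence $M\to N$ by asphericity, and then invoke Theorem~\ref{nilrig} (Farrell--Hsiang) to upgrade it to a homeomorphism. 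This is a legitimate alternative and conceptually clean, but it is anachronistic (Farrell--Hsiang is 1983, well after the theorem was known) and, more concretely, Theorem~\ref{nilrig} as stated requires $m+n>4$; with $m=0$ your argument only covers $\dim M\geq 5$, leaving dimensions $\leq 4$ unhandled. Franks' original dynamical argument has no such restriction. So your route works in high dimensions but needs a separate low-dimensional patch, whereas the paper's attributed route (Franks' conjugacy) is uniform in dimension.
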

\begin{remark}\rm{
Shub showed that the universal cover $\widetilde{M}$ of $M$ is diffeomorphic to $\mathbb{R}^n$ where $n=\dim M$. Then Franks showed that $\pi_{1}M$ has polynomial growth and that $M$ is homeomorphic to an infranilmanifold provided $\pi_{1}M$ is virtually solvable. Gromov completed the proof of Theorem \ref{expand} by showing that a group of polynomial growth must be virtually nilpotent. Gromov's result was motivated by Hirsch's paper \cite{Hir70} where it is shown that the solution to Hilbert's fifth problem is related to  Theorem \ref{expand}. Hirsch also implicitly poised Question \ref{expand.exi} in his Remark 1; i.e., whether the word ``homeomorphism'' can be replaced by ``diffeomorphism'' in Theorem \ref{expand}. But Farrell  and Jones showed in \cite{FJ78} that this is not the case; namely, they proved the following result:}
\end{remark} 
\begin{theorem}\label{expand.torus}
Let $T^n$ be the $n$-torus $(n>4)$ and $\Sigma^n$ an arbitrary homotopy sphere, then the connected sum $T^n\#\Sigma^n$ admits an expanding endomorphism.
\end{theorem}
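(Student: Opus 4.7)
The strategy is to lift multiplication by a carefully chosen integer $k$ on $T^n$ to a smooth self-covering of $T^n\#\Sigma^n$, and then show that an appropriate Riemannian metric makes this lift expanding. The starting point is $\mu_k\colon T^n\to T^n$, $x\mapsto kx$, which is a $k^n$-fold smooth covering that expands every nonzero tangent vector by exactly the factor $k$ in the flat metric on $T^n$. By Kervaire--Milnor the group $\Theta_n$ is finite and abelian; let $d$ denote the order of $[\Sigma^n]$ in $\Theta_n$ and pick $k>1$ with $k^n\equiv 1\pmod d$. The choice $k=1+md$ works for every $m\geq 1$, and, importantly, $k$ can be taken arbitrarily large within this arithmetic progression.

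I would next construct the self-covering. Write $M=T^n\#\Sigma^n$ and realize its smooth structure by modifying the standard one on $T^n$ inside a small open disk $D$ centered at $0$; the exotic modification is concentrated in $D$ and agrees with the flat structure on the complement. The preimage $\mu_k^{-1}(D)$ consists of $k^n$ pairwise disjoint small disks, one around each $k$-torsion point of $T^n$, and $\mu_k$ restricts to a diffeomorphism from each such disk onto $D$. Performing the same exotic modification inside every one of the $k^n$ preimage disks yields a smooth manifold $\widetilde{M}$ whose smooth structure is $T^n\#(k^n\Sigma^n)$; by the choice of $k$ and the relation $k^n[\Sigma^n]=[\Sigma^n]$ in $\Theta_n$, there exists a diffeomorphism $\phi\colon M\to \widetilde{M}$. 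With the matched modifications on source and target, $\mu_k$ descends to a smooth $k^n$-fold covering $\widetilde{M}\to M$, and composition with $\phi$ produces the desired smooth self-map $f\colon M\to M$, which is locally modelled on multiplication by $k$.

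To show that $f$ is expanding, I would equip $M$ with a Riemannian metric $g$ that agrees with the flat metric on the complement of a slightly smaller disk $D'\subset D$ and extends smoothly but otherwise arbitrarily across $D'$. On $M\setminus f^{-1}(D')$ the map $f$ coincides with $\mu_k$ under the identifications, so $|df(v)|_g=k\,|v|_g$. On $f^{-1}(D')$ the differential $df$ is still locally conjugate to multiplication by $k$, but measured in the possibly non-flat metric $g$; the bilipschitz distortion between $g|_{D'}$ and the flat metric on $D'$ is a finite constant $C$ depending only on the fixed bump and not on $k$, so choosing $k>C^2$---possible since $k$ ranges over the arithmetic progression $1+d\mathbb{Z}$---forces $|df(v)|_g>|v|_g$ pointwise on all of $M$.

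The main technical obstacle is precisely this metric analysis on the exotic region. The exotic smooth structure is concentrated in a \emph{fixed} compact region whose bilipschitz geometry is independent of $k$, while the arithmetic identity $k^n[\Sigma^n]=[\Sigma^n]$ permits $k$ to be taken arbitrarily large; it is the interplay between localization of the exotic structure and freedom in the size of $k$ that lets the flat-region expansion overwhelm any fixed distortion coming from the bump and so produces the required expanding endomorphism.
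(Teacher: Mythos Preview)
The paper does not actually prove this result; it is cited from \cite{FJ78} as motivation. So there is no ``paper's proof'' to compare against, but let me assess your argument on its own.

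Your construction of the self-covering $f = \mu_k \circ \phi : M \to M$ is correct and is indeed the starting point in \cite{FJ78}: choose $k>1$ with $k^n \equiv 1 \pmod{d}$, observe that the pulled-back structure $\widetilde M = \mu_k^{*} M$ is concordant to $M$ (one also needs $Wh(\mathbb{Z}^n)=0$ and the $s$-cobordism theorem to upgrade concordance to diffeomorphism, which you do not mention but is standard), and compose.

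The gap is in your expanding argument. You assert that ``on $M \setminus f^{-1}(D')$ the map $f$ coincides with $\mu_k$,'' which would require the diffeomorphism $\phi$ to be the identity there. But $\phi$ cannot be the identity outside any fixed compact set: the manifold $M$ carries a single exotic patch (inside $D$), while $\widetilde M$ carries $k^n$ exotic patches located at \emph{all} the $k$-torsion points, scattered throughout the torus. Any diffeomorphism $\phi\colon M \to \widetilde M$ must redistribute the exotic structure globally. Moreover $\widetilde M$ itself depends on $k$, so $\phi$ does too, and there is no a~priori bound on the $C^1$-distortion of $\phi$ that is uniform in $k$---indeed the naive construction of $\phi$ (slide and cancel $(k^n-1)$ copies of $\Sigma$) has distortion growing with $k$. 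Your final paragraph conflates the fixed bilipschitz geometry of the \emph{metric} $g$ on $D'$ (which is indeed independent of $k$) with the $k$-dependent distortion of the \emph{map} $\phi$; it is the latter, not the former, that enters $df = d\mu_k \circ d\phi$ and must be dominated by the factor $k$. So ``take $k$ large'' does not close the argument as written.

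Making this step rigorous is exactly the nontrivial content of \cite{FJ78}; it requires either a careful explicit construction of $\phi$ with controlled distortion, or an independent route to an expanding metric for the self-covering $f$.
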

\begin{remark}\rm{
 By Theorem \ref{expand.torus} and Theorem \ref{expand}, $T^n\#\Sigma^n$ is homeomorphic to an infranilmanifold.}
\end{remark}
\begin{theorem}\label{diff-con}\cite{Far96}
Let $M^n$ be a closed Riemannian manifold (with $n\geq 5$) which is a locally symmetric space whose sectional curvatures are either identically zero or all negative. Let $N^n$ be a smooth structure on $M^n$. If $N^n$ is diffeomorphic to $M^n$, then $N^n$ and $M^n$ represent the same element in $\mathcal{C}(M)$; i.e., they are topologically concordant.
\end{theorem}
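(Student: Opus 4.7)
The goal is to construct a diffeomorphism $g\colon N\to M$ together with a homeomorphism $\bar{H}\colon N\times [0,1]\to M\times [0,1]$ such that $\bar{H}|_{N\times 0}=f$ and $\bar{H}|_{N\times 1}=g$, where $f\colon N\to M$ is the underlying homeomorphism presenting $N$ as a smoothing of $M$. The plan splits into two stages: first produce a diffeomorphism $g\colon N\to M$ that is homotopic to $f$, then upgrade that homotopy to a level-preserving homeomorphism using parametric topological rigidity.

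For the first stage, start with any diffeomorphism $g_0\colon N\to M$ supplied by the hypothesis and form the self-homeomorphism $\phi = f\circ g_0^{-1}\colon M\to M$. When $M$ is flat, Bieberbach's Rigidity Theorem \ref{bieberbachrig} produces an affine diffeomorphism $\psi\colon M\to M$ homotopic to $\phi$. When $M$ is locally symmetric and negatively curved, its universal cover is irreducible (a local de Rham factor would force some mixed sectional curvatures to vanish, contradicting strict negativity), so Mostow's Rigidity Theorem \ref{mostow} applies and yields an isometry $\psi\colon M\to M$ homotopic to $\phi$. In either case $g := \psi\circ g_0\colon N\to M$ is a diffeomorphism and $f = \phi\circ g_0 \simeq \psi\circ g_0 = g$.

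For the second stage, choose a homotopy $\tilde{H}\colon N\times [0,1]\to M$ from $f$ to $g$ and define $H\colon N\times [0,1]\to M\times [0,1]$ by $H(x,t)=(\tilde{H}(x,t),t)$. Since each slice $\tilde{H}_t$ is homotopic to the homeomorphism $f$, each is a homotopy equivalence, so $H$ is a fiberwise homotopy equivalence over $[0,1]$ and hence a homotopy equivalence; moreover $H|_{N\times 0}=(f,0)$ and $H|_{N\times 1}=(g,1)$ are homeomorphisms. Now apply the Topological Rigidity Theorem \ref{gentoprigd} with $m=1$: since $n\geq 5$ the dimension condition $m+n\neq 3,4$ is satisfied, and $M$ is non-positively curved. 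The theorem yields a homeomorphism $\bar{H}\colon N\times [0,1]\to M\times [0,1]$ homotopic to $H$ rel boundary; in particular $\bar{H}|_{N\times 0}=f$ and $\bar{H}|_{N\times 1}=g$, so $\bar{H}$ is the required topological concordance between $f$ and $g$, witnessing $(N,f)\sim(M,\mathrm{id}_M)$ in $\mathcal{C}(M)$.

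The main obstacle is the second stage: a self-homeomorphism of $M$ homotopic to the identity need not a priori be topologically concordant to the identity, and it is precisely the strength of the parametric Borel-type rigidity for non-positively curved manifolds—the version of Theorem \ref{gentoprigd} with a disc factor, rather than the plain Borel conjecture—that allows the homotopy supplied by Mostow or Bieberbach to be promoted to a concordance in dimension $n+1$.
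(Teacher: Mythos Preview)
Your proof is correct and follows essentially the same approach as the paper. The paper does not prove Theorem~\ref{diff-con} directly (it is only cited), but the argument you give---use Mostow/Bieberbach rigidity to replace an arbitrary diffeomorphism by one homotopic to the structure homeomorphism $f$, then apply the Farrell--Jones Topological Rigidity Theorem~\ref{gentoprigd} with $m=1$ to upgrade the homotopy to a topological concordance---is precisely the method the paper uses to prove the closely related Theorem~\ref{Lemma 2.1} and Theorem~\ref{concord}. The only cosmetic difference is that the paper first treats the special case where the diffeomorphism is already homotopic to the identity and then reduces the general case to it via Mostow, whereas you compose with the isometry first; the content is identical.
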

We have the following result due to Browder \cite{Bro65} and Brumfiel \cite{Bru71}:
\begin{theorem}\label{brum}\cite{Far96}
Assume that $M^n$ is an oriented closed (connected) smooth manifold which is stably parallelizable and that $n\geq 5$. Then $f_{M}^*:\Theta_n \to [M^n ,Top/O]$ is monic.
\end{theorem}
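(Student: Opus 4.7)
The plan is to exploit the stably parallelizable hypothesis to produce, via Pontryagin--Thom, a stable section of the based degree-one map $\tilde f_M : M_+ \to \mathbb{S}^n$, and then to convert this stable splitting into a splitting of $f_M^*$ by using the infinite loop space structure of $Top/O$.

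First, I would embed $M^n$ into a large sphere $\mathbb{S}^{n+k}$ with trivial framed normal bundle. By the Whitney Embedding Theorem we may smoothly embed $M$ into $\mathbb{R}^{n+k}\subset \mathbb{S}^{n+k}$ for $k$ large, and since $TM\oplus \epsilon^{j}$ is trivial for some $j$, the normal bundle $\nu$ of such an embedding is stably trivial; enlarging $k$ if necessary, we may assume $\nu$ itself is trivial and fix a framing. Taking $\tilde f_M : M_+\to \mathbb{S}^n$ to be the extension by basepoint of a standard collapse $f_M : M \to \mathbb{D}^n/\partial\mathbb{D}^n = \mathbb{S}^n$ realizing the degree-one class, the Pontryagin--Thom construction produces a collapse map
$$c: \mathbb{S}^{n+k}\longrightarrow Th(\nu)=\Sigma^k M_+.$$

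Second, I would verify that $c$ is a stable section of $\tilde f_M$ by a direct cohomology computation. Under the Thom/suspension isomorphism $\tilde H^{n+k}(\Sigma^k M_+)\cong H^n(M)$, the Pontryagin--Thom map pulls the generator $\omega\in \tilde H^{n+k}(\Sigma^k M_+)$ corresponding to the fundamental cohomology class of $M$ back to the generator of $\tilde H^{n+k}(\mathbb{S}^{n+k})$, while $(\Sigma^k\tilde f_M)^*$ carries the generator of $\tilde H^{n+k}(\mathbb{S}^{n+k})$ to $\omega$ (as $f_M$ has degree one). Consequently the composite
$$\mathbb{S}^{n+k}\xrightarrow{\ c\ }\Sigma^k M_+\xrightarrow{\Sigma^k\tilde f_M}\mathbb{S}^{n+k}$$
acts as the identity on top cohomology, so by Hopf's degree theorem it is homotopic to $\mathrm{id}_{\mathbb{S}^{n+k}}$. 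Thus $\tilde f_M\circ c\simeq\mathrm{id}$ in the stable homotopy category.

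Third, I would transfer the stable splitting to $[-,Top/O]$. The space $Top/O$ is the homotopy fiber of the map $BO\to BTop$ between infinite loop spaces, hence itself an infinite loop space, and therefore $[-,Top/O]$ is (the zeroth term of) a representable generalized cohomology theory $\mathbf{E}^*(-)$ with $\mathbf{E}^0(X)=[X,Top/O]$. In particular $\mathbf{E}^0$ factors through the stable homotopy category, so the stable section from Step~2 induces
$$c^*\circ \tilde f_M^*=(\tilde f_M\circ c)^*=\mathrm{id}$$
on $\mathbf{E}^0(\mathbb{S}^n)$. Identifying $\mathbf{E}^0(\mathbb{S}^n)=\Theta_n$ and $\mathbf{E}^0(M_+)=[M,Top/O]$ via Theorem~\ref{kirby}, this yields that $f_M^*:\Theta_n\to [M^n,Top/O]$ is split injective, hence monic.

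The main obstacle is the degree-one verification of Step~2, where one must track orientations through Thom, suspension, and collapse isomorphisms simultaneously; stable parallelizability is the essential geometric input that makes this calculation available. Once that calculation is in place, Step~3 is formal given the by-now standard infinite-loop-space structure on $Top/O$ from smoothing/surgery theory.
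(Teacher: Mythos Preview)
Your proof is correct and follows essentially the same approach as the paper: both arguments embed $M$ with trivial normal bundle using stable parallelizability, obtain a Pontryagin--Thom collapse that stably splits the degree-one map $f_M$, and then invoke the infinite loop space structure of $Top/O$ (the paper writes $Top/O=\Omega^{n+1}(Y)$ explicitly) to transfer the stable splitting to a splitting of $f_M^*$. The only cosmetic difference is that the paper works with the reduced suspension $\Sigma^{n+1}M$ obtained from $M\times\mathbb{D}^{n+1}\subset\mathbb{S}^{2n+1}$, while you phrase things via $\Sigma^k M_+$ and the language of generalized cohomology; the content is the same.
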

\begin{proof}
Since $X\to [X,Top/O]$ is a homotopy functor on the category of topological spaces, Theorem \ref{brum} would follow immediately if $f_M : M^n\to \mathbb{S}^n$ is homotopically split. That is, if there exists a map $g:\mathbb{S}^n\to M^n$ such that $f_M\circ g$ is homotopic to $id_{\mathbb{S}^n}$. Unfortunately, $f_M$ is only homotopically split when $M$ is a homotopy sphere. But we can use the fact that $M$ is stably parallelizable to always stably split $f_M$ up to homotopy; to show that the $(n+1)$-fold suspension
\begin{center}
 $\Sigma^{n+1}(f_M):\Sigma^{n+1}M^n\to \mathbb{S}^{2n+1}$
\end{center}
of $f_M$ is homotopically split. This is done as follows. Note first that $M^n\times \mathbb{D}^{n+1}$ can be identified with a codimension-0 smooth submanifold of $\mathbb{S}^{2n+1}$ by using the Whitney embedding theorem together with the fact that $M$ is stably parallelizable. Let $\star$ be a base point in $M$. Then dual to the inclusion
\begin{center}
 $M^n\times \mathbb{D}^{n+1}\subset \mathbb{S}^{2n+1}$
\end{center}
is a quotient map $\phi:\mathbb{S}^{2n+1}\to \Sigma^{n+1}M^n$ realizing the $(n+1)$-fold reduced suspension $\Sigma^{n+1}M^n$ of $M^n$ as a quotient space of  $\mathbb{S}^{2n+1}$. Namely, $\phi$ collapses everything outside of $M^n\times Int(\mathbb{D}^{n+1})$ together with $\star\times \mathbb{D}^{n+1}$ to the base point of $\Sigma^{n+1}M^n$, and is a bijection between the remaining points. And it is easy to see that the composition $\Sigma^{n+1}(f_M)\circ \phi$ is homotopic to $id_{\mathbb{S}^n}$; i.e., $\Sigma^{n+1}(f_M)$ is homotopically split. But this is enough to show that $f^{*}_M$ is monic since $Top/O$ is an $\infty$-loop space \cite{BV73}; in particular, there exists a topological space $Y$ such that $\Omega^{n+1}(Y)=Top/O$. This fact is used to identify the functor
\begin{center}
 $X\to [X,Top/O]=[X,\Omega^{n+1}(Y)]$
\end{center}
with the functor $X\to [\Sigma^{n+1}M^n, Y]$. Consequently,
\begin{center}
 $f_{M}^*:[\mathbb{S}^n, Top/O]\to [M^n,Top/O]$
\end{center}
is identified with 
\begin{center}
 $(\Sigma^{n+1}(f_M))^*:[\mathbb{S}^{2n+1}, Y]\to [\Sigma^{n+1}M^n, Y]$.
\end{center}
But, this last homomorphism is monic since $\Sigma^{n+1}(f_M)$ is homotopically split.
\end{proof}
The following result is an immediate consequence of Theorem \ref{diff-con} and Theorem \ref{brum}; it will be used to construct exotic smoothings of some symmetric spaces.
\begin{theorem}\label{corr.brum}\cite{Far96}
Let $M^n$ be a closed,oriented (connected) stably parallelizable Riemannian locally symmetric space (with $n\geq 5$) whose sectional curvatures are either identically zero or all negative. Let $\Sigma^n$ be an exotic sphere, then $M\#\Sigma^n$ is not diffeomorphic to  $M^n$. 
\end{theorem}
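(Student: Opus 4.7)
The plan is to argue by contradiction, combining the two preceding theorems (\ref{diff-con} and \ref{brum}) through the identification of the concordance set $\mathcal{C}(M)$ with $[M^n,Top/O]$ provided by Theorem \ref{kirby}. The heart of the matter is that diffeomorphism and concordance are linked under the locally symmetric hypothesis, while concordance and the homotopy-theoretic invariant are linked by Kirby--Siebenmann.

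First I would suppose, toward a contradiction, that $M^n\#\Sigma^n$ is diffeomorphic to $M^n$. Since $M$ is a closed locally symmetric Riemannian manifold (of dimension $n\geq 5$) whose sectional curvatures are either identically zero or all negative, Theorem \ref{diff-con} applies and tells us that $M^n\#\Sigma^n$ and $M^n$ represent the same element of the concordance set $\mathcal{C}(M)$; equivalently,
\[
[M^n\#\Sigma^n]\;=\;[M^n\#\mathbb{S}^n]\quad\text{in }\mathcal{C}(M).
\]
Next I would transport this equality to $[M^n,Top/O]$ using the bijection of Theorem \ref{kirby}. Under the identifications $\Theta_n=[\mathbb{S}^n,Top/O]$ and $\mathcal{C}(M)=[M^n,Top/O]$, the paragraph preceding Theorem \ref{whigro} in the excerpt identifies the map $[\Sigma^n]\mapsto [M^n\#\Sigma^n]$ with the pullback $f_M^\ast$ along a degree-one map $f_M\colon M^n\to \mathbb{S}^n$. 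Thus the concordance equality above translates to
\[
f_M^\ast([\Sigma^n])\;=\;f_M^\ast([\mathbb{S}^n])\;=\;0.
\]

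Finally, since $M^n$ is stably parallelizable, oriented, closed and of dimension $n\geq 5$, Theorem \ref{brum} (Browder--Brumfiel) asserts that $f_M^\ast\colon\Theta_n\to [M^n,Top/O]$ is injective. Injectivity forces $[\Sigma^n]=[\mathbb{S}^n]$ in $\Theta_n$, contradicting the hypothesis that $\Sigma^n$ is exotic. This completes the argument.

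The proof is essentially assembly rather than invention: the main work has already been done in Theorems \ref{diff-con} and \ref{brum}. The only step requiring care is the bookkeeping, making explicit that the element of $\mathcal{C}(M)$ attached to $M\#\Sigma^n$ by the concordance class of the obvious self-homeomorphism $h\colon M\#\Sigma^n\to M$ is the same as the image $f_M^\ast([\Sigma^n])$ under Kirby--Siebenmann; this identification is precisely what the paragraph following Theorem \ref{kirby} in the excerpt records, so no new verification is needed.
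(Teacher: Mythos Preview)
Your argument is correct and is precisely the assembly the paper has in mind: the theorem is stated there as ``an immediate consequence of Theorem \ref{diff-con} and Theorem \ref{brum}'', and your contradiction argument via $f_M^\ast$ is exactly how those two results combine.
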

\begin{remark}\rm{
\indent
 \begin{itemize}
 \item [\bf{1.}] Since $T^n$ is a stably parallelizable flat Riemannian manifold, by Theorem \ref{corr.brum}, $T^n\#\Sigma^n$ is not diffeomorphic to $T^n$ where $\Sigma^n$ is an exotic sphere.
 \item[\bf{2.}] The Malcev's Rigidity Theorem \cite{Mal51}, shows that any closed infranilmanifold with abelian fundamental group  must be Riemannian flat. And  Bieberbach's Rigidity Theorem \ref{bieberbachrig} shows that any such manifold is diffeomorphic to a torus. This implies that $T^n\#\Sigma^n$ is not diffeomorphic to any infranilmanifold.
 \end{itemize}}
\end{remark}
By Theorem \ref{recomhy}, Theorem \ref{sullivan} and Theorem \ref{corr.brum}, we have the following exotic smoothings of Farrell-Jones \cite{FJ89a}:
\begin{theorem}
Let $\Sigma^n$ be an exotic sphere $(n\geq 5)$. Then there exists a closed real hyperbolic manifold $M^n$ such that $M^n\#\Sigma^n$ and $M^n$ are not diffeomorphic.
\end{theorem}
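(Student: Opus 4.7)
The plan is to chain together Theorems~\ref{recomhy}, \ref{sullivan}, and \ref{corr.brum}. First I would invoke Borel's Theorem~\ref{recomhy} to obtain a closed real hyperbolic manifold $N^n$ in the given dimension $n\geq 5$. Next, by Sullivan's Theorem~\ref{sullivan}, $N^n$ admits a finite sheeted cover $M^n$ that is stably parallelizable; since a cover of a real hyperbolic manifold inherits the locally symmetric Riemannian structure of constant negative sectional curvature, $M^n$ is itself a closed real hyperbolic manifold, and in particular it is a locally symmetric space all of whose sectional curvatures are negative.

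The second step is to arrange the remaining hypotheses of Theorem~\ref{corr.brum}, namely connectedness and orientability. Connectedness is ensured by replacing $M^n$ with any one of its connected components. If the resulting manifold is not orientable, I would pass to its orientation double cover; this cover is still a closed real hyperbolic manifold (hence locally symmetric with all sectional curvatures strictly negative) and remains stably parallelizable, since for any covering $\pi\colon \widetilde M\to M$ one has $T\widetilde M\oplus \varepsilon^k \cong \pi^{*}(TM\oplus \varepsilon^k)$, so triviality of $TM\oplus \varepsilon^k$ is inherited.

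With these properties in place, Theorem~\ref{corr.brum} applies directly to $M^n$: it is a closed, connected, oriented, stably parallelizable Riemannian locally symmetric space of dimension $n\geq 5$ with all sectional curvatures negative. For any exotic sphere $\Sigma^n$ we therefore conclude that $M^n\#\Sigma^n$ is not diffeomorphic to $M^n$, which is exactly the claim.

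The proof really is an assembly: all the substantive work has been packaged into Theorem~\ref{corr.brum} (which rests on Theorem~\ref{diff-con}, using the Topological Rigidity Theorem~\ref{gentoprigd}, together with the Browder--Brumfiel monomorphism Theorem~\ref{brum}) and into Sullivan's deep existence of a stably parallelizable cover. The only obstacle I anticipate is the cosmetic bookkeeping of producing a connected, oriented representative, which is handled by a further finite cover that preserves the locally symmetric real hyperbolic geometry and stable parallelizability.
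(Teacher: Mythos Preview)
Your proposal is correct and follows exactly the approach indicated in the paper: the theorem is obtained by combining Borel's existence result (Theorem~\ref{recomhy}), Sullivan's stably parallelizable cover (Theorem~\ref{sullivan}), and Theorem~\ref{corr.brum}. Your additional bookkeeping about connectedness and orientability is harmless and only makes explicit what the paper leaves implicit.
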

In fact, Farrell and Jones proved the following results \cite{FJ89a}:\\

Let $\Sigma_{1}$, $\Sigma_{2}$ , ..., $\Sigma_{k}$ be a complete list of inequivalent exotic spheres of dimension $n$, where two exotic spheres are equivalent provided they are diffeomorphic, but not necessarily via an orientation preserving diffeomorphism. (The standard sphere $\mathbb{S}^n$ is not included in this list.) The set of equivalence classes of homotopy $n$-spheres under this relation is denoted by $\Theta_{n}^{+}$.
\begin{theorem}\label{hype.smooth}
Let  $M^n$ be a closed real hyperbolic manifold with dimension $n\geq 5$.
Given any real number  $\delta >0$, there is a finite sheeted covering space $\widehat{M}$ of $M$ which satisfies the following properties:
\begin{itemize}
\item[(i)] No two of the manifolds $\widehat{M}$, $\widehat{M}\#\Sigma_{1}$, $\widehat{M}\#\Sigma_{2}$, ... , $\widehat{M}\#\Sigma_{k}$ are diffeomorphic, but they are all homeomorphic to one another.
\item[(ii)] Each of the manifolds $\widehat{M}\#\Sigma_{1}$, $\widehat{M}\#\Sigma_{2}$ , ... , $\widehat{M}\#\Sigma_{k}$  supports a Riemannian metric, all of whose sectional curvature values lie in the interval $(-1 -\delta , -1 +\delta)$.
\end{itemize}
\end{theorem}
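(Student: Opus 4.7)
The plan has three parts: (a) choose $\widehat{M}$ to be a stably parallelizable finite sheeted cover of $M$ of arbitrarily large injectivity radius; (b) for part (i), distinguish the smooth structures via Mostow's rigidity theorem combined with the monomorphism of Theorem \ref{brum}; (c) for part (ii), use a Farrell--Jones geometric surgery inside a long geodesic tube to produce the $\delta$-pinched negatively curved metric on each $\widehat{M}\#\Sigma_i$.

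By Sullivan's Theorem \ref{sullivan}, $M$ admits a stably parallelizable finite sheeted cover $M_1$, and stable parallelizability passes to further coverings. Because $\pi_1(M_1)$ is residually finite by Theorem \ref{malcev}, and because the nontrivial elements of $\pi_1(M_1)$ whose translation length on $\mathbb{R}\mathbf{H}^n$ lies below any preassigned bound form only finitely many conjugacy classes, one can intersect finitely many finite-index normal subgroups of $\pi_1(M_1)$ to obtain a finite sheeted cover $\widehat{M}\to M_1$ whose injectivity radius exceeds any prescribed $R=R(\delta)$; the resulting $\widehat{M}$ is closed, real hyperbolic, and stably parallelizable.

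For part (i), let $h_i:\widehat{M}\#\Sigma_i\to\widehat{M}$ denote the canonical collapse homeomorphism, with the convention $\Sigma_0=\mathbb{S}^n$ and $h_0=\mathrm{id}$. Suppose for contradiction that a diffeomorphism $\psi:\widehat{M}\#\Sigma_i\to\widehat{M}\#\Sigma_j$ exists for some $i\neq j$. The self-homeomorphism $h_j\circ\psi\circ h_i^{-1}$ of $\widehat{M}$ is homotopic by Mostow's Theorem \ref{mostow} to an isometry $\phi\in\mathrm{Isom}(\widehat{M})$, and by the topological rigidity Theorem \ref{gentoprigd} the reference homeomorphism $\phi^{-1}\circ h_j\circ\psi:\widehat{M}\#\Sigma_i\to\widehat{M}$ is topologically concordant to $h_i$. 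Hence in $\mathcal{C}(\widehat{M})$ one has the chain of equalities
\begin{equation*}
[\widehat{M}\#\Sigma_i,\, h_i] \;=\; [\widehat{M}\#\Sigma_i,\, \phi^{-1}\circ h_j\circ\psi] \;=\; [\widehat{M}\#\Sigma_j,\, \phi^{-1}\circ h_j],
\end{equation*}
the second via the diffeomorphism $\psi$ itself. Translating under the Kirby--Siebenmann identification of Theorem \ref{kirby} gives $f_{\widehat{M}}^*[\Sigma_i]=(f_{\widehat{M}}\circ\phi^{-1})^*[\Sigma_j]=f_{\widehat{M}}^*(\pm[\Sigma_j])$, the last equality because $\phi$ has degree $\pm 1$ and any two equidegree maps from the $n$-complex $\widehat{M}$ into $\mathbb{S}^n$ are homotopic. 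Theorem \ref{brum} then forces $[\Sigma_i]=\pm[\Sigma_j]$ in $\Theta_n$, equivalently $[\Sigma_i]=[\Sigma_j]$ in $\Theta_n^+$, contradicting $i\neq j$.

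For part (ii), I follow Farrell and Jones: inside a long tubular neighborhood $T\cong D^{n-1}\times\mathbb{S}^1$ of a closed geodesic of large length in $\widehat{M}$ (whose existence is guaranteed by the very large injectivity radius), remove $T$ and reglue a modified version $T_i$ having the same boundary but obtained from $T$ by a \emph{twist} along the $\mathbb{S}^1$-factor whose net effect on the smooth structure of the glued manifold is the connected sum with $\Sigma_i$. By spreading this twist out over a very long interval along $\mathbb{S}^1$, the corresponding metric on $\widehat{M}\#\Sigma_i$ stays within the sectional-curvature bounds $(-1-\delta,-1+\delta)$. The main obstacle is exactly this quantitative geometric step: each of the $k$ twists must be executed with a curvature perturbation tending to zero as the twist is spread, and the spreading length required, uniformly in $i\in\{1,\dots,k\}$ and as a function of $\delta$, is what determines the injectivity radius $R(\delta)$ prearranged in the setup via residual finiteness.
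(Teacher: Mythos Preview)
Your parts (a) and (b) are correct and match the paper's argument essentially line for line: Sullivan's theorem plus Malcev's residual finiteness produce the stably parallelizable cover of large injectivity radius, and the combination of Mostow rigidity, topological rigidity (Theorem~\ref{gentoprigd}), Kirby--Siebenmann, and the Browder--Brumfiel monomorphism (Theorem~\ref{brum}) distinguishes the smooth structures exactly as in the paper's Theorems~\ref{Lemma 2.1}--\ref{Addendum 2.3} and the proof of Theorem~\ref{hype.exto}.

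Part (c), however, contains a genuine error: you have conflated two distinct Farrell--Jones constructions. The Dehn surgery you describe---removing a tube $\mathbb{S}^1\times\mathbb{D}^{n-1}$ around a closed geodesic and regluing by a twist---is indexed by a diffeomorphism $\phi:\mathbb{S}^{n-2}\to\mathbb{S}^{n-2}$, hence by an element of $\Theta_{n-1}$, and the resulting manifold $M_{f,\phi}$ is in general \emph{not} of the form $\widehat{M}\#\Sigma$ for any $\Sigma\in\Theta_n$. That construction is what the paper uses later (see Theorem~\ref{Theorem 0.2} and the non-compact case, Theorem~\ref{0.1}) precisely when $\Theta_{n-1}\neq 0$; it does not produce the connected sums required here. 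There is no ``twist along the $\mathbb{S}^1$-factor'' that realizes an arbitrary element of $\Theta_n$.

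The paper's actual construction for Theorem~\ref{hype.metric} is quite different: one works in a large metric \emph{ball} of radius $3\alpha$ about a \emph{point} $p$ (this is what the injectivity radius guarantees), identifies it via the exponential map with $\mathbb{S}^{n-1}\times(0,3)$ equipped with the warped metric $\overline{A}=\sinh^2(\alpha t)\,A_1+\alpha^2 A_2$, and performs the connected sum by replacing the annular shell $\mathbb{S}^{n-1}\times[1,2]$ with a reglued copy using the clutching diffeomorphism $f_i:\mathbb{S}^{n-1}\to\mathbb{S}^{n-1}$ that defines $\Sigma_i\in\Theta_n$. The pinching estimate is then the content of Lemma~\ref{pinched}: for the analogous warped metric $\overline{B}$ built from any metric $B$ on the shell satisfying Property~3, one has $K_{\overline{B}}(P)\to -1$ uniformly as $\alpha\to\infty$. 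Your sketch would need to replace the geodesic-tube picture entirely with this ball/annulus construction and invoke (or reprove) that curvature lemma.
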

\begin{remark}\rm{
\indent
\begin{itemize}
\item[\bf{1.}] This result is startling for a number of reasons. First, note that the manifolds are obviously homeomorphic. Second, by Mostow Rigidity Theorem \ref{mostow}, $\widehat{M}^n\#\Sigma^n$ cannot admit a metric of constant negative curvature, or else it would be isometric, hence diffeomorphic to $\widehat{M}^n$. Thus these manifolds can be added to the short list of closed manifolds which have (pinched) negative curvature and are not diffeomorphic to a locally symmetric space. Third, the examples of Farrell and Jones given by Theorem \ref{hype.smooth} provide counterexamples to the Lawson-Yau Conjecture.
\item[\bf{2.}] Theorem \ref{hype.smooth} gives interesting counterexamples for smooth analogue of Borel's Conjecture \ref{borel}.
\end{itemize}}
\end{remark}
The proof of Theorem \ref{hype.smooth} depends on the following two theorem:
\begin{theorem}\label{hype.exto}
Let  $M^n$ be a closed real hyperbolic manifold with dimension $n\geq 5$.
Suppose that the finite sheeted covering space $\widehat{M}$ of $M$ is stably parallelizable. Then no two of the manifolds $\widehat{M}$, $\widehat{M}\#\Sigma_{1}$, $\widehat{M}\#\Sigma_{2}$ , ..., $\widehat{M}\#\Sigma_{k}$ are diffeomorphic.
\end{theorem}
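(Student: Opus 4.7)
The plan is to reduce the statement to Theorem \ref{corr.brum} by a connected-sum cancellation trick in the group $\Theta_n$. First I observe that $\widehat{M}$, being a finite sheeted cover of the closed real hyperbolic manifold $M$, is itself a closed real hyperbolic manifold of dimension $n\geq 5$; it is thus a stably parallelizable, oriented, locally symmetric Riemannian manifold with all sectional curvatures strictly negative, so it fulfils every hypothesis of Theorem \ref{corr.brum}.

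Now suppose for contradiction that two of the manifolds in the list $\widehat{M}, \widehat{M}\#\Sigma_1, \ldots, \widehat{M}\#\Sigma_k$ are diffeomorphic. I write this uniformly as $\widehat{M}\#\Sigma_i \cong_{\mathrm{diff}} \widehat{M}\#\Sigma_j$, where I allow either $\Sigma_i$ or $\Sigma_j$ to be $\mathbb{S}^n$ (so as to subsume the case where one of the two manifolds is $\widehat{M}$ itself). I form the connected sum of both sides with $-\Sigma_j$, the oppositely-oriented copy of $\Sigma_j$. Using that (i) connected sum with a fixed connected manifold preserves diffeomorphism type, (ii) the connected sum operation is associative up to diffeomorphism, and (iii) $\Sigma_j \#(-\Sigma_j) \cong_{\mathrm{diff}} \mathbb{S}^n$ (the inverse property making $\Theta_n$ a group), I obtain
\[
\widehat{M}\#\bigl(\Sigma_i \#(-\Sigma_j)\bigr) \;\cong_{\mathrm{diff}}\; \widehat{M}.
\]
Applying Theorem \ref{corr.brum} to $\widehat{M}$ then forces $\Sigma_i \#(-\Sigma_j)$ to be the standard sphere, that is, $[\Sigma_i]=[\Sigma_j]$ in $\Theta_n$. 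In particular $\Sigma_i$ and $\Sigma_j$ are diffeomorphic, hence equivalent in $\Theta_n^+$. This contradicts either the chosen inequivalence of $\Sigma_i,\Sigma_j$ in $\Theta_n^+$, or (in the boundary case $\Sigma_j=\mathbb{S}^n$) the hypothesis that none of $\Sigma_1,\ldots,\Sigma_k$ is standard.

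All of the substantive geometric and topological content of the statement is already absorbed into Theorem \ref{corr.brum}, which in turn rests on the monicity of $f_M^{\ast}$ proved in Theorem \ref{brum} (where stable parallelizability of $\widehat{M}$ is essential, via the Whitney embedding and the $\infty$-loop-space structure of $\mathrm{Top}/O$) together with the topological-concordance conclusion of Theorem \ref{diff-con} (which in turn derives from the Farrell--Jones topological rigidity machinery for non-positively curved manifolds). Granting those inputs, the only minor point requiring attention is the well-definedness of connected-sum cancellation, which is routine for closed connected manifolds. Thus the expected obstacle here is not the proof of Theorem \ref{hype.exto} itself, but the prior verification that $\widehat{M}$ can be chosen to be stably parallelizable --- guaranteed by Sullivan's Theorem \ref{sullivan}, which is invoked separately to produce the cover required in Theorem \ref{hype.smooth}.
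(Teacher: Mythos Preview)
Your reduction to Theorem \ref{corr.brum} via connected-sum cancellation is a clean idea and is morally equivalent to what the paper does. However, there is a genuine gap in your step (i). The assertion ``connected sum with a fixed connected manifold preserves diffeomorphism type'' is not valid as stated for \emph{oriented} connected sums when the assumed diffeomorphism $\widehat{M}\#\Sigma_i \to \widehat{M}\#\Sigma_j$ is orientation-reversing. In that case, taking $\#(-\Sigma_j)$ on both sides does \emph{not} yield $\widehat{M}\#(\Sigma_i\#(-\Sigma_j))\cong \widehat{M}$; rather one obtains $\widehat{M}\#(\Sigma_i\#\Sigma_j)\cong \widehat{M}$ (the sign on the attached sphere flips). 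Thus from Theorem \ref{corr.brum} you can only conclude $[\Sigma_i]=\pm[\Sigma_j]$ in $\Theta_n$, not $[\Sigma_i]=[\Sigma_j]$. Fortunately this is exactly enough: the list $\Sigma_1,\dots,\Sigma_k$ consists of representatives of distinct classes in $\Theta_n^{+}$, so $[\Sigma_i]=\pm[\Sigma_j]$ already forces $\Sigma_i$ and $\Sigma_j$ to be equivalent in $\Theta_n^{+}$, a contradiction. You should make this two-case split explicit.

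For comparison, the paper's own proof does not use the cancellation trick; instead it invokes Theorem \ref{Addendum 2.3} (which is where the orientation dichotomy is handled, via Mostow rigidity plus Theorem \ref{gentoprigd}) to pass directly from ``diffeomorphic'' to ``concordant to $\widehat{M}\#(\pm\Sigma_j)$'', and then appeals to the monicity of $f_{\widehat{M}}^{*}$ (Theorem \ref{brum}) to conclude $[\Sigma_i]=\pm[\Sigma_j]$ in $\Theta_n$. Your route packages the same two ingredients inside Theorem \ref{corr.brum} and replaces the concordance bookkeeping with a group-theoretic cancellation in $\Theta_n$; once the orientation issue is patched, the two arguments are essentially the same.
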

\begin{theorem}\label{hype.metric}
Given a real number $\delta >0$, there is a real number $\alpha > 0$ which depends only on $n= \dim M$ and $\delta$ such that the following is true. Suppose that the finite sheeted covering space $\widehat{M}$ of $M$ has radius of injectivity greater than $3 \alpha$ at some point $p\in\widehat{M}$. Then each of the manifolds $\widehat{M}\#\Sigma_{1}$, $\widehat{M}\#\Sigma_{2}$,...., $\widehat{M}\#\Sigma_{k}$ supports a Riemannian metric, all of whose sectional curvature values lie in the interval $(-1 -\delta , -1 +\delta)$.
\end{theorem}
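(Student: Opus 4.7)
The plan is to build, for each $i$, a Riemannian metric on $\widehat{M}\#\Sigma_i$ that agrees with the hyperbolic metric of $\widehat{M}$ outside the geodesic ball $B(p,3\alpha)$, realizes the connected sum with $\Sigma_i$ by a carefully controlled ``twist'' supported inside this ball, and has sectional curvatures uniformly in $(-1-\delta,-1+\delta)$. Since by Kervaire--Milnor there are only finitely many exotic $n$-spheres, it will suffice to choose $\alpha$ large enough to handle each of the finitely many $\Sigma_i$ and take the maximum.

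First I would fix the local model. The injectivity radius hypothesis shows that $B(p,3\alpha)\subset\widehat{M}$ is isometric to the open ball of radius $3\alpha$ in $\mathbb{R}\mathbf{H}^n$, and in geodesic polar coordinates the hyperbolic metric has the warped-product form $g_{\mathrm{hyp}}=dr^2+\sinh^2(r)\,g_{\mathbb{S}^{n-1}}$. Using the identification $\pi_0\mathrm{Diff}^{+}(\mathbb{S}^{n-1})\cong\Theta_n$ for $n\geq 6$ (treating the low-dimensional cases by hand), I pick a representative $\phi_i\in\mathrm{Diff}^{+}(\mathbb{S}^{n-1})$ of the class $[\Sigma_i]$ together with a smooth isotopy $\phi_{i,t}$ from $\mathrm{id}$ to $\phi_i$. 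The manifold $\widehat{M}\#\Sigma_i$ is then diffeomorphic to what one obtains from $\widehat{M}$ by cutting along the geodesic sphere $\partial B(p,\alpha)$ and regluing via $\phi_i$, since the resulting twisted smooth structure is precisely the twisted-sphere (Gromoll--Meyer) description of the connected sum with $\Sigma_i$.

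Next I construct the metric. Keep $g_{\mathrm{hyp}}$ on the outer region $r\geq 2\alpha$ and on the inner ball $r\leq \alpha$, the latter identified with a standard hyperbolic $\alpha$-ball through the regluing $\phi_i$. On the annular shell $\alpha\leq r\leq 2\alpha$, replace the round spherical factor by the slowly varying family $\phi_{i,\rho(r)}^{*}g_{\mathbb{S}^{n-1}}$, where $\rho:[\alpha,2\alpha]\to[0,1]$ is a smooth cutoff equal to $0$ near $\alpha$ and to $1$ near $2\alpha$ whose derivatives are spread out over the full shell of width $\alpha$. The resulting metric is globally smooth, matches $g_{\mathrm{hyp}}$ outside $B(p,2\alpha)$, and the underlying smooth manifold is precisely $\widehat{M}\#\Sigma_i$ by the choice of $\phi_i$.

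The main obstacle is curvature control. Writing the metric on the shell as $dr^2+\sinh^2(r)h_r$ with $h_r=\phi_{i,\rho(r)}^{*}g_{\mathbb{S}^{n-1}}$, a standard warped-product computation expresses the sectional curvatures in terms of $\sinh,\cosh$ and their derivatives, the intrinsic curvatures of $h_r$, and the quantities $\|\dot h_r\|/\sinh^2(r)$ and $\|\ddot h_r\|/\sinh^2(r)$. Because each $h_r$ is a pullback of the round metric, its intrinsic curvatures remain identically $1$ and its norm is uniformly comparable to the round norm. Its derivatives in $r$ satisfy $\|\dot h_r\|\lesssim|\dot\rho|\lesssim 1/\alpha$ and $\|\ddot h_r\|\lesssim|\ddot\rho|+|\dot\rho|^2\lesssim 1/\alpha^2$, with constants depending only on the finitely many isotopies $\{\phi_{i,t}\}$, i.e.\ only on $n$. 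The warping factor $\sinh^2(r)\geq\sinh^2(\alpha)$ then damps these contributions by an exponentially large amount, so for $\alpha$ large enough, depending only on $n$ and $\delta$, every sectional curvature of the new metric differs from $-1$ by less than $\delta$. Taking the maximum of the required values of $\alpha$ as $i$ runs over $1,\ldots,k$ produces the single constant $\alpha=\alpha(n,\delta)$ asserted in the theorem.
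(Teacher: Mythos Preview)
Your strategy coincides with the paper's: build $\widehat{M}\#\Sigma_i$ by cutting and regluing along a geodesic sphere via a clutching diffeomorphism $\phi_i$, and interpolate the metric on an annular shell of width $\sim\alpha$ using a warped product $dr^2+\sinh^2(r)\,h_r$, where $h_r$ runs from the round metric to its $\phi_i$-pullback. The paper packages the curvature estimate as a standalone result (Lemma~\ref{pinched}) valid for an \emph{arbitrary} fixed family $B_1(t)$ on $\mathbb{S}^{n-1}$, proved by a normal--coordinate computation after the rescaling $\bar t=\alpha t$; your specific choice $h_r=\phi_{i,\rho(r)}^*g_{\mathbb{S}^{n-1}}$ is a special case, with the pleasant bonus that each $h_r$ has intrinsic sectional curvature exactly $1$.

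There is, however, an error in your curvature reasoning. You assert that the deviations from $-1$ are governed by $\|\dot h_r\|/\sinh^2(r)$ and $\|\ddot h_r\|/\sinh^2(r)$, hence exponentially damped by the warping factor. No such division occurs. If $L_r$ denotes the endomorphism with $h_r(L_rX,Y)=\dot h_r(X,Y)$, then the shape operator of each slice is $\coth(r)\,\mathrm{Id}+\tfrac12 L_r$, and the Riccati, Gauss and Codazzi equations give curvature corrections of order $\coth(r)\|L_r\|_{h_r}$, $\|\dot L_r\|_{h_r}$, $\|L_r\|_{h_r}^2$ and $\|\nabla^{h_r}\dot h_r\|_{h_r}$, with \emph{no} factor of $\sinh^{-2}(r)$. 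The argument is nonetheless rescued by the estimate you already recorded, namely $\|\dot h_r\|_{h_r}=O(|\dot\rho|)=O(1/\alpha)$ and $\|\ddot h_r\|_{h_r}=O(1/\alpha^2)$, which makes all of these corrections $O(1/\alpha)$. So the decay is only polynomial in $\alpha$, not exponential, but that still suffices: replace the ``exponential damping'' sentence by this observation and the proof is complete. (A minor bookkeeping point: with the cut placed at $r=\alpha$ and outer coordinates on the shell, you need $h_\alpha=(\phi_i^{-1})^*g_{\mathbb{S}^{n-1}}$ and $h_{2\alpha}=g_{\mathbb{S}^{n-1}}$, i.e.\ the opposite endpoint convention for $\rho$.)
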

\paragraph{Proof of Theorem \ref{hype.smooth} assuming Theorem \ref{hype.exto} and Theorem \ref{hype.metric} :}
First, we use Theorem \ref{sullivan} that there is a finite covering space $\tilde{M}$ of $M$ such that $\tilde{M}$ is stably parallelizable. Let $g_1$, $g_2$, ... ,$g_x$ be a list of all the closed geodesics in $\tilde{M}$ which have length less than or equal to $6\alpha$, where $\alpha$ comes from Theorem  \ref{hype.metric}. Choose a point $q\in \tilde{M}$ and elements $\beta_1$, $\beta_2$, ... ,$\beta_x$ $\in \pi_{1}(\tilde{M} ,q)$ such that each $\beta_i$ is represented by a map $f_{i}:\mathbb{S}^{1}\to \tilde{M}$ which is freely homotopic to $g_i$. By Theorem \ref{malcev}, $\pi_{1}(\tilde{M} ,q)$ is residually finite and so there is a homomorphism $h:\pi_{1}(\tilde{M} ,q)\to G$ onto a finite group $G$ such that $h(\beta_i)\neq 1$ for all indices $i$.
Let $\widehat{M}$ denote the finite covering space of $\tilde{M}$ corresponding to the kernel of
$h$, which is a subgroup of $\pi_{}(\tilde{M} ,q)$. Note that  $\widehat{M}$ is a finite sheeted covering
space of $M$ which is stably parallelizable and which has radius of injectivity
greater than $3\alpha$ at each of its points. Thus, we may apply Theorem \ref{hype.exto} and Theorem \ref{hype.metric} to $\widehat{M}$ to conclude the proof of this theorem. Note that in (a) of this theorem, we use the general fact that the topological type of a manifold is not changed by forming a connected sum of it with an exotic sphere.\\

We shall now prove Theorems \ref{hype.exto}. For that we need the following lemma :
\begin{theorem}\label{Lemma 2.1}
Let  $M^n$ be a closed real hyperbolic manifold with dimension $n\geq 5$ and let $\widehat{M}$ be a finite sheeted  orientable covering space  of $M$. Set $N =\widehat{M}$, $N_{1}=\widehat{M}\#\Sigma_{i}$, and $N_{2}=\widehat{M}\#\Sigma_{j}$. Then  $N_{1}$ is concordant to  $N_{2}$ if and only if $N_{1}$ is diffeomorphic to  $N_{2}$ via an orientation-preserving diffeomorphism.
\end{theorem}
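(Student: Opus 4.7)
The plan is to treat the two implications separately; one direction will be a bookkeeping consequence of the definition, while the other will combine Mostow Rigidity with the Farrell--Jones Topological Rigidity Theorem.

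For the forward direction, I will start from the assumption that $N_{1}$ and $N_{2}$ are concordant. By definition this yields a diffeomorphism $g:N_{1}\to N_{2}$ together with a homeomorphism $F:N_{1}\times I\to\widehat{M}\times I$ restricting to $\phi_{1}$ and $\phi_{2}\circ g$ on the two ends, where $\phi_{k}:N_{k}\to\widehat{M}$ denotes the ``identity off a ball'' homeomorphism built into the connected-sum construction. The restriction of $F$ to the boundary exhibits $\phi_{1}\simeq\phi_{2}\circ g$, and since $\phi_{1}$ and $\phi_{2}$ are orientation-preserving by construction, this forces $g$ to have degree $+1$ and hence to be orientation-preserving.

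For the backward direction, I will begin with an orientation-preserving diffeomorphism $g:N_{1}\to N_{2}$ and form the orientation-preserving self-homeomorphism $\bar g:=\phi_{2}\circ g\circ\phi_{1}^{-1}$ of $\widehat{M}$. Because $\widehat{M}$ is closed real hyperbolic with $\dim\widehat{M}\geq 5$, Mostow Rigidity (Theorem~\ref{mostow}) will yield an orientation-preserving isometry $\psi$ of $\widehat{M}$ with $\bar g\simeq\psi$. I will then lift $\psi$, which is in particular a smooth diffeomorphism of $\widehat{M}$, to a self-diffeomorphism $\tilde\psi:N_{2}\to N_{2}$ satisfying $\phi_{2}\circ\tilde\psi\simeq\psi\circ\phi_{2}$; concretely, I will transport the connected-sum ball from its basepoint $p$ to $\psi(p)$ along $\psi$ and identify the resulting manifold with $N_{2}$ using that the connected sum is independent of the chosen basepoint. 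Setting $g':=\tilde\psi^{-1}\circ g$, still an orientation-preserving diffeomorphism $N_{1}\to N_{2}$, I will then compute
$$\phi_{2}\circ g'\;\simeq\;\psi^{-1}\circ\phi_{2}\circ g\;=\;\psi^{-1}\circ\bar g\circ\phi_{1}\;\simeq\;\psi^{-1}\circ\psi\circ\phi_{1}\;=\;\phi_{1},$$
so that $\phi_{2}\circ g'$ and $\phi_{1}$ are homotopic as maps $N_{1}\to\widehat{M}$.

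To conclude, I will promote this homotopy to a topological concordance as follows. Picking a homotopy $h_{t}$ from $\phi_{1}$ to $\phi_{2}\circ g'$, I define $H:N_{1}\times I\to\widehat{M}\times I$ by $H(x,t)=(h_{t}(x),t)$; this $H$ is a homotopy equivalence, since both $N_{1}\times I$ and $\widehat{M}\times I$ deformation retract onto their $t=0$ slices, and it restricts to a homeomorphism on the boundary $N_{1}\times\{0,1\}$. Since $\dim\widehat{M}+1\geq 6\neq 3,4$, the Topological Rigidity Theorem~\ref{gentoprigd} applied with $m=1$ will produce a homotopy of $H$ rel boundary to a homeomorphism $\bar H:N_{1}\times I\to\widehat{M}\times I$, which is exactly the topological concordance showing that $(N_{1},\phi_{1})$ and $(N_{2},\phi_{2})$ coincide in $\mathcal{C}(\widehat{M})$ via $g'$. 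The main technical hurdle will be the construction of the lift $\tilde\psi$ and the verification of $\phi_{2}\circ\tilde\psi\simeq\psi\circ\phi_{2}$, since one must arrange these compatibilities even when the isometry $\psi$ has no fixed points on $\widehat{M}$; everything else is an essentially formal consequence of Mostow's strong rigidity and the Farrell--Jones topological rigidity theorem.
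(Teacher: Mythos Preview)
Your backward direction is essentially the paper's argument, only repackaged: both use Mostow rigidity to replace the given diffeomorphism by one homotopic to the identity (the paper isotopes the isometry to fix the connected-sum ball $B$; your ``lift $\tilde\psi$'' is the same move), and then apply the Topological Rigidity Theorem~\ref{gentoprigd} with $m=1$ to straighten the level-preserving homotopy equivalence $N_{1}\times I\to\widehat{M}\times I$ into a homeomorphism, which is exactly the desired concordance.

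Your forward direction, however, is genuinely more elementary than the paper's. The paper works with the second formulation of concordance in Definition~2.12 (a smooth structure $\bar N$ on $\widehat{M}\times[0,1]$ restricting to $N_{1},N_{2}$ on the ends), notes that $\bar N$ is an $h$-cobordism, invokes $Wh(\pi_{1}\widehat{M})=0$ (Theorem~\ref{whigro}) and the smooth $s$-cobordism theorem to conclude that $\bar N$ is a smooth product, and extracts the orientation-preserving diffeomorphism from that product structure. You instead use the first formulation of Definition~2.12, which already hands you a diffeomorphism $g$, and observe that the concordance homeomorphism $F$ (composed with the projection to $\widehat{M}$) gives a homotopy $\phi_{1}\simeq\phi_{2}\circ g$, forcing $\deg g=+1$. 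This bypasses both the Whitehead vanishing and the $s$-cobordism theorem entirely; the trade-off is that it leans on the particular way this survey phrases the concordance relation, whereas the paper's argument works from the more primitive ``smooth structure on the cylinder'' definition.
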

\begin{proof}
First suppose that $N_{1}$ is concordant to $N_{2}$ via a smooth structure $\bar{N}$ for $N\times[0,1]$. Since $\bar{N}$ is topologically a product, it follows from   Theorem \ref{whigro} and from the smooth s-cobordism theorem that $\bar{N}$ is a product in the smooth category. Thus, $N_{1}$ is orientation-preserving diffeomorphic to $N_{2}$, since $N_{1}=\partial_{-}\bar{N}$ and $N_{2}=\partial_{-}\bar{N}$.

We can assume that the connected sums of  $\widehat{M}$ with $\Sigma_{i}$ and $\Sigma_{j}$ are taking place on the boundary of a small metric ball $B$ in $\widehat{M}$, so we think of $N_{1}$ and $N_{2}$ as being topologically identified with $\widehat{M}$, and the changes in the smooth structure happen inside $B$. Let $f:\widehat{M}\#\Sigma_{i}\rightarrow \widehat{M}\#\Sigma_{j}$ be a diffeomorphism. First we consider a special case, where $f:\widehat{M}\rightarrow \widehat{M}$ is homotopic to the identity. Thus we have a homotopy $f:\widehat{M}\times [0,1]\rightarrow \widehat{M}$ with $h|_{\widehat{M}\times 1} = f$ and $h|_{\widehat{M}\times 0} = id$. Define $H:\widehat{M}\times [0,1]\rightarrow \widehat{M}\times [0,1]$ by $H(x, t) = (h(x, t), t)$. Note that $H$ is a homotopy equivalence which restricts to a homeomorphism on the boundary. Therefore, by Farrell and Jones Topological Rigidity Theorem \ref{gentoprigd}, $H$ is homotopic rel boundary to a homeomorphism 
$\widehat{H}:\widehat{M}\times [0,1]\rightarrow \widehat{M}\times [0,1]$. We put the smooth structure $N_{2}\times [0,1]$ on the range of $\widehat{H}$ and, by pulling it back via $\widehat{H}$, obtain the smooth structure $N$ on the domain of $\widehat{H}$. Since, by construction, $\widehat{H}:N\rightarrow N_{2}\times [0,1]$ is a diffeomorphism, $\widehat{H}|_{\widehat{M}\times 1} = f$ and $\widehat{H}|_{\widehat{M}\times 0} = id$, $N$ is a concordance between $N_{1}$ and $N_{2}$.

The general case reduces to the above special case as follows: If $f:\widehat{M}\rightarrow \widehat{M}$ is an orientation preserving homeomorphism, then, by the Strong Mostow Rigidity Theorem \ref{mostow}, $f^{-1}$ is homotopic to an orientation preserving isometry $g$. Since one can move around small metric balls in  $\widehat{M}$ by smooth isotopies, $\widehat{M}$ is homotopic to a diffeomorphism $\widehat{g}:\widehat{M}\rightarrow \widehat{M}$ such that  $\widehat{g}|_{B} = id$. Since $N_{2}$ is obtained by taking the connected sum along the boundary of $B$ and $\widehat{g}|_{B} = id$ and $\widehat{g}|_{M\setminus B}$ is a diffeomorphism, it follows that $\widehat{g}:N_{2}\rightarrow N_{2}$ is also a diffeomorphism. Therefore the composition $\widehat{g}\circ f:N_{1}\rightarrow N_{2}$ is a diffeomorphism homotopic to the identity and it follows from the previous special case that $N_{1}$ and $N_{2}$ are concordant.
If $f$ is an orientation reversing homeomorphism, then similar argument produces a diffeomorphism $\widehat{g}:N_{2}\rightarrow N_{2}$ and it follows that $N_{1}$ is concordant to $N_{2}$.
\end{proof}
\begin{theorem}\label{Addendum 2.3}
Let  $M^n$ be a closed real hyperbolic manifold with dimension $n\geq 5$ and let $\widehat{M}$ be a finite sheeted covering space of $M$.
Suppose $\widehat{M}\#\Sigma_{i}$ is diffeomorphic to $\widehat{M}\#\Sigma_{j}$, then either $\widehat{M}\#\Sigma_{i}$ is concordant to $\widehat{M}\#\Sigma_{j}$ or to  $\widehat{M}\#(-\Sigma_{j})$. Also, if $\widehat{M}\#\Sigma_{i}$ is diffeomorphic to $\widehat{M}$, then $\widehat{M}\#\Sigma_{i}$ is concordant to $\widehat{M}$.
\end{theorem}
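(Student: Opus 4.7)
The plan is to extend Theorem~\ref{Lemma 2.1}, whose conclusion already covers any \emph{orientation-preserving} diffeomorphism between $\widehat{M}\#\Sigma_i$ and $\widehat{M}\#\Sigma_j$. Fixing an orientation on $\widehat{M}$ and identifying $\widehat{M}\#\Sigma_i\cong\widehat{M}\cong\widehat{M}\#\Sigma_j$ via the canonical homeomorphisms, the orientation-preserving case of the given diffeomorphism $f:\widehat{M}\#\Sigma_i\to\widehat{M}\#\Sigma_j$ is immediately handled by Theorem~\ref{Lemma 2.1} (concordance to $\widehat{M}\#\Sigma_j$). So I would reduce to assuming $f$ is orientation-reversing and aim to produce a concordance with $\widehat{M}\#(-\Sigma_j)$ instead.

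The key construction is an orientation-reversing self-diffeomorphism $\widehat{g}$ of $\widehat{M}$ which is standard on the connect-sum ball. Because $f$ induces an orientation-reversing self-homeomorphism of $\widehat{M}$, the Strong Mostow Rigidity Theorem (Theorem~\ref{mostow}) provides an orientation-reversing isometry $g$ of $\widehat{M}$ homotopic to it. Using the fact (from the proof of Theorem~\ref{Lemma 2.1}) that small metric balls in $\widehat{M}$ can be moved around by smooth isotopies, I would isotope $g$ through diffeomorphisms to a diffeomorphism $\widehat{g}:\widehat{M}\to\widehat{M}$ whose restriction to the ball $B$ used for the connect sums is a standard reflection.

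The central step is to show that $\widehat{g}$ extends to a diffeomorphism $\widetilde{g}:\widehat{M}\#\Sigma_j\to\widehat{M}\#(-\Sigma_j)$. The reflection on $B$ reverses the local orientation at the connect-sum point, so combining $\widehat{g}$ on the $\widehat{M}$ factor with the identity on the $\Sigma_j$ factor is forced, by matching on the gluing sphere, to identify the $\Sigma_j$-summand with a summand of the \emph{opposite} orientation, namely $-\Sigma_j$. This $\widetilde{g}$ is orientation-reversing, and therefore $\widetilde{g}\circ f:\widehat{M}\#\Sigma_i\to\widehat{M}\#(-\Sigma_j)$ is orientation-preserving; applying Theorem~\ref{Lemma 2.1} now with $-\Sigma_j$ in place of $\Sigma_j$ yields the desired concordance of $\widehat{M}\#\Sigma_i$ with $\widehat{M}\#(-\Sigma_j)$. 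The final sentence of the statement is then the special case $\Sigma_j=\mathbb{S}^n$, since $\widehat{M}\#\mathbb{S}^n=\widehat{M}=\widehat{M}\#(-\mathbb{S}^n)$.

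The main obstacle I foresee is the orientation bookkeeping in the extension step: one must verify carefully that inserting a reflection on $B$ genuinely converts the summand $\Sigma_j$ to $-\Sigma_j$ (and so that the target of $\widetilde{g}$ is $\widehat{M}\#(-\Sigma_j)$ rather than back to $\widehat{M}\#\Sigma_j$), and that $\widetilde{g}$ is a well-defined diffeomorphism of smooth manifolds compatible with both connect-sum gluings.
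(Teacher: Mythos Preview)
Your approach is correct and is essentially the one implicit in the paper. The paper does not give a separate proof of the Addendum; the orientation-reversing case is handled in the last sentence of the proof of Theorem~\ref{Lemma 2.1} (and again in the proof of Theorem~\ref{concord}), exactly by producing via Mostow rigidity an orientation-reversing isometry, isotoping it to be standard on the connect-sum ball $B$, and observing that such a self-diffeomorphism of $\widehat{M}$ carries the smooth structure $\widehat{M}\#\Sigma_j$ to $\widehat{M}\#(-\Sigma_j)$, so that composing with $f$ reduces to the orientation-preserving case of Theorem~\ref{Lemma 2.1}. Your identification of the one genuine check---that a reflection on $B$ converts the summand $\Sigma_j$ to $-\Sigma_j$---is exactly the point the paper leaves to the reader.
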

\paragraph{Proof of Theorem \ref{hype.exto} :}
Because of Theorem \ref{Addendum 2.3}, it suffices to prove that, for any pair $(\Sigma,\widehat{\Sigma})$ of distinct elements in $\Theta_{n}^{+}$, $\widehat{M}\#\Sigma$ is not concordant to $\widehat{M}\#\widehat{\Sigma}$. By Theorem \ref{kirby}, there is a one-to-one correspondence between concordance classes of smooth structures on $M$ and the homotopy classes of maps from $\widehat{M}$ to $Top/0$ (denoted by $[M, Top/0])$ with the hyperbolic structure on $\widehat{M}$ corresponding to the class of the constant map $[KS77]$. For the same reasons, there is a one-to-one correspondence between $\Theta_{n}^{+}$ and $[\mathbb{S}^n , Top/0]$ with $\mathbb{S}^n$ corresponding to the constant class. Let $\beta_{1}$ and $\beta_{2}$ in $[\mathbb{S}^n, Top/0]$ correspond to $\Sigma$ and $\widehat{\Sigma}$, respectively. Since $Top/0$ is an infinite loop space \cite{BV73}; in particular, there exists a topological space $X$ such that $Top/0=\Omega^2(X)$. If $\beta$ is the class of a map from $\mathbb{S}^n$ to $Top/0$, 
then  $\beta^*$ denotes the class of its composite with a degree-one map $f$ from $\widehat{M}$ to $\mathbb{S}^n$. The naturality of this construction, one can compute that the smooth structures $\widehat{M}\#\Sigma$ and $\widehat{M}\#\widehat{\Sigma}$  correspond to the elements $\beta_{1}^*$ and $\beta_{2}^*$, respectively. Thus, to complete the proof, it suffices to show that the map $f^*:[\mathbb{S}^n , Top/0]\rightarrow[\widehat{M}, Top/0]$ (given by sending $\beta$ to $\beta^*$) is monic, i.e., $\beta_{1}^*=\beta_{2}^*$. But, the homomorphism $f^*$ is monic by Theorem \ref{brum}. This completes the proof of Theorem \ref{hype.exto}.\\

Before beginning the proof of Theorem \ref{hype.metric}, we will recall a fact about manifolds of constant negative curvature and state a lemma \cite{FJ89a}:\\

Let $A( , )$ denote the Riemannian metric on $\mathbb{S}^{n-1}\times (0, 3)$ which is the product of the standard Riemannian metric on the unit $(n-1)$-sphere $\mathbb{S}^{n-1}$ with the standard Riemannian metric on the interval $(0, 3)$. Let $\xi$, $\gamma$ denote the distributions
on $\mathbb{S}^{n-1}\times (0, 3)$ which are tangent to the first and second factors, respectively. Let $P_{1}: T(\mathbb{S}^{n-1}\times (0, 3))\rightarrow \xi$, $P_{2}: T(\mathbb{S}^{n-1}\times (0,3))\rightarrow \gamma$ be the $A$-orthogonal projections. Define 2-tensors $A_{i}( , )$, $i = 1, 2 $ on $\mathbb{S}^{n-1}\times (0, 3)$ by $A_{i}(v, w)=A(P_{i}(v), P_{i}(w))$. Define a Riemannian metric $\overline{A}( , )$ on $\mathbb{S}^{n-1}\times (0, 3)$ as follows:\\
$\mathbf{Property~~ 1 :}$
$\overline{A}(v, w) =\sinh^2(\alpha t)A_{1}(v, w)+\alpha^2 A_{2}(v, w)$ for any pair of vectors  $v$, $w$ tangent to $\mathbb{S}^{n-1}\times (0, 3)$ at a point $(q, t)\in \mathbb{S}^{n-1}\times (0, 3)$.
It is well known that if $\widehat{M}$ has radius of injectivity greater than 3$\alpha$ at
$p\in \widehat{M}$, there is a smooth map $h:\mathbb{S}^{n-1}\times(0, 3)\rightarrow \widehat{M}$ which satisfies the following properties:\\
$\mathbf{Property~~ 2 :}$
\begin{itemize}
\item[(a)]$h$ is an embedding.
\item[(b)]For each $q\in\mathbb{S}^{n-1}$, the path $g(t)=h(q, t)$ is a geodesic of speed with \[ \lim_{t \to 0}g(t)= p \]
\item[(c)] The pull back along $h :\mathbb{S}^{n-1}\times (0, 3)\rightarrow \widehat{M}$ of the Riemannian metric $\langle ,\rangle_{\widehat{M}}$ is equal to $\overline{A}(  , )$.
\end{itemize}
Note that since $\langle ,\rangle_{\widehat{M}}$ has constant sectional curvature equal to $-1$, it
follows by Property 2(c) that $\overline{A}$ also has constant sectional curvature equal to $-1$. We let $B( , )$ be any Riemannian metric on $\mathbb{S}^{n-1}\times [1,2]$ which satisfies the following: \\
$\mathbf{Property~~ 3 :}$
\begin{itemize}
\item[(a)] For any $v\in \xi|_{\mathbb{S}^{n-1}\times[1,2]}$, $w\in \gamma|_{\mathbb{S}^{n-1}\times[1,2]}$, we have that $B(v, w)= 0$.
\item[(b)]If $t$ denotes the second coordinate variable in the product $\mathbb{S}^{n-1}\times[1,2]$,
then we have that $B(\frac{\displaystyle{\partial}}{\displaystyle{\partial t}}, \frac{\displaystyle{\partial}}{\displaystyle{\partial t}})= 1$.
\end{itemize}
We define a new Riemannian metric $\overline{B}( , )$ on  $\mathbb{S}^{n-1}\times[1,2]$ as follows:\\
$\mathbf{Property~~ 4 :}$
$\overline{B}(v, w) =\sinh^2(\alpha t)B_{1}(v, w)+\alpha^2 B_{2}(v, w)$ for any pair of vectors $v$, $w$ tangent to $\mathbb{S}^{n-1}\times [1,2]$ at a point $(q, t)\in \mathbb{S}^{n-1}\times [1,2]$, where $B_{i}(v, w) = B(P_{i}(v),P_{i}(w))$.
\begin{lemma}\label{pinched}
We let $P$ denote a $2$-plane tangent to $\mathbb{S}^{n-1}\times [1,2]$, and we let $K_{\overline{B}}(P)$ denote the sectional curvature of $P$ with respect to $\overline{B}(  , )$. Then \[\lim_{\alpha\to\infty} K_{\overline{B}}(P)=-1\] uniformly in $P$.
\end{lemma}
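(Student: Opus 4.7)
\textbf{Proof proposal for Lemma \ref{pinched}.}
The plan is to recognize $\overline{B}$, after the change of variable $r=\alpha t$, as a perturbation of the hyperbolic warped product $dr^{2}+\sinh^{2}(r)\,h$ living at very large radius $r\in[\alpha,2\alpha]$, and to show that both the perturbation error and the usual warped-product error in the sectional curvature are $o(1)$ uniformly as $\alpha\to\infty$. Since the sectional curvature is a continuous function of the curvature tensor and of the $2$-plane, and the Grassmannian of $2$-planes on $\mathbb{S}^{n-1}\times[1,2]$ is compact, uniformity in $P$ will follow from convergence of the full curvature tensor in a continuously-varying orthonormal frame.

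First I would set $r=\alpha t$, so that in the new variable the metric reads
\[
\overline{B}=dr^{2}+\sinh^{2}(r)\,B_{1}(q,r/\alpha),
\]
where $B_{1}(q,t)$ denotes the smooth family of Riemannian metrics on $\mathbb{S}^{n-1}$ obtained by restricting $B(\,\cdot\,,t)$ to the horizontal distribution $\xi$ (this restriction makes sense by Property~3(a), which decouples $\xi$ from $\gamma$). Observe that $B_{1}(q,r/\alpha)$ and all of its $r$-derivatives of order $k\geq 1$ are bounded by $C_{k}/\alpha^{k}$ uniformly in $(q,r)$, since $\partial_{r}=\alpha^{-1}\partial_{t}$ and $B$ is smooth on the compact cylinder $\mathbb{S}^{n-1}\times[1,2]$.

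Next I would pick a $\overline{B}$-orthonormal frame adapted to the splitting $T(\mathbb{S}^{n-1}\times[1,2])=\xi\oplus\gamma$: set $e_{0}=\partial_{r}$ and $e_{a}=\sinh^{-1}(r)\,\hat{e}_{a}$ $(1\leq a\leq n-1)$, where $\hat{e}_{1},\dots,\hat{e}_{n-1}$ is a local $B_{1}(\,\cdot\,,r/\alpha)$-orthonormal frame on $\mathbb{S}^{n-1}$. A direct Koszul computation in coordinates shows that the Christoffel symbols split as
\[
\overline{\Gamma}^{c}_{ab}=\Gamma^{c}_{ab}[B_{1}(\cdot,r/\alpha)],\qquad
\overline{\Gamma}^{c}_{r b}=\coth(r)\,\delta^{c}_{b}+O(\alpha^{-1}),\qquad
\overline{\Gamma}^{r}_{ab}=-\tfrac{1}{2}\partial_{r}\!\bigl[\sinh^{2}(r)B_{1}(q,r/\alpha)_{ab}\bigr],
\]
and all remaining $\overline{\Gamma}$'s that would vanish in an exact warped product are $O(\alpha^{-1})$. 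Substituting these into the formula for the Riemann tensor and passing to the orthonormal frame $\{e_{0},e_{a}\}$ yields the estimates
\[
\overline{R}(e_{0},e_{a},e_{0},e_{a})=-1+O(\alpha^{-1}),\qquad
\overline{R}(e_{a},e_{b},e_{a},e_{b})=-1+O(\sinh^{-2}(r))+O(\alpha^{-1}),
\]
and $\overline{R}(e_{i},e_{j},e_{k},e_{l})=O(\sinh^{-2}(r))+O(\alpha^{-1})$ for all other index configurations. Since $r\geq\alpha$, both $\sinh^{-2}(r)$ and $\alpha^{-1}$ tend to zero uniformly in $(q,r)$ as $\alpha\to\infty$, so $\overline{R}_{ijkl}$ converges to the components $-(\delta_{ik}\delta_{jl}-\delta_{il}\delta_{jk})$ of the hyperbolic curvature tensor uniformly on the domain.

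Finally, for an arbitrary $2$-plane $P$ at $(q,r)$, choose a $\overline{B}$-orthonormal basis $\{u,v\}\subset P$ and expand $u=\sum u^{i}e_{i}$, $v=\sum v^{j}e_{j}$. Then
\[
K_{\overline{B}}(P)=\sum_{i,j,k,l}\overline{R}_{ijkl}\,u^{i}v^{j}u^{k}v^{l},
\]
and each coefficient $u^{i}v^{j}u^{k}v^{l}$ is bounded in absolute value by $1$. The uniform convergence $\overline{R}_{ijkl}\to R^{\mathrm{hyp}}_{ijkl}$ therefore yields $K_{\overline{B}}(P)\to -1$ uniformly in $P$, as required. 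The main obstacle is bookkeeping in the third step: one must verify that every place an $r$-derivative falls on the slowly-varying factor $B_{1}(q,r/\alpha)$ produces a compensating factor of $\alpha^{-1}$, and that no curvature term contains a product $\sinh(r)\cdot\alpha^{-1}$ that could blow up (a naive counting would worry about terms of the form $\sinh(r)\cosh(r)\cdot\partial_{r}B_{1}$, but these appear only once differentiated against $\overline{B}^{rr}=1$, producing terms of order $\alpha^{-1}\sinh(r)\cosh(r)/\sinh^{2}(r)=O(\alpha^{-1})$). Smoothness of $B$ on the compact cylinder $\mathbb{S}^{n-1}\times[1,2]$ guarantees that all the implicit constants are independent of $(q,r)$ and of $\alpha$.
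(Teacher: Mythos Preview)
Your argument is correct; it takes a somewhat different route from the paper. The paper introduces, at each point $(p_{0},t_{0})$, \emph{rescaled normal coordinates} $\bar{x}_{i}=\sinh(\alpha t_{0})x_{i}$, $\bar{t}=\alpha t$ on the sphere and the interval, and proves a comparison claim (``Claim A'') that the metric coefficients $\bar{g}_{ij}^{b}$ of $\overline{B}$ and $\bar{g}_{ij}^{a}$ of the reference hyperbolic metric $\overline{A}$, together with all their partial derivatives of order $\leq 2$, have the same uniform limits as $\alpha\to\infty$. Since $K_{\overline{A}}\equiv -1$ and sectional curvature is a universal polynomial in these data, the conclusion follows without ever writing down a Christoffel symbol or a curvature component. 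Your approach instead exploits the (approximate) warped-product structure directly: after the substitution $r=\alpha t$ you compute Christoffel symbols and curvature components in an adapted orthonormal frame, separating the exact warped-product terms from the $O(\alpha^{-1})$ corrections coming from the slow $r$-dependence of $B_{1}$, and from the $O(\sinh^{-2}r)$ intrinsic curvature of the fibres. Both methods rest on the same rescaling idea; the paper's comparison trick avoids the Koszul bookkeeping entirely, while your frame computation makes the geometric mechanism (large-radius warped products become hyperbolic) more transparent. Either way, the delicate point you identified---that every $\partial_{r}$ landing on $B_{1}(\cdot,r/\alpha)$ produces a factor $\alpha^{-1}$, and that no curvature term carries an uncompensated factor of $\sinh r$---is exactly what the paper's Property~5(c)/Property~6(c) scaling handles on its side.
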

\begin{proof}
The proof of the theorem relies upon the following claim:\\
$\mathbf{Claim~~ A :}$ For any $(p_{0},t_{0})\in \mathbb{S}^{n-1}\times [1,2]$, there are two coordinated system $(\bar{x}_{1},\bar{x}_{2},....,\bar{x}_{n-1})$ and $(\bar{y}_{1},\bar{y}_{2},....,\bar{y}_{n-1})$ for $\mathbb{S}^{n-1}$ near $p_{0}$, and a linear coordinate $\bar{t}$ for $[1,2]$ near $t_{0}$ such that the following hold true:
\begin{itemize}
\item[(a)] $\overline{A}( , )=\bar{g}_{ij}^a d\bar{x}_{i} d\bar{x}_{j}+ d\bar{t}^2$, $\overline{B}( , )=\bar{g}_{ij}^b d\bar{y}_{i} d\bar{y}_{j}+d\bar{t}^2$, where $\bar{g}_{ij}^a=\overline{A}(\frac{\displaystyle{\partial}}{\displaystyle{\partial \bar{x}_{i}}},\frac{\displaystyle{\partial}}{\displaystyle{\partial \bar{x}_{j}}})$, and $\bar{g}_{ij}^b=\overline{B}(\frac{\displaystyle{\partial}}{\displaystyle{\partial \bar{y}_{i}}},\frac{\displaystyle{\partial}}{\displaystyle{\partial \bar{y}_{j}}})$.
\item[(b)]$\bar{g}_{ij}^a(p_{0},t_{0})=\bar{g}_{ij}^b(p_{0},t_{0})$.
\item[(c)]Let $k$ and $s$ denote any non-negative integers satisfying $k + s < 2$, and let $\overline{X}_{i,j,i_{1}....i_{k},s}$ and $\overline{Y}_{i,j,i_{1}....i_{k},s}$ denote the partial derivatives (through second order) $\frac{\displaystyle{\partial^{k+s} \bar{g}_{ij}^a}}{\displaystyle{\partial \bar{x}_{i_{1}}}...\displaystyle{\partial \bar{x}_{i_{k}}} \displaystyle{\partial \bar{t}^{s}}}(p_{0},t_{0})$ and $\frac{\displaystyle{\partial^{k+s} \bar{g}_{ij}^b}}{\displaystyle{\partial \bar{y}_{i_{1}}}...\displaystyle{\partial \bar{y}_{i_{k}}} \displaystyle{\partial \bar{t}^{s}}}(p_{0},t_{0})$.
\end{itemize}
Then we must have \[\lim_{\alpha\to \infty} \overline{X}_{i,j,i_{1}....i_{k},s} =0^k 2^s \delta^i_j\]  and  \[\lim_{\alpha\to \infty} \overline{Y}_{i,j,i_{1}....i_{k},s} =0^k 2^s \delta^i_j\] uniformly in $(p_0, t_0)$ (where $0^0 = 1$ and $0^k =0$ if $k\geq 1$). We shall first complete the proof of this theorem based on Claim A. Then we shall verify Claim A. Choose an orthonormal basis ${v_1, v_2}$ for the $2$-plane $P$, and write $v_{i}=\sum_{j} a_{ij} \frac{\displaystyle{\partial}}{\displaystyle{\partial \bar{x}_{j}}}+a_{in} \frac{\displaystyle{\partial}}{\displaystyle{\partial \bar{t}}}$. Set $\widehat{v}_{i}=\sum_{j}a_{ij} \frac{\displaystyle{\partial}}{\displaystyle{\partial \bar{y}_{j}}}+a_{in} \frac{\displaystyle{\partial}}{\displaystyle{\partial \bar{t}}}$ and let $\widehat{P}$ denote the $2$-plane spanned by ${\widehat{v}_{1},\widehat{v}_{2}}$. Note that it follows from Claim A(a) and A(b) and from the classical relation between the coefficients of the curvature tensor and of the first fundamental 
form \cite{Hic65} that $K_{\overline{A}}(\widehat{P})$ is a polynomial in the ${\overline{X}_{i,j,i_{1}....i_{k},s},a_{ij}}$ and that  $K_{\overline{B}}(P)$ is the same polynomial in the  ${\overline{Y}_{i,j,i_{1}....i_{k},s}, a_{ij}}$. Thus, by Claim 7(c), we have that \[\lim_{\alpha\to \infty}(K_{\overline{B}}(P)-(K_{\overline{A}}(\widehat{P})))= 0\] uniformly in $P$. Since $K_{\overline{A}}(\widehat{P})= -1$, \[\lim_{\alpha\to\infty} K_{\overline{B}}(P) =-1\] uniformly in $P$.
It remains to construct the coordinates $(\bar{x}_{1},\bar{x}_{2},....,\bar{x}_{n-1})$ and $(\bar{y}_{1},\bar{y}_{2},....,\bar{y}_{n-1})$ and $\bar{t}$ and to verify Claim A. Towards this end, we choose normal  coordinates $(x_{1},x_{2},....,x_{n-1})$ and $(y_{1},y_{2},....,y_{n-1})$ for $\mathbb{S}^{n-1}$ near $p_{0}$ with respect to the metric $A|_{\mathbb{S}^{n-1}\times t_{0}}$ and $B|_{\mathbb{S}^{n-1}\times t_{0}}$ respectively so that the following hold :\\
$\mathbf{Property~~ 5 :}$
\begin{itemize}
\item[(a)]$A( ~, )=g_{ij}^a dx_{i} dx_{j}+dt^2$ ; $B(~  , )=g_{ij}^b dy_{i} dy_{j}+dt^2$.
\item[(b)] $g_{ij}^a(p_{0},t_{0})=g_{ij}^b(p_{0},t_{0})$.
\item[(c)]There is a number $C>0$, which is independent of $(p_{0},t_{0})$, such that for all non negative integers $k$, $s$ satisfying $k + s\leq 2$, the following must hold:\\~\\
$\left |{\frac{\displaystyle{\partial}^{k+s} g_{ij}^a}{\displaystyle{\partial x_{i_{1}}}...\displaystyle{\partial x_{i_{k}}} \displaystyle{\partial t^s}}(p_{0},t_{0})}\right |<C$ and $\left |{\frac{\displaystyle{\partial}^{k+s} g_{ij}^b}{\displaystyle{\partial y_{i_{1}}}...\displaystyle{\partial y_{i_{k}}} \displaystyle{\partial t^s}}(p_{0},t_{0})}\right |<C$.
\end{itemize}
Now define the coordinates $\bar{x}_{i}$, $\bar{y}_{i}$ and $\bar{t}$ as follows :\\
$\mathbf{Property~~ 6 :}$
\begin{itemize}
\item[(a)]$\bar{t}= \alpha t$, $\bar{t}_{0}= \alpha t_{0}$, $\bar{x}_{i}=\displaystyle{\sinh}(\bar{t}_{0})x_{i}$, $\bar{y}_{i}=\displaystyle{\sinh}(\bar{t}_{0})y_{i}$.\\ Then we also have the following equalities :
\item[(b)] $d\bar{t}= \alpha dt$, $d\bar{x}_{i}=\displaystyle{\sinh}(\bar{t}_{0})dx_{i}$, $d\bar{y}_{i}=\displaystyle{\sinh}(\bar{t}_{0})dy_{i}.$
\item[(c)] $\frac{\displaystyle{\partial}}{\displaystyle{\partial\bar{t}}}=\frac{\displaystyle{1}}{\displaystyle{\alpha}}\frac{\displaystyle{\partial}}{\displaystyle{\partial t}}$, $\frac{\displaystyle{\partial}} {\displaystyle{\partial\bar{x}_{i}}}=\frac{\displaystyle{1}}{\displaystyle{\sinh}(\bar{t}_{0})}\frac{\displaystyle{\partial}}{\displaystyle{\partial x_{i}}}$, $\frac{\displaystyle{\partial}}{\displaystyle{\partial\bar{y}_{i}}}=\frac{1}{\displaystyle{\sinh}(\bar{t}_{0})}\frac{\displaystyle{\partial}}{\displaystyle{\partial y_{i}}}.$
\end{itemize}
It follows from Property 5(a), 5(b) and from Property 6(a), 6(b) that Claim A(a), A(b) are satisfied and that the $\bar{g}_{ij}^a$, $\bar{g}_{ij}^b$ of Claim A(a) can be computed in terms of the $g_{ij}^a$, $g_{ij}^b$ of Property 5(a) as follows: \\
$\mathbf{Property~~ 7 :}$
~~~~~~$\bar{g}_{ij}^a=\frac{\displaystyle{\sinh}(\bar{t})}{\displaystyle{\sinh}(\bar{t}_{0})}^{2}g_{ij}^a$,  $\bar{g}_{ij}^b=\frac{\displaystyle{\sinh}(\bar{t})}{\displaystyle{\sinh}(\bar{t}_{0})}^{2}g_{ij}^b$.\\
Finally, note that it follows from Property 5(c), Property 6(c) and Property 7 that Claim A(c) is satisfied. This completes the proof of Lemma \ref{pinched}.
\end{proof}
\paragraph{Proof of Theorem \ref{hype.metric} :}
Let $E_{+}^{n}$, $E_{-}^{n}$ denote the northern and southern hemi-spheres of the unit $n$-sphere $\mathbb{S}^n$. Each exotic sphere $\Sigma_{i}$ can be constructed by gluing $E_{+}^{n}$ to $E_{-}^{n}$ along a diffeomorphism $f_{i}: \partial E_{+}^{n} \to \partial E_{-}^{n}$. Since $\partial E_{+}^{n}=\mathbb{S}^n$, it follows from Property 2 that $\widehat{M}\#\Sigma_{i}$ can be constructed by gluing $\mathbb{S}^{n-1}\times [1,2]$ to $\widehat{M}\setminus h(\mathbb{S}^{n-1}\times (1,2))$ along the maps $h:\mathbb{S}^{n-1}\times 1\to \widehat{M}\setminus h(\mathbb{S}^{n-1}\times (1,2))$ and $h\circ \bar{f_{i}}:\mathbb{S}^{n-1}\times 2\to \widehat{M}\setminus h(\mathbb{S}^{n-1}\times (1,2))$, where $\bar{f_{i}}(x, 2)=(f_{i}(x), 2)$. Choose a metric $B(  , )$ on $\mathbb{S}^{n-1}\times [1,2]$ which satisfies Property 3 and the following properties:
Let $B_1( , )$ be constructed from $B( , )$ as in Property 4.\\
$\mathbf{Property~~ 8 :}$
\begin{itemize}
\item[(a)]$B_1( , )|_{\mathbb{S}^{n-1}\times 1}=A_1( , )|_{\mathbb{S}^{n-1}\times 1}$.
\item[(b)]$B_1( , )|_{\mathbb{S}^{n-1}\times 2}$ equals the pull back along $\bar{f}_{i}:\mathbb{S}^{n-1}\times 2\to \mathbb{S}^{n-1}\times 2$ of $A_1( , )|_{\mathbb{S}^{n-1}\times 2}$.
\item[(c)] $B_1$ is constant in $t$ near $t=1$, $2$. We define a metric $\left \langle ,\right \rangle _{i}$ on $\widehat{M}\#\Sigma_{i}$ as follows:
\end{itemize}
$\mathbf{Property~~ 9 :}$
\begin{itemize}
\item[(a)]$\left \langle ,\right \rangle_{i}|_{\widehat{M}\setminus h(\mathbb{S}^{n-1}\times (1,2))} = \left \langle ,\right \rangle_{\widehat{M}}|_{\widehat{M}\setminus h(\mathbb{S}^{n-1}\times (1,2))}$.
\item[(b)]$\left \langle ,\right \rangle_{i}|_{\mathbb{S}^{n-1}\times [1,2]=B_1}$, where $B_{1}$ is constructed from $B$ as in Property 8. It follows from Property 2, 8, and 9 that $\left \langle ,\right \rangle_{i}$ is well defined. By Property 9 and Theorem \ref{pinched},  \[\lim_{\alpha\to \infty}K_{\left \langle,\right \rangle_{i}}(P)=-1\] uniformly in $P$, where $P$ is any $2$-sphere tangent to $\widehat{M}\#\Sigma_{i}$ and where $K_{\left \langle ,\right \rangle_{i}}(P)$ is the sectional curvature of $P$ with respect to $\left \langle ,\right \rangle_{i}$. This completes the proof of Theorem \ref{hype.metric}.
\end{itemize}
\begin{remark}\rm{
Since there are no exotic spheres in dimensions $< 7$ this does not give counterexamples to Lawson-Yau Conjecture in dimensions less than 7. (Also note that, for example, there are no exotic 12-dimensional spheres.) Moreover, since the Diff category is equivalent to the PL category in dimensions less than 7, changing the differentiable structure is equivalent to changing the PL structure.  Hence for dimensions $< 7$ the Smooth Rigidity Problem \ref{prorig} is equivalent to the following PL version :}
\end{remark}
\begin{problem}\label{prorigpl}
Let $f:N\to M$ denote a homotopy equivalence between closed smooth manifolds. Is $f$ homotopic to a PL homeomorphism?
\end{problem}
\begin{remark}\rm{
For a general dimension $n$, a negative answer to Problem \ref{prorigpl} implies a negative answer for Problem \ref{prorig}, because diffeomorphic manifolds are PL-homeomorphic. The converse is not true in general, but, as mentioned before, it is true for dimensions $< 7$. For example an (smoothly) exotic sphere $\Sigma$ is not diffeomorphic to the corresponding sphere (by definition) but it is PL-homeomorphic to it, provided $\dim \Sigma \neq 4$. In fact, there are no PL-exotic spheres in any dimension $\neq 4$. It follows that $Z$ is PL-homeomorphic to $Z\#\Sigma$ for any manifold $Z$, and exotic sphere $\Sigma$, $\dim \Sigma\neq 4$. Therefore, Theorem \ref{hype.smooth} does not answer Problem \ref{prorigpl}. Note that, since diffeomorphism implies PL-homeomorphism and this in turn implies homeomorphism, we have that Problem \ref{prorigpl} for non-positively curved manifolds lies between the Topological Rigidity for negative curvature (which is true, by Theorem \ref{gentoprigd}) and the Smooth Rigidity (
which is false, by Theorem \ref{hype.smooth}). The Theorem \ref{hype.smooth} was generalized by Ontaneda in \cite{Ont94} to dimension 6, by changing the PL structure, which gives answer to Problem \ref{prorigpl} for non-positively curved manifolds to be negative. Also the constructions in \cite{Ont94} give counterexamples to Lawson-Yau Conjecture in dimension 6. Here is the result:}
\end{remark}
\begin{theorem}\label{plexo6}
There are closed real hyperbolic manifolds $M$ of dimension $6$ such that the following holds. Given $\epsilon >0$, $M$ has a finite cover $\widehat{M}$ that supports an exotic (smoothable) PL-structure that admits a Riemannian metric with all sectional curvatures in the interval $(-1 -\epsilon, -1 +\epsilon)$. 
\end{theorem}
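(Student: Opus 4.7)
The plan is to adapt the Farrell--Jones scheme from Theorem \ref{hype.smooth} to the PL category in dimension $6$. The essential obstacle is that $\Theta_6=0$, so connected sum with an exotic sphere cannot change anything. However, by Kirby--Siebenmann the space $Top/PL$ is an Eilenberg--MacLane space $K(\mathbb{Z}/2,3)$, so concordance classes of PL structures on a closed $6$-manifold $\widehat{M}$ are in bijection with $[\widehat{M},Top/PL]\cong H^3(\widehat{M};\mathbb{Z}/2)$. Moreover, any such PL $6$-manifold is smoothable, because all relevant smoothing obstructions in $H^{k+1}(\widehat{M};\pi_k(PL/O))$ vanish for dimensional reasons ($\pi_k(PL/O)=0$ for $k\leq 6$, and $H^{k+1}=0$ for $k\geq 6$). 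So it suffices to produce a hyperbolic $6$-manifold whose cover $\widehat{M}$ carries a non-trivial Kirby--Siebenmann class that can be realized by a local geometric modification of the hyperbolic metric.

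First I would set up the cover. Start from a closed real hyperbolic $6$-manifold $M$ guaranteed by Theorem \ref{recomhy}. Pass to a stably parallelizable finite cover using Theorem \ref{sullivan}, then to a further cover with injectivity radius $>3\alpha$ at every point, following the residual-finiteness argument in the proof of Theorem \ref{hype.smooth} (using Theorem \ref{malcev}); here $\alpha>0$ is the constant furnished by Theorem \ref{hype.metric} for the given $\epsilon$. Finally, by a suitable choice of $M$ and a further finite cover, arrange $H^3(\widehat{M};\mathbb{Z}/2)\neq 0$, and fix a non-trivial class $\kappa$ in this group.

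Next, I would realize $\kappa$ geometrically via a local tube twist in the spirit of the Farrell--Jones tube construction mentioned in the introduction. Inside a metric ball of radius less than the injectivity radius, pick a smoothly embedded product $S\times \mathbb{D}^{6-\dim S}$, excise its interior, and reglue via a PL self-homeomorphism of $S\times \partial\mathbb{D}^{6-\dim S}$ whose Kirby--Siebenmann invariant is the Poincar\'e dual of $\kappa$. The resulting PL manifold $\widehat{M}_\kappa$ is smoothable by the dimensional analysis above. Now apply the Farrell--Jones metric interpolation of Theorem \ref{hype.metric} essentially verbatim: in a collar around the gluing locus install the warped metric of Property 1, $\sinh^2(\alpha t)A_1+\alpha^2 A_2$, smoothly joined to the ambient hyperbolic metric of $\widehat{M}$ outside. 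Lemma \ref{pinched} then guarantees that for $\alpha$ sufficiently large every sectional curvature of the resulting Riemannian metric on $\widehat{M}_\kappa$ lies in $(-1-\epsilon,-1+\epsilon)$.

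Finally, I would show the PL structure is exotic. If $\widehat{M}_\kappa$ were PL-homeomorphic to $\widehat{M}$, then following the argument of Lemma \ref{Lemma 2.1}, Mostow rigidity (Theorem \ref{mostow}) reduces the problem to the case of a PL self-homeomorphism homotopic to the identity; Topological Rigidity (Theorem \ref{gentoprigd}) combined with the Vanishing Theorem \ref{whigro} and the PL $s$-cobordism theorem then promotes this to a PL concordance between $\widehat{M}_\kappa$ and $\widehat{M}$, forcing $\kappa=0$ in $H^3(\widehat{M};\mathbb{Z}/2)$, a contradiction. The central difficulty of this plan is step three: producing an explicit tube twist whose Kirby--Siebenmann obstruction equals a preassigned class in $H^3(\widehat{M};\mathbb{Z}/2)$, while remaining supported in a region small enough for the Farrell--Jones warping estimate to deliver the pinching. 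This requires a careful choice of the embedded submanifold $S$ (for instance, a linking sphere of a lower-dimensional totally geodesic submanifold) and a direct computation verifying that the resulting obstruction class is the intended $\kappa$.
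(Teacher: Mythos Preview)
Your overall strategy --- replace exotic spheres by the Kirby--Siebenmann class in $H^3(\widehat M;\mathbb Z/2)$, realize a nonzero class by a cut-and-reglue, pinch the metric by a warping estimate, and certify exoticness via Mostow plus topological rigidity plus the $s$-cobordism theorem --- is the right shape, and is exactly the philosophy of Ontaneda's proof. But two of your steps do not work as written, and fixing them forces you into precisely the extra structure the paper uses in Theorem~\ref{plexo6lem}.

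First, a modification supported ``inside a metric ball of radius less than the injectivity radius'' can never produce a nontrivial Kirby--Siebenmann class. Such a modification factors through the collapse $\widehat M\to \widehat M/(\widehat M\setminus B)\simeq \mathbb S^6\to Top/PL$, and $[\mathbb S^6,Top/PL]=H^3(\mathbb S^6;\mathbb Z/2)=0$. So the regluing must take place along (a tubular neighborhood of) a \emph{homologically nontrivial} $3$-cycle, not inside a contractible ball. Your closing parenthetical about ``a linking sphere of a lower-dimensional totally geodesic submanifold'' is pointing in the right direction, but it is incompatible with the ``inside a ball'' setup and with invoking Theorem~\ref{hype.metric} as stated.

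Second, once the modification is forced to live along a global $3$-cycle, large injectivity radius at a point no longer controls the curvature pinching: the Farrell--Jones warping estimate of Lemma~\ref{pinched} is a radial interpolation across a \emph{fat collar}, so what you need is a totally geodesic hypersurface containing the support of the modification and having arbitrarily large normal width in suitable covers. This, together with a certificate that the relevant class in $H^3(\widehat M;\mathbb Z/2)$ is nonzero, is exactly the content of the hypotheses of Theorem~\ref{plexo6lem}: Ontaneda needs specific \emph{arithmetic} hyperbolic $6$-manifolds (from the Millson--Raghunathan construction) containing totally geodesic submanifolds $M_3^3\subset M_0^6$ and $M_2^3\subset M_1^5\subset M_0^6$ with $M_2\cap M_3$ a single transverse point (so $[M_2],[M_3]$ are nonzero and Poincar\'e dual in $\mathbb Z/2$-cohomology), and with the tubular width of $M_1^5$ tending to infinity in a tower of covers. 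The stably parallelizable cover from Theorem~\ref{sullivan} plays no role here; it was needed in Theorem~\ref{hype.exto} only to split the degree-one map to $\mathbb S^n$, which is irrelevant for $Top/PL=K(\mathbb Z/2,3)$.
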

\begin{remark}\rm{
\indent
\begin{itemize}
\item[\bf{1.}] An exotic (smoothable) PL-structure on $\widehat{M}$ means that there exists a closed smooth manifold $N$ such that $\widehat{M}$ and $N$ are not piecewise linearly homeomorphic; i.e., the underlying simplicial complex of any PL triangulation compatible with the given smooth structure of $\widehat{M}$ must be different from that of any PL triangulation compatible with the given smooth structure of $N$. In particular, $\widehat{M}$ and $N$ are not diffeomorphic. This gives counterexamples to Lawson-Yau Conjecture in dimension 6. On the other hand, the counterexamples to smooth-rigidity Problem \ref{prorig} for negatively curved manifolds given by Theorem \ref{hype.exto} are piecewise linearly homeomorphic.
\item[\bf{2.}] P. Ontaneda's construction builds on the ideas used in the construction of the counterexamples given by Theorem \ref{hype.exto}; but employs the Kirby-Siebenmann obstruction to PL-equivalence, which lies in $H^3(M,\mathbb{Z}_2)$ \cite{KS77}, instead of exotic spheres. By using such PL obstruction, P. Ontaneda proved the following theorem \cite{Ont94}:
\end{itemize} }
\end{remark}
\begin{theorem}\label{plexo6lem}
Consider the following data. For each $k=1,2,3,....$
we have closed hyperbolic manifolds $M_{0}(k)$, $M_{1}(k)$, $M_{2}(k)$, $M_{3}(k)$ such that the following hold :
\begin{itemize}
\item[(i)] $\dim M_{0}(k)=6$, $\dim M_{1}(k)=5$, $\dim M_{2}(k)=3$, $\dim M_{3}(k)=3$.
\item[(ii)] $M_{2}(k)\subset M_{1}(k)\subset M_{0}(k)$ and $M_{3}(k)\subset M_{0}(k)$. All the inclusions are totally geodesic.
\item[(iii)] $M_{2}(k)$ and  $M_{3}(k)$ intersect at one point transversally.
\item[(iv)] For each $k$ there is a finite covering map $p(k):M_{0}(k)\to M_{0}(1)$
such that $p(k)(M_{i}(k))=M_{i}(1)$, for $i=0,1,2,3$.
\item[(v)] $M_{1}(k)$ has a tubular neighborhood in $M_{0}(k)$ of width $r(k)$ and $r(k)\to \infty $ as $k\to \infty$.
\end{itemize}
Then, given $\epsilon>0$, there is a $K$ such that all $M_{0}(k)$, $k\geq K$, have
exotic (smoothable) triangulations admitting Riemannian metrics with all sectional curvatures in the interval  $(-1 -\epsilon , -1 +\epsilon)$.
\end{theorem}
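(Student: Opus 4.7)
The approach follows the Farrell--Jones strategy of Theorems~\ref{hype.exto} and~\ref{hype.metric}, but replaces the exotic-sphere obstruction $\Theta_n=\pi_n(\mathrm{Top}/O)$ (detecting smooth concordance classes) by the Kirby--Siebenmann obstruction in $H^3(M_0(k);\mathbb{Z}_2)$ (detecting PL concordance classes, since $\mathrm{Top}/\mathrm{PL}$ has a single nontrivial homotopy group $\pi_3=\mathbb{Z}_2$ and is therefore a $K(\mathbb{Z}_2,3)$ in the relevant range). The hypothesis that $M_2(k)$ and $M_3(k)$ meet transversally in a single point will be used as a Poincar\'e-duality detector for this obstruction, playing the r\^ole of the degree-one map $f_M\colon M\to \mathbb{S}^n$ and Theorem~\ref{brum} in the smooth case.

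First, I construct a closed PL manifold $\widetilde M_0(k)$, homeomorphic to $M_0(k)$, by modifying the smooth/PL structure on a tubular neighborhood $T_2(k)\cong M_2(k)\times D^3$ of the totally geodesic $3$-submanifold $M_2(k)$. The modification re-glues $T_2(k)$ to its complement via a self-homeomorphism of $M_2(k)\times S^2$ whose fibrewise restriction represents the generator of $\pi_3(\mathrm{Top}/\mathrm{PL})=\mathbb{Z}_2$. By naturality of the Kirby--Siebenmann invariant, $\kappa(\widetilde M_0(k))=\mathrm{PD}\,[M_2(k)]\in H^3(M_0(k);\mathbb{Z}_2)$. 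Because $M_2(k)$ and $M_3(k)$ are transverse at exactly one point, their mod-$2$ intersection number equals $1$, so $\langle\kappa(\widetilde M_0(k)),[M_3(k)]\rangle=1\neq 0$ in $\mathbb{Z}_2$, and the Kirby--Siebenmann classification forces $\widetilde M_0(k)$ and $M_0(k)$ to be PL-inequivalent. Smoothability of $\widetilde M_0(k)$ is automatic from the construction, since the re-gluing homeomorphism is smooth off a codimension-$3$ subcomplex.

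Next, I equip $\widetilde M_0(k)$ with a pinched negatively curved Riemannian metric. Given $\epsilon>0$, let $\alpha=\alpha(\epsilon,6)$ be the constant supplied by Theorem~\ref{hype.metric} and Lemma~\ref{pinched}. Since $r(k)\to\infty$, for all sufficiently large $k$ a tube of radius $3\alpha$ about $M_2(k)$ is contained inside the tube of width $r(k)$ about $M_1(k)\supset M_2(k)$; I arrange the PL twist of the previous paragraph to be supported inside this $3\alpha$-tube. On the tube I install a warped-product metric $\overline B(\,,\,)$ modelled on Property~4, compatibly with the twist; off the tube I keep the ambient hyperbolic metric, gluing as in Property~9. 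Lemma~\ref{pinched} then yields uniform convergence of sectional curvatures to $-1$ as $\alpha\to\infty$, so by taking $k\geq K$ large enough all sectional curvatures lie in $(-1-\epsilon,-1+\epsilon)$.

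The main obstacle is the first step. One must exhibit an explicit local PL modification of $M_2(k)\times D^3$ rel boundary whose propagated Kirby--Siebenmann class is precisely $\mathrm{PD}\,[M_2(k)]$, and do so in a manner compatible with the warped-product metric used in the curvature step. In the smooth Farrell--Jones proof the analogous modification (connected sum with an exotic sphere) is transparent because exotic spheres already come equipped with smoothings of $D^n$ rel boundary; in the PL setting one must instead build and analyse an explicit generator of $\pi_3(\mathrm{Top}/\mathrm{PL})$, verify its naturality under the inclusion $T_2(k)\hookrightarrow M_0(k)$, and crush its support into an arbitrarily thin tube about $M_2(k)$ so that the curvature calculation of Lemma~\ref{pinched} applies verbatim. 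Once the local PL model is in hand, the detection (Step~2) is immediate from Kirby--Siebenmann and Poincar\'e duality, and the metric construction (Step~3) is a direct transcription of the proof of Theorem~\ref{hype.metric}.
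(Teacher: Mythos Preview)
Your topological detection argument is correct and matches what the paper describes as Ontaneda's method: replace the exotic-sphere obstruction by the Kirby--Siebenmann class in $H^3(M_0(k);\mathbb{Z}_2)$, realize it as $\mathrm{PD}[M_2(k)]$, and detect nontriviality via the single transverse intersection with $M_3(k)$.

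The gap is in your metric construction, and it concerns the role of $M_1(k)$. You claim that for large $k$ a tube of radius $3\alpha$ about $M_2(k)$ sits inside the width-$r(k)$ tube about $M_1(k)$, and then propose to run Lemma~\ref{pinched} ``verbatim'' on this $M_2(k)$-tube. But hypothesis~(v) says nothing about the normal injectivity radius of $M_2(k)$ in $M_0(k)$: the normal directions to $M_2(k)$ that are tangent to $M_1(k)$ are completely uncontrolled by (v), and there is no reason for the $M_2(k)$-tube radius to grow with $k$. Moreover, Lemma~\ref{pinched} and Properties~1--9 are written for the $\sinh^2(\alpha t)$ warped product on a sphere about a \emph{point}; transplanting them to the normal $S^2$-bundle of a $3$-submanifold is not a verbatim transcription.

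The reason $M_1(k)$ appears in the hypotheses at all is that Ontaneda's metric construction takes place in the codimension-one tube $M_1(k)\times(-r(k),r(k))$, where the hyperbolic metric has the warped form $\cosh^2(t)\,g_{M_1(k)}+dt^2$. The exotic PL structure is realized by modifying the PL structure on a slab $M_1(k)\times[a,b]$ inside this tube (equivalently, cutting along the separating hypersurface $M_1(k)$ and regluing by a homeomorphism of $M_1(k)$ supported near $M_2(k)$ whose mapping-torus KS class is $\mathrm{PD}[M_2(k)]$). The pinching argument is then the codimension-one analogue of Lemma~\ref{pinched}, with $\cosh$ replacing $\sinh$, and it is precisely the growth $r(k)\to\infty$ of this \emph{hypersurface} tube that drives the curvatures to $-1$. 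Your outline bypasses $M_1(k)$ entirely, which both leaves hypothesis~(v) unused and leaves the curvature estimate unsupported.
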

\begin{remark}\rm{
\indent
\begin{itemize}
\item[\bf{1.}] To prove Theorem \ref{plexo6}, we have to show that there are manifolds satisfying the hypothesis of Theorem \ref{plexo6lem}. P. Ontaneda constructed such manifolds, for every $n\geq 4$, $M_{i}(k)$, $i=0,1,2,3$ and $k=1,2,3,...$ with $\dim M_{0}(k)=n$, $\dim M_{1}(k)=n-1$, $\dim M_{2}(k)=n-3$, $\dim M_{3}(k)=3$ satisfying $(ii)$, $(iii)$, $(iv)$ and $(v)$ of the Theorem \ref{plexo6lem}. When $n=6$ they also satisfied $(i)$ (\cite[p.15]{Ont94}). These manifolds are the ones that appear at the end of \cite{MR81} for the real hyperbolic case.
\item[\bf{2.}] The following Theorem \ref{plexa} showed that the answer to Question \ref{prorigpl} for non-positively curved manifolds to be negative for dimensions greater than 5:
\end{itemize} }
\end{remark} 
\begin{theorem}\label{plexa}
For $n\geq 6$, there are closed non-positively curved manifolds of dimension $n$ that support exotic (smoothable) PL structures admitting
Riemannian metrics with non-positive sectional curvatures.
\end{theorem}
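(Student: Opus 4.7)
The plan is to bootstrap from the six-dimensional case (Theorem~\ref{plexo6}) by crossing with a flat torus. Fix $n\geq 6$; for $n=6$ the conclusion is already contained in Theorem~\ref{plexo6}, so assume $n>6$. Apply Theorem~\ref{plexo6} with some $\epsilon\in(0,1)$ to obtain a closed topological $6$-manifold $\widehat{M}$ carrying two smoothable PL structures: its standard hyperbolic structure $\widehat{M}_{\mathrm{hyp}}$, and an exotic one represented by a smooth structure $\widehat{M}^{\#}$ with a Riemannian metric $g_{\#}$ whose sectional curvatures lie in $(-1-\epsilon,-1+\epsilon)$. Let $T^{n-6}$ be a flat $(n-6)$-torus with metric $g_{\mathrm{flat}}$ and standard smooth structure, and take as candidate
\[
X=\widehat{M}^{\#}\times T^{n-6}, \qquad g_X=g_{\#}+g_{\mathrm{flat}}.
\]
Since the sectional curvatures of a Riemannian product split as those of each factor on pure planes and vanish on mixed planes, $g_X$ is non-positively curved. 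Thus $X$ is a closed smooth $n$-manifold with a non-positively curved metric, and the only remaining point is to show that its PL structure is genuinely exotic relative to the standard product PL structure on $\widehat{M}_{\mathrm{hyp}}\times T^{n-6}$.

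Topologically, $X$ and $X_0:=\widehat{M}_{\mathrm{hyp}}\times T^{n-6}$ are the same manifold $M^{\mathrm{top}}:=\widehat{M}\times T^{n-6}$, so they produce two smoothable PL structures $\sigma_1$ and $\sigma_0$ on $M^{\mathrm{top}}$. I would detect their inequivalence using the Kirby-Siebenmann classification: for a closed topological manifold of dimension $\geq 5$, concordance classes of PL structures form a torsor over $[M^{\mathrm{top}},TOP/PL]=H^3(M^{\mathrm{top}};\mathbb{Z}_2)$, using $TOP/PL\simeq K(\mathbb{Z}_2,3)$ through the relevant dimensions. Let $\kappa\in H^3(\widehat{M};\mathbb{Z}_2)$ be the nonzero class distinguishing $\widehat{M}^{\#}$ from $\widehat{M}_{\mathrm{hyp}}$, whose nontriviality is precisely the content of Theorem~\ref{plexo6}. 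By naturality of this obstruction under stabilization by a PL manifold, the class distinguishing $\sigma_1$ from $\sigma_0$ is the pullback $\pi_1^{*}\kappa$ along the projection $\pi_1\colon M^{\mathrm{top}}\to\widehat{M}$. The K\"unneth formula with $\mathbb{Z}_2$-coefficients identifies $\pi_1^{*}\kappa$ with $\kappa\otimes 1$ in the summand $H^3(\widehat{M};\mathbb{Z}_2)\otimes H^0(T^{n-6};\mathbb{Z}_2)$, which is nonzero since $\kappa\neq 0$ and $T^{n-6}$ is connected. Hence $\sigma_0$ and $\sigma_1$ are not PL-equivalent.

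The main obstacle is the naturality assertion used above, namely that stabilizing by a standard PL factor pulls the Kirby-Siebenmann difference class back through the projection. This is the standard multiplicativity of smoothing/PL obstructions under products, but it does need to be invoked explicitly; once in hand, the K\"unneth calculation is immediate and the rest is routine (curvature of Riemannian products, and preservation of smoothability under products). I expect no other substantive difficulty. In fact the same argument works with $T^{n-6}$ replaced by any closed non-positively curved PL manifold $N^{n-6}$ (flat, or indeed any non-positively curved smooth manifold), yielding a broader family of counterexamples to PL-rigidity in every dimension $n\geq 6$.
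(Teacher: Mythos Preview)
Your approach is essentially the same as the paper's: take the six-dimensional example from Theorem~\ref{plexo6}, cross with a flat torus $T^{n-6}$, use the product metric for non-positive curvature, and apply the K\"unneth formula with $\mathbb{Z}_2$-coefficients to show the Kirby--Siebenmann obstruction survives. One small point: what the K\"unneth argument directly yields is that $\sigma_0$ and $\sigma_1$ are not \emph{concordant}; concluding they are not PL-\emph{equivalent} requires the further step of controlling the action of self-homeomorphisms on $H^3(\,\cdot\,;\mathbb{Z}_2)$, which the paper handles by citing \cite{Ont94}.
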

\begin{remark}\rm{
\indent
\begin{itemize}
\item[\bf{1.}] Theorem \ref{plexa} follows by taking one of the examples of Theorem \ref{plexo6} and multiply it by $T^n$. To see this we note that if $(M,\tau_{0})$ and $(M,\tau_{1})$ are two non-positively curved triangulations on $M$, then $(M\times T^n,\tau_{0}\times \tau_{T^n})$ and $(M\times T^n,\tau_{1}\times \tau_{T^n})$ are also non-positively curved. Moreover, if $(M,\tau_{0})$ and $(M,\tau_{1})$ are non-concordant, then $(M\times T^n,\tau_{0}\times \tau_{T^n})$ and $(M\times T^n,\tau_{1}\times \tau_{T^n})$ are also so, for the Kunneth formula tells us that $\mathbb{Z}_{2}$-cohomology classes do not vanish when we take products. Finally we can prove these triangulations are not equivalent \cite{Ont94}.
\item[\bf{2.}] Theorem \ref{plexo6} was generalized by Farrell, Jones and Ontaneda in \cite{FJO98} for every dimension $> 5$. Here is the result:
\end{itemize} }
\end{remark}
\begin{theorem}\label{plexog}
There are closed real hyperbolic manifolds $M$ in every dimension $n$, $n>5$, such that the following holds. Given $\epsilon >0$, $M$ has a finite cover $\widehat{M}$ that supports an exotic (smoothable) PL structure that admits a Riemannian metric with all sectional curvatures in the interval $(-1 -\epsilon , -1 +\epsilon)$.
\end{theorem}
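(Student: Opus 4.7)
The plan is to bypass the product construction of Theorem \ref{plexa} (which only yields non-positively curved examples and therefore misses the pinching statement) and instead push Ontaneda's six-dimensional argument up to every $n>5$ by producing the relevant geometric data directly in arbitrary dimension. Concretely, I would prove an $n$-dimensional analogue of Theorem \ref{plexo6lem} in which $\dim M_0(k)=n$, $\dim M_1(k)=n-1$, $\dim M_2(k)=n-3$, $\dim M_3(k)=3$, with $M_2(k)$ and $M_3(k)$ meeting transversally at a single point, the inclusions $M_2(k)\subset M_1(k)\subset M_0(k)$ totally geodesic, and the normal injectivity radius $r(k)$ of $M_1(k)$ inside $M_0(k)$ tending to $\infty$ along a compatible tower of finite covers. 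The conclusion of this enhanced lemma, obtained by repeating the obstruction-cum-warping argument of Theorem \ref{plexo6lem}, then yields Theorem \ref{plexog} immediately.

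The first step is to build the tower $\{M_0(k),M_1(k),M_2(k),M_3(k)\}$. I would follow the arithmetic recipe of Millson and Raghunathan \cite{MR81}: choose a totally real number field $F$ and an admissible quadratic form $q$ of signature $(n,1)$ on $F^{n+1}$ whose restrictions to suitable coordinate subspaces have signatures $(n-1,1)$, $(n-3,1)$ and $(3,1)$ respectively, with the codimension-three and three-dimensional pieces intersecting only at the origin. The corresponding arithmetic lattice produces $M_0(1)$ together with the three totally geodesic submanifolds. To force $r(k)\to\infty$ I would invoke residual finiteness (Theorem \ref{malcev}) together with subgroup separability for the geometrically finite subgroup representing $M_1(1)$, passing to sufficiently deep congruence covers so that every short loop that leaves a prescribed tubular neighborhood gets unwrapped.

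The second step is to construct the exotic PL structure. The Kirby--Siebenmann obstruction lives in $H^{3}(M_0(k);\mathbb{Z}_2)$ in every dimension, so Ontaneda's cohomological argument transplants verbatim: twist the PL structure inside a collar of $M_1(k)$ using a class Poincar\'e dual to $[M_2(k)]$, and verify nontriviality of the obstruction by pairing with $[M_3(k)]$, which by the single transverse intersection point evaluates to $1$ mod $2$. The third step is the pinching. On the tube of radius $r(k)\to\infty$ around $M_1(k)$, replace the hyperbolic warped metric $\sinh^2(\alpha t)A_1+\alpha^2 A_2$ by the metric $\sinh^2(\alpha t)B_1+\alpha^2 B_2$ of Property~4, where $B$ carries the PL twist on $\mathbb{S}^{n-2}\times\{2\}$. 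By Lemma \ref{pinched} the sectional curvatures tend uniformly to $-1$ as $\alpha\to\infty$, so for each $\epsilon>0$ we choose $\alpha$ large and then take $k$ large enough that the tube is wide enough to accommodate $\alpha$ and that the Kirby--Siebenmann class has already become nontrivial.

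The hard part is the first step: producing three totally geodesic subspaces (two nested, the third meeting the inner one transversely at a single point) inside the same closed real hyperbolic $n$-manifold, while simultaneously arranging $r(k)\to\infty$ along a compatible tower of covers. The congruence data have to be organized so that all four arithmetic subgroups remain coherently commensurable across the tower, which is the technical input supplied by \cite{FJO98} on top of \cite{MR81}. Once this geometric bouquet is available, steps two and three are essentially dimension-free adaptations of the arguments already carried out for Theorem \ref{hype.metric} and Theorem \ref{plexo6}.
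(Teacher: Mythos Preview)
Your proposal is essentially the approach that the paper attributes to \cite{FJO98}: the survey does not give a self-contained proof of Theorem \ref{plexog} but explains (in the remarks surrounding Theorems \ref{plexo6lem} and \ref{plexog}) that the argument consists of (a) producing, via the Millson--Raghunathan arithmetic construction \cite{MR81}, towers $M_i(k)$ with $\dim M_0=n$, $\dim M_1=n-1$, $\dim M_2=n-3$, $\dim M_3=3$ satisfying (ii)--(v) of Theorem \ref{plexo6lem}; (b) using the transverse intersection $M_2\cap M_3$ to detect a nontrivial Kirby--Siebenmann class in $H^3(M_0(k);\mathbb{Z}_2)$; and (c) an Ontaneda-type warping on the wide normal tube of $M_1(k)$ to pinch the curvatures. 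So your outline matches the paper's.

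One small correction in step three: Lemma \ref{pinched} and Properties 1--4 describe the metric on a geodesic ball (normal tube about a \emph{point}), with radial warping $\sinh^2(\alpha t)A_1+\alpha^2 A_2$. The normal tube about a totally geodesic \emph{hypersurface} $M_1(k)\subset M_0(k)$ carries instead the Fermi metric $dt^2+\cosh^2(t)\,g_{M_1}$, and the twist is inserted in that collar. The asymptotic curvature computation is analogous in spirit to Lemma \ref{pinched} but is not literally that lemma; it is the hypersurface version carried out in \cite{Ont94} and \cite{FJO98}. Aside from this mislabeling of the warping model, your plan is the correct one.
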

\begin{remark}\rm{
\indent
\begin{itemize}
\item[\bf{1.}] The counterexamples constructed in the proof of Theorem \ref{plexog} use the results of Millson and Raghunathan \cite{MR81}, based on previous work of Millson \cite{Mil76}.
\item[\bf{2.}] Theorem \ref{hype.smooth} and Theorem \ref{plexog} were the first in a sequence of results that shed some light on the relationship between the analysis, geometry and topology of negatively curved manifolds. These results showed certain limitations of well-known powerful analytic tools in geometry, such as the Harmonic Map technique, the Ricci flow technique, the Elliptic deformation technique as well as Besson-Courtois-Gallot's Natural Map technique \cite{BCG96}. A more complete exposition on this area and how it evolved in time can be found in the survey article \cite{FO04}.
\end{itemize} }
\end{remark}
\section{\large Smooth Rigidity for Finite Volume Real Hyperbolic and Complex Hyperbolic Manifolds}
In this section we give three different variants of Theorem \ref{hype.smooth} : first for non-compact finite volume complete hyperbolic manifolds, the second for negatively curved manifolds not homotopy equivalent to a closed locally symmetric space and the third for complex hyperbolic manifolds. Let us begin with the first.\\
A possible way to change smooth structure on a smooth manifold $M^n$, without changing its homeomorphism type, is to take its connected sum $M^n\#\Sigma^n$ with a homotopy sphere $\Sigma^n$. A surprising observation reported in \cite{FJ93a} is that connected sums can never change the smooth structure on a connected, non-compact smooth manifold:
\begin{theorem}\label{sumnochan}
Let $M^n$, $n\geq 5$ be a complete, connected and non-compact smooth manifold. Let the homotopy sphere $\Sigma^n$ represent an element in $\Theta_n$. Then $M^n\#\Sigma^n$ is diffeomorphic to $M^n$.
\end{theorem}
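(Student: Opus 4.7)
The idea is to prove the theorem by an Eilenberg--Mazur style swindle that uses non-compactness of $M$ in an essential way. Since $M^n$ is non-compact and connected, one can choose a proper smooth embedding $\gamma:[0,\infty)\to M$ together with a sequence of pairwise disjoint smoothly embedded closed $n$-disks $\bar{B}_1,\bar{B}_2,\ldots$ with centers on $\gamma$ which escape every compact subset of $M$. Let $\Sigma^{-1}$ denote a homotopy $n$-sphere representing the inverse of $[\Sigma]$ in the Kervaire--Milnor group $\Theta_n$, so that $\Sigma\#\Sigma^{-1}$ is diffeomorphic to $\mathbb{S}^n$.

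Next, one forms a single non-compact smooth manifold $N$ as the infinite iterated connected sum
\[
N \;=\; M\,\#_{B_1}\,\Sigma\,\#_{B_2}\,\Sigma^{-1}\,\#_{B_3}\,\Sigma\,\#_{B_4}\,\Sigma^{-1}\,\cdots,
\]
performed simultaneously at all the $B_i$. Because the $B_i$ escape every compact set, any compact subset of the resulting space is affected by only finitely many of the operations, so $N$ is a well-defined smooth $n$-manifold, obtained as a direct limit of finite connected sums with smooth transition maps. The heart of the argument is then to compute $N$ in two different ways. Grouping adjacent summands as $(B_1,B_2),(B_3,B_4),\ldots$, each pair contributes a connect-sum with $\Sigma\#\Sigma^{-1}\cong\mathbb{S}^n$, which leaves the ambient manifold unchanged; hence $N$ is diffeomorphic to $M$. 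Leaving $B_1$ alone and instead pairing $(B_2,B_3),(B_4,B_5),\ldots$, each new pair collapses to $\Sigma^{-1}\#\Sigma\cong\mathbb{S}^n$, so $N$ is diffeomorphic to $M\,\#_{B_1}\,\Sigma\cong M\#\Sigma$. Chaining these identifications yields $M\cong N\cong M\#\Sigma$.

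The main obstacle is making the ``grouping trick'' rigorous: each formal regrouping has to be realized by an honest diffeomorphism. For a consecutive pair $(B_i,B_{i+1})$, the plan is to invoke the disk isotopy theorem to slide $B_{i+1}$ next to $B_i$ inside a common larger $n$-disk $D_i\subset M$, apply associativity of the (compact) connected sum to reinterpret the pair as a single connect-sum operation with $\Sigma_i\#\Sigma_{i+1}\cong \mathbb{S}^n$, and then use the standard fact that connect-summing with a standard $\mathbb{S}^n$ is implemented by a diffeomorphism supported in a small disk. Each such local modification has compact support in $M$, and because the $B_i$ escape every compact subset, the countably many compactly supported modifications patch together into a single global diffeomorphism by a telescoping argument. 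This telescoping step is precisely where the non-compactness of $M$ enters; for a closed manifold no proper ray is available, and indeed the analogous statement fails (for example, for stably parallelizable closed hyperbolic manifolds, as in Theorem~\ref{hype.exto}).
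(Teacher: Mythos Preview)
Your swindle argument is correct and is in fact the classical elementary proof of this fact; the paper, however, takes a quite different route. The paper's proof invokes Kirby--Siebenmann smoothing theory (Theorem~\ref{kirby}): the concordance class $[M\#\Sigma]\in\mathcal{C}(M)\cong[M,Top/O]$ is the image of $[\Sigma]\in\Theta_n$ under the map $f_M^*$ induced by the collapse map $f_M:M\to\mathbb{S}^n$, and since $M$ is non-compact this collapse map is null-homotopic (one can push the disk off to infinity), so $[M\#\Sigma]=[M]$; concordance then gives diffeomorphism. Your approach avoids the Kirby--Siebenmann machinery entirely, using only the group structure on $\Theta_n$, the Palais disk theorem, and a direct limit of compactly supported diffeomorphisms. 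The trade-off is that the paper's argument is a two-line application of a black box, while yours is self-contained but requires the care you already flagged in carrying out the telescoping isotopies. Both arguments hinge on the same geometric fact---that a small disk in a non-compact manifold can be pushed to infinity---but your version realizes this as an explicit infinite composition of diffeomorphisms, whereas the paper encodes it as a null-homotopy of a classifying map.
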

\begin{proof}
We will denote the concordance class in $\mathcal{C}(M)$ of $M^n\#\Sigma$ by $[M^n\#\Sigma]$.(Note that $[M^n\#\mathbb{S}^n]$ is the class of  $M^n$.) Let $f_{M}:M^n\to \mathbb{S}^n $  be a degree one map and note that $f_{M}$ is well-defined up to homotopy. Composition with $f_{M}$ defines a homomorphism $f_{M}^*:[\mathbb{S}^n, Top/O]\to [M^n ,Top/O]$. And in terms of the identifications 
$\Theta_n=[\mathbb{S}^n, Top/O]$ and $\mathcal{C}(M^n)=[M^n ,Top/O]$ given by Theorem \ref{kirby}, $f_{M}^*$ becomes $[\Sigma^n]\to [M^n\#\Sigma^n]$. But every map $M^n\to  \mathbb{S}^n $ is homotopic to a constant map. Therefore $M^n\#\Sigma^n$ is concordant to $M^n$ and hence diffeomorphic to $M^n$.
\end{proof}
\begin{remark}\rm{
Note that taking the connected sum of a non-compact manifold $M$ with an exotic sphere can never change the differential structure of $M$. Therefore we do not have an exact analogue of Theorem \ref{hype.smooth} for the finite volume non-compact case. F.T. Farrell and L.E. Jones used a different method in \cite{FJ93a}, which can sometimes change the smooth structure on a non-compact manifold $M^n$. The method is to remove an embedded tube $\mathbb{S}^1\times \mathbb{D}^{n-1}$ from $M^n$ and then reinsert it with a twist. More precisely, the method is as follows : }
\end{remark}
\begin{definition}(Dehn surgery method)\rm{
Pick a  smooth embedding $f:\mathbb{S}^1\times \mathbb{D}^{n-1}\to M^n$ and an orientation preserving diffeomorphism $\phi:\mathbb{S}^{n-2}\to \mathbb{S}^{n-2}$. Then a new smooth manifold $M_{f,\phi}$ is obtained as the quotient space of the disjoint union
\begin{center}
 $\mathbb{S}^1\times \mathbb{D}^{n-1} \sqcup M^n\setminus f(\mathbb{S}^1\times int(\mathbb{D}^{n-1}))$,
\end{center}
where we identify points
\begin{center}
$(x,v)$ and $f(x,\phi(v))$ if $(x,v)\in \mathbb{S}^1\times \mathbb{S}^{n-2}$.
\end{center}}
\end{definition}
The smooth manifold $M_{f,\phi}$ is canonically homeomorphic to $M^n$ but is not always diffeomorphic to $M^n$. F.T. Farrell and L.E. Jones \cite{FJ93a} proved the following result in this way:
\begin{theorem}\label{0.1}\cite{FJ93a}
Let $n$ be any integer such that $\Theta_{n-1}\neq 1$, and $\displaystyle{\epsilon}$ be any positive real number. Then there exists an $n$-dimensional complete Riemannian manifold $M^n$ with finite volume and all its sectional curvatures contained in the interval $[-1-\displaystyle{\epsilon}, -1+\displaystyle{\epsilon}]$, and satisfying the following: $M^n$  is not diffeomorphic to any complete Riemannian locally symmetric space; but it is homeomorphic to $\mathbb{R}\textbf{H}^n/\Gamma $ where $\Gamma$ is a torsion free non uniform lattices in Iso$(\mathbb{R}\textbf{H}^n)$.
\end{theorem}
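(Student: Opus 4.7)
The plan is to realise $M^{n}$ by the Dehn surgery procedure just introduced, which is designed precisely to bypass the obstruction of Theorem \ref{sumnochan}. First, using Borel's construction of non-uniform arithmetic lattices (the non-cocompact analogue of Theorem \ref{recomhy}), I fix a torsion-free non-uniform lattice $\Gamma\subset\mathrm{Iso}(\mathbb{R}\textbf{H}^{n})$ and put $X=\mathbb{R}\textbf{H}^{n}/\Gamma$. Combining residual finiteness of $\pi_{1}X$ (Theorem \ref{malcev}) with the finite-volume analogue of Sullivan's argument (Theorem \ref{sullivan}), I pass to a finite cover $\widehat{X}$ that is stably parallelisable and contains a closed geodesic $\gamma$ whose tubular neighbourhood $f:\mathbb{S}^{1}\times\mathbb{D}^{n-1}\hookrightarrow\widehat{X}$ has geometric width at least $3\alpha$, where $\alpha=\alpha(n,\epsilon)$ is the constant produced in Theorem \ref{hype.metric}. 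The existence of such a $\gamma$ follows by the same residually-finite-covering trick as in the proof of Theorem \ref{hype.smooth}, applied to a prescribed closed geodesic whose short homotopy class is killed in a suitable finite quotient.

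Since $\Theta_{n-1}\neq 1$, I choose an orientation-preserving diffeomorphism $\phi:\mathbb{S}^{n-2}\to\mathbb{S}^{n-2}$ whose clutching construction $\mathbb{D}^{n-1}\cup_{\phi}\mathbb{D}^{n-1}$ represents a non-trivial element of $\Theta_{n-1}$, and form the Dehn surgery $M^{n}:=\widehat{X}_{f,\phi}$. By construction, $M^{n}$ is canonically homeomorphic to $\widehat{X}$; in particular it has finite volume and is homeomorphic to $\mathbb{R}\textbf{H}^{n}/\widehat{\Gamma}$ for the corresponding finite-index sublattice $\widehat{\Gamma}$ of $\Gamma$.

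To equip $M^{n}$ with a Riemannian metric of sectional curvatures in $[-1-\epsilon,\,-1+\epsilon]$, I keep the hyperbolic metric outside $f(\mathbb{S}^{1}\times\mathrm{int}\,\mathbb{D}^{n-1})$ and interpolate across the tube by a warped-product metric of the form $\sinh^{2}(\alpha t)\,B_{1}+\alpha^{2}B_{2}$ modelled on Property 4 in the proof of Theorem \ref{hype.metric}, now with base $\mathbb{S}^{1}\times\mathbb{S}^{n-2}$. The boundary data are arranged so that the interpolating metric matches the hyperbolic metric on the inner tube boundary and matches its pullback under the twist $\bar{\phi}$ on the outer boundary, so that the gluing is smooth. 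Pinching is then forced by the uniform limit $\lim_{\alpha\to\infty}K_{\overline{B}}(P)=-1$ of Lemma \ref{pinched}, provided $\alpha$ is chosen large enough — which the covering step has already arranged.

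The main obstacle is verifying that the smooth structure really has changed. The strategy is to compute the concordance class $[M^{n}]\in\mathcal{C}(\widehat{X})\cong[\widehat{X},\mathrm{Top}/O]$ supplied by the appropriate extension of Theorem \ref{kirby} to manifolds with cusps. Naturality of the Dehn surgery construction exhibits $[M^{n}]$ as the image of $[\phi]\in\Theta_{n-1}\cong[\mathbb{S}^{n-1},\mathrm{Top}/O]$ under a map induced by the Pontryagin-Thom collapse of $\widehat{X}$ onto a suspension spectrum built from the core circle $\gamma$. Using the stable parallelisability of $\widehat{X}$ and the suspension / infinite-loop-space argument in the proof of Theorem \ref{brum}, one verifies that this image is non-zero, so $[M^{n}]\neq[\widehat{X}]$ in $\mathcal{C}(\widehat{X})$. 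A finite-volume extension of Theorem \ref{diff-con}, resting on the Mostow--Prasad rigidity theorem together with the topological rigidity of Theorem \ref{gentoprigd}, then upgrades non-concordance to non-diffeomorphism, so $M^{n}$ is not diffeomorphic to $\widehat{X}$. Finally, since any complete Riemannian locally symmetric space $Y$ homotopy-equivalent to $\widehat{X}$ must, by Mostow--Prasad, be isometric to $\widehat{X}$, the conclusion follows: $M^{n}$ is not diffeomorphic to any complete Riemannian locally symmetric space, while remaining homeomorphic to one.
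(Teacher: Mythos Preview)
Your overall architecture is right --- start from a non-compact finite-volume real hyperbolic manifold, pass to a stably parallelisable cover, do Dehn surgery along a framed closed geodesic using a nontrivial $[\phi]\in\Theta_{n-1}$, and interpolate the metric on the tube. The pinching argument you sketch is essentially the one in the paper (Theorem~\ref{Adden1}(iv)).

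The genuine gap is in the detection step. You assert that ``using the stable parallelisability of $\widehat X$ and the suspension / infinite-loop-space argument in the proof of Theorem~\ref{brum}, one verifies that this image is non-zero.'' That argument does \emph{not} transplant to the non-compact setting without substantial extra input. The Pontryagin--Thom collapse $\widehat\alpha:\widehat X\to\mathbb{S}^{n-1}$ onto the framed core $\gamma$ need not detect $[\phi]$: on an open manifold the relevant framed cobordism class can die because $\gamma$ may be framed-cobordant, through the cusps, to something homotopically invisible. The paper controls exactly this by working not with an arbitrary hyperbolic $\Gamma$, but with Millson's arithmetic examples (Theorem~\ref{millson}) having positive first Betti number, so that there is an epimorphism $\phi:\pi_1\to\mathbb Z$. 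It then chooses a $t$-simple geodesic with $\phi([\gamma])\neq0$ (Theorem~\ref{simple}) and --- crucially --- uses Proposition~\ref{homo} to manufacture a further finite cover and a homomorphism $\lambda:\pi_1(M^n)\to\mathbb Z$ with $\lambda([\gamma])=1$ \emph{and} $\lambda(\beta)$ divisible by $|\Theta_{n-1}|$ for every cuspidal $\beta$. These are precisely the hypotheses of Theorem~\ref{Adden1}, whose conclusion~(iii) then gives that nothing in the concordance class $\mathcal M(\alpha,x)$ is diffeomorphic to $N^n\#\Sigma^n$ for any locally symmetric $N^n$. Your proposal omits the Betti-number input, the construction of $\lambda$, and the cuspidal divisibility condition; without them the Browder--Brumfiel splitting argument has no purchase, and ``$[M^n]\neq[\widehat X]$ in $\mathcal C(\widehat X)$'' is unproved.

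A secondary point: the ``finite-volume extension of Theorem~\ref{diff-con}'' you invoke is not a soft consequence of Mostow--Prasad plus Theorem~\ref{gentoprigd}; the paper packages this as the conclusion of Theorem~\ref{Adden1}, which again requires the $\lambda$-hypotheses above. So the missing ingredient is not cosmetic: the entire mechanism for showing the smooth structure has changed rests on the arithmetic/Betti-number input and the careful handling of cuspidal elements, none of which appears in your argument.
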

\begin{remark}\rm{
\indent
\begin{itemize}
\item[\bf{1.}] Theorem \ref{0.1} showed that the answer to Problem \ref{prorig} for finite volume but non-compact Riemannian locally symmetric spaces to be negative. 
\item[\bf{2.}] Recall that Kervaire and Milnor \cite{KM63} and Browder \cite{Bro69} showed that $\Theta_{2n-1}$ is non trivial for every integer $n>2$ which does not have the form $n=2^{i}-1$. On the other hand $\Theta_{12}$ is trivial. Therefore if $M^{12}$ is a closed real hyperbolic manifold, $M^{12}\#\Sigma^{12}$ must be diffeomorphic to  $M^{12}$ where $\Sigma^{12}$ is any homotopy $12$-sphere. The Theorem \ref{hype.smooth} consequently fails to yield an exotic smooth structure on a negatively curved $12$-manifold which is homeomorphic to a real hyperbolic manifold. But Dehn surgery method does. In particular, F.T. Farrell and L.E. Jones \cite{FJ93a} proved the following result:
\end{itemize} }
\end{remark}
\begin{theorem}\label{Theorem 0.2}\cite{FJ93a}
Let $n$ be any integer such that either $\Theta_{n}$ or $\Theta_{n-1}$ is not trivial. Then there exists a closed Riemannian manifold $M^n$ whose sectional curvatures are all pinched within $\displaystyle{\epsilon}$  of $-1$ and such that
\begin{itemize}
\item[(i)] $\dim M^n=n$.
\item[(ii)] $M^n$ is homeomorphic to a real hyperbolic manifold.
\item[(iii)] $M^n$ is not diffeomorphic to any Riemannian locally symmetric space.
\end{itemize}
\end{theorem}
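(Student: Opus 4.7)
The plan is to split the proof into two cases according to which of $\Theta_{n}$ or $\Theta_{n-1}$ is nontrivial. The case $\Theta_{n}\neq 0$ is essentially a restatement of Theorem \ref{hype.smooth} with a Mostow-rigidity coda; the case $\Theta_{n-1}\neq 0$ requires carrying out the Dehn-surgery construction described right after Theorem \ref{0.1} in the closed hyperbolic setting rather than the cusped one.

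For $\Theta_{n}\neq 0$, take a closed real hyperbolic manifold $M_{0}^{n}$ (Theorem \ref{recomhy}), apply Theorem \ref{hype.smooth} to obtain a finite sheeted cover $\widehat{M}$, and set $M^{n}:=\widehat{M}\#\Sigma$ for a nontrivial $\Sigma\in\Theta_{n}$. Properties (i) and (ii) are built into the construction. For (iii), any Riemannian locally symmetric space $L$ diffeomorphic to $M^{n}$ would satisfy $\pi_{1}(L)=\pi_{1}(\widehat{M})$, which is word-hyperbolic (since $\widehat{M}$ is closed negatively curved); the de~Rham decomposition of $L$ therefore admits no Euclidean or compact-type factors, so $L$ is locally symmetric of noncompact type. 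Mostow's Rigidity Theorem \ref{mostow} then forces $L$ to be isometric (hence diffeomorphic) to $\widehat{M}$, contradicting the non-diffeomorphism supplied by Theorem \ref{hype.smooth}.

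For $\Theta_{n-1}\neq 0$, let $M_{0}^{n}$ be a closed real hyperbolic $n$-manifold and $\gamma_{0}\subset M_{0}$ a closed geodesic. Given $R>0$, residual finiteness of $\pi_{1}(M_{0})$ (Theorem \ref{malcev}) together with the finiteness of translates of a fixed lift of $\gamma_{0}$ coming within distance $R$ produces a finite cover $\widehat{M}$ in which a chosen lift $\gamma$ of $\gamma_{0}$ has embedded tubular radius greater than $R$; this is in the same spirit as the use of residual finiteness in the proof of Theorem \ref{hype.smooth}. Pick an orientation-preserving diffeomorphism $\phi:\mathbb{S}^{n-2}\to\mathbb{S}^{n-2}$ whose clutching $\mathbb{D}^{n-1}\cup_{\phi}\mathbb{D}^{n-1}$ represents a nontrivial class in $\Theta_{n-1}$, let $f:\mathbb{S}^{1}\times\mathbb{D}^{n-1}\hookrightarrow\widehat{M}$ parametrise the fat tube around $\gamma$, and set $M^{n}:=\widehat{M}_{f,\phi}$. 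Since $\phi$ is topologically isotopic to the identity (Alexander trick applied to $\mathrm{Homeo}^{+}(\mathbb{S}^{n-2})$), $M^{n}$ is canonically homeomorphic to $\widehat{M}$, yielding (i) and (ii). For the pinched negative curvature required by (iii), I would adapt the interpolation of Theorem \ref{hype.metric}: in Fermi coordinates around $\gamma$ the hyperbolic metric is a warped product analogous to Property 1, and modifying the metric across a thin annular shell to implement the twist $\phi$ while preserving the warped-product form yields sectional curvatures uniformly tending to $-1$ as $R\to\infty$ by essentially the same estimate as Lemma \ref{pinched}.

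The remaining assertion in (iii), namely that $M^{n}$ is not diffeomorphic to any Riemannian locally symmetric space, is the step I expect to be the main obstacle. The word-hyperbolic plus Mostow argument of Case 1 goes through verbatim (after noting $\pi_{1}(M^{n})=\pi_{1}(\widehat{M})$ by van Kampen, since $\mathbb{S}^{n-2}$ is simply connected for $n\geq 4$), so it suffices to show $M^{n}\not\cong_{\mathrm{diff}}\widehat{M}$. By Theorem \ref{diff-con} this reduces to proving that the concordance class $[M^{n}]\in\mathcal{C}(\widehat{M})\cong[\widehat{M},\mathrm{Top}/O]$ is nontrivial. My plan is to identify $[M^{n}]$ with the composite
\[\widehat{M}\twoheadrightarrow \widehat{M}/T\simeq \mathbb{S}^{n}\vee\mathbb{S}^{n-1}\longrightarrow \mathrm{Top}/O,\]
where $T$ is the closed complement of the open fat tube and the last map records the clutching class $[\Sigma^{n-1}]\in\pi_{n-1}(\mathrm{Top}/O)=\Theta_{n-1}$ on the $\mathbb{S}^{n-1}$-wedge summand. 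A Brumfiel-type suspension splitting in the spirit of the proof of Theorem \ref{brum}, applied to the stably parallelizable tube $\mathbb{S}^{1}\times\mathbb{D}^{n-1}$, then shows this $\mathbb{S}^{n-1}$-component is detected by a monic map on $[\,\cdot\,,\mathrm{Top}/O]$, so it remains nontrivial in $[\widehat{M},\mathrm{Top}/O]$. This contradicts $M^{n}\cong_{\mathrm{diff}}\widehat{M}$ and completes the proof.
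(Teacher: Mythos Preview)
Your Case 1 is fine and matches the paper: Theorem \ref{hype.smooth} supplies the manifold and the ``Mostow plus no Euclidean factor'' argument is exactly what Theorem \ref{hype.finite.volu} packages.

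Case 2 is where your sketch diverges from the paper, and the detection step has a real gap. The paper does \emph{not} try to re-prove that the Dehn-surgered manifold is non-concordant from scratch; it simply feeds the closed Millson manifold $K^{n}$ of Theorem \ref{millson} into Theorem \ref{non uniform}. That theorem is a black box from \cite{FJ93a} whose hypothesis---positive first Betti number---is exactly what the Millson construction supplies and what your argument never uses. You start from an arbitrary closed hyperbolic $M_{0}^{n}$, pass to a cover with a fat tube, and then assert that a ``Brumfiel-type suspension splitting applied to the tube $\mathbb{S}^{1}\times\mathbb{D}^{n-1}$'' shows
\[
\widehat{\alpha}^{*}:\Theta_{n-1}=[\mathbb{S}^{n-1},\mathrm{Top}/O]\longrightarrow[\widehat{M},\mathrm{Top}/O]
\]
is injective. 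But the Brumfiel argument in Theorem \ref{brum} concerns a degree-one map $M^{n}\to\mathbb{S}^{n}$ between closed manifolds of the \emph{same} dimension; your $\widehat{\alpha}:\widehat{M}^{n}\to\mathbb{S}^{n-1}$ is the Pontryagin--Thom collapse associated to a framed circle, which is a different animal. Knowing the tube is stably parallelizable tells you nothing about whether the class $\widehat{\alpha}^{*}(x)$ survives in $[\widehat{M},\mathrm{Top}/O]$, because the obstruction lives in the homotopy of $\widehat{M}$, not of the tube. In \cite{FJ93a} this survival is arranged precisely by controlling the homology class of $\alpha$ via the epimorphism $\pi_{1}(\widehat{M})\to\mathbb{Z}$ coming from positive first Betti number (cf.\ Theorem \ref{non uniform}(i)); without that control there is no reason $\widehat{\alpha}^{*}(x)\neq 0$.

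So your Case 2 needs two repairs: start from a closed hyperbolic manifold with $b_{1}>0$ (Theorem \ref{millson} gives $K^{n}$), and either invoke Theorem \ref{non uniform} directly as the paper does, or reproduce the actual argument of \cite{FJ93a} that exploits the map to $\mathbb{S}^{1}$ to split $\widehat{\alpha}$ stably. The vague appeal to Theorem \ref{brum} does not do this.
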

We shall now need the following definitions and results to prove Theorem \ref{0.1}:
\begin{definition}\rm{
A tube $f:\mathbb{S}^1\times \mathbb{D}^{n-1}\to M^n$ determines a framed simple closed curve 
$\alpha:\mathbb{S}^1\to M^n$ where $\alpha(y)=f(y,0)$ for each $y\in \mathbb{S}^1$. The framing of $\alpha$ consists of the vector fields $X_{1}$, $X_{2}$,..., $X_{n-1}$ where $X_{i}(y)$ is the vector tangent to the curve at $t\to f(y, te_{i})$ at $t=0$. Here $e_{i}$ denotes the point in $\mathbb{R}^{n-1}$ whose $i^{th}$ coordinate is 1 and all other coordinates are 0. We use $\alpha$ to denote the curve equipped with this framing. It is called the core of $f$. The concordance class of $M_{f,\phi}$ depends only on $M^n$, the core $\alpha$ of $f$, and the (pseudo)-isotopy class of $\phi$ denoted by $x$. We consequently denote the concordance class of $M_{f, \phi}$ by $\mathcal{M}(\alpha, x)$. Recall that the isotopy classes of orientation-preserving diffeomorphisms of $\mathbb{S}^{n-2}$ are in one-one correspondence with the elements in the abelian group $\Theta_{n-1}$ which is also identified with $\pi_{n-1}(Top/O)$; therefore, $x\in \Theta_{n-1}$.\\
Let  $\widehat{\alpha}:M^n\to \mathbb{S}^{n-1}$ be the result of applying the Pontryagin-Thom construction to the framed $1$-manifold $\alpha$. It is explicitly described by $\widehat{\alpha}(f(y,v))=q(v)$ where $(y, v)\in \mathbb{S}^1\times \mathbb{D}^{n-1}$, and $q: \mathbb{D}^{n-1}\to \mathbb{D}^{n-1}/\partial\mathbb{D}^{n-1}=\mathbb{S}^{n-1}$ is the canonical quotient map if $y\notin$ image $f$, then $\widehat{\alpha}(y)=q(\partial\mathbb{D}^{n-1})$.}
\end{definition}
\begin{definition} \rm{
Let $M^n$ be a complete (connected) Riemannian manifold with finite volume and whose all sectional curvatures $-1$.
We say that an element $\gamma \in \pi_{1}(M)$ is cuspidal if there are arbitrarily short closed curves in $M^n$ which are freely homotopic to a curve representing $\gamma$.}
\end{definition}
\begin{definition}\rm{
Let $M^n$ be a complete (connected) Riemannian manifold with finite volume and whose all sectional curvatures $-1$.
A closed geodesic $\gamma:\mathbb{S}^1\to M^n$ is said to be $t$-simple if $\dot{\gamma}:\mathbb{S}^1\to TM$ is simple, i.e., a one-to-one function.}
\end{definition}
\begin{theorem}\label{hype.finite.volu}\cite{FJ93a} \rm{
Let $M^n$, with $n\ge 2$, be a complete (connected) Riemannian manifold with finite volume and all sectional curvatures $-1$. Let $N^n$ be a complete Riemannian locally symmetric space. If $M$ and $N$ are homeomorphic, then they are isometrically equivalent (after rescaling the metric on $N$ by a positive constant).}
\end{theorem}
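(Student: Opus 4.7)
My plan is to reduce the theorem to Mostow--Prasad strong rigidity. First, the homeomorphism $f : M \to N$ identifies $\pi_1(N) \cong \pi_1(M) =: \Gamma$, a torsion-free finite covolume lattice in $G_0 := \mathrm{Iso}(\mathbb{R}\textbf{H}^n)$. Since $\widetilde M = \mathbb{R}\textbf{H}^n$ is contractible, $M$ is aspherical, hence $N$ is aspherical and $\widetilde N$ is a contractible symmetric space. I would then apply the de Rham decomposition $\widetilde N = E \times X_- \times X_+$: the compact-type factor $X_+$ must be a point (it would otherwise contribute non-trivial homotopy), and I would argue away the Euclidean factor $E$ using that $\Gamma$, as a lattice in $O(n,1)$, has no virtually abelian direct factor, combined with the Lawson--Yau/Eberlein splitting theorem for groups acting on Riemannian products with a parallel Euclidean factor. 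This leaves $\widetilde N = X_- = G/K$ a symmetric space of non-compact type, and $\Gamma \hookrightarrow G$ a discrete faithful representation.

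Next I would show that $\Gamma$ is actually a lattice in $G$, i.e.\ that $N$ has finite volume. When $M$ is compact this is automatic. In the noncompact case I would transport the thin--thick decomposition of $M$ across the homeomorphism: the ends of $M$ are finitely many cusps of the form (compact flat manifold)$\,\times [0,\infty)$, whose peripheral subgroups are virtually $\mathbb{Z}^{n-1}$. Applying the Margulis lemma to $N$ (whose universal cover is non-positively curved), these peripheral subgroups must act as parabolics on $X_-$ fixing points at infinity, and the corresponding horoball quotients give finite-volume ends of $N$. With $\Gamma$ now a lattice in both $G_0$ and $G$, Mostow--Margulis rigidity forces $G$ to be locally isomorphic to $G_0$, and the two lattice embeddings of $\Gamma$ to be conjugate up to an automorphism and a rescaling of the symmetric metric on $X_-$. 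The Mostow--Prasad rigidity theorem (Theorem \ref{mostow} for the compact case, Prasad's extension for the noncompact finite-volume case) then yields the desired isometry between $M$ and $N$ after the appropriate rescaling.

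The main obstacle I anticipate is the finite-volume transfer step: a priori $N$ could be an infinite-volume symmetric space quotient whose underlying manifold merely happens to be homeomorphic to $M$. Excluding this is precisely the step where the constant-curvature $-1$ hypothesis on $M$ (rather than general negative curvature) is essential, since the specific topological structure of hyperbolic cusps is what pins down the peripheral subgroups of $\Gamma$ as parabolic and of finite covolume when acting on $X_-$. Once this is handled, the remainder is classical rigidity and the argument is fairly mechanical.
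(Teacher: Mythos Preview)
The paper itself does not supply a proof of this theorem; it is simply quoted from \cite{FJ93a} as an input to later arguments. So there is no ``paper's own proof'' to compare against, and I will comment directly on your sketch.

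Your overall strategy---eliminate compact-type and Euclidean de~Rham factors of $\widetilde N$, establish that $\Gamma=\pi_1(M)$ sits as a lattice in $\mathrm{Isom}(\widetilde N)$, then invoke Mostow--Prasad rigidity---is the standard and correct route for $n\ge 3$. Two substantive issues, however, need repair.

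\emph{Circularity in the Euclidean-factor step.} You propose to kill the Euclidean factor $E$ via the Lawson--Yau/Eberlein splitting theorem. Those theorems require $N$ (or at least the $\Gamma$-action) to have finite covolume, which is exactly what you have not yet established at that point of the argument. You should instead argue intrinsically from $\Gamma$: since de~Rham factors are preserved by isometries, $\Gamma$ embeds in $\mathrm{Isom}(\mathbb{R}^k)\times\mathrm{Isom}(X_-)$; the projection to the amenable group $\mathrm{Isom}(\mathbb{R}^k)$, combined with the fact that a torsion-free lattice in $O(n,1)$ has no nontrivial normal abelian subgroup and does not virtually split as a direct product, is what forces $k=0$. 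This can be done before any volume consideration.

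\emph{The finite-volume transfer.} You are right that this is the crux, but your sketch is too loose. Knowing that the peripheral subgroups are virtually $\mathbb{Z}^{n-1}$ does not by itself force them to act parabolically on $X_-$, nor does a parabolic fixed point at infinity automatically yield a finite-volume horoball quotient. One clean way through: the peripheral subgroups of $\Gamma\subset O(n,1)$ consist entirely of parabolic (non-semisimple) elements; since Euclidean isometries are always semisimple, the $X_-$-component of each such element must itself be parabolic, so the peripheral subgroup fixes a point $\xi\in\partial X_-$ and lies in the unipotent radical $N_\xi$ of the associated parabolic. Because the peripheral subgroup is cocompact in the cusp cross-section of $M$ (hence virtually $\mathbb{Z}^{n-1}$ of full rank), it is a lattice in $N_\xi$, and the corresponding horoball quotient in $N$ has finite volume. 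Assembling the cusps with the compact core of $N$ (transported from $M$) gives $\mathrm{vol}(N)<\infty$.

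Two further remarks. First, the statement as printed (with $n\ge 2$) is false: for $n=2$ any two non-isometric hyperbolic surfaces of the same genus are homeomorphic complete locally symmetric spaces. The intended hypothesis is $n\ge 3$, which is also where Mostow--Prasad applies; your write-up should flag this. Second, the rigidity you invoke at the end is Mostow--Prasad (Prasad's extension to non-uniform lattices in rank one), not ``Mostow--Margulis''; Margulis superrigidity is a higher-rank phenomenon and is neither needed nor applicable here.
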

\begin{theorem}\label{simple}\cite{FJ93a}\rm{
Let $M^n$  be a complete (connected) Riemannian manifold with finite volume and all sectional curvatures $-1$. Assume $M^n$ is orientable and $\phi:\pi_{1}(M)\to \mathbb{Z}$ is an epimorphism. Then there is a $t$-simple closed geodesic $\gamma:\mathbb{S}^1\to M^n$ such that $\phi([\gamma])\neq 0$ where $[\gamma]$ denotes the free homotopy class of $\gamma$.}
\end{theorem}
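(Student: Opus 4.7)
The plan is to realize a $t$-simple closed geodesic with nonzero $\phi$-value as the geodesic representative of a primitive, non-cuspidal conjugacy class in $\phi^{-1}(1) \subset \pi_1(M)$. The strategy hinges on the observation that primitivity in $\phi^{-1}(1)$ is automatic: surjectivity of $\phi$ yields some $\alpha \in \pi_1(M)$ with $\phi(\alpha) = 1$, and if $\alpha = \beta^k$ for some $k \geq 1$ then $k\phi(\beta) = 1$ in $\mathbb{Z}$ forces $k = 1$. Since being a proper power is a conjugacy invariant, every element of the coset $\phi^{-1}(1) = \alpha\ker\phi$ is primitive, and the only remaining task is to find some element of $\phi^{-1}(1)$ that is not conjugate into any of the finitely many cusp subgroups $C_1, \dots, C_r$ of the geometrically finite lattice $\pi_1(M)$ (if $M$ is compact there are no cusps and one simply takes $\gamma := \alpha$).

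To extract such an element in the non-compact case, I would first produce a loxodromic element $\delta \in \ker\phi$. Commutators of generic loxodromic elements of $\pi_1(M)$ automatically lie in $\ker\phi$ because $\mathbb{Z}$ is abelian, and they are themselves loxodromic when the original axes are in general position; alternatively one may invoke the standard fact that an infinite-index normal subgroup of a non-elementary rank-one lattice contains loxodromic elements. Since $\pi_1(M)$ is torsion-free, the element $\alpha$ is either parabolic or loxodromic. A north--south ping-pong argument on $\partial \mathbb{H}^n$ now shows that for all sufficiently large $N$ the element $\gamma := \alpha \delta^N$ is loxodromic with a pair of distinct attracting and repelling boundary fixed points (one just needs $\alpha$ not to preserve the boundary fixed-point set of $\delta$, which can always be arranged by a generic choice of $\delta$), while $\phi(\gamma) = 1 + N\phi(\delta) = 1$.

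Standard hyperbolic geometry then identifies the translation axis of $\gamma$ in $\mathbb{H}^n$ with the lift of a unique closed geodesic $\gamma : \mathbb{S}^1 \to M$ in the free homotopy class of $\gamma$, parametrized at constant speed. This geodesic is $t$-simple: if $\dot\gamma(t_1) = \dot\gamma(t_2)$ for some $t_1 \neq t_2$, uniqueness for the geodesic ODE from a prescribed initial velocity forces $\gamma$ to be periodic of period $|t_2 - t_1| < 1$, making it a proper cover of a shorter closed geodesic and contradicting the primitivity of $\gamma$ as a group element established in the first paragraph. Since $\phi([\gamma]) = 1 \neq 0$ by construction, this completes the proof.

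The main obstacle is the existence step in the non-compact case --- finding a non-cuspidal representative in $\phi^{-1}(1)$. Primitivity is essentially free because $1$ is indivisible in $\mathbb{Z}$, and once a primitive loxodromic element is in hand, the geodesic realization and the $t$-simplicity follow from generalities. All the real work lies in using the rigid peripheral structure of the cusped lattice $\pi_1(M)$, together with ping-pong on $\partial \mathbb{H}^n$, to arrange non-cuspidality without destroying the $\phi$-value.
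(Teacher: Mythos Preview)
The paper is a survey and does not include a proof of this theorem; it simply cites the result from \cite{FJ93a}. So there is no in-paper argument to compare against, and I evaluate your proposal on its own merits.

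Your approach is sound and the essential steps are correct. The observation that any $\alpha$ with $\phi(\alpha)=1$ is automatically primitive (since $1$ is indivisible in $\mathbb{Z}$) is the right starting point, and your deduction that a primitive loxodromic element yields a $t$-simple closed geodesic is valid: if $\dot\gamma(t_1)=\dot\gamma(t_2)$ with $t_1\neq t_2$, the set of periods of the lifted geodesic is a closed subgroup of $\mathbb{R}$ strictly containing $\mathbb{Z}$, forcing $[\gamma]$ to be a proper power in the infinite cyclic stabiliser of its axis.

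The only place that deserves a little more care is the non-compact step, where you produce a loxodromic element in $\phi^{-1}(1)$. Your outline is correct but compressed. Two remarks make it airtight. First, the existence of a loxodromic $\delta\in\ker\phi$ follows cleanly from the fact that an infinite normal subgroup of a non-elementary discrete group has the same limit set as the ambient group, hence is itself non-elementary; this is preferable to the commutator argument, which requires an extra check. Second, the condition you actually need for the ping-pong is $\alpha(\delta^+)\neq\delta^-$; with this, small neighbourhoods $U_\pm$ of $\delta^\pm$ in $\partial\mathbb{H}^n$ satisfy $(\alpha\delta^N)(\alpha(U_+))\subset\alpha(U_+)$ and $(\alpha\delta^N)^{-1}(U_-)\subset U_-$ for $N$ large, giving two distinct boundary fixed points and hence loxodromicity (torsion-freeness rules out the elliptic case). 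If by bad luck $\alpha(\delta^+)=\delta^-$ \emph{and} $\alpha(\delta^-)=\delta^+$, then $\alpha^2$ fixes both endpoints; when $\alpha$ is parabolic this is impossible, and when $\alpha$ is loxodromic it forces $\alpha,\delta$ to share an axis, contradicting $\phi(\delta)=0$ and $\phi(\alpha)=1$ via the cyclic stabiliser. So after possibly replacing $\delta$ by $\delta^{-1}$ the condition holds, and no further genericity is needed.

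With these clarifications your argument is complete.
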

\begin{proposition}\label{homo}\cite{FJ93a}
Given a semi simple element $A\in SO^{+}(n,1,\mathbb{Z})$ of infinite  order and a positive integer $m$, there exists a finite group $G$ and a homomorphism $\chi:SO^{+}(n,1,\mathbb{Z})\to G$ with the following properties:
\begin{itemize}
\item[(i)] The order of $\chi(A)$ is divisible by $m$.
\item[(ii)] Let $\beta$ be any unipotent element in $SO^{+}(n,1,\mathbb{Z})$ such that  $\chi(B)= \chi(A)^s$ where $s\in \mathbb{Z}$, then $m$ divides $s$.
\end{itemize}
\end{proposition}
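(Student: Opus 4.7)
The plan is to take $\chi := \chi_p$ to be reduction modulo a carefully chosen prime $p$, with $G$ the (finite) image of $SO^+(n,1,\mathbb{Z})$ inside $GL_{n+1}(\mathbb{F}_p)$. The whole argument reduces to choosing $p$ satisfying three conditions: (a) $\gcd(p,m)=1$; (b) $p$ does not divide the discriminant of the minimal polynomial of $A$; and (c) the order of $\chi_p(A)$ in $G$ is divisible by $m$. Conditions (a) and (b) exclude only finitely many primes, while (c) is where the real work lies.

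Granting such a $p$, property (i) is built into (c). For (ii), condition (b) forces $\chi_p(A)$ to remain semisimple in $GL_{n+1}(\mathbb{F}_p)$, so its order $d$ is coprime to $p$. If $B\in SO^+(n,1,\mathbb{Z})$ is unipotent, then $\chi_p(B)$ is unipotent in $GL_{n+1}(\mathbb{F}_p)$ and hence has order a power of $p$. If $\chi_p(B)=\chi_p(A)^s$, then
\[
\mathrm{ord}(\chi_p(A)^s) \;=\; \frac{d}{\gcd(d,s)}
\]
is simultaneously a power of $p$ and a divisor of $d$; since $\gcd(d,p)=1$, this order must equal $1$. Hence $d\mid s$, and (c) gives $m\mid s$ as required.

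The main obstacle is arranging (c), and this is the arithmetic heart of the argument. Since $A$ is semisimple and has infinite order, at least one eigenvalue $\lambda$ of $A$ is not a root of unity. Letting $K=\mathbb{Q}(\lambda)$ and $\mathfrak{p}\subset\mathcal{O}_K$ a prime above $p$ with residue field $\mathbb{F}_q$, the order of $\chi_p(A)$ is divisible by the multiplicative order of the reduction $\bar\lambda\in\mathbb{F}_q^*$, so it suffices to arrange $m\mid\mathrm{ord}(\bar\lambda)$. I would establish this by a Chebotarev density argument applied to a Kummer-cyclotomic tower over $K$: choose $\mathfrak{p}$ to split completely in $K(\zeta_m)$, forcing $m\mid q-1$, and for each prime $\ell\mid m$ arrange the Frobenius at $\mathfrak{p}$ to act nontrivially on $K(\zeta_m,\lambda^{1/\ell^{v_\ell(m)}})$, which prevents $\bar\lambda$ from being a sufficiently high $\ell$-th power and hence forces $\ell^{v_\ell(m)}\mid\mathrm{ord}(\bar\lambda)$. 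The non-torsion hypothesis on $\lambda$ guarantees that each of these Kummer extensions of $K(\zeta_m)$ is a proper extension, so Chebotarev density produces a positive-density set of admissible primes $\mathfrak{p}$. Removing the finitely many primes excluded by (a) and (b) still leaves infinitely many choices of $p$, and any one of them yields the required homomorphism $\chi:SO^+(n,1,\mathbb{Z})\to G$.
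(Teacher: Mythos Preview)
The survey does not prove this proposition; it merely cites \cite{FJ93a}. Your overall strategy---reduce modulo a prime $p$ satisfying (a)--(c)---is the natural one, and your derivation of property~(ii) from (a)--(c), exploiting that a semisimple element of $GL_{n+1}(\mathbb{F}_p)$ has order prime to $p$ while a unipotent one has $p$-power order, is correct and clean.

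There is, however, a real gap in establishing~(c). Your Frobenius condition on $K(\zeta_m,\lambda^{1/\ell^{a}})$ (with $a=v_\ell(m)$) only says that $\bar\lambda$ is not an $\ell^{a}$-th power in $\mathbb{F}_q^\ast$. Writing $\bar\lambda=g^k$ for a generator $g$, this means $v_\ell(k)\le a-1$, whence $v_\ell\bigl(\mathrm{ord}(\bar\lambda)\bigr)=v_\ell(q-1)-v_\ell(k)\ge v_\ell(q-1)-a+1$; since you only arranged $v_\ell(q-1)\ge a$, you obtain $v_\ell\bigl(\mathrm{ord}(\bar\lambda)\bigr)\ge 1$, not $\ge a$. (For instance if $q-1=\ell^{a}$ and $\bar\lambda=g^{\ell}$ with $a\ge 2$, then $\bar\lambda$ is not an $\ell^{a}$-th power yet $\mathrm{ord}(\bar\lambda)=\ell^{a-1}$.) The right condition is that $\bar\lambda$ not be an $\ell$-th power---i.e.\ Frobenius nontrivial already on $K(\zeta_m,\lambda^{1/\ell})$---which forces $v_\ell(k)=0$ and hence $v_\ell\bigl(\mathrm{ord}(\bar\lambda)\bigr)=v_\ell(q-1)\ge a$.

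A related oversight: your assertion that non-torsion of $\lambda$ makes each Kummer extension proper is false as stated, since $\lambda$ may itself be an $\ell$-th power in $K(\zeta_m)$, making $K(\zeta_m,\lambda^{1/\ell})=K(\zeta_m)$. One first writes $\lambda=\mu^{\ell^{e}}$ with $\mu\in K(\zeta_m)^\ast$ not an $\ell$-th power---such a maximal $e$ exists because $\lambda$ is a non-torsion unit (both $\lambda$ and $\lambda^{-1}$ are eigenvalues of $A\in SO^{+}(n,1,\mathbb{Z})$, hence algebraic integers) in the finitely generated group $\mathcal{O}_{K(\zeta_m)}^\ast$---and then runs the argument with $\mu$ in place of $\lambda$, enlarging the $\ell$-part of $m$ by $\ell^{e}$ to compensate. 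With these two repairs your Chebotarev argument goes through.
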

\begin{theorem}\label{non uniform}\cite{FJ93a}
Assume that $M^n$, with $n\ge 6$, is closed (connected) Riemannian manifold with all sectional curvatures $-1$ and has positive first Betti number. Given $\displaystyle{\epsilon}>0$ and an infinite order element $y\in H_{1}(M^n,\mathbb{Z})$, there exist a (connected) finite sheeted covering space $P:\mathcal{M}^n\to M^n$ and a simple framed geodesic $\alpha$ in $\mathcal{M}^n$ with the following properties:
\begin{itemize}
\item[(i)] Some multiple of the homology class represented by $\alpha$ maps to a non zero multiple of $y$ via  $P_{*}: H_{1}(\mathcal{M}^n,\mathbb{Z})\to H_{1}(M^n,\mathbb{Z})$.
\item[(ii)] There is no manifold diffeomorphic to $N^n \#\Sigma^n$ in the concordance class $\mathcal{M}(\alpha,x)$ provided $N^n$ is a Riemannian locally symmetric space, $\Sigma^n$ represents an element in $\Theta_{n}$ and $x$ is a non zero element in $\Theta_{n-1}$.
\item[(iii)] Each concordance class $\mathcal{M}(\alpha,x)$ contains a complete and finite volume Riemannian manifold whose sectional curvatures are all in the interval $[-1-\displaystyle{\epsilon}, -1+\displaystyle{\epsilon}]$.
\end{itemize}
\end{theorem}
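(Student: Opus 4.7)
The plan is to combine Theorem \ref{simple} (existence of a $t$-simple geodesic representing a given cohomology class) with Proposition \ref{homo} (arithmetic separation of semisimple and unipotent elements) to produce a simple framed geodesic $\alpha$ in a suitable finite cover $\mathcal{M}$, then to apply the Dehn-surgery/warped-product construction of the proof of Theorem \ref{hype.metric} for (iii), and finally to invoke Theorem \ref{hype.finite.volu} together with a concordance-class computation in $[\mathcal{M},\rm{Top}/O]$ for (ii). Since $b_1(M)>0$ and $y$ has infinite order, I first choose an epimorphism $\phi:\pi_1(M)\to\mathbb{Z}$ that is nontrivial on a preimage of $y$, then invoke Theorem \ref{simple} to obtain a $t$-simple closed geodesic $\gamma:\mathbb{S}^1\to M$ with $\phi([\gamma])\neq 0$; in particular a nonzero multiple of $[\gamma]$ agrees with a nonzero multiple of $y$ in $H_1(M;\mathbb{Q})$. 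The holonomy along $\gamma$ is a semisimple element $A\in SO^+(n,1,\mathbb{Z})$ of infinite order; apply Proposition \ref{homo} with $m$ chosen large to obtain $\chi:SO^+(n,1,\mathbb{Z})\to G$, and let $P:\mathcal{M}\to M$ be the finite cover corresponding to $\ker\chi\cap\pi_1(M)$. Clause (i) of Proposition \ref{homo} guarantees that the lift $\alpha\subset\mathcal{M}$ of $\gamma$ is a simple closed geodesic wrapping $m$ times around $\gamma$, with tube radius tending to infinity as $m\to\infty$. Condition (i) of the theorem follows automatically from $P_{*}[\alpha]=m[\gamma]$.

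\textbf{Pinched metrics (iii).} Given $x\in\Theta_{n-1}$, represented by an orientation-preserving diffeomorphism $\psi:\mathbb{S}^{n-2}\to\mathbb{S}^{n-2}$, form the Dehn-twisted manifold $\mathcal{M}_{f,\psi}$ along the tube around $\alpha$. The large tube radius afforded by the construction above lets me glue in a warped-product metric on $\mathbb{S}^{n-1}\times[1,2]$ following Properties 3 and 4 in the proof of Theorem \ref{hype.metric}. Lemma \ref{pinched} then delivers sectional curvatures in $(-1-\epsilon,-1+\epsilon)$ once the stretching parameter is chosen sufficiently large, giving (iii).

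\textbf{Main obstacle: excluding locally symmetric summands (ii).} Suppose some manifold in $\mathcal{M}(\alpha,x)$ is diffeomorphic to $N^n\#\Sigma^n$ with $N$ locally symmetric and $x\neq 0$. Both the Dehn surgery and the connected sum with a homotopy sphere preserve the homeomorphism type, so $N$ is homeomorphic to the closed hyperbolic $\mathcal{M}$; Theorem \ref{hype.finite.volu} then forces $N$ to be hyperbolic and isometric, hence diffeomorphic, to $\mathcal{M}$. So (ii) reduces to showing that $[\mathcal{M}\#\Sigma^n]\neq \mathcal{M}(\alpha,x)$ in $\mathcal{C}(\mathcal{M})=[\mathcal{M},\rm{Top}/O]$ for every $\Sigma^n\in\Theta_n$ and every nonzero $x\in\Theta_{n-1}$. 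Under Theorem \ref{kirby} the first class is $f_\mathcal{M}^{*}[\Sigma^n]$, with $f_\mathcal{M}:\mathcal{M}\to\mathbb{S}^n$ a degree-one map, while the second is $\widehat{\alpha}^{*}(x)$, with $\widehat{\alpha}:\mathcal{M}\to\mathbb{S}^{n-1}$ the Pontryagin--Thom collapse onto a meridian of $\alpha$. I expect the hard part of the proof to be the separation of these two classes: $\widehat{\alpha}^{*}(x)$ is detected by restriction to a meridional $\mathbb{S}^{n-1}$ transverse to $\alpha$, whereas $f_\mathcal{M}^{*}[\Sigma^n]$ factors through $\mathbb{S}^n$ and therefore restricts trivially to any such meridian. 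Using the infinite-loop structure of $\rm{Top}/O$ (as in the proof of Theorem \ref{brum}) I would translate this into a computation in an appropriate generalized cohomology, with clause (ii) of Proposition \ref{homo} serving to rule out any unipotent cancellation of the meridional detection. Once this separation is established, no diffeomorphism between $\mathcal{M}\#\Sigma^n$ and a representative of $\mathcal{M}(\alpha,x)$ can exist, completing (ii).
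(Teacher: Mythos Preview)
The survey itself does not prove Theorem~\ref{non uniform}; it merely quotes it from \cite{FJ93a} and uses it as a black box in the proof of Theorem~\ref{Theorem 0.2}. So there is no ``paper's proof'' to compare against, and the question is whether your outline stands on its own.

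There is a genuine structural gap. You build the finite cover $\mathcal{M}\to M$ by invoking Proposition~\ref{homo}, but that proposition is a statement about $SO^{+}(n,1,\mathbb{Z})$; nothing in the hypotheses of Theorem~\ref{non uniform} says that $\pi_1(M)$ sits inside an arithmetic lattice, so you cannot form ``$\ker\chi\cap\pi_1(M)$'' as written. More to the point, Proposition~\ref{homo} exists in the paper to control \emph{cuspidal} (unipotent) elements in the \emph{non-compact} finite-volume case (see the proof of Theorem~\ref{0.1} and the role of condition~(ii) in Theorem~\ref{Adden1}). Here $M$ is \emph{closed}, there are no cusps and no unipotent elements, and your sentence ``clause~(ii) of Proposition~\ref{homo} serving to rule out any unipotent cancellation'' has no content. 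The correct way to manufacture $\mathcal{M}$ in the closed case is to combine Sullivan's Theorem~\ref{sullivan} (to pass to a stably parallelizable finite cover) with Malcev's residual finiteness Theorem~\ref{malcev} (to enlarge the tubular neighborhood of the lifted geodesic), exactly as in the proof of Theorem~\ref{hype.smooth}.

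The omission of Theorem~\ref{sullivan} is not cosmetic: your argument for~(ii) hinges on ``restriction to a meridional $\mathbb{S}^{n-1}$'' detecting $\widehat{\alpha}^{*}(x)$, but a meridian sphere bounds in $\mathcal{M}$, so the composite $\mathbb{S}^{n-1}\hookrightarrow\mathcal{M}\xrightarrow{\widehat{\alpha}}\mathbb{S}^{n-1}$ is null-homotopic and the naive restriction sees nothing. What actually separates $\widehat{\alpha}^{*}(x)$ from the image of $f_{\mathcal{M}}^{*}$ is a stable splitting argument of the Browder--Brumfiel type (cf.\ Theorem~\ref{brum}), and that argument requires $\mathcal{M}$ to be a $\pi$-manifold. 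Without first invoking Theorem~\ref{sullivan} you have no reason to believe the detection step goes through. Your treatment of~(iii) via Lemma~\ref{pinched} is fine once the tube radius is large, but again it is residual finiteness, not Proposition~\ref{homo}, that delivers that radius.
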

\begin{theorem}\label{Adden1}\cite{FJ93a}
Let $M^n$, with $n\ge 6$, be complete (connected) Riemannian manifold with all sectional curvatures $-1$.
Assume that $M^n$ is a $\pi$- manifold (non necessarily compact), $\gamma$ is a $t$-simple framed closed geodesic in $M^n$ and $\lambda:\pi_{1}(M)\to \mathbb{Z}$ is a homomorphism such that
\begin{itemize}
\item[(i)] $\lambda([\gamma])=1$ where $[\gamma]$ denotes the free homotopy class of $\gamma$ and
\item[(ii)]$\lambda(\beta)$ is divisible by the order of  $\Theta_{n-1}$ for each cuspidal element $\beta$ in $\pi_{1}(M^n)$.
Given a positive real number $\displaystyle{\epsilon}$, there exist a (connected) finite sheeted covering space  $P:\mathcal{M}^n\to M^n$ and a simple framed geodesic $\alpha$ in $\mathcal{M}^n$ such that the composite $p\circ \alpha=\gamma$.
\item[(iii)]There is no manifold diffeomorphic to $N^n \#\Sigma^n$ in the concordance class $\mathcal{M}(\alpha,x)$ provided $N^n$ is a Riemannian locally symmetric space, $\Sigma^n$ represents an element in $\Theta_{n}$ and $x$ is a non zero element in $\Theta_{n-1}$.
\item[(iv)] Each concordance class $\mathcal{M}(\alpha,x)$ contains a complete and finite volume Riemannian manifold whose sectional curvatures are all in the interval $[-1-\displaystyle{\epsilon}, -1+\displaystyle{\epsilon}]$.
\end{itemize}
\end{theorem}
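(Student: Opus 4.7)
The plan is to parallel the proof of Theorem \ref{non uniform} (the closed case), adapting it to the non-compact finite-volume setting by using the divisibility hypothesis on cuspidal elements to control the Pontryagin-Thom construction near the ends of $M^n$.

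First I would construct the finite cover $P:\mathcal{M}^n \to M^n$ and the lifted framed geodesic $\alpha$. Since $[\gamma]$ is represented by a closed geodesic in the constant-curvature manifold $M^n$, it is semi-simple in $\pi_1(M)\subset \mathrm{Isom}(\mathbb{R}\mathbf{H}^n)$. Using Theorem \ref{malcev} together with Proposition \ref{homo} applied to $[\gamma]$, I can construct a homomorphism $\chi:\pi_1(M)\to G$ onto a finite group whose kernel $H$ contains $[\gamma]$ and separates $\alpha$ from its other self-crossings and cuspidal returns. Take $\mathcal{M}$ to be the associated cover. Then $\gamma$ lifts uniquely to a simple framed closed geodesic $\alpha$ in $\mathcal{M}$ with $P\circ \alpha=\gamma$, and the restriction $\lambda|_{\pi_1(\mathcal{M})}$ is automatically divisible by $|\Theta_{n-1}|$ on every cuspidal element, inheriting the divisibility from the corresponding cuspidal elements of $\pi_1(M)$.

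Next I would carry out the Pontryagin-Thom analysis. The framed geodesic $\alpha$ determines a map $\widehat{\alpha}:\mathcal{M}^n\to \mathbb{S}^{n-1}$. Because $\mathcal{M}^n$ inherits the $\pi$-manifold property from $M^n$, the stable splitting argument in the proof of Theorem \ref{brum} applies: the $(n+1)$-fold suspension of $\widehat{\alpha}$ is homotopically split via the Whitney embedding and the infinite-loop-space structure of $Top/O$, so $\widehat{\alpha}^{*}:[\mathbb{S}^{n-1},Top/O]\to [\mathcal{M}^n,Top/O]$ is monic. The divisibility assumption ensures this monicity survives the non-compactness: the codimension-one submanifold Poincar\'e-dual to $\lambda\bmod |\Theta_{n-1}|$ can be chosen compactly (avoiding the cusps), so the relevant $Top/O$-valued classes are well-defined on the finite-volume $\mathcal{M}^n$. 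Property (iii) is then immediate: if some $N^n\#\Sigma^n$ with $N^n$ locally symmetric and $\Sigma^n\in \Theta_n$ lay in $\mathcal{M}(\alpha,x)$ with $x\neq 0$, Theorem \ref{hype.finite.volu} would force $N^n$ to be isometric to $\mathcal{M}^n$ up to rescaling, Theorem \ref{sumnochan} would force $N^n\#\Sigma^n$ to be diffeomorphic to $\mathcal{M}^n$, and monicity of $\widehat{\alpha}^{*}$ would contradict $x\neq 0$.

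For property (iv) I would perform the Dehn surgery construction of Section 3 on a tubular neighborhood of $\alpha$: the hyperbolic metric agrees with the model $\overline{A}$ of Property 2, and replacing it with the twisted metric $\overline{B}$ of Property 4 built from an orientation-preserving diffeomorphism $\phi\in \mathrm{Diff}(\mathbb{S}^{n-2})$ representing $x\in \Theta_{n-1}$ produces a complete finite-volume metric on a representative of $\mathcal{M}(\alpha,x)$, with the cusps unaltered. By Lemma \ref{pinched}, once the tube is sufficiently long all sectional curvatures lie in $(-1-\epsilon,-1+\epsilon)$; passing to a deeper cover provides the length. The main obstacle will be verifying that the cuspidal divisibility hypothesis is precisely strong enough to make $\widehat{\alpha}$ well-defined up to homotopy on the non-compact $\mathcal{M}^n$ and to keep $\widehat{\alpha}^{*}$ monic --- this is the genuinely new cohomological input, absent from the closed-case argument of Theorem \ref{non uniform}, and the place where the finite-volume hypothesis must be exploited with the greatest care.
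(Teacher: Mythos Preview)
The paper does not actually prove this theorem; it is quoted from \cite{FJ93a} and used as a black box in the proof of Theorem~\ref{0.1}. So there is no ``paper's own proof'' to compare against, and your sketch must be judged on its own.

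Your outline has the right architecture (pass to a cover, lift $\gamma$ to a simple $\alpha$, show $\widehat{\alpha}^{*}$ is monic, do the curvature estimate via Lemma~\ref{pinched}), but two steps are not right as written. First, you invoke Proposition~\ref{homo} to build the cover, but that proposition is specific to $SO^{+}(n,1,\mathbb{Z})$ and the hypotheses of Theorem~\ref{Adden1} do not assume $M$ is arithmetic; moreover, the homomorphism $\lambda$ with its divisibility property is already \emph{given} to you, not something to be manufactured. What you actually need is a residual-finiteness argument (Theorem~\ref{malcev}) to find a cover in which the $t$-simple geodesic $\gamma$ becomes genuinely simple and has a long enough tubular neighbourhood for Lemma~\ref{pinched} to apply; Proposition~\ref{homo} plays no role at this stage.

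Second, your explanation of how the cuspidal divisibility hypothesis enters is too vague to be a proof. The issue is not that $\widehat{\alpha}$ is ``well-defined up to homotopy'' --- it always is --- but rather that on a non-compact manifold the Browder--Brumfiel argument (Theorem~\ref{brum}) for monicity of $\widehat{\alpha}^{*}$ can fail because the relevant class in $[\mathcal{M}^n,Top/O]$ may be killed by pushing it out to the ends. The hypothesis that $\lambda(\beta)$ is divisible by $|\Theta_{n-1}|$ for every cuspidal $\beta$ is exactly what guarantees that the restriction of $\widehat{\alpha}^{*}(x)$ to each cusp cross-section is trivial in $\Theta_{n-1}$, so the obstruction to monicity vanishes end by end. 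You gesture at this with ``the codimension-one submanifold \ldots\ can be chosen compactly,'' but that sentence does not explain the mechanism. Finally, in your argument for (iii), after invoking Theorems~\ref{hype.finite.volu} and~\ref{sumnochan} you still need to pass from ``diffeomorphic to $\mathcal{M}^n$'' to ``concordant to $\mathcal{M}^n$''; this requires the non-compact finite-volume analogues of Mostow rigidity and the topological rigidity theorem used in Theorem~\ref{Lemma 2.1}, which you should cite explicitly rather than leave implicit.
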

\begin{theorem}\label{millson}\cite{Mil76}
For each integer $n>1$, there exist two complete (connected) finite volume Riemannian manifolds $K^n$ and $N^n$ of dimension $n$ which satisfy the following properties:
\begin{itemize}
\item[(i)] All the sectional curvatures of both $K^n$ and $N^n$ are $-1$.
\item[(ii)]Both $K^n$ and $N^n$ have positive first Betti number.
\item[(iii)] $K^n$ is compact.
\item[(iv)] $N^n$  is not compact.
\item[(v)]$\pi_{1}(N^n)$ is isomorphic to a finite index subgroup of  $SO^{+}(n,1,\mathbb{Z})$.
\end{itemize}
\end{theorem}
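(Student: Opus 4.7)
The plan is to construct both $K^n$ and $N^n$ as arithmetic hyperbolic $n$-manifolds coming from quadratic forms of signature $(n,1)$, differing only in the underlying number field, and then to force positive first Betti number by a reflection/Smith-theoretic argument on a congruence cover. All the curvature and covering-volume properties are immediate once the arithmetic setup is in place; the real work is in verifying (ii).

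For the non-compact case, take the standard form $Q = -x_0^2 + x_1^2 + \cdots + x_n^2$ over $\mathbb{Q}$ and let $\Gamma_0 = SO^+(Q,\mathbb{Z})$. Since $Q$ is isotropic over $\mathbb{Q}$, Godement's criterion gives that $\mathbb{R}\mathbf{H}^n/\Gamma_0$ has finite volume and is non-compact, while property (v) is built in. For the compact case, replace $Q$ by $Q' = -\sqrt{2}\,x_0^2 + x_1^2 + \cdots + x_n^2$ over $F = \mathbb{Q}(\sqrt{2})$ and let $\Gamma_0 = SO^+(Q',\mathcal{O}_F)$, embedded in $SO^+(n,1)$ via the identity place. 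The Galois-conjugate form is positive definite, so $Q'$ is anisotropic over $F$, and Godement's criterion now yields a \emph{cocompact} lattice. In both cases $\mathbb{R}\mathbf{H}^n/\Gamma_0$ has constant sectional curvature $-1$, giving (i), (iii), (iv).

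For (ii), both forms are invariant under the involution $\sigma$ that negates the coordinate $x_n$, so $\sigma$ descends to an orientation-preserving isometric involution of $\mathbb{R}\mathbf{H}^n$ whose fixed locus is a totally geodesic $\mathbb{R}\mathbf{H}^{n-1}$ cut out by $\{x_n=0\}$. The stabilizer of this hyperplane is an arithmetic lattice of the same type in $SO^+(n-1,1)$, giving a finite-volume totally geodesic hypersurface $V_0$ in $\mathbb{R}\mathbf{H}^n/\Gamma_0$. I would then pass to a deep principal congruence subgroup $\Gamma \trianglelefteq \Gamma_0$ chosen so that $\sigma$ normalizes $\Gamma$, so that $\Gamma$ acts freely (for the cocompact case one must also ensure orientability, which any deep enough level achieves), and so that each component of the preimage of $V_0$ in $M := \mathbb{R}\mathbf{H}^n/\Gamma$ is two-sided. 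The induced involution $\bar\sigma$ on $M$ then has this preimage as its fixed set $V$.

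The main step, and the hardest part, is showing $[V]\neq 0$ in $H_{n-1}(M;\mathbb{Q})$. The plan is to apply the Lefschetz fixed point formula to $\bar\sigma$: its trace on $H^{*}(M;\mathbb{Q})$ equals the Euler characteristic of $V$, which in even codimension is controlled by the volume of $V_0$ via Gauss--Bonnet and is nonzero. Equivalently, by Smith theory for $\mathbb{Z}/2$-actions, the $(-1)$-eigenspace of $\bar\sigma$ on rational cohomology is nontrivial, and Poincaré duality in $M$ pairs this with $[V]$ to give a nonzero class in $H^1(M;\mathbb{Q})$. The main obstacle is precisely calibrating the congruence level: two-sidedness and freeness of $\Gamma$ are cheap mod-$p$ conditions, but making the Lefschetz number work (so that the reflection does not act trivially on the relevant rational cohomology) requires genuine arithmetic input controlling how the hyperplane stabilizer $\Gamma\cap SO^+(Q|_{x_n=0})$ sits inside $\Gamma$, and this is where the choice of $Q$ (rational versus $\mathbb{Q}(\sqrt{2})$) and the separation of $V$ from any possible bounding 3-chain must be checked by hand at the level of arithmetic groups, following the scheme of Millson~\cite{Mil76}.
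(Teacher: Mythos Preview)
The paper does not prove this theorem; it is quoted from \cite{Mil76} and used as a black box in the proof of Theorem~\ref{0.1}. So there is no in-paper argument to compare against, only Millson's original one.

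Your arithmetic setup is correct and is exactly Millson's: the standard signature-$(n,1)$ form over $\mathbb{Q}$ for the non-compact $N^n$, an anisotropic form such as $-\sqrt{2}\,x_0^2+x_1^2+\cdots+x_n^2$ over $\mathbb{Q}(\sqrt{2})$ for the compact $K^n$, Godement's compactness criterion, and the reflection $\sigma$ in $\{x_n=0\}$ producing a totally geodesic hypersurface $V$ in a congruence cover $M$.

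The genuine gap is your ``main step'' for $[V]\neq 0$. The Lefschetz fixed point formula for $\bar\sigma$ gives $L(\bar\sigma)=\chi(V)$, but when $n$ is even the hypersurface $V$ is a closed orientable manifold of odd dimension $n-1$, so $\chi(V)=0$ and the formula is vacuous. Even when $\chi(V)\neq 0$, Lefschetz only controls the alternating sum of traces on $H^*$ and does not isolate a nonzero class in $H^1$; your Smith-theory sentence has the same defect. So the argument as written cannot work uniformly in $n$.

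Millson's actual mechanism is geometric rather than trace-theoretic. He chooses the congruence level and a second integral isometry (arising from another reflection in the arithmetic group) which descends to $M$, preserves $V$, but interchanges the two sides of $V$. This forces $M\setminus V$ to be connected, i.e.\ $V$ is nonseparating, whence $[V]\neq 0$ in $H_{n-1}(M;\mathbb{Z})$ and, by Poincar\'e duality, $H^1(M;\mathbb{Q})\neq 0$. The ``arithmetic input'' you correctly sense is needed is precisely the existence and compatibility of this second reflection at the chosen level, not a Lefschetz computation. If you replace your Lefschetz/Smith paragraph with this nonseparation argument (and verify the congruence conditions that make both reflections descend and $V$ embedded and two-sided), the sketch becomes Millson's proof.
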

\paragraph{Proof of Theorem \ref{0.1} :}
Define two sequences of positive integers $a_m$ and  $b_m$  as follows: Let  $a_m$ be the order of the finite group $\Theta_{m}$ and let $b_m$ be the least common multiple of the orders of the holonomy groups of lattices in the lie group of all rigid motions of Euclidean $m$-dimensional space. Bieberbach \cite{Bie10} showed that $b_m$ exists and divides the order of the finite group $GL_m(\mathbb{Z}_3)$ because of Minkowski's theorem \cite{Min87}. Let $N^n$ be the Millson manifold in Theorem \ref{millson}. Because of Theorem \ref{sullivan}, there is a finite sheeted (connected) covering space $P:\mathcal{N}^n\to N^n$ such that every covering space of $\mathcal{N}^n$ is a $\pi$-manifold. There is an epimorphism $\phi:\pi_{1}(\mathcal{N}^n)\to \mathbb{Z}$ since $N^n$ has positive first Betti number. Because of Theorem \ref{simple}, there is a $t$-simple framed closed geodesic $w$ in $\mathcal{N}^n$ such that  $\phi([w])\neq 0$ where $[w]$ denotes the fundamental group element corresponding to $w$. (Note that $[w]$ is 
well 
defined upto conjugacy). Let $A\in SO^{+}(n,1,\mathbb{Z})$ denote the semi simple matrix corresponding to $[w]$ under an identification of $\pi_{1}(\mathcal{N}^n)$ with a subgroup of $SO^{+}(n,1,\mathbb{Z})$. Let $\chi: SO^{+}(n,1,\mathbb{Z})\to G$ be a homomorphism satisfying the conclusions of Theorem \ref{homo} relative to $A$ and $n=a_{n-1}b_{n-1}$. Note that $g^b_{n-1}$ is unipotent for every cuspidal element $g\in \pi_{1}(\mathcal{N}^n)$. Hence conclusion (2) of  Theorem \ref{homo} yields the following fact:
\begin{itemize}
\item[(i)]if $\chi(B)=\chi(A)^s$ where $B$ is a cuspidal element of $\pi_{1}(\mathcal{N}^n)$, then $a_{n-1}$ divides $s$.
\end{itemize}
Consider the homomorphism  $\phi\times \chi:\pi_{1}(\mathcal{N}^n)\to \mathbb{Z}\times G$ and let $\mathfrak{B}$  denote the infinite cyclic subgroup generated by  $\phi\times \chi(A)=(\phi(A),\chi(A))$. Let $q:M^n\to \mathcal{N}^n$  be the covering space corresponding to $(\phi\times \chi)^{-1}_{\#}(\mathfrak{B})$ and $\lambda:\pi_{1}(M^n)\to \mathbb{Z}$ be the composite of  $q_{\#}$,
$\phi\times \chi$ and the identification of $\mathfrak{B}$ with $\mathbb{Z}$ determined by making $(\phi(A),\chi(A))$ correspond to $1\in \mathbb{Z}$. Let $\Gamma$ be a lift of $w$ to $M^n$. Since the conditions of Theorem \ref{Adden1} are clearly satisfied, the examples posited in this theorem  can now be drawn from the conclusions of Theorem \ref{Adden1}.

\paragraph{Proof of Theorem \ref{Theorem 0.2} :}
When $\Theta_{n}$ is non trivial, this result follows from Theorem \ref{hype.smooth} and Theorem \ref{hype.finite.volu}. When $\Theta_{n-1}$ is non trivial, it follows from Theorem \ref{non uniform} by setting  $M^n$ (in Theorem \ref{non uniform}) equal to the  manifold $K^n$ of Theorem \ref{millson}.
\begin{remark}\rm{
\indent
\begin{itemize}
\item[\bf{1.}] A given topological manifold $M$ can often support many distinct smooth structures. F.T. Farrell and L.E. Jones constructed in \cite{FJ94a} examples of topological manifolds $M$ supporting at least two distinct smooth structures $\mathcal{M}_1$ and $\mathcal{M}_2$, 
where $\mathcal{M}_1$ is a complete, finite volume, real hyperbolic manifold, while $\mathcal{M}_2$ cannot support a complete, finite volume, pinched negatively curved Riemannian metric. This paper \cite{FJ94a} supplements the results of papers \cite{FJ89a} and \cite{FJ93a} where the opposite phenomenon was studied. Namely, in these earlier papers examples of distinct smoothings $\mathcal{M}_1$ and $\mathcal{M}_2$ were constructed, where $\mathcal{M}_1$ is as above but $\mathcal{M}_2$ also supports a complete, finite volume, pinched negatively
curved Riemannian metric. Compact examples were constructed in \cite{FJ89a} (see Theorem \ref{hype.smooth}) and noncompact examples in \cite{FJ93a} (see Theorem \ref{0.1}).
\item[\bf{2.}] Since a finite volume pinched negatively curved metric on a manifold induce an infranil structure on the cross section of the cusps, we have that the Smooth Rigidity Theorem \ref{infranil} of infranilmanifolds can be used to prove the following result of Farrell and Jones \cite{FJ94a} :
\end{itemize}}
\end{remark}
\begin{theorem}\label{nonpinch}
Let $n>5$ such that $\Theta_{n-1}$ is non trivial. Then there exists a connected smooth manifold $N^n$ such that
\begin{itemize}
\item[(i)] $N$ is homeomorphic to a complete, non-compact, finite volume real hyperbolic manifold.
\item[(ii)] $N$ does not admit a finite volume complete pinched negatively curved Riemannian metric.
\end{itemize}
\end{theorem}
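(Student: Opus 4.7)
The plan is to construct $N^n$ by applying the Dehn surgery method of the previous section to a non-compact finite volume real hyperbolic manifold, placing the surgery tube inside a cusp so as to modify the smooth structure of a horospherical cross section without changing the topology. I will start with the non-compact manifold provided by Theorem \ref{millson}, pass via Sullivan's Theorem \ref{sullivan} to a finite cover $M^n$ that is stably parallelizable, and (after a further finite cover if necessary) arrange that some cusp cross section is a flat torus $T^{n-1}$. Fix a non-trivial $[\phi]\in \Theta_{n-1}$ represented by a diffeomorphism $\phi:\mathbb{S}^{n-2}\to \mathbb{S}^{n-2}$, and embed a tube $f:\mathbb{S}^1\times \mathbb{D}^{n-1}\to M^n$ whose core $\alpha$ is a framed simple closed curve lying on a horospherical cross section $T^{n-1}$, with $n-2$ of the framing directions tangent to that cross section. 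Set $N^n := M^n_{f,\phi}$. Assertion (i) is then immediate, since $M^n_{f,\phi}$ is canonically homeomorphic to $M^n$.

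For assertion (ii), I argue by contradiction. Suppose $N^n$ admits a complete finite volume Riemannian metric with sectional curvatures pinched in some interval $[-a^2,-b^2]$. By the standard structure theory of cusps in pinched negative curvature (thick-thin decomposition plus the Margulis lemma), each end of $N^n$ is then smoothly diffeomorphic to $Z\times [0,\infty)$ for some closed infranilmanifold $Z$ with $\pi_1(Z)$ isomorphic to the fundamental group of the end. For the end produced by the surgery, $\pi_1\cong \mathbb{Z}^{n-1}$, so the Smooth Rigidity Theorem \ref{infranil} of Lee-Raymond forces $Z$ to be diffeomorphic to the flat torus $T^{n-1}$; hence a horospherical cross section of a compact core of that cusp is smoothly $T^{n-1}$. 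On the other hand, reading the Dehn surgery construction carefully, that cross section is diffeomorphic to $T^{n-1}\#\Sigma_\phi^{n-1}$, where $\Sigma_\phi^{n-1}$ is the exotic sphere obtained by capping $\phi$ with two hemispheres. Since $T^{n-1}$ is closed, stably parallelizable and flat, and since $\Sigma_\phi^{n-1}$ is non-trivial in $\Theta_{n-1}$, Theorem \ref{corr.brum} yields that $T^{n-1}\#\Sigma_\phi^{n-1}$ is not diffeomorphic to $T^{n-1}$. This is the required contradiction.

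The main obstacle is making this geometric identification rigorous: one must verify that the Dehn twist by $\phi$ along a horospherical circle $\alpha$ genuinely modifies the horospherical cross section to $T^{n-1}\#\Sigma_\phi^{n-1}$, and that this alteration cannot be smoothed away inside the non-compact end. The latter point is subtle in view of Theorem \ref{sumnochan}, which asserts that connected sum with an exotic $n$-sphere cannot change the smooth type of a non-compact manifold; the resolution is that the present obstruction lives in $\Theta_{n-1}$ and acts on a compact cross section. The monicity of $f^{*}_{T^{n-1}}:\Theta_{n-1}\to [T^{n-1},Top/O]$ furnished by Theorem \ref{brum} (applied one dimension down) then ensures that a non-trivial $\phi$ produces a non-trivial concordance class of smooth structures on the cross section, which translates into a non-standard smooth structure on the end and delivers the contradiction sought in (ii).
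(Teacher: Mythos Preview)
Your construction has a genuine gap at the crucial step: the $\mathbb{S}^1\times\mathbb{D}^{n-1}$ Dehn surgery you perform is supported in a \emph{compact} region of the cusp (a neighborhood of the level $T^{n-1}\times\{s_0\}$ containing $\alpha$), so horospherical cross sections $T^{n-1}\times\{s\}$ with $s$ beyond that region are untouched and remain the \emph{standard} smooth $T^{n-1}$. Consequently the end of your $N^n$ is still smoothly $T^{n-1}\times[0,\infty)$, and any smooth compactification of $N^n$ has $T^{n-1}$ (not $T^{n-1}\#\Sigma^{n-1}_\phi$) as that boundary component. Your claimed contradiction in (ii) therefore never arises. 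Note also that at the level $s_0$ itself the torus does not survive the surgery as a submanifold: the regluing diffeomorphism $\phi$ acts on the full $\mathbb{S}^{n-2}=\partial\mathbb{D}^{n-1}$, which need not preserve the equatorial $\mathbb{S}^{n-3}$ in which the torus meets $\mathbb{S}^1\times\mathbb{S}^{n-2}$; so the assertion ``that cross section is diffeomorphic to $T^{n-1}\#\Sigma^{n-1}_\phi$'' is not well-posed for $s=s_0$ and is simply false for $s>s_0$.

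The paper's remedy is to introduce a new gadget, a \emph{proper} tube $f:\mathbb{R}\times\mathbb{D}^{n-1}\to M^n$, i.e.\ a proper embedding whose $\mathbb{R}$-factor runs all the way out to infinity through a cusp. Twisting by $\phi$ uniformly along $\mathbb{R}$ then modifies \emph{every} horospherical cross section of that cusp, so that the corresponding boundary component of a compactification genuinely becomes $N^{n-1}\#\Sigma^{n-1}$. Two further points are needed that your sketch does not address. First, for $M^n_{f,\phi}$ to be canonically homeomorphic to $M^n$ one takes the proper tube to connect \emph{two different} ends; the paper manufactures a second cusp by passing to the double cover associated to an epimorphism $\lambda:\pi_1M\to\mathbb{Z}$ with $\lambda$ even on all cuspidal elements. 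Second, the comparison between the two compactifications of $M^n_{f,\phi}$ (the obvious one with boundary component $N^{n-1}\#\Sigma^{n-1}$, and the hypothetical one coming from a pinched metric with infranilmanifold boundary) is made via a smooth $h$-cobordism and the vanishing $Wh(\pi_1N^{n-1})=0$; only then does Lemma~\ref{lem.nonpinch} deliver the contradiction.
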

We shall now need the following definitions and results to prove Theorem \ref{nonpinch}
\begin{definition}\rm{
Let $M^n$ be a connected smooth manifold (with $n > 5$) and $f:\mathbb{R}\times \mathbb{D}^{n-1}\to M^n$ a smooth embedding which is also a proper map. We call $f$ a proper tube. Let $\phi:\mathbb{S}^{n-2}\to \mathbb{S}^{n-2}$ be an orientation-preserving diffeomorphism of the sphere $\mathbb{S}^{n-2}$ and identify $\mathbb{S}^{n-2}$ with $\mathbb{D}^{n-1}$. Then a new smooth manifold $M^{n}_{f,\phi}$ is obtained as a quotient space of the disjoint union
\begin{center}
 $\mathbb{R}\times \mathbb{D}^{n-1} \sqcup M^n\setminus f(\mathbb{R}\times int(\mathbb{D}^{n-1}))$,
\end{center}
where we identify points
\begin{center}
$(t, x)$ and $f(t, \phi(x))$ if $(t, x)\in \mathbb{R}\times \mathbb{S}^{n-2}$.
\end{center}
Recall that the isotopy class $[\phi]$ is identified with an element in $\Theta_{n-1}$.}
\end{definition}
\begin{definition}\rm{
Let $f:\mathbb{R}\times \mathbb{D}^{n-1}\to M^n$ be a proper tube. We say that it connects different ends of $M^n$ if there exist a compact subset $K$ in $M^n$ and a positive real number $r$ such that $f([r, +\infty)\times \mathbb{D}^{n-1})$ and $f((-\infty, -r]\times \mathbb{D}^{n-1})$ lie in different components of $M^n\setminus K$.}
\end{definition}
\begin{theorem}\label{corr.nonpinch}
Let $M^n$ $(n > 5)$ be a parallelizable complete real hyperbolic manifold with finite volume and $f:\mathbb{R}\times \mathbb{D}^{n-1}\to M^n$ a proper tube which connects different ends of $M^n$. If the diffeomorphism $\phi:\mathbb{S}^{n-2}\to \mathbb{S}^{n-2}$ represents a nontrivial element of $\Theta_{n-1}$, then the smooth manifold $M^{n}_{f,\phi}$ is
\begin{itemize}
\item[(i)] homeomorphic to $M^n$ and
\item[(ii)] does not support a complete pinched negatively curved Riemannian metric with finite volume.
\end{itemize}
\end{theorem}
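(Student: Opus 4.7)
The plan is to treat the two assertions separately: (i) is a soft topological argument using the Alexander trick, while (ii) is the core of the theorem and proceeds by deriving a contradiction between smooth rigidity of infranilmanifolds and the obstruction theory furnished by Theorem~\ref{corr.brum}.

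For (i), the orientation-preserving diffeomorphism $\phi:\mathbb{S}^{n-2}\to\mathbb{S}^{n-2}$ extends by the Alexander trick (coning radially in polar coordinates) to a homeomorphism $\Phi:\mathbb{D}^{n-1}\to\mathbb{D}^{n-1}$ with $\Phi|_{\mathbb{S}^{n-2}}=\phi$. The map defined by $\mathrm{id}_{\mathbb{R}}\times\Phi$ on $\mathbb{R}\times\mathbb{D}^{n-1}$ and by the identity on $M^n\setminus f(\mathbb{R}\times\mathrm{int}(\mathbb{D}^{n-1}))$ respects the surgery gluing over $\mathbb{R}\times\mathbb{S}^{n-2}$ (because $\Phi|_{\mathbb{S}^{n-2}}=\phi$), and therefore descends to a homeomorphism $M^n_{f,\phi}\to M^n$.

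For (ii), I would argue by contradiction. Suppose that $M^n_{f,\phi}$ carries a complete finite-volume Riemannian metric whose sectional curvatures are pinched between two negative constants. The classical structure theorem for cusps in pinched negative curvature (via the Margulis lemma and the thick--thin decomposition) then implies that each end of $M^n_{f,\phi}$ is smoothly diffeomorphic to $[0,\infty)\times C$, where $C$ is a closed infranilmanifold. Part (i) identifies these ends with the cusps of the real hyperbolic manifold $M^n$, whose cross-sections are closed flat manifolds $B$; from the parallelizability of $M^n$ and the triviality of the normal direction in the cusp $[0,\infty)\times B$, each such $B$ is stably parallelizable (and orientable). The homeomorphism $C\to B$ is in particular a homotopy equivalence between closed infranilmanifolds, so the Lee--Raymond smooth rigidity theorem (Theorem~\ref{infranil}) yields $C\cong_{\mathrm{Diff}} B$.

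On the other hand, I would read off the smooth type of the cusp cross-section of $M^n_{f,\phi}$ directly from the Dehn surgery. Fix an end of $M^n$ into which the tube runs as $t\to+\infty$ (the other case is symmetric). After an isotopy (using properness and transversality) one may arrange that $f$ meets sufficiently far cusp cross-sections $\{t\}\times B$ of $M^n$ transversally in the standard embedded disc $\{t\}\times \mathbb{D}^{n-1}$. The surgery then restricts on each such slice to cutting out $\mathbb{D}^{n-1}$ and regluing it along $\phi$ on $\mathbb{S}^{n-2}$, which is by definition the connected-sum operation with the exotic sphere $\Sigma_\phi\in\Theta_{n-1}$ obtained by clutching two discs along $\phi$. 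Hence, smoothly, the cusp cross-section of $M^n_{f,\phi}$ is $B\#\Sigma_\phi$. Combining with the previous paragraph gives $B\#\Sigma_\phi\cong_{\mathrm{Diff}} B$, which contradicts Theorem~\ref{corr.brum} applied to the closed, oriented, stably parallelizable flat manifold $B$ of dimension $n-1\geq 5$, since $\Sigma_\phi$ is exotic.

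The hard step, both conceptually and technically, is the last identification of the cusp cross-section as $B\#\Sigma_\phi$: one must verify that the product structure on the cusp and the proper tube $f$ can be put into a normal form in which, for all sufficiently large $t$, the surgery restricts on each horospherical slice to the standard connected-sum construction. A secondary subtlety is the precise smooth form of the cusp structure theorem in pinched negative curvature, which is needed in order to apply Theorem~\ref{infranil} and deduce a genuine diffeomorphism $C\cong_{\mathrm{Diff}} B$ rather than merely a homeomorphism.
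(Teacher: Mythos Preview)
Your overall strategy for (ii) is the same as the paper's, but there is a genuine gap at the point where you ``combine'' the two descriptions of the cusp cross-section. You correctly produce two smooth compactifications of the relevant end of $M^n_{f,\phi}$: one coming from the surgery description, with boundary $B\#\Sigma_\phi$, and one coming from the assumed pinched metric (via Lemma~\ref{lem1.nonpinch}), with boundary an infranilmanifold $C$. You then treat ``the smooth type of the cusp cross-section'' as if it were well-defined up to diffeomorphism and conclude $C\cong_{\mathrm{Diff}}B\#\Sigma_\phi$. This is exactly the non-obvious step: a priori the same open end can admit two smooth collar structures $[0,\infty)\times X$ and $[0,\infty)\times Y$ with $X$ and $Y$ not diffeomorphic. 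What one actually knows is only that $B\#\Sigma_\phi$ and $C$ are smoothly $h$-cobordant, being boundaries of two smooth compactifications of the same open manifold. The paper fills this gap by invoking Theorem~\ref{whgro} to get $Wh(\pi_1 N^{n-1})=0$ and then the smooth $s$-cobordism theorem to upgrade the $h$-cobordism to a diffeomorphism. Your closing ``secondary subtlety'' mislabels the issue: the step $C\cong_{\mathrm{Diff}}B$ via Lee--Raymond (Theorem~\ref{infranil}) is unproblematic; it is $C\cong_{\mathrm{Diff}}B\#\Sigma_\phi$ that needs justification.

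Once this is repaired, your argument coincides with the paper's. The paper packages the endgame slightly differently: instead of first proving $C\cong_{\mathrm{Diff}}B$ via Lee--Raymond and then invoking Theorem~\ref{corr.brum} on the flat $\pi$-manifold $B$, it quotes Lemma~\ref{lem.nonpinch} (whose proof already absorbs the Lee--Raymond input and the Browder--Brumfiel monicity argument) to conclude directly that $N^{n-1}\#\Sigma^{n-1}$ cannot be diffeomorphic to \emph{any} infranilmanifold.
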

\paragraph{Proof of Theorem \ref{nonpinch} assuming Theorem \ref{corr.nonpinch}:}
Let $\mathbb{Z}$ denote the additive group of integers. The argument in the proof of Theorem \ref{0.1} is easily modified to yield a connected, complete, non-compact, parallelizable, real hyperbolic manifold $M^n$ and an
epimorphism $\lambda: \pi_{1}M^n\to \mathbb{Z}$ such that $\lambda(\beta)$ is divisible by $2$ for every cuspidal element $\beta$ in $\pi_{1}M^n$. 
Let $\mathcal{N}^n$ be the finite sheeted covering space of $M^n$ corresponding to the subgroup $\lambda^{-1}(2\mathbb{Z})$ of $\pi_{1}M^n$.
Then $\mathcal{N}^n$ has twice as many cusps as $M^n$. Consequently, $\mathcal{N}^n$ is connected and has at least two distinct cusps. Therefore, we can construct a
proper tube $f:\mathbb{R}\times \mathbb{D}^{n-1}$ connecting different ends of $\mathcal{N}^n$. Let $N^n$ be $\mathcal{N}^{n}_{f,\phi}$ where $\phi$ represents a nontrivial element of $\Theta_{n-1}$.
Now applying Theorem \ref{corr.nonpinch}, in whose statement $M^n$ is replaced by $\mathcal{N}^n$, we see that $N^n$ satisfies the conclusions of Theorem \ref{nonpinch}.\\

The proof of Theorem \ref{corr.nonpinch} requires some preliminary results :
\begin{lemma}\label{lem.nonpinch}\cite{FJ94a}
Let $N^n$ $(n > 4)$ be a closed infranilmanifold which is also a $\pi$-manifold.
Let $\Sigma^n$ be a homotopy sphere which is not diffeomorphic to $\mathbb{S}^{n}$. Then
the connected sum $N^n\#\Sigma^n$ is not diffeomorphic to any infranilmanifold.
\end{lemma}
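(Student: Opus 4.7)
I would argue by contradiction, following the three-step pattern used in the real hyperbolic case in Theorem~\ref{Lemma 2.1} and Theorems~\ref{diff-con}--\ref{corr.brum}, with Mostow rigidity replaced by the Lee--Raymond smooth rigidity of Theorem~\ref{infranil} and negatively curved topological rigidity replaced by its virtually nilpotent analogue (Theorem~\ref{nilrig}).

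Suppose $N \# \Sigma$ is diffeomorphic to some infranilmanifold $K$. Because $N \# \Sigma$ is homeomorphic to $N$ via the standard map $h : N \# \Sigma \to N$, we have $\pi_1(K) \cong \pi_1(N)$, and Theorem~\ref{infranil} upgrades the resulting homotopy equivalence to a diffeomorphism $K \cong N$. Thus the goal reduces to deriving a contradiction from the existence of any diffeomorphism $\alpha : N \# \Sigma \to N$.

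Next, mimicking the reduction carried out in the proof of Theorem~\ref{Lemma 2.1}, I would show that such an $\alpha$ forces $[N \# \Sigma] = [N]$ in $\mathcal{C}(N)$. The self-homeomorphism $\alpha \circ h^{-1} : N \to N$ is homotopic to a diffeomorphism $\beta : N \to N$ by Theorem~\ref{infranil}, so $\beta^{-1} \circ \alpha : N \# \Sigma \to N$ is a diffeomorphism homotopic to $h$. Picking such a homotopy $H_t$, I form
\begin{equation*}
\tilde{H} : (N \# \Sigma) \times [0,1] \longrightarrow N \times [0,1], \qquad \tilde{H}(x,t) = (H_t(x), t),
\end{equation*}
which is a homotopy equivalence that restricts to a homeomorphism on each boundary component. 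Since $\pi_1(N)$ is virtually nilpotent and $n+1 > 4$, Theorem~\ref{nilrig} deforms $\tilde{H}$ rel boundary to a homeomorphism $\bar{H}$; pulling back the product smooth structure on $N \times [0,1]$ along $\bar{H}$ yields a smooth structure on $(N \# \Sigma) \times [0,1]$ that interpolates between the two smooth structures on the underlying topological manifold and therefore exhibits the desired concordance.

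Finally, since $N$ is a $\pi$-manifold (hence stably parallelizable and orientable) and $n \geq 5$, Browder--Brumfiel (Theorem~\ref{brum}) makes the map $f_N^* : \Theta_n \to [N, Top/O] = \mathcal{C}(N)$ injective. As $[\Sigma] \neq 0$ in $\Theta_n$ and $f_N^*[\Sigma] = [N \# \Sigma]$, we conclude $[N \# \Sigma] \neq [N]$ in $\mathcal{C}(N)$, contradicting the concordance just established. I expect the middle step to be the main obstacle: topological rigidity cannot be applied to manufacture a concordance from a bare diffeomorphism without first adjusting it to be homotopic to $h$, and Theorem~\ref{infranil} is precisely what makes this adjustment possible in the infranil setting, playing the role that Mostow rigidity played in Lemma~\ref{Lemma 2.1}.
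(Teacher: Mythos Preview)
Your proof is correct and follows essentially the same approach as the paper's. The paper's proof is more compressed: it reduces via Theorem~\ref{infranil} to showing $N\#\Sigma$ is not diffeomorphic to $N$, invokes Theorem~\ref{brum} to see $[N\#\Sigma]\neq[N]$ in $\mathcal{C}(N)$, and then simply remarks that the argument of Addendum~\ref{Addendum 2.3} goes through with Theorem~\ref{infranil} replacing Mostow rigidity and Theorem~\ref{nilrig} replacing Theorem~\ref{gentoprigd}---exactly the substitution you carry out explicitly.
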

\begin{proof}
By Theorem \ref{infranil}, it is sufficient to show that $N^n\#\Sigma^n$ is not diffeomorphic to $N^n$. Recall from Theorem \ref{kirby} that the concordance classes of smooth structures on (the topological manifold) $N^n$ can be put in bijective correspondence with the homotopy classes of maps from $N^n$ to $Top/O$ denoted $[N^n, Top/O]$ with the infranilmanifold structure corresponding to the constant map. Let $\gamma :N^n\to \mathbb{S}^{n}$ be a degree one map. It induces a map
$$ \gamma^{*} : \Theta_n=[\mathbb{S}^{n}, Top/O]\to [N^n, Top/O],$$ and it can be shown that $\gamma^{*}(\Sigma^n)$ is the concordance class of $N^n\#\Sigma^n$. Since $N^n$
is a $\pi$-manifold, the argument given in Theorem \ref{hype.exto} due to Browder \cite{Bro65} and Brumfiel \cite{Bru71} applies to show that  $\gamma^{*}$ is monic.
Hence $N^n\#\Sigma^n$ is not concordant to $N^n$. Then a slight modification of the argument given to prove Addendum \ref{Addendum 2.3} shows that $N^n\#\Sigma^n$ is not diffeomorphic to $N^n$. The modification consists of using Theorem \ref{infranil} in place of Mostow's Rigidity Theorem \ref{mostow} and  Theorem \ref{nilrig} in place of Farrell and Jones Topological Rigidity Theorem \ref{gentoprigd}. This completes the proof of Lemma \ref{lem.nonpinch}.
\end{proof}
By using Theorem \ref{lem.nonpinch}, F.T. Farrell and A. Gogolev recently proved the following result \cite{FG12}:
\begin{theorem}
Let $M$ be an $n$-dimensional ($n\not =4$) orientable infranilmanifold with a $q$-sheeted cover $N$ which is a nilmanifold. Let $\Sigma$ be an exotic homotopy sphere of order $d$ from $\Theta_n$. Then $M\#\Sigma$ is not diffeomorphic to any infranilmanifold if $d$ does not divide $q$. In particular, $M\#\Sigma$ is not diffeomorphic to $M$ if $d$ does not divide $q$.
\end{theorem}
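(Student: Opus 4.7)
My plan is to argue by contradiction, lifting the connected sum operation to the nilmanifold cover $N$ and then invoking Lemma \ref{lem.nonpinch}. Suppose for contradiction that $M\#\Sigma$ is diffeomorphic to some infranilmanifold $M'$. Since $\Sigma$ is simply connected, $\pi_1(M\#\Sigma)\cong\pi_1(M)$, so $\pi_1(M')\cong\pi_1(M)$. Let $p':N'\to M'$ be the $q$-sheeted cover corresponding to the subgroup $\pi_1(N)\subset\pi_1(M)$. Since finite covers of infranilmanifolds are again infranilmanifolds (the relevant finite-index subgroup of the fundamental group still acts freely and properly discontinuously on the ambient nilpotent Lie group), $N'$ is an infranilmanifold.

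Next, I will identify the $q$-sheeted cover of $M\#\Sigma$ corresponding to $\pi_1(N)$. The connected sum takes place inside a small coordinate ball $U\subset M$, which I may assume is evenly covered by $p:N\to M$, so that $p^{-1}(U)$ consists of $q$ disjoint copies of $U$ in $N$. The corresponding cover of $M\#\Sigma$ is then obtained from $N$ by performing a connected sum with $\Sigma$ at each of these $q$ preimage balls. Because connected sums at different points of a connected manifold give the same diffeomorphism type, and because connected sum of oriented homotopy spheres corresponds to addition in $\Theta_n$, this cover is diffeomorphic to $N\#(q\Sigma)$, where $q\Sigma$ denotes a homotopy sphere representing $q[\Sigma]\in\Theta_n$. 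Via the assumed diffeomorphism $M\#\Sigma\cong M'$, we obtain a diffeomorphism $N\#(q\Sigma)\cong N'$.

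The hypothesis $d\nmid q$ says precisely that $q[\Sigma]\neq 0$ in $\Theta_n$, so $q\Sigma$ is exotic. Since $N$ is a nilmanifold, its tangent bundle is trivialized by left-invariant vector fields on the ambient nilpotent Lie group, so $N$ is parallelizable and in particular a $\pi$-manifold (and of course itself an infranilmanifold). Lemma \ref{lem.nonpinch} applied to $N$ and the exotic sphere $q\Sigma$ then asserts that $N\#(q\Sigma)$ is not diffeomorphic to any infranilmanifold, contradicting $N\#(q\Sigma)\cong N'$. The ``in particular'' clause is immediate since $M$ itself is an infranilmanifold. The main technical point requiring care is the identification of the $q$-sheeted cover of $M\#\Sigma$ with $N\#(q\Sigma)$; once this local-to-global bookkeeping is verified, the remaining steps are a direct assembly of results already established in the paper.
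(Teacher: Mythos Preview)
Your argument is correct and is precisely the approach the paper indicates: it states that the result is obtained in \cite{FG12} ``by using Theorem \ref{lem.nonpinch}'' (i.e., Lemma \ref{lem.nonpinch}), without giving further details, and your lift-to-the-nilmanifold-cover argument is exactly how that lemma is brought to bear. The only minor remark is that Lemma \ref{lem.nonpinch} is stated for $n>4$, so you should note that for $n\leq 3$ the hypothesis is vacuous since $\Theta_n=0$; otherwise your proof is complete.
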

The following final preliminary result for proving Theorem \ref{corr.nonpinch} strings together facts contained in \cite{BK81} and \cite{Ruh82}:
\begin{lemma}\label{lem1.nonpinch}\cite{FJ94a}
Let $M^n$ be a connected complete pinched negatively curved Riemannian manifold with finite volume. Then there exists a compact smooth manifold
$\overline{M}^n$ such that
\begin{itemize}
\item[(i)] the interior of $\overline{M}^n$ is diffeomorphic to $M^n$ and
\item[(ii)]the boundary of $\overline{M}^n$ is the disjoint union of a finite number of infranilmanifolds.
\end{itemize}
\end{lemma}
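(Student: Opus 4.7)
The plan is to apply the Margulis-type thick-thin decomposition available for complete pinched negatively curved manifolds of finite volume, together with Ruh's theorem identifying almost flat manifolds up to diffeomorphism with infranilmanifolds. Concretely, fix a pinching constant so that the sectional curvatures of $M^n$ lie in some interval $[-a^2,-b^2]$ with $0<b\le a$, and let $\epsilon$ be a Margulis constant depending only on $n$, $a$, and $b$. Let $M^n_{\mathrm{thick}}$ denote the set of points where the injectivity radius is at least $\epsilon/2$, and $M^n_{\mathrm{thin}}$ its complement. The first step is to recall (this is the content of \cite{BK81}) that $M^n_{\mathrm{thick}}$ is compact because $M^n$ has finite volume, and that each connected component of $M^n_{\mathrm{thin}}$ is either a tubular neighborhood of a short closed geodesic (bounded and precompact) or a standard cusp neighborhood $C$ diffeomorphic to $F\times[0,\infty)$, where $F$ is a compact manifold arising as a quotient of a horosphere by a discrete group of parabolic isometries fixing a common point at infinity.

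Next I would analyze the cross-section $F$ of each cusp. Using the pinching $-a^2\le K\le -b^2$ and standard comparison arguments on horospheres in Hadamard manifolds (again drawn from \cite{BK81}), the induced Riemannian metric on the horospherical cross-section, when suitably rescaled by translating along the cusp toward infinity, has curvature tensor and diameter tending to zero in the sense required by Gromov's definition of an almost flat manifold. In particular, for any prescribed $\delta>0$ one can push far enough into the cusp so that the cross-section $F$ becomes $\delta$-almost flat. Now Ruh's theorem \cite{Ruh82} asserts that any $\delta$-almost flat manifold (for $\delta$ smaller than a universal constant depending only on the dimension) is diffeomorphic to an infranilmanifold; applying this to each cusp cross-section, we conclude that every $F$ is diffeomorphic to an infranilmanifold.

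The construction of $\overline{M}^n$ is now the straightforward geometric step. For each cusp $C\cong F\times[0,\infty)$ I would pick a level $t_C$ large enough that $F\times\{t_C\}$ is almost flat to Ruh's tolerance, and truncate the cusp at $F\times\{t_C\}$. Doing this simultaneously on each of the finitely many cusps (finiteness being a consequence of finite volume together with the uniform lower bound on the volume of the thick part of a cusp) produces a compact smooth submanifold $\overline{M}^n\subset M^n$ whose interior is diffeomorphic to $M^n$ and whose boundary is the disjoint union of the truncation levels $F\times\{t_C\}$, each diffeomorphic to an infranilmanifold by the previous step. This gives both (i) and (ii).

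The principal obstacle is the cusp-geometry input: one needs the full strength of the structure theorem for ends of finite-volume pinched negatively curved manifolds, including the fact that ends have the product form $F\times[0,\infty)$ and that the horospherical cross-sections become almost flat in Gromov's precise quantitative sense as one moves out toward infinity. Once this geometric input and Ruh's theorem are in hand, the remainder — collecting finitely many cusps, truncating, and identifying the boundary components — is essentially bookkeeping.
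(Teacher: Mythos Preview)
Your proposal is correct and follows essentially the same approach as the paper: invoke the compactification of a finite-volume pinched negatively curved manifold via the thick--thin decomposition (the paper attributes this to Buser--Karcher \cite{BK81}, who concatenate results of Gromov \cite{Gro78}, Eberlein \cite{Ebe80}, and Heintze--Im Hof \cite{HI77}) to obtain $\overline{M}^n$ with almost flat boundary components, and then apply Ruh's theorem \cite{Ruh82} to identify each almost flat boundary component with an infranilmanifold. You supply more of the geometric detail behind the Buser--Karcher step than the paper does, but the logical structure is identical.
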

\begin{remark}\label{lem1.rem}\rm{
Almost flat Riemannian manifolds are defined and investigated by Gromov in \cite{Gro78a}. Buser and Karcher \cite{BK81} showed that there exists a
compact smooth manifold $M$ satisfying
\begin{itemize}
\item[(i)] $int(\overline{M}^n)= M^n$ and
\item[(ii)] each component of $\partial \overline{M}^n$ is an almost flat Riemannian manifold.
\end{itemize}
Their construction is a consequence of concatenating results from \cite{Gro78}, \cite{Ebe80}, and \cite{HI77}. But Ruh \cite{Ruh82} (extending results of Gromov \cite{Gro78a}) showed that every almost flat manifold is an infranilmanifold. }
\end{remark}
Lemma \ref{lem1.nonpinch} now follows from the above Remark \ref{lem1.rem}. 
\begin{definition}\rm{
We consider the class $\mathcal{C}$ of poly-(finite or cyclic) groups : $\Gamma \in \mathcal{C}$ if it has normal series 
$$ \Gamma=\Gamma_1 \supseteq \Gamma_2 \supseteq ..........\Gamma_n=1 $$
such that each factor group $\Gamma_i \setminus \Gamma_{i+1}$ is either a finite group or an infinite cyclic group.
If all the factor groups are infinite cyclic, then $\Gamma$ is a poly-$\mathbb{Z}$ group; a group is virtually poly-$\mathbb{Z}$ (poly-$\mathbb{Z}$ by finite) if it contains a poly-$\mathbb{Z}$ subgroup of finite index.}
\end{definition}
\begin{remark}\rm{
It is well known (\cite{Wal71}) that $\mathcal{C}$ is the same as the class of virtually poly-$\mathbb{Z}$ group. F. T. Farrell and W. C. Hsiang \cite{FH81} proved the following result :}
\end{remark}
\begin{theorem}\label{whgro}
 Let $\Gamma$ be a torsion-free, poly-(finite or cyclic) group; then $Wh(\Gamma) = 0$ and $K_0(\mathbb{Z}\Gamma)=0$.
\end{theorem}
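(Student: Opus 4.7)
The plan is to prove both vanishings simultaneously by induction on the Hirsch length $h(\Gamma)$, defined as the number of infinite cyclic factors in a poly-$\mathbb{Z}$ series of some (and hence every) finite-index poly-$\mathbb{Z}$ subgroup of $\Gamma$. The base case $h(\Gamma)=0$ reduces to $\Gamma=\{1\}$ by torsion-freeness, and both invariants vanish trivially. For the inductive step, one first handles the finite extension from a poly-$\mathbb{Z}$ subgroup $\Gamma'$ of finite index up to $\Gamma$ via transfer arguments in algebraic $K$-theory, combined with the vanishing of the invariants for finite quotients of torsion-free groups; this reduces matters to the case where $\Gamma$ itself is poly-$\mathbb{Z}$.

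Within the poly-$\mathbb{Z}$ case, pick a normal subgroup $\Gamma_0 \triangleleft \Gamma$ with $\Gamma/\Gamma_0 \cong \mathbb{Z}$, so that $\Gamma \cong \Gamma_0 \rtimes_\alpha \mathbb{Z}$ for some twisting automorphism $\alpha$, and invoke the Bass-Heller-Swan decomposition
\begin{equation*}
Wh(\Gamma) \;\cong\; Wh(\Gamma_0)^\alpha \,\oplus\, \tilde{K}_0(\mathbb{Z}\Gamma_0)_\alpha \,\oplus\, NK_1^{+}(\mathbb{Z}\Gamma_0,\alpha)\,\oplus\, NK_1^{-}(\mathbb{Z}\Gamma_0,\alpha),
\end{equation*}
together with the analogous formula for $\tilde{K}_0(\mathbb{Z}\Gamma)$ involving $NK_0(\mathbb{Z}\Gamma_0,\alpha)$. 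The inductive hypothesis applied to $\Gamma_0$, which has strictly smaller Hirsch length, kills the first two summands in each formula. Thus the theorem reduces to showing that the Nil terms $NK_i(\mathbb{Z}\Gamma_0,\alpha)$, $i=0,1$, vanish.

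The main obstacle, and the geometric heart of the argument, is precisely this Nil vanishing. The strategy combines two ingredients. First, a theorem (due to Farrell) that every element of $NK_i(\mathbb{Z}\Gamma_0,\alpha)$ is torsion of uniformly bounded order. Second, a transfer argument: one constructs an infinite tower of finite-index subgroups of $\Gamma_0 \rtimes_\alpha \mathbb{Z}$ using congruence quotients of the twisting automorphism $\alpha$, lifts a putative nonzero Nil class through the tower, and applies the Frobenius-type relation controlling restriction-transfer compositions in $K$-theory to show that the class must be divisible by arbitrarily large integers in a bounded-exponent abelian group, hence zero. The delicate step is producing finite-index subgroups whose associated transfer maps realize the required divisibility; this uses the structure theory of automorphisms of poly-cyclic groups and the abundance of congruence subgroups, and it is by far the technical core of the proof. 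Once the Nil terms are killed, the inductive step closes, giving both $Wh(\Gamma)=0$ and $\tilde{K}_0(\mathbb{Z}\Gamma)=0$.
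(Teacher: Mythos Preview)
The paper does not actually prove this theorem: it is quoted verbatim from Farrell--Hsiang \cite{FH81} and used as a black box (see the remark immediately following the statement). So there is no ``paper's own proof'' to compare against; what one can do is check your outline against the Farrell--Hsiang argument itself.

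Your skeleton---induction on Hirsch length, Bass--Heller--Swan decomposition, and reduction to vanishing of twisted Nil groups---is indeed the shape of the argument in \cite{FH81}. However, the step you describe as ``handle the finite extension from a poly-$\mathbb{Z}$ subgroup $\Gamma'$ up to $\Gamma$ via transfer arguments'' is where the real difficulty lies, and your description understates it. Transfer alone only shows that $Wh(\Gamma)$ is annihilated by the index $[\Gamma:\Gamma']$ once $Wh(\Gamma')=0$; it does not force $Wh(\Gamma)=0$. Farrell--Hsiang do not simply reduce to the poly-$\mathbb{Z}$ case and then run BHS. Rather, the finite-extension and Nil-killing steps are intertwined: they use Swan/Dress-type induction over the family of hyperelementary subgroups of the finite quotient, combined with a geometric ``expansive'' transfer (an iterated Frobenius-style construction exploiting the automorphism structure of the poly-$\mathbb{Z}$ normal subgroup) that forces the relevant $K$-theory classes to be divisible by arbitrarily large integers in a group of bounded exponent. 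Your last paragraph captures the flavor of this for the Nil terms, but the same machinery is what handles the finite-index passage as well; treating those two reductions as separate and sequential, with the first one being routine transfer, is a genuine gap.
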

\begin{remark}\rm{
Theorem \ref{whgro} extends F. T. Farrell and W. C. Hsiang earlier result \cite[Theorem 3.1]{FH78}, where they showed that
$Wh (\Gamma)=0$ if $\Gamma$ is a Bieberbach group, that is, if $\Gamma$ is torsion-free and contains a finitely generated abelian subgroup of finite index.}
\end{remark}
\paragraph{Proof of Theorem \ref{corr.nonpinch}:}
Let $\overline{M}^n$ be the compactification of $M^n$ posited in Lemma \ref{lem1.nonpinch}. Note that $\overline{M}^n$ is a $\pi$-manifold since $M^n$ is parallelizable. Let $N^{n-1}$ denote the boundary component of $\overline{M}^m$ corresponding to that cusp of $M^n$ which contains the sets $f(t\times \mathbb{D}^{n-1})$ for all sufficiently large real numbers $t$.
Hence, there clearly exists a compact smooth manifold $W^n$ with the following properties:
\begin{itemize}
\item[(i)] The interior of $W^n$ is diffeomorphic to $M^{n}_{f,\phi}$.
\item[(ii)] One of the components of $\partial W^n$ is diffeomorphic to $N^{n-1}\#\Sigma^{n-1}$, where $\Sigma^{n-1}$ is an exotic homotopy sphere representing the element of $\Theta_{n-1}$ determined by $\phi:\mathbb{S}^{n-1}\to \mathbb{S}^{n-1}$.\\
We assume that $M^{n}_{f,\phi}$, supports a complete pinched negatively curved Riemannian metric with finite volume. Then, Lemma \ref{lem1.nonpinch} yields a second compactification of $M^{n}_{f,\phi}$, i.e., a compact smooth manifold $\overline{M}^{n}_{f,\phi}$ satisfying properties (i) and (ii) of the conclusion of Lemma \ref{lem1.nonpinch} with $M^n$ and $\overline{M}^n$ replaced respectively by $M^{n}_{f,\phi}$ and $\overline{M}^{n}_{f,\phi}$. But one easily sees that the boundaries of two smooth compactifications of the same manifold are smoothly $h$-cobordant. In particular, $N^{n-1}\#\Sigma^{n-1}$ is smoothly $h$-cobordant to an infranilmanifold. Hence $N^{n-1}\#\Sigma^{n-1}$ is diffeomorphic to an infranilmanifold because of Theorem \ref{whgro}, where it is
shown that $Wh (\pi_{1}N^{n-1})=0$. But this contradicts Lemma \ref{lem.nonpinch} since $N^{n-1}$ is a $\pi$-manifold. This completes the proof of Theorem \ref{corr.nonpinch}.
\end{itemize}
\begin{remark}\rm{
\indent
\begin{itemize}
\item[\bf{1.}] Note that this process can not be used for closed hyperbolic (or negatively curved) manifolds because they do not have cusps. There are a couple of canonical constructions of closed non-positively
curved manifolds from non-compact finite volume hyperbolic manifolds : (1) the double of a hyperbolic manifold and (2) the ones obtained by cusp closing
due to Schroeder \cite{Sch89}.
\item[\bf{2.}] C.S. Aravinda and F.T. Farrell \cite{AF94} constructed examples of compact topological manifolds $M$ with two different smooth
structures $\mathcal{M}_1$ and $\mathcal{M}_2$ such that $\mathcal{M}_1$ carries a Riemannian metric with non-positive sectional
curvature and geometric rank one while $\mathcal{M}_2$ cannot support a nonpositively curved metric. The examples $(M,\mathcal{M}_1)$ are of two types. The first class is obtained from Heintze’s examples of compact
non-positively curved manifolds, and the second class is obtained using a cusp closing construction. 
\item[\bf{3.}] A non-compact finite volume real hyperbolic $n$-manifold is homeomorphic to the interior of a compact manifold with boundary. Each boundary component is a flat manifold of dimension $n\text{-}1$. The compact manifold with boundary can be doubled along its boundary to form $DM$, the double of $M$. It is well known that the hyperbolic metric can be modified on each cusp to produce a metric of non-positive curvature. In \cite{Ont00}, P. Ontaneda constructed examples of non-compact finite volume real hyperbolic manifolds of dimension greater than five, such that their doubles admit at least three non-equivalent smoothable PL structures, two of which admit a Riemannian metric of non-positive curvature while the other does not. Here is the result:
\end{itemize}}
\end{remark}
\begin{theorem}\label{doub1}
 There are examples of non-compact finite volume real hyperbolic $n$-manifolds $M$, $n > 5$, such that
 \begin{itemize}
\item[(i)] $DM$ has, at least, three non-equivalent (smoothable) PL structures $\Sigma_{1}$, $\Sigma_{2}$, $\Sigma_{3}$.
\item[(ii)] $\Sigma_{1}$, $\Sigma_{2}$ admit a Riemannian metric with non-positive curvature, and negative curvature outside a hypersurface.
\item[(iii)] $\Sigma_{3}$ does not admit a Riemannian metric with non-positive curvature. In fact, $\Sigma_{3}$ does not even admit a piecewise flat metric of non-positive curvature.
\end{itemize}
\end{theorem}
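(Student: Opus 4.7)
The plan is to start from a non-compact finite-volume real hyperbolic $n$-manifold $M$ ($n>5$) satisfying the configuration data of Theorem \ref{plexo6lem}/\ref{plexog}: totally geodesic submanifolds $M_2\subset M_1\subset M$ together with a transversely intersecting $M_3$, and with enough finite covers available (from Millson--Raghunathan \cite{MR81}) to accommodate injectivity-radius estimates. Truncating the cusps of $M$ yields a compact core $\bar M$ whose boundary components are flat $(n-1)$-manifolds; the double $DM=\bar M\cup_{\partial\bar M}\bar M$ carries a canonical smoothable PL structure $\Sigma_1$. By Schroeder's cusp-closing construction the hyperbolic metric on each copy of $\bar M$ can be interpolated across $\partial\bar M$ to produce a $C^\infty$ non-positively curved metric on $(DM,\Sigma_1)$ that is strictly negatively curved off the doubling hypersurface $H:=\partial\bar M\hookrightarrow DM$.

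Next I would produce $\Sigma_2$ by applying the localized PL-twist from the proof of Theorem \ref{plexog} to one side of the double, carried out in a small neighborhood of the transverse intersection $M_2\cap M_3$ sitting inside one copy of $\bar M$. The Kirby--Siebenmann obstruction of this modification is a nonzero class $\alpha_2\in H^3(DM;\mathbb{Z}_2)$ pulled back from the codimension-$3$ geodesic configuration, so $\Sigma_2\not\simeq_{\mathrm{PL}}\Sigma_1$. Because the modification is localized inside a single metric ball, the curvature estimate in Lemma \ref{pinched} (Property $4$ and the interpolation argument of Theorem \ref{hype.metric}) carries over to the doubled setting and, combined with Schroeder's cusp closure on the unaltered side, endows $\Sigma_2$ with a non-positively curved Riemannian metric, strictly negative outside $H$. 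By construction $\alpha_2$ restricts trivially to $H$, so the hypersurface is left intact.

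For $\Sigma_3$, I would perform a Dehn-surgery-type PL twist (as in the definitions preceding Theorem \ref{corr.nonpinch}) along a proper tube that crosses $H$ transversely, chosen so that the new Kirby--Siebenmann class $\alpha_3\in H^3(DM;\mathbb{Z}_2)$ has \emph{nontrivial} restriction to $H$. Since $\alpha_1\!\restriction_H=\alpha_2\!\restriction_H=0$ while $\alpha_3\!\restriction_H\neq 0$, the three structures are pairwise PL-inequivalent. To rule out a non-positively curved Riemannian metric on $\Sigma_3$, assume one exists; using geodesic flow and Lemma \ref{lem1.nonpinch} (in the NPC rather than strictly negative setting, via almost-flat boundary cross-sections), one produces a smooth compactification whose boundary cross-section at the cusp is an infranilmanifold. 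The smooth $h$-cobordism between this cross-section and the original flat boundary of $\bar M$ is trivial by the vanishing Theorem \ref{whgro} ($Wh=0$), so the PL/smooth type of the cross-section is rigid by Theorem \ref{FJ-infrasol} — forcing $\alpha_3\!\restriction_H=0$, a contradiction.

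The main obstacle is the last sentence of (iii): ruling out even a \emph{piecewise flat} NPC metric on $\Sigma_3$. One cannot directly invoke smooth geodesic flow, so the strategy is to transfer Lemma \ref{lem.nonpinch} from its exotic-sphere form (using $\Theta_{n-1}$) to a PL form (using the $\mathbb{Z}_2$-valued Kirby--Siebenmann obstruction and Theorem \ref{nilrig}): any polyhedral NPC compactification of $(DM,\Sigma_3)$ must yield a cross-section PL-concordant to the flat boundary of $\bar M$, contradicting the nontrivial restriction $\alpha_3\!\restriction_H$. Executing this PL analogue rigorously — in particular, extracting an infranil boundary structure from a non-smooth NPC metric and matching it against the Kirby--Siebenmann invariant through the $h$-cobordism — is the delicate technical step where most of the work will live.
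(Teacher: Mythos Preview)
The paper is a survey and does not actually prove this theorem; it is stated and attributed to Ontaneda \cite{Ont00}, followed only by remarks. So there is no in-paper proof to compare against. Nonetheless, your proposal has a genuine error that derails part (iii).

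Your step ruling out an NPC metric on $\Sigma_3$ invokes Lemma \ref{lem1.nonpinch} to ``produce a smooth compactification whose boundary cross-section at the cusp is an infranilmanifold,'' then runs an $h$-cobordism comparison between that cross-section and $\partial\bar M$. But $DM$ is a \emph{closed} manifold: it has no cusps and no ends, so Lemma \ref{lem1.nonpinch} (which concerns complete, non-compact, finite-volume, pinched negatively curved manifolds) is simply inapplicable, and there is no ``cross-section at the cusp'' to compare with anything. You have imported the mechanism of Theorem \ref{corr.nonpinch} wholesale into a setting where its hypotheses fail. Your instinct that the obstruction lives in the restriction $\alpha_3|_H\neq 0$ is correct, and distinguishing $\Sigma_2$ from $\Sigma_3$ by whether the Kirby--Siebenmann class restricts nontrivially to the doubling hypersurface $H$ is indeed the heart of Ontaneda's construction. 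What is missing is the right geometric input on the closed side: one argues that in any NPC (Riemannian or piecewise-flat, i.e.\ locally CAT(0)) metric on $DM$, the incompressible hypersurface $H$ --- whose fundamental group is a Bieberbach group --- is represented, up to homotopy, by a totally geodesic flat submanifold (flat-torus theorem and its virtually-abelian extension). Bieberbach/infranil rigidity then pins down the PL type near that flat, contradicting $\alpha_3|_H\neq 0$. Because this is a CAT(0) argument it handles the piecewise-flat case simultaneously; no separate ``PL analogue of Lemma \ref{lem.nonpinch}'' is needed.
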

\begin{remark}\rm{
\indent
\begin{itemize}
\item[\bf{1.}] In \cite{AF94}, examples are given of doubles of non-compact finite volume hyperbolic manifolds with exotic differentiable structures not admitting a non-positively curved metric. In these examples, the PL structure does not change.
\item[\bf{2.}] Now, the negatively curved manifolds mentioned up to this point were homeomorphic (hence homotopy equivalent) to hyperbolic manifolds. We call these manifolds of hyperbolic homotopy type. In \cite{Ard00}, Ardanza gave a version of Theorem \ref{hype.smooth} for manifolds that are not homotopy equivalent to a closed locally symmetric space; in particular, they do not have a hyperbolic homotopy type. We call these manifolds of non-hyperbolic homotopy type. His constructions use branched covers of hyperbolic manifolds. Recall that Gromov and Thurston \cite{GT87} proved that large branched covers of hyperbolic manifolds do not have the homotopy type of a closed locally symmetric space. Here is the statement of Ardanza's result :
\end{itemize}}
\end{remark}
\begin{theorem}\label{ardanza}
For all $n= 4k-1$, $k\geq 2$, there exist closed Riemannian $n$-dimensional manifolds $M$ and $N$ with negative sectional curvature such
that they do not have the homotopy type of a locally symmetric space and
\begin{itemize}
\item[(i)] $M$ is homeomorphic to $N$.
\item[(ii)] $M$ is not diffeomorphic to $N$.
\end{itemize}
\end{theorem}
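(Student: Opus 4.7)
The strategy is to combine the Gromov--Thurston branched-cover construction with the Farrell--Jones connected-sum technique used in the proof of Theorem \ref{hype.smooth}. First I would fix $n = 4k-1$ with $k \geq 2$ and choose a closed orientable real hyperbolic manifold $X^n$ containing a closed codimension-two totally geodesic submanifold $Y$; such manifolds are produced by the arithmetic constructions of Millson (in the spirit of Theorem \ref{millson} and the data appearing in Theorem \ref{plexo6lem}). Passing to a finite sheeted cover guaranteed by Sullivan's Theorem \ref{sullivan} together with residual finiteness (Theorem \ref{malcev}), I may arrange that $X$ is stably parallelizable and that the normal injectivity radius around $Y$ is as large as desired.

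The second step is to take a $d$-fold cyclic branched cover $M \to X$ along $Y$. By Gromov and Thurston, for any $\varepsilon > 0$ there is a $d_0$ such that whenever $d \geq d_0$ the manifold $M$ supports a Riemannian metric whose sectional curvatures lie in $(-1-\varepsilon,-1+\varepsilon)$; moreover, for $d$ large enough $M$ is \emph{not} homotopy equivalent to any closed locally symmetric space. A routine bundle-theoretic check shows that $M$ inherits stable parallelizability from $X$. Now pick any nontrivial exotic sphere $\Sigma^n \in \Theta_n$ (which exists since $n = 4k-1 \geq 7$ and $\Theta_{4k-1}$ is nontrivial for $k \geq 2$) and set $N = M \# \Sigma^n$. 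Part (i) of the theorem is automatic, since connected sum with a homotopy sphere yields a homeomorphic manifold.

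For part (ii), the non-diffeomorphism is detected by the Browder--Brumfiel Theorem \ref{brum}: because $M$ is stably parallelizable, the homomorphism $f_M^{\ast} \colon \Theta_n \to [M, Top/O]$ is monic, so $[M \# \Sigma] \neq [M]$ in $\mathcal{C}(M)$. To upgrade non-concordance to non-diffeomorphism I would rerun the argument of Lemma \ref{Lemma 2.1} almost verbatim, using the Whitehead vanishing Theorem \ref{whigro} together with the smooth $s$-cobordism theorem to reduce diffeomorphism to concordance, with the Farrell--Jones Topological Rigidity Theorem \ref{gentoprigd} supplying in the non-positively curved setting what Mostow rigidity supplied in the hyperbolic setting of Lemma \ref{Lemma 2.1}. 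Finally, to endow $N$ with a negatively curved Riemannian metric, one performs the connect sum inside a region of $M$ that lies far from the branch locus, where the metric is locally isometric to the hyperbolic metric on $X$ and where the injectivity radius is arbitrarily large; the pinched-curvature patching of Theorem \ref{hype.metric} then goes through unchanged.

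The main obstacle I anticipate is the upgrade from non-concordance to non-diffeomorphism, since the standard argument in Lemma \ref{Lemma 2.1} uses Mostow rigidity to move the connect-sum ball around by an isometry, a tool that is emphatically unavailable for Gromov--Thurston branched covers (indeed their failure to be locally symmetric is the whole point). Replacing that step by Farrell--Jones topological rigidity requires some care because $M$ may admit self-homotopy equivalences not induced by isometries, so one must check that the class in $\mathcal{C}(M)$ detected by $f_M^{\ast}$ is insensitive to the ambient action of $\mathrm{Out}(\pi_1 M)$, or else choose the connect-sum location so that the relevant self-equivalences can be homotoped to be the identity on the ball where $\Sigma$ is inserted.
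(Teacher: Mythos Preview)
Your overall architecture---Gromov--Thurston branched covers together with a connected sum by an exotic sphere---agrees with what the paper attributes to Ardanza; note however that the paper itself gives no proof of Theorem~\ref{ardanza}, only the citation \cite{Ard00} and the remark that ``his constructions use branched covers of hyperbolic manifolds.'' So there is no line-by-line comparison to make, and your proposal must be judged on its own.

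The obstacle you isolate is genuine, but your proposed repair does not close it. In the proof of Theorem~\ref{Lemma 2.1}, Mostow rigidity is invoked not merely to ``move the ball'' but to produce a self-\emph{diffeomorphism} of $\widehat{M}$ in a prescribed homotopy class; Theorem~\ref{gentoprigd} supplies only a self-\emph{homeomorphism}. This matters because the action of $\pi_0(\mathrm{Homeo}(M))$ on $\mathcal{C}(M)\cong[M,Top/O]$ is affine rather than linear: a homeomorphism $h$ not isotopic to a diffeomorphism of $(M,\sigma_0)$ need not fix the basepoint $[\sigma_0]=0$, so the orbit of $[M]$ can be strictly larger than $\{[M]\}$. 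Your proposed check---that the image of $f_M^*$ is $\mathrm{Out}(\pi_1M)$-invariant---addresses only the linear part of the action. What would actually be needed is smooth Nielsen realization for the branched cover (every class in $\mathrm{Out}(\pi_1M)$ represented by a diffeomorphism), and this is not a consequence of topological rigidity.

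The dimension hypothesis $n=4k-1$ is the clue to a clean bypass. In these dimensions $\Theta_n$ contains the large cyclic group $bP_{4k}$, and for closed stably parallelizable $(4k-1)$-manifolds the Eells--Kuiper invariant $\mu\in\mathbb{Q}/\mathbb{Z}$ is an oriented-\emph{diffeomorphism} invariant that is additive under connected sum: $\mu(M\#\Sigma)=\mu(M)+\mu(\Sigma)$. Choosing $\Sigma\in bP_{4k}$ with $\mu(\Sigma)\notin\{0,-2\mu(M)\}$ (possible since $|bP_{4k}|\geq 3$ for $k\geq 2$) rules out both orientation-preserving and orientation-reversing diffeomorphisms $M\#\Sigma\to M$ directly, with no passage through concordance and no appeal to Mostow. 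This both explains the otherwise unmotivated restriction to $n=4k-1$ and dissolves your obstacle. One smaller point: the assertion that the branched cover inherits stable parallelizability from $X$ is not quite ``routine''---characteristic classes of cyclic branched covers pick up correction terms supported on the branch locus---so this step also deserves a genuine argument.
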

\begin{remark}\rm{
Up to now the hyperbolic manifolds considered were real hyperbolic manifolds. We now consider Rigidity Questions for complex, quaternionic and Cayley hyperbolic manifolds. Recall that these are Riemannian $m$-manifolds whose universal covers, with the pulled back metric, are isometric to complex hyperbolic space $\mathbb{C}\textbf{H}^m$, quaternionic hyperbolic space $\mathbb{H}\textbf{H}^m$ $(m=4n)$, or Cayley hyperbolic plane $\mathbb{O}\textbf{H}^2$ $(m = 16)$, respectively. These manifolds have sectional curvatures in the interval $[-4, -1]$ and they also are isometric rigid due to Mostow Rigidity Theorem \ref{mostow}. In fact they satisfy the following superrigidity property in the quaternionic and Cayley cases.}
\end{remark}
\begin{theorem}\label{hern.yau}(Hern$\acute{a}$́ndez \cite{Her91} and Yau and Zheng \cite{YZ91})
Assume that $M^m$ and $N^m$ are homeomorphic closed Riemannian manifolds. If $M^m$ is complex, quaternionic $(m=4n, n\geq2)$ or Cayley hyperbolic $(m = 16)$ and $N^m$ has sectional curvatures in $[-4, -1]$, then $M^m$ and $N^m$  are isometric. 
\end{theorem}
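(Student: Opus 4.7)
The plan is to deform the given homeomorphism $h:N^m\to M^m$ to a harmonic map, and then to exploit the special algebraic structure of the curvature tensor of $M^m$ to show that this harmonic map is actually an isometry. First I would invoke the Eells--Sampson existence theorem: since $M^m$ has strictly negative sectional curvature, there is a smooth harmonic map $u:N\to M$ in the homotopy class of $h$, and by Hartman's uniqueness theorem $u$ is the unique harmonic map in that class. In particular, $u$ is a homotopy equivalence, and it suffices to show that $u$ is a local isometry.

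The heart of the argument is a refined Bochner--Weitzenb\"ock identity for harmonic maps of the schematic form
\begin{equation*}
\tfrac{1}{2}\Delta e(u) \;=\; |\nabla du|^{2} \;+\; \langle \mathrm{Ric}^{N}(du),\, du\rangle \;-\; \sum_{i,j}\langle R^{M}(du(e_i),du(e_j))du(e_j),\, du(e_i)\rangle,
\end{equation*}
where $\{e_i\}$ is a local orthonormal frame on $N$ and $e(u)=\tfrac12|du|^2$. Integrating over the closed manifold $N$ kills the left-hand side and leaves a balance between the positive term $|\nabla du|^2$, the Ricci term (which is bounded below by the pinching $\sec_N\in[-4,-1]$), and the curvature term coming from $R^M$. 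For $M$ complex, quaternionic, or Cayley hyperbolic, $R^M$ is not merely negative but \emph{strongly negative} in the sense of Siu, with the specific algebraic form dictated by the $\mathbb{K}$-invariant $2$-planes ($\mathbb{K}=\mathbb{C},\mathbb{H},\mathbb{O}$). The Hern\'andez inequality (in the complex case) and its Yau--Zheng extension (in the quaternionic and Cayley cases) give a pointwise bound which, combined with the fact that the sectional curvatures of $N$ never exceed $-1$ and are never less than $-4$, forces every term in the integrated identity to vanish. This yields $\nabla du\equiv 0$, so $u$ is a totally geodesic map of constant energy density, and a standard linear-algebra argument using the same curvature inequality at each point forces $du$ to be a linear isometry onto its image.

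Once $u$ is a totally geodesic isometric immersion between closed manifolds of the same dimension, it is automatically a local isometry and hence a Riemannian covering. Since $u$ is homotopic to the homeomorphism $h$, the induced map $u_\ast:\pi_1(N)\to\pi_1(M)$ is an isomorphism, so the covering has one sheet and $u$ is an isometry. The main obstacle is the algebraic curvature inequality powering the vanishing step: one must verify that for every configuration $du$ the negative contribution from $R^M$ absolutely dominates the Ricci contribution under the exact pinching $\sec_N\in[-4,-1]$, with equality forcing $du$ to preserve the $\mathbb{K}$-structure. The Cayley case is the most delicate, because the non-associativity of $\mathbb{O}$ obstructs the standard representation-theoretic manipulations of the curvature tensor and requires the intrinsic description of $R^{\mathbb{O}\mathbf{H}^2}$ in terms of the $\mathrm{Spin}(9)$-action used by Yau and Zheng.
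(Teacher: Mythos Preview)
The paper does not give its own proof of this theorem; it is quoted from Hern\'andez \cite{Her91} and Yau--Zheng \cite{YZ91} as background for the discussion of exotic smoothings of complex hyperbolic manifolds, so there is nothing in the paper to compare your argument against directly.

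That said, your sketch has a genuine gap. In the Bochner identity for a harmonic map $u:N\to M$, the domain term $\langle\mathrm{Ric}^{N}(du),du\rangle$ is \emph{nonpositive} (since $\sec_N\le-1$ makes $\mathrm{Ric}^N$ negative-definite), while the target term $-\sum_{i,j}\langle R^{M}(du(e_i),du(e_j))du(e_j),du(e_i)\rangle$ is \emph{nonnegative} (since $M$ is negatively curved). After integrating, the identity reads $0=\int|\nabla du|^{2}+\int(\text{nonpositive})+\int(\text{nonnegative})$, and nothing forces the individual terms to vanish. Your appeal to an unspecified ``Hern\'andez inequality'' at this point is exactly where the real work lies, and the coarse bounds $-4\le\sec_N\le-1$ together with the schematic formula you wrote do not produce it. In particular, you have chosen the harmonic map in the direction $N\to M$, which puts the K\"ahler (resp.\ quaternionic-K\"ahler, $\mathrm{Spin}(9)$) structure on the \emph{target}; the Siu--Sampson--type refinements that make such arguments go through require that structure on the \emph{domain}.

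The arguments in \cite{Her91} and \cite{YZ91} for the complex case are in fact rather different from what you outline: they show, by a pointwise algebraic analysis of the curvature tensor using the K\"ahler identities, that a compact \emph{K\"ahler} manifold with real sectional curvatures in $[-4,-1]$ must have constant holomorphic sectional curvature $-4$. Getting from the hypothesis ``$N$ is a Riemannian manifold homeomorphic to complex hyperbolic $M$'' to ``$N$ is K\"ahler'' is itself a nontrivial step (handled via a harmonic map $M\to N$ and Siu--Sampson pluriharmonicity, so that the K\"ahler domain does the work). The quaternionic and Cayley cases require further refinements along the lines of the Corlette and Mok--Siu--Yeung formulas stated immediately afterwards in the paper; the pinching enters through the spectral structure of the curvature operator of $\mathbb{H}\mathbf{H}^n$ and $\mathbb{O}\mathbf{H}^2$, not through a naive Ricci bound on the domain.
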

\begin{theorem}\label{Corlette}(Corlette \cite{Cor92})
Assume that $M^m$ and $N^m$ are homeomorphic closed Riemannian manifolds. If $M^m$ is quaternionic $(m = 4n, n\geq 2)$ or Cayley hyperbolic $(m=16)$
and $N^m$ has non-positive curvature operator, then $M^m$ and $N^m$  are isometric (up to scaling). 
\end{theorem}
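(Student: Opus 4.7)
The plan is to run the Eells--Sampson/Corlette harmonic map program, using the homeomorphism to supply a fixed homotopy class and the non-positivity of the curvature operator on $N$ as the input that drives the Bochner argument. Since $M$ and $N$ are homeomorphic, $\pi_1(M)\cong \pi_1(N)$ and we obtain a homotopy equivalence $g\colon N\to M$. Because $M$ is quaternionic or Cayley hyperbolic, its sectional curvatures lie in $[-4,-1]$, and in particular $M$ has non-positive sectional curvature, so the Eells--Sampson existence theorem produces a smooth harmonic map $f\colon N\to M$ homotopic to $g$.

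The key step is to upgrade $f$ to a totally geodesic map. For this I would invoke Corlette's Bochner identity for harmonic maps into rank-one symmetric spaces of type $\mathbb{H}\mathbf{H}^n$ and $\mathbb{O}\mathbf{H}^2$: one writes
\[
\tfrac12\Delta|df|^2 = |\nabla df|^2 + \langle df\circ \mathrm{Ric}^N, df\rangle - \sum_{i,j} R^M(df\,e_i, df\,e_j, df\,e_i, df\,e_j),
\]
and then uses the special algebraic structure of the quaternionic/Cayley curvature tensor (the holonomy reductions to $Sp(n)Sp(1)$ and $Spin(9)$, and Corlette's identification of the resulting quadratic form on $\mathrm{Hom}(TN,f^*TM)$) to rewrite the right-hand side as a non-negative quadratic form in $\nabla df$ \emph{plus} a term of the shape $\langle \mathcal{R}^N\alpha,\alpha\rangle$ for some $2$-form-valued tensor $\alpha$ built from $df$. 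Integration over the compact manifold $N$ kills the Laplacian, and the hypothesis that the curvature operator $\mathcal{R}^N$ of $N$ is non-positive then forces every term on the right to vanish. In particular $\nabla df \equiv 0$, so $f$ is totally geodesic, and simultaneously $\mathcal{R}^N$ must vanish on the image of $\alpha$; this is the step I expect to carry the full technical weight of the proof and where the full strength of non-positivity of the curvature operator (as opposed to just of the sectional curvature) is needed.

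Once $f$ is totally geodesic, I would argue that it is a diffeomorphism: since $f$ is homotopic to a homotopy equivalence between closed $m$-manifolds and $\nabla df=0$, the rank of $df$ is locally constant, and $|df|$ cannot vanish (else $f$ would be null-homotopic by the contractibility of $M$'s universal cover, contradicting $g_*\colon \pi_1(N)\xrightarrow{\cong}\pi_1(M)$). Hence $f$ is a totally geodesic immersion between manifolds of equal dimension, so a local isometry up to a constant factor determined by $|df|$; because $f$ has positive degree and $M,N$ are closed, $f$ is a covering, and since it is a homotopy equivalence it must in fact be a diffeomorphism.

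Finally, the totally geodesic isometric (up to scale) diffeomorphism $f$ pulls the quaternionic/Cayley hyperbolic metric on $M$ back to a metric on $N$ that agrees with the original $N$-metric up to scale: indeed, $f^*g_M$ has constant ratio with $g_N$ along each unit tangent vector by the parallel property $\nabla df = 0$, and the pinching information that $f^*g_M$ has sectional curvatures in $[-4,-1]$ combined with the non-positive curvature operator of $g_N$ forces the two metrics to be proportional. Rescaling concludes that $(N,g_N)$ is isometric to $(M,g_M)$. The principal obstacle, as noted, is the algebraic verification that Corlette's quadratic form construction makes the Bochner right-hand side controllable by $\mathcal{R}^N$ alone; once that is in place, the remainder is a standard rigidity wrap-up.
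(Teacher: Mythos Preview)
The paper does not prove this theorem; it is stated as a cited result from Corlette's 1992 Annals paper, so there is no ``paper's own proof'' to compare against. Your sketch attempts to reproduce Corlette's argument, but it has a structural error that would make the proof fail as written.

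You run the harmonic map in the wrong direction. You take $f\colon N\to M$, with the quaternionic/Cayley hyperbolic manifold $M$ as the \emph{target}. In the Bochner identity you wrote down, the domain curvature enters only through $\mathrm{Ric}^N$, not through the curvature operator acting on $2$-forms; there is no mechanism to produce a term of the shape $\langle \mathcal{R}^N\alpha,\alpha\rangle$ from that formula. Corlette's actual argument requires the map to go $f\colon M\to N$, with the quaternionic/Cayley hyperbolic manifold as the \emph{domain}. The reason is that the whole point of his method is to exploit the $G$-invariant parallel differential form $\omega$ on $M$ (the Kraines $4$-form when $G=Sp(n,1)$, the Spin$(9)$-invariant $8$-form when $G=F_4^{-20}$); one pairs $\omega$ against tensors built from $df$ and integrates by parts to obtain an identity whose curvature term is precisely $\int_M \langle \mathcal{R}^N(\alpha),\alpha\rangle$ for a $\Lambda^2$-valued object $\alpha$ on the \emph{target} side. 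Non-positivity of the curvature operator of $N$ then forces $\nabla df=0$. With your orientation the parallel form lives on the target, where it cannot be used in this way.

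A secondary gap: even after obtaining $\nabla df=0$, the conclusion ``$f^*g_M$ has constant ratio with $g_N$'' does not follow from totally geodesic alone. Parallelism of $df$ makes the singular values of $df$ constant, but not equal; one still needs the algebraic structure of the quaternionic/Cayley curvature tensor (again on the domain) to force $df$ to be a homothety onto its image. Corlette handles this, but it is an additional step, not an automatic consequence of $\nabla df=0$.
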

\begin{theorem}\label{MSY}(Mok, Siu and Yeung \cite{MSY93})
Assume that $M^m$ and $N^m$ are homeomorphic closed Riemannian manifolds. If $M^m$ is quaternionic $(m = 4n, n\geq 2)$ or Cayley hyperbolic $(m=16)$
and the complexified sectional curvatures of $N^m$ are nonpositive, then $M^m$ and $N^m$ are isometric (up to scaling).
\end{theorem}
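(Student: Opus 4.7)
The plan is to follow the harmonic map strategy that underlies the rigidity results of Siu, Corlette, and Hern\'andez cited just above. First, since $M^m$ is quaternionic or Cayley hyperbolic, it has strictly negative sectional curvature, hence in particular is nonpositively curved and aspherical. The given homeomorphism $h : N^m \to M^m$ induces an isomorphism $h_*: \pi_1(N) \to \pi_1(M)$, and by the Eells--Sampson theorem there exists a harmonic map $f : N^m \to M^m$ in the homotopy class of $h$; uniqueness and nontriviality follow from the negative curvature of $M$ together with the fact that $h$ has degree $\pm 1$.

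Next, I would apply a Siu--Sampson--Matsushima type Bochner formula for $f$, adapted to the quaternionic K\"ahler or Cayley setting. The essential point is that $\mathbb{H}\textbf{H}^n$ and $\mathbb{O}\textbf{H}^2$ carry parallel calibrating forms (the fundamental $4$-form in the quaternionic case, the Cayley $8$-form in the octonionic case), and one computes $\Delta$ applied to an appropriate quadratic expression in $df$ built out of these parallel forms. The curvature term that appears decomposes into two pieces: one involving the curvature tensor of the target (which has a definite sign by the quaternionic/Cayley hyperbolic symmetry, as used by Corlette and by Mok--Siu--Yeung), and one involving the complexified sectional curvatures of the domain $N$. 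The hypothesis that the complexified sectional curvatures of $N$ are nonpositive is precisely what is needed to make the latter term have the correct sign, so that the full integrated Bochner identity forces the vanishing of a certain tensor constructed from $df$.

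From this vanishing I would deduce that $f$ is pluriharmonic with respect to the quaternionic (respectively Cayley) structure, and then, by a standard argument exploiting the rank-one nature of $M$, that $f$ is totally geodesic. Since $f$ is homotopic to a homeomorphism, its degree is $\pm 1$, so the total geodesic map $f$ cannot drop rank generically; by the equidimensional, totally geodesic, nonconstant property it is a local isometric immersion up to a constant scaling factor, and since both manifolds are closed of the same dimension, $f$ is a Riemannian covering and hence, up to scaling, an isometry. Combined with the fact that $f$ is homotopic to a homeomorphism, this gives the desired isometry between $M^m$ and $N^m$.

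The main obstacle is the Bochner step: one must write down the correct quadratic form in $df$ whose Laplacian, when paired against the parallel calibrating form on the target, produces curvature integrands of compatible sign under the complexified sectional curvature hypothesis on $N$. The quaternionic case requires handling the $\mathrm{Sp}(n)\mathrm{Sp}(1)$-structure carefully so that the cross-terms coming from the non-integrability of the almost complex structures cancel, while the Cayley case requires working directly with $\mathrm{Spin}(9)$-representation theory in place of a complex structure. Once this integral identity is in place, the remaining deductions (total geodesy, then isometry) are comparatively routine.
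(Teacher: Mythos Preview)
The paper does not prove this theorem; it is stated as a citation of \cite{MSY93} and immediately followed by a remark, with no proof environment. As this is a survey article, the result is quoted from the literature rather than reproved, so there is no ``paper's own proof'' to compare against.

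That said, your sketch contains a substantive error of orientation. In the Mok--Siu--Yeung argument the harmonic map goes \emph{from} the locally symmetric space $M$ \emph{to} $N$, not from $N$ to $M$ as you have written. The Matsushima--Bochner formula of \cite{MSY93} is an integral identity on the domain $M$: it exploits the special algebraic structure of the curvature tensor of $\mathbb{H}\textbf{H}^n$ or $\mathbb{O}\textbf{H}^2$ (equivalently, the parallel calibrating $4$-form or $8$-form) on the \emph{domain}, while the curvature of the \emph{target} $N$ enters through the second fundamental form term, and it is precisely there that the nonpositivity of the complexified sectional curvatures of $N$ is used. With your map $f:N\to M$, the roles are reversed: the parallel forms would sit on the target and the curvature hypothesis on the domain, and the standard Siu--Sampson/MSY integration-by-parts identity does not organize itself that way. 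Moreover, existence of the harmonic map via Eells--Sampson requires nonpositive curvature of the target; $M$ is indeed negatively curved, but in the correct direction $f:M\to N$ one must also address existence, which in \cite{MSY93} is handled under the stated hypothesis on $N$. The remainder of your outline (vanishing $\Rightarrow$ totally geodesic $\Rightarrow$ isometry up to scaling) is the right shape once the map is set up correctly.
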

\begin{remark}\rm{
\indent
\begin{itemize}
\item[\bf{1.}] The conditions in Theorem \ref{hern.yau} or Theorem \ref{Corlette} for $N^m$ imply the condition in Theorem \ref{MSY} for $N^m$.
\item[\bf{2.}] For the complex case in Theorem \ref{hern.yau}, Farrell and Jones \cite{FJ94} proved that this Rigidity can not be strengthened to requiring that the curvatures lie in the interval $[-4-\epsilon, -1 +\epsilon]$, for some $\epsilon>0$ :
\end{itemize}}
\end{remark}
\begin{theorem}\label{complex.hyp}
Let $m$ be either $4$ or any integer of the form $4n + 1$ where $n\geq 1$ and
$n$ is an integer. Given a positive real number $\displaystyle{\epsilon}$, there exists a closed smooth manifold $N^{2m}$ and a homotopy sphere $\Sigma^{2m}\in \Theta_{2m}$ such that the following is true:
\begin{itemize}
\item[(i)] $N^{2m}$ is  a complex hyperbolic manifold of complex dimension $m$.
\item[(ii)] The smooth manifolds $N^{2m}$ and $N^{2m}\#\Sigma^{2m}$ are homeomorphic but not diffeomorphic.
\item[(iii)]The connected sum  $N^{2m}\#\Sigma^{2m}$ supports a negatively curved Riemannian
metric whose sectional curvatures all lie in the closed interval $[-4 -\displaystyle{\epsilon}, -1 +\displaystyle{\epsilon}]$.
\end{itemize}
\end{theorem}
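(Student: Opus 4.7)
The plan is to follow the same two-step strategy used to prove Theorem \ref{hype.smooth}, adapted to the complex hyperbolic setting: build a suitable closed complex hyperbolic manifold $N^{2m}$, establish the curvature estimate of (iii) by a warping construction on a metric ball with large injectivity radius, and establish (ii) by exhibiting a smooth invariant that distinguishes $N^{2m}$ from $N^{2m}\#\Sigma^{2m}$. By Theorem \ref{recomhy}, closed complex hyperbolic manifolds of complex dimension $m$ exist; by Theorem \ref{malcev} their fundamental groups are residually finite, so, exactly as in the proof of Theorem \ref{hype.smooth}, I can pass to a finite-sheeted covering space $N^{2m}$ whose injectivity radius at some chosen point $p$ exceeds $3\alpha$, where $\alpha=\alpha(m,\epsilon)$ is a constant to be fixed by the curvature estimate below.

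For conclusion (iii), I would adapt the warping construction of Theorem \ref{hype.metric}. Fix an orientation-preserving diffeomorphism $\phi:\mathbb{S}^{2m-1}\to\mathbb{S}^{2m-1}$ realizing $\Sigma^{2m}$ as a twisted sphere, and realize $N^{2m}\#\Sigma^{2m}$ by removing an annular shell $h(\mathbb{S}^{2m-1}\times[1,2])$ from the large ball around $p$ and regluing with the twist induced by $\phi$. Replace the metric on the annular piece with a warped product of the same shape as Property~4, but with warping function tailored to match the induced metric on the boundary spheres of the complex hyperbolic ball rather than the constant negative curvature model. The analogue of Lemma \ref{pinched} should then show that, as $\alpha\to\infty$, the coefficients of the warped metric and their first and second derivatives converge uniformly to those of the ambient metric on $\mathbb{C}\textbf{H}^m$, so that the sectional curvatures of the reconstructed metric on $N^{2m}\#\Sigma^{2m}$ converge uniformly to the (non-constant) sectional curvatures of $\mathbb{C}\textbf{H}^m$, which lie in $[-4,-1]$. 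Choosing $\alpha$ sufficiently large would then deliver the pinching in $[-4-\epsilon,-1+\epsilon]$.

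For conclusion (ii), the detection step is more delicate than in Theorem \ref{hype.exto}, because closed complex hyperbolic manifolds are generally not stably parallelizable, so the Browder--Brumfiel injectivity Theorem \ref{brum} cannot be applied and the map $f_N^{\ast}:\Theta_{2m}\to[N,Top/O]$ is not known to be monic. I would therefore look for a diffeomorphism invariant which, combined with Theorem \ref{diff-con} promoting concordance to diffeomorphism, separates the smooth structures on $N^{2m}$ and $N^{2m}\#\Sigma^{2m}$. The restriction $m=4$ or $m=4n+1$, equivalently $2m=8$ or $2m\equiv 2\pmod 8$, should single out precisely the dimensions in which a suitable invariant, of Eells--Kuiper / $\hat{A}$-genus type when $m=4$ or of Kervaire-type $\eta$-invariant when $m=4n+1$, can be computed on the complex hyperbolic tangent bundle and is nontrivial on the chosen element $[\Sigma^{2m}]\in\Theta_{2m}$.

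The main obstacle is the detection step in (ii). The clean suspension-splitting argument of Theorem \ref{brum}, which makes the real hyperbolic case essentially automatic once Sullivan's Theorem \ref{sullivan} is invoked, fails here because the tangent bundle of $N^{2m}$ carries nontrivial Pontryagin classes forced by Hirzebruch proportionality with the dual $\mathbb{C}P^m$. Identifying the correct integral (or spin) invariant, verifying that it actually differs between $N^{2m}$ and $N^{2m}\#\Sigma^{2m}$ for the chosen $\Sigma^{2m}$, and understanding precisely why the calculation goes through for $m=4$ and $m=4n+1$ but breaks in other dimensions, is the technical heart of the proof. The curvature-pinching estimate of (iii), by contrast, should be a fairly mechanical adaptation of Lemma \ref{pinched} once the warping profile is correctly matched to the complex hyperbolic boundary data.
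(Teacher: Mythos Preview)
Your overall strategy is right, and you have correctly identified that the failure of stable parallelizability is the crux of~(ii). But the detection step contains a genuine gap: you never find the mechanism that replaces Theorem~\ref{brum}. The paper does \emph{not} compute Eells--Kuiper or $\eta$-type invariants on $N^{2m}$ directly. Instead it exploits the relationship between $N^{2m}$ and its compact dual $\mathbb{C}\mathbf{P}^m$: by Deligne--Sullivan there is a finite cover $\mathcal{M}$ of $M$ and a map $f:\mathcal{M}\to\mathbb{C}\mathbf{P}^m$ with $f^{*}T\mathbb{C}\mathbf{P}^m$ stably equivalent to $T\mathcal{M}$ (Theorem~\ref{staequ}). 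This yields an embedding $N^{2m}\times\mathbb{D}^{2m+1}\hookrightarrow\mathbb{C}\mathbf{P}^m\times\mathbb{D}^{2m+1}$ with trivial normal bundle (Theorem~\ref{emb1}), and hence a factorization $f^{*}_{\mathbb{C}\mathbf{P}^m}=\eta\circ f^{*}_{N}:\Theta_{2m}\to\mathcal{C}(\mathbb{C}\mathbf{P}^m)$ (Corollary~\ref{corr.emb1}). Combined with Mostow rigidity and topological rigidity exactly as in Theorem~\ref{Lemma 2.1}, this reduces the question ``is $N\#\Sigma$ diffeomorphic to $N$?'' to ``is $\mathbb{C}\mathbf{P}^m\#\Sigma$ concordant to $\mathbb{C}\mathbf{P}^m$?'' (Theorem~\ref{com.con1}). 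The latter is then settled on the \emph{fixed} manifold $\mathbb{C}\mathbf{P}^m$ by stable-homotopy calculations: for $m=4n+1$ one uses Adams' elements $\mu_{8n+2}\in\pi^{s}_{8n+2}$ of order~$2$ together with the Adams--Walker computation of $\widetilde{KO}$ of suspensions of $\mathbb{C}\mathbf{P}^{4n+1}$, and for $m=4$ a Brumfiel lemma (Lemma~\ref{lem.con}). This is why the dimensional restriction $2m=8$ or $2m\equiv 2\pmod 8$ arises---it comes from Adams' $\mu_r$, not from an $\hat A$- or $\eta$-calculation on $N$ itself. Your proposal, as written, never reaches $\mathbb{C}\mathbf{P}^m$ and so has no concrete invariant to evaluate.

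Your plan for~(iii) would probably work but is harder than necessary, and the paper takes a cleaner two-step route. Rather than warping directly against the anisotropic complex hyperbolic boundary data, one first builds a one-parameter family of metrics $b_r$ on $\mathbb{R}^{2m}$ that are genuinely real hyperbolic $\mathbb{R}\mathbf{H}^{2m}$ on a ball of radius $r$, genuinely complex hyperbolic $\mathbb{C}\mathbf{H}^m$ outside radius $r^2$, and have curvatures in $[-4-\epsilon(r),-1+\epsilon(r)]$ with $\epsilon(r)\to 0$ (Lemma~\ref{constru.met}). Transplanting $b_r$ into a large-injectivity-radius cover produces a real hyperbolic ball inside $N^{2m}$, and then the \emph{unmodified} real hyperbolic construction of Theorem~\ref{hype.metric}/Lemma~\ref{pinmetric} handles the connected sum with $\Sigma^{2m}$. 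This avoids redoing the curvature estimate against non-constant boundary curvature.
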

\begin{remark}\rm{
\indent
\begin{itemize}
\item[\bf{1.}] Note that $\epsilon$ cannot be 0 in the Theorem \ref{complex.hyp} due to  Theorem \ref{hern.yau}. 
\item[\bf{2.}] Theorem \ref{complex.hyp} showed that the answer to Problem \ref{prorig} for closed complex hyperbolic manifolds is negative.
\item[\bf{2.}] The idea of the proof of Theorem \ref{complex.hyp} can be paraphrased as follows: \\
Recall that $N$ and $N \# \Sigma$ are always homeomorphic since $\dim \Sigma >4$. Hence we need only choose $N$ and $\Sigma$ in Theorem \ref{complex.hyp} so that $N$ and $N \# \Sigma$ are not diffeomorphic in order to satisfy (ii). Letting $[M]$ denote the concordance class of $M$, Theorem \ref{diff-con} shows that it is sufficient to choose $N$ and $\Sigma$ so that $[N \# \Sigma]\neq [N]$  in $\mathcal{C}(N)$; i.e., so that $f^{*}([\Sigma])\neq 0$. It would be convenient at this point to be able to use Theorem \ref{brum}; but unfortunately this can't be done since a closed complex $n$-dimensional hyperbolic manifold $N$ is never stably parallelizable when $n > 1$; in fact, its first Pontryagin class is never zero. This last fact is a result of the close relationship between the tangent bundle $TM$ of $M$ and that of its positively curved dual symmetric space $\mathbb{C}\textbf{P}^n$. In fact, the following result was proven in \cite{FJ94}.
\end{itemize}}
\end{remark}
\begin{theorem}\label{staequ}\cite{FJ94}
Let $M$ be any closed complex $n$-dimensional hyperbolic manifold. Then there exists a finite sheeted cover $\mathcal{M}$ of $M$ and a map $f:\mathcal{M}\to \mathbb{C}\textbf{P}^n$ such that the pullback bundle $f^*(T\mathbb{C}\textbf{P}^n)$ and $T\mathcal{M}$ are stably equivalent complex vector bundles.
\end{theorem}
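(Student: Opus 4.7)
The plan is a tangential-map construction exploiting the fact that $M$ and $\mathbb{C}\textbf{P}^n$ share a common isotropy subgroup. Writing $G = SU(n,1)$, $G_u = SU(n+1)$, and $K = S(U(n) \times U(1))$, we realize $\mathbb{C}\textbf{H}^n = G/K$ and $\mathbb{C}\textbf{P}^n = G_u/K$; both tangent bundles $TM = (\Gamma \backslash G) \times_K \mathfrak{p}$ and $T\mathbb{C}\textbf{P}^n = G_u \times_K \mathfrak{p}_u$ arise as associated bundles via the same complex isotropy representation $\rho: K \to U(n)$, because $\mathfrak{p}$ and $\mathfrak{p}_u$ coincide as complex $K$-modules in the Hermitian symmetric setting. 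Consequently, if one can produce a map $f: \mathcal{M} \to \mathbb{C}\textbf{P}^n$ whose pullback of the principal $K$-bundle $G_u \to \mathbb{C}\textbf{P}^n$ agrees with $\Gamma' \backslash G \to \mathcal{M}$, then naturality of associated bundles forces $f^*T\mathbb{C}\textbf{P}^n$ and $T\mathcal{M}$ to be stably isomorphic as complex vector bundles.

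First, I would translate this into classifying space language. The bundles above are classified by maps $\phi_M: M \to BK$ and $\phi_{\mathbb{C}\textbf{P}^n}: \mathbb{C}\textbf{P}^n \to BK$, and the inclusion $K \hookrightarrow G_u$ yields a fiber sequence
\begin{equation*}
\mathbb{C}\textbf{P}^n \;=\; G_u/K \;\longrightarrow\; BK \;\stackrel{\pi_u}{\longrightarrow}\; BG_u.
\end{equation*}
The desired map $f$ is precisely a lift of $\phi_\mathcal{M}$ through $\pi_u$, which exists up to homotopy if and only if the composite $\psi_\mathcal{M} := \pi_u \circ \phi_\mathcal{M}: \mathcal{M} \to BG_u$ is null-homotopic. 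The analogous composite $\psi_M$ classifies the principal $G_u$-bundle $P_M := \Gamma \backslash G \times_K G_u$ over $M$, and this is exactly the $G_u$-reduction of the flat principal $G_c$-bundle on $M$ whose holonomy is the inclusion $\Gamma \hookrightarrow G \subset G_c = SL(n+1, \mathbb{C})$.

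Second, the heart of the proof is to exhibit a finite-sheeted cover $\mathcal{M} \to M$ on which $P_M$ becomes topologically trivial. Because $P_M$ sits inside a flat $G_c$-bundle, Chern-Weil theory forces all its Chern classes to vanish rationally, so they are torsion classes in $H^*(M; \mathbb{Z})$. To kill this torsion integrally, I would invoke a Deligne-Sullivan type argument: the lattice $\Gamma$ is linear and residually finite by Malcev's Theorem \ref{malcev}, so one can conjugate the inclusion $\Gamma \subset G_c$ into an integral form $GL(n+1, \mathcal{O})$ over a suitable ring of algebraic integers and then pass to a principal congruence subgroup $\Gamma' \subset \Gamma$ for which the restricted classifying map $\mathcal{M} \to BG_u$ is null-homotopic. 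This is the very mechanism that underlies Sullivan's proof of Theorem \ref{sullivan}.

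Third, given the triviality of $P_\mathcal{M}$, standard obstruction theory for the fibration $\pi_u$ produces a lift $f: \mathcal{M} \to \mathbb{C}\textbf{P}^n$ with $\phi_{\mathbb{C}\textbf{P}^n} \circ f \simeq \phi_\mathcal{M}$, unique up to the action of $[\mathcal{M}, G_u]$; since different lifts modify the associated bundle only by a trivial summand, naturality yields the required stable isomorphism $f^*(T\mathbb{C}\textbf{P}^n)$ and $T\mathcal{M}$ as complex vector bundles. The main obstacle is the second step: achieving integral triviality of $P_M$ on a finite cover. Chern-Weil supplies only rational vanishing, and the passage from rational to integral triviality demands genuine number-theoretic input (linearity of $\Gamma$, conjugation into an integral form, passage to congruence covers), rather than a soft homotopy-theoretic argument; this arithmetic bookkeeping is the source of the finite covering $\mathcal{M} \to M$ in the theorem statement.
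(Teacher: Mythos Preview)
Your proposal is correct and follows essentially the same approach that the paper presents (in its proof of the more general Theorem~\ref{finite sheeted tang}, Okun's tangential map theorem, which contains Theorem~\ref{staequ} as the special case $G=SU(n,1)$). Both arguments hinge on the Deligne--Sullivan theorem to trivialize the flat principal $G_c$-bundle on a finite cover, then use the fibration $G_u/K \to BK \to BG_u$ to lift the classifying map of the canonical $K$-bundle into $\mathbb{C}\textbf{P}^n$, from which the tangent-bundle statement follows by the associated-bundle construction.
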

Theorem \ref{staequ} and the Whitney Embedding Theorem yield the following useful result:
\begin{theorem}\label{emb1}\cite{FJ94}
Let $M^{2n}$ be a closed complex $n$-dimensional hyperbolic manifold. Then there exists a finite sheeted cover $\mathcal{M}^{2n} of M^{2n}$ such that the following is true for any finite sheeted cover $N^{2n}$ of $\mathcal{M}^{2n}$. The manifold $N^{2n}\times \mathbb{D}^{2n+1}$ is orientation
preserving diffeomorphic to a codimension 0-submanifold contained in the interior of $\mathbb{C}\textbf{P}^n\times \mathbb{D}^{2n+1}$.
\end{theorem}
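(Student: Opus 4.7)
The plan is to construct the desired embedding by pulling back tangent bundle data from $\mathbb{C}\textbf{P}^n$ using Theorem \ref{staequ} and then invoking the Whitney Embedding Theorem, with the key step being that the tangent bundle matching forces the normal bundle of the resulting embedding to be trivial.

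First, I would apply Theorem \ref{staequ} to obtain a finite sheeted cover $\mathcal{M}\to M^{2n}$ together with a map $f:\mathcal{M}\to\mathbb{C}\textbf{P}^n$ such that $f^*(T\mathbb{C}\textbf{P}^n)$ and $T\mathcal{M}$ are stably equivalent as complex vector bundles. For any finite sheeted cover $p:N\to\mathcal{M}$, form $g=f\circ p:N\to\mathbb{C}\textbf{P}^n$. Since $TN=p^*T\mathcal{M}$ and stable equivalence is preserved under pullback, $g^*(T\mathbb{C}\textbf{P}^n)$ and $TN$ are stably equivalent as complex, and hence as real, vector bundles. Next, let $j:\mathbb{C}\textbf{P}^n\hookrightarrow \mathbb{C}\textbf{P}^n\times\mathbb{D}^{2n+1}$ denote the inclusion $x\mapsto(x,0)$. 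The target has dimension $4n+1=2(2n)+1$, exactly matching the Whitney bound for the source dimension $2n$; so the Whitney Embedding Theorem homotopes $j\circ g$ to a smooth embedding $h:N\hookrightarrow \mathbb{C}\textbf{P}^n\times\mathbb{D}^{2n+1}$, which after a small perturbation we may assume lies in the interior. Because $h\simeq j\circ g$, the pullback $h^*T\mathbb{C}\textbf{P}^n$ is isomorphic to $g^*T\mathbb{C}\textbf{P}^n$.

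The crucial step is to show that the normal bundle $\nu(h)$, of rank $2n+1$, is trivial. The splitting $TN\oplus\nu(h)\cong h^*T(\mathbb{C}\textbf{P}^n\times\mathbb{D}^{2n+1})\cong h^*T\mathbb{C}\textbf{P}^n\oplus(\text{trivial of rank }2n+1)$, combined with the stable equivalence between $TN$ and $h^*T\mathbb{C}\textbf{P}^n$, shows that $\nu(h)$ is stably trivial as a real vector bundle. Since $\mathrm{rank}\,\nu(h)=2n+1$ strictly exceeds $\dim N=2n$, the classifying map $N\to BO(2n+1)$ is determined by its stabilization to $BO$, so stable triviality implies genuine triviality. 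A trivialization of $\nu(h)$ identifies a tubular neighborhood of $h(N)$ in $\mathbb{C}\textbf{P}^n\times\mathbb{D}^{2n+1}$ with $N\times\mathbb{R}^{2n+1}$; restricting to $N\times\mathbb{D}^{2n+1}$ and shrinking the tube so that it lies in the interior yields the required codimension-zero submanifold. Composing the trivialization with a reflection if necessary makes the embedding orientation preserving, using the natural complex orientations on $N$ and $\mathbb{C}\textbf{P}^n$ and the standard orientation on $\mathbb{D}^{2n+1}$.

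The main obstacle is the passage from stable triviality to genuine triviality of $\nu(h)$; it hinges on the numerical coincidence that the Whitney Embedding Theorem places $N$ into an ambient manifold of dimension exactly $4n+1$, so that $\mathrm{rank}\,\nu(h)=2n+1$ strictly exceeds $\dim N=2n$. Had the ambient dimension been any smaller, there could in principle be a top-dimensional obstruction in $H^{2n}(N;\pi_{2n-1}(O))$ preventing trivialization. Everything else, including the transition from complex to real stable equivalence and the orientation bookkeeping, is routine.
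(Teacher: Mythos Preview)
Your proof is correct and follows exactly the approach the paper intends: the paper simply states that Theorem \ref{staequ} together with the Whitney Embedding Theorem yield this result, and your argument supplies precisely those details. In fact, your proof coincides step for step with the argument given later in the paper for the more general Theorem \ref{tubular} (due to Okun), including the use of Theorem \ref{trivial bund} to pass from stable triviality of the normal bundle to genuine triviality via the rank inequality $2n+1>\dim N=2n$.
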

Let $f_N: N^{2n}\to \mathbb{S}^{2n}$ and $f_{\mathbb{C}\textbf{P}^n}: \mathbb{C}\textbf{P}^n\to \mathbb{S}^{2n}$ denote degree 1 maps. By using Theorem \ref{emb1} and Theorem \ref{kirby}, we have the following useful analogue of Theorem \ref{brum} :
\begin{corollary}\label{corr.emb1}\cite{FJ94}
Let $M^{2n}$ be a closed complex $n$-dimensional hyperbolic manifold and $\mathcal{M}^{2n}$ be the finite sheeted cover of $M^{2n}$ posited in Theorem \ref{emb1}. Then the following is true for every finite sheeted cover $N^{2n}$ of $\mathcal{M}$. The group homomorphism $f^{*}_{\mathbb{C}\textbf{P}^n}:\Theta_{2n}\to \mathcal{C}(\mathbb{C}\textbf{P}^n)$ factors through $f^{*}_{N}:\Theta_{2n}\to \mathcal{C}(N)$; i.e., there exists a homomorphism $\eta:\mathcal{C}(N)\to \mathcal{C}(\mathbb{C}\textbf{P}^n)$  such that $$\eta\circ f^{*}_{N}=f^{*}_{\mathbb{C}\textbf{P}^n}.$$
\end{corollary}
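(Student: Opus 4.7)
The plan is to adapt the Browder--Brumfiel suspension trick used in Theorem \ref{brum}, with the role of $\mathbb{S}^{2n+1}$ now played by $\mathbb{C}\textbf{P}^n\times \mathbb{D}^{2n+1}$. Fix a finite sheeted cover $N^{2n}$ of $\mathcal{M}$, and let
\[
j\colon N\times \mathbb{D}^{2n+1} \hookrightarrow \mathrm{int}(\mathbb{C}\textbf{P}^n\times \mathbb{D}^{2n+1})
\]
be the orientation-preserving codimension-$0$ embedding supplied by Theorem \ref{emb1}. After an ambient smooth isotopy I may assume $j$ carries the basepoint fibre $\star_N\times \mathbb{D}^{2n+1}$ into $\star_{\mathbb{C}\textbf{P}^n}\times \mathbb{D}^{2n+1}$. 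Using the model $\Sigma^{2n+1} X = (X\times \mathbb{D}^{2n+1})/(X\times \partial\mathbb{D}^{2n+1}\cup \star_X\times \mathbb{D}^{2n+1})$ for the $(2n+1)$-fold reduced suspension, I then define a Pontryagin--Thom quotient map
\[
\phi\colon \Sigma^{2n+1}\mathbb{C}\textbf{P}^n \longrightarrow \Sigma^{2n+1} N
\]
which is the canonical quotient on $j(N\times \mathrm{int}\,\mathbb{D}^{2n+1})$ and sends everything outside it to the basepoint.

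Since $Top/O$ is an infinite loop space, there is a space $Y$ with $\Omega^{2n+1} Y = Top/O$. Under the natural identifications
\[
\mathcal{C}(X)\;=\;[X, Top/O]\;=\;[\Sigma^{2n+1} X, Y]
\]
given by Theorem \ref{kirby} and the loop-space structure, I define $\eta\colon \mathcal{C}(N)\to \mathcal{C}(\mathbb{C}\textbf{P}^n)$ to be the pull-back $\phi^{*}$. Being induced by a map of spaces it is automatically a group homomorphism, so the content of the corollary is the equality $\eta\circ f^{*}_N = f^{*}_{\mathbb{C}\textbf{P}^n}$.

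After the loop-space translations, that equality reduces to the single homotopy statement
\[
\Sigma^{2n+1}(f_N)\circ \phi \;\simeq\; \Sigma^{2n+1}(f_{\mathbb{C}\textbf{P}^n})\colon \Sigma^{2n+1}\mathbb{C}\textbf{P}^n \longrightarrow \mathbb{S}^{4n+1},
\]
which I would verify by a degree calculation. Homotope $f_N$ and $f_{\mathbb{C}\textbf{P}^n}$ to the standard quotient maps collapsing the complements of small open coordinate $2n$-discs around regular values in $N$ and $\mathbb{C}\textbf{P}^n$ respectively. Then each side above becomes the quotient that collapses everything in $\Sigma^{2n+1}\mathbb{C}\textbf{P}^n$ outside a small open $(4n+1)$-disc (sitting inside $\mathbb{C}\textbf{P}^n\times \mathrm{int}\,\mathbb{D}^{2n+1}$) to the basepoint. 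By the Hopf--Whitney theorem the set $[\Sigma^{2n+1}\mathbb{C}\textbf{P}^n,\mathbb{S}^{4n+1}]$ is classified by the induced homomorphism on top homology $H_{4n+1}(\Sigma^{2n+1}\mathbb{C}\textbf{P}^n;\mathbb{Z})\cong\mathbb{Z}$, and both compositions induce multiplication by $+1$, so the desired homotopy holds.

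The hard part is the last step: one must arrange $j$ to respect basepoints (a small isotopy argument) and keep careful track of orientations so that the two degrees come out equal rather than off by a sign; this is where the orientation-preserving hypothesis in Theorem \ref{emb1} is essential. Once these points are settled, invoking Hopf--Whitney closes the argument, while everything else---the construction of $\phi$ from the embedding and the passage from $\phi$ to $\eta$ via the infinite loop structure on $Top/O$---is a direct transcription of the machinery already deployed in the proof of Theorem \ref{brum}.
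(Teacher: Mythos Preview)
Your argument is correct and is essentially the paper's own approach: the paper records the corollary as an immediate consequence of Theorem \ref{emb1} and Theorem \ref{kirby}, and the mechanism you spell out (Pontryagin--Thom collapse from the codimension-$0$ embedding, then transport through the infinite loop structure on $Top/O$) is exactly what is later written out in the more general tangential-map setting as Theorems \ref{suspension.homot} and \ref{locally.concord}. One small slip worth fixing: for $\phi$ to descend to the suspension quotients you need the \emph{preimage} condition $j(N\times\mathbb{D}^{2n+1})\cap(\star_{\mathbb{C}\textbf{P}^n}\times\mathbb{D}^{2n+1})\subseteq j(B\times\mathbb{D}^{2n+1})$ for a small ball $B$ about $\star_N$, not the forward inclusion you stated; this is arranged by general position (as in the proof of Theorem \ref{suspension.homot}) rather than by an isotopy moving $\star_N\times\mathbb{D}^{2n+1}$ into the basepoint fibre.
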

\begin{theorem}\label{com.con1}\cite{FJ94}
Let $M^{2n}$ be a closed complex $n$-dimensional hyperbolic manifold with $n > 2$ and $\mathcal{M}^{2n}$ be the finite sheeted cover of $M^{2n}$ posited in Theorem \ref{emb1}. Let $N^{2n}$ be a finite sheeted cover of $\mathcal{M}^{2n}$ and $\Sigma_1, \Sigma_2\in \Theta_{2n}$; i.e., $\Sigma_1$ and $\Sigma_2$ are a pair of homotopy $2n$-spheres. If the connected sums $N^{2n}\#\Sigma_1$ and $N^{2n}\#\Sigma_2$ are diffeomorphic, then $\mathbb{C}\textbf{P}^n\#\Sigma_1$ is concordant to either $\mathbb{C}\textbf{P}^n\#\Sigma_2$ or $\mathbb{C}\textbf{P}^n\#(-\Sigma_2)$.
\end{theorem}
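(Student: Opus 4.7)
The plan is to adapt the concordance-rigidity argument of Theorem \ref{Addendum 2.3} to the complex hyperbolic setting, and then transfer the resulting information from $N^{2n}$ to its dual positively curved space $\mathbb{C}\textbf{P}^n$ via the homomorphism $\eta$ from Corollary \ref{corr.emb1}. First, I would view $N^{2n}\#\Sigma_{1}$ and $N^{2n}\#\Sigma_{2}$ as two smooth structures on the same underlying topological manifold, obtained by surgery inside a small metric ball $B\subset N^{2n}$, and analyze a given diffeomorphism $f:N^{2n}\#\Sigma_{1}\to N^{2n}\#\Sigma_{2}$ through its induced self-homeomorphism of $N^{2n}$.

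By the Mostow Rigidity Theorem \ref{mostow} applied to the complex hyperbolic manifold $N^{2n}$, this self-homeomorphism is homotopic either to an orientation-preserving or to an orientation-reversing isometry $g$ of $N^{2n}$. Since one can move small metric balls around by ambient smooth isotopies of $N^{2n}$, $g$ can be altered by a smooth isotopy to a self-diffeomorphism $\widehat{g}$ satisfying $\widehat{g}|_B=\mathrm{id}$. The composition $\widehat{g}\circ f$ is then a self-diffeomorphism either of $N^{2n}\#\Sigma_{1}\to N^{2n}\#\Sigma_{2}$ or of $N^{2n}\#\Sigma_{1}\to N^{2n}\#(-\Sigma_{2})$ that is homotopic to the identity. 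Exactly as in the proof of Lemma \ref{Lemma 2.1}, one builds from such a homotopy a self-homotopy equivalence of $N^{2n}\times[0,1]$ restricting to a homeomorphism on the boundary; since $N^{2n}$ is non-positively curved, the Topological Rigidity Theorem \ref{gentoprigd} lets us homotope this, rel boundary, to a homeomorphism, and pulling back the obvious product smooth structure produces a concordance between $N^{2n}\#\Sigma_{1}$ and $N^{2n}\#\Sigma_{2}$, respectively $N^{2n}\#(-\Sigma_{2})$. In particular
$$f^{*}_{N}([\Sigma_{1}]) \;=\; f^{*}_{N}([\pm\Sigma_{2}]) \quad \text{in } \mathcal{C}(N^{2n}).$$

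Now I would apply the homomorphism $\eta:\mathcal{C}(N^{2n})\to \mathcal{C}(\mathbb{C}\textbf{P}^n)$ provided by Corollary \ref{corr.emb1}, which satisfies $\eta\circ f^{*}_{N}=f^{*}_{\mathbb{C}\textbf{P}^n}$. Applying $\eta$ to both sides of the above equation yields
$$f^{*}_{\mathbb{C}\textbf{P}^n}([\Sigma_{1}]) \;=\; \eta\bigl(f^{*}_{N}([\Sigma_{1}])\bigr) \;=\; \eta\bigl(f^{*}_{N}([\pm\Sigma_{2}])\bigr) \;=\; f^{*}_{\mathbb{C}\textbf{P}^n}([\pm\Sigma_{2}]),$$
which is precisely the statement that $\mathbb{C}\textbf{P}^n\#\Sigma_{1}$ is concordant to $\mathbb{C}\textbf{P}^n\#\Sigma_{2}$ or to $\mathbb{C}\textbf{P}^n\#(-\Sigma_{2})$.

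The main obstacle I anticipate is the first step: carefully extracting, from the diffeomorphism $f$, a self-diffeomorphism of $N^{2n}$ that is the identity on $B$ and is homotopic to the identity (after possibly reversing orientation). This requires the complex-hyperbolic Mostow rigidity together with the ambient isotopy trick for metric balls, and it is exactly where the $\pm\Sigma_{2}$ ambiguity enters the conclusion. Once this is in place, the remainder is the standard concordance/rigidity package (Theorem \ref{gentoprigd} and the vanishing of Whitehead torsion for non-positively curved manifolds via Theorem \ref{whigro}) plus the naturality of $f^{*}$ encoded in Corollary \ref{corr.emb1}. The hypothesis $n>2$ guarantees both $2n\geq 5$ for the smoothing-theory identifications of Theorem \ref{kirby} and the availability of the codimension-zero embedding $\mathcal{M}^{2n}\times\mathbb{D}^{2n+1}\hookrightarrow \mathbb{C}\textbf{P}^n\times\mathbb{D}^{2n+1}$ used to construct $\eta$.
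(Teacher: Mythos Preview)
Your proposal is correct and follows essentially the same route as the paper: first adapt the argument of Theorem~\ref{Addendum 2.3} (using Mostow rigidity and the Topological Rigidity Theorem~\ref{gentoprigd}, both valid for closed complex hyperbolic manifolds of real dimension $>4$) to deduce $f^{*}_{N}([\Sigma_1])=f^{*}_{N}([\pm\Sigma_2])$ in $\mathcal{C}(N^{2n})$, and then push this equality forward to $\mathcal{C}(\mathbb{C}\textbf{P}^n)$ via the factorization $\eta\circ f^{*}_{N}=f^{*}_{\mathbb{C}\textbf{P}^n}$ of Corollary~\ref{corr.emb1}. Your write-up is in fact slightly more explicit than the paper's about the role of $\eta$, but the underlying argument is the same.
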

\begin{proof}
The argument given to prove Addendum \ref{Addendum 2.3} shows that $N^{2n}\#\Sigma_1$ is concordant to either $N^{2n}\#\Sigma_2$ or $N^{2n}\#(-\Sigma_2)$. But in this argument one must note that both Mostow's Rigidity Theorem \ref{mostow} and Topological Rigidity Theorem \ref{gentoprigd} remain valid for compact complex hyperbolic manifolds of (real) dimension greater than 4. Next by Theorem \ref{kirby} that concordance classes of smooth structures on a smooth manifold $X$ are in bijective correspondence with homotopy classes of maps from $X$ to $Top/O$ provided $\dim X > 4$. In particular, $\Theta_{2n}=\pi_{2m}(Top/O)$ and $N^{2n}\#\Sigma_1$ is concordant to either $N^{2n}\#\Sigma_2$ or $N^{2n}\#(-\Sigma_2)$ can be interpreted as showing that
\begin{equation}\label{eq.con}
f^*_N(\Sigma_1)=f^*_N(\pm \Sigma_1).
\end{equation}
By using Equation (\ref{eq.con}) together with Corollary \ref{corr.emb1} and arguing analogous to the proof of Theorem \ref{brum}, we see that 
\begin{equation}\label{com.con}
f^{*}_{\mathbb{C}\textbf{P}^n}(\Sigma_1)=f^{*}_{\mathbb{C}\textbf{P}^n}(\pm \Sigma_1);
\end{equation}
i.e., $\mathbb{C}\textbf{P}^n\#\Sigma_1$ is concordant to either $\mathbb{C}\textbf{P}^n\#\Sigma_2$ or $\mathbb{C}\textbf{P}^n\#(-\Sigma_2)$.
\end{proof}
\begin{theorem}\label{adam}\cite{Ada66}
Suppose that $r \equiv$ $1$ or $2$ $\rm{mod}~8$ and $r > 0$. Then $\pi_{r}^{s}$ contains an element $\mu_r$, of order $2$, such that any map $h: \mathbb{S}^{q+r}\to \mathbb{S}^q$ representing $\mu_r$, induces a non-zero homomorphism of $\widetilde{KO}^{q}$.
\end{theorem}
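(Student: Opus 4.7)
The plan is to read the statement through $KO$-Hurewicz, construct $\mu_r$ from Bott generators via the real $J$-homomorphism, and then verify non-triviality on $\widetilde{KO}$ using the fact that $J$ is compatible with the $KO$-unit. First, I would unpack what the conclusion means. By the suspension isomorphism, $\widetilde{KO}^q(\mathbb{S}^q)\cong\widetilde{KO}^0(\mathbb{S}^0)=\mathbb{Z}$ and $\widetilde{KO}^q(\mathbb{S}^{q+r})\cong\widetilde{KO}^{-r}(\mathrm{pt})$. The Bott $8$-fold periodicity of $KO$ gives $\widetilde{KO}^{-r}(\mathrm{pt})=\mathbb{Z}/2$ precisely when $r\equiv 1,2\pmod 8$. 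Hence the induced homomorphism $h^\ast$ is a map $\mathbb{Z}\to\mathbb{Z}/2$, and being non-zero is the same as being surjective; equivalently, $h^\ast$ hits the Bott generator of $\widetilde{KO}^{-r}(\mathrm{pt})$. In particular the statement is equivalent to saying that the $KO$-Hurewicz image of $[h]=\mu_r$ under $\pi_r^s\to KO_r$ is the non-zero element of $KO_r=\mathbb{Z}/2$.

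Next I would construct $\mu_r$ through the real $J$-homomorphism. Recall that Bott periodicity also gives $\pi_{r-1}(O)=\mathbb{Z}/2$ for $r\equiv 1,2\pmod 8$, generated by a class $\beta_r$. The stable $J$-homomorphism $J_{\mathbb{R}}\colon\pi_{r-1}(O)\longrightarrow\pi_r^s$ sends $\beta_r$ to a class which I will call $\mu_r\in\pi_r^s$. For the base cases this recovers the familiar elements: $\mu_1=\eta$ (stable Hopf map) and $\mu_2=\eta^2$, both of order two. For higher $r$, a convenient alternative is to exhibit $\mu_r$ as the Toda-bracket-style product $\eta\cdot\overline{\beta}^k$ (respectively $\eta^2\cdot\overline{\beta}^k$) where $\overline{\beta}$ is a lift of the Bott class $\beta\in KO^{-8}(\mathrm{pt})$ to $\pi_8^s$ modulo indeterminacy; both descriptions give the same element up to elements that vanish under the $KO$-Hurewicz map, which is what we care about.

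The verification of the two required properties then proceeds as follows. For the order claim, $2\beta_r=0$ in $\pi_{r-1}(O)$ since this group is $\mathbb{Z}/2$, hence $2\mu_r=J_{\mathbb{R}}(2\beta_r)=0$. For non-triviality of $h^\ast$, I would use the commutative diagram
\[
\begin{array}{ccc}
\pi_{r-1}(O) & \xrightarrow{\;J_{\mathbb{R}}\;} & \pi_r^s \\
\downarrow & & \downarrow \\
KO_r(\mathrm{pt}) & =\!=\!=\!= & KO_r(\mathrm{pt})
\end{array}
\]
where the left vertical map is the Bott isomorphism $\pi_{r-1}(O)\cong KO_r(\mathrm{pt})$ (for $r>0$) and the right vertical map is the Hurewicz map induced by the unit $\mathbb{S}^0\to ko$. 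Commutativity of this diagram is the standard fact that $J_{\mathbb{R}}$ agrees with the $KO$-Hurewicz map of its image after identifying $\pi_{\ast-1}(O)$ with $KO_\ast(\mathrm{pt})$ via Bott (this is essentially the assertion that the Thom class of an $O$-bundle refines the bundle's $KO$-class). Since $\beta_r$ generates the left column $\mathbb{Z}/2$, its image $\mu_r$ must generate the right column $\mathbb{Z}/2$, which is exactly the statement that $h^\ast(1)$ is the generator of $\widetilde{KO}^{-r}(\mathrm{pt})$, i.e., $h^\ast\neq 0$.

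The principal obstacle is the commutativity of the diagram in the third paragraph, i.e., the compatibility of the $J$-homomorphism with the Bott isomorphism. This is the technical heart of Adams' original argument and is typically proven either by a direct geometric manipulation of Thom spectra (exhibiting the Thom class of a stably trivial $O$-bundle as the image of $J$ of its clutching map) or, as in Adams' $J(X)$ series, through a careful Adams-operations analysis of the $e_{\mathbb{R}}$-invariant applied to representatives of $\mu_r$. Once this compatibility is available, the non-triviality and the order assertions drop out immediately from Bott periodicity, and the theorem is proved.
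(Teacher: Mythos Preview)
The paper does not give its own proof of this statement: Theorem~\ref{adam} is quoted verbatim from Adams~\cite{Ada66} and used as a black box in the proof of Lemma~\ref{lem.con}. So there is no in-paper argument to compare your proposal against.

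That said, your sketch is a faithful outline of how Adams' result is actually established. Your reformulation of the conclusion as the non-vanishing of the $KO$-Hurewicz (Adams' $d$-invariant) image of $\mu_r$ is exactly right, and constructing $\mu_r$ as $J_{\mathbb{R}}(\beta_r)$ for $\beta_r$ the generator of $\pi_{r-1}(O)\cong\mathbb{Z}/2$ is the standard move. Two small remarks. First, your order argument only gives $2\mu_r=0$; the statement ``order $2$'' also requires $\mu_r\neq 0$, which you do obtain, but only \emph{after} the $KO$-Hurewicz computation---so the two claims are not really independent, and you should present them in that order. Second, you are correct that the entire weight of the argument sits in the commutativity of the square identifying $J_{\mathbb{R}}$ with the $KO$-Hurewicz map through the Bott isomorphism; in Adams' paper this is packaged as the computation of the $d_{\mathbb{R}}$-invariant of $\mu_r$, and it is not a one-line fact. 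Your parenthetical about Thom classes points at the right geometric reason, but a complete proof would need either that Thom-spectrum argument spelled out or the $e$/$d$-invariant machinery from the $J(X)$ series.
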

\begin{lemma}\label{lem.con}\cite{FJ94}
For each positive integer $n$, there exists a homotopy sphere $\Sigma^{8n+2}\in \Theta_{8n+2}$ such that $\mathbb{C}\textbf{P}^{4n+1}\#\Sigma^{8n+2}$ is not concordant to $\mathbb{C}\textbf{P}^{4n+1}$. Furthermore there exists a homotopy sphere $\Sigma^{8}\in \Theta_{8}$  such that $\mathbb{C}\textbf{P}^{4}\#\Sigma^{8}$ is not concordant to $\mathbb{C}\textbf{P}^{4}$. Also for any two elements $\Sigma_{1}$, $\Sigma_{2} \in \Theta_{10}$, $\mathbb{C}\textbf{P}^{5}\#\Sigma_1$ is concordant to $\mathbb{C}\textbf{P}^{5}\#\Sigma_2$ if and only if $\Sigma_{1}=\Sigma_{2}$. (Recall that $\Theta_{10}$ is a cyclic group of order 6.)
\end{lemma}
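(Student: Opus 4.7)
The plan is to work entirely on the $[-,Top/O]$ side via Theorem \ref{kirby}. Under the identifications $\mathcal{C}(\mathbb{C}\textbf{P}^n)=[\mathbb{C}\textbf{P}^n,Top/O]$ and $\Theta_{2n}=[\mathbb{S}^{2n},Top/O]$, the concordance class of $\mathbb{C}\textbf{P}^n\#\Sigma$ is $f^*_{\mathbb{C}\textbf{P}^n}([\Sigma])$, so each assertion of the lemma amounts to a statement about $\ker f^*_{\mathbb{C}\textbf{P}^n}\subseteq\Theta_{2n}$. Because $Top/O$ is an infinite loop space \cite{BV73}, applying $[-,Top/O]$ to the cofiber sequence $\mathbb{C}\textbf{P}^{n-1}\hookrightarrow\mathbb{C}\textbf{P}^n\xrightarrow{f_{\mathbb{C}\textbf{P}^n}}\mathbb{S}^{2n}$ yields a Puppe long exact sequence of abelian groups
\[
[\Sigma\mathbb{C}\textbf{P}^{n-1},Top/O]\xrightarrow{(\Sigma a)^*}\Theta_{2n}\xrightarrow{f^*_{\mathbb{C}\textbf{P}^n}}[\mathbb{C}\textbf{P}^n,Top/O]\longrightarrow[\mathbb{C}\textbf{P}^{n-1},Top/O],
\]
where $a:\mathbb{S}^{2n-1}\to\mathbb{C}\textbf{P}^{n-1}$ is the Hopf attaching map of the top cell. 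Thus $\ker f^*_{\mathbb{C}\textbf{P}^n}=\mathrm{image}((\Sigma a)^*)$, reducing the lemma to a bound on this image in the cases $n=5$, $n=4$, and $n=4k+1$.

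For the $\mathbb{C}\textbf{P}^5$ assertion (injectivity of $f^*_{\mathbb{C}\textbf{P}^5}$ on all of $\Theta_{10}\cong\mathbb{Z}/6$) I would show $(\Sigma a)^*=0$. Using the Atiyah-Hirzebruch spectral sequence for $[\Sigma\mathbb{C}\textbf{P}^4,Top/O]$ with coefficients $\pi_*(Top/O)=\Theta_*$, each class is represented up to filtration by a map factoring through a skeletal collapse $\Sigma\mathbb{C}\textbf{P}^4\to\mathbb{S}^{2j+1}$ for $1\le j\le 4$. Precomposing with $\Sigma a:\mathbb{S}^{10}\to\Sigma\mathbb{C}\textbf{P}^4$ yields a stable Hopf-type composite $\mathbb{S}^{10}\to\mathbb{S}^{2j+1}\to Top/O$ whose class in $\Theta_{10}$ I would verify vanishes by consulting Toda's tables for $\pi^s_{10-(2j+1)}$ together with the low-dimensional groups $\Theta_*$; this takes care of both the $2$- and $3$-primary parts of $\Theta_{10}$ simultaneously. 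For the remaining cases ($n=4$ and $n=4k+1$ with $k\ge 1$), the strategy shifts to picking a single exotic sphere detected by an invariant that survives the obstruction computation: I would use Adams' Theorem \ref{adam} to produce $\mu_{8k+1}\in\pi^s_{8k+1}$, then pass via the Pontryagin-Thom construction and the Kervaire-Milnor exact sequence to a specific $\Sigma^{8k+2}\in\Theta_{8k+2}$ (and analogously a generator of $\Theta_8=\mathbb{Z}/2$) detected by the Adams $\alpha$-invariant $\alpha:\Theta_{8k+2}\to\pi_{8k+2}(KO)=\mathbb{Z}/2$.

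To finish I must show this $\Sigma^{8k+2}$ is not in the image of $(\Sigma a)^*$, equivalently that $\alpha$ annihilates that image. My plan is to use the natural forgetful map $Top/O\to BO$ together with the $KO$-theoretic realization of $\alpha$ to factor the composite $[\Sigma\mathbb{C}\textbf{P}^{4k},Top/O]\to\Theta_{8k+2}\xrightarrow{\alpha}\mathbb{Z}/2$ through $\widetilde{KO}^{-(8k+2)}(\Sigma\mathbb{C}\textbf{P}^{4k})\xrightarrow{(\Sigma a)^*}\widetilde{KO}^{-(8k+2)}(\mathbb{S}^{8k+2})=\mathbb{Z}/2$, and then to carry out a Bott-periodicity/Adams-operation analysis on $\widetilde{KO}(\mathbb{C}\textbf{P}^{4k})$ (generated by powers of $\beta-1$ for the Bott class $\beta$) to see that this pairing is divisible by $2$, hence zero. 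The main obstacle is precisely this last $\widetilde{KO}$-vanishing step, carried out uniformly in $k$: one must control the stable Hopf pairing with $\Sigma a$ on the whole image of $J$ in $\widetilde{KO}(\mathbb{C}\textbf{P}^{4k})$, which is where Theorem \ref{adam} is decisive, since it simultaneously furnishes the $\alpha$-nontrivial exotic sphere and governs the $\widetilde{KO}$-interaction that must annihilate the image of $(\Sigma a)^*$.
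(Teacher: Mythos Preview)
Your Puppe-sequence framework is valid and genuinely different from the paper's argument, but several of your execution steps are shaky, and in each case the paper takes a more direct route.

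For the $\mathbb{C}\textbf{P}^{4n+1}$ assertion the paper never analyzes $\mathrm{image}((\Sigma a)^*)$ at all. Instead it uses the comparison diagram among $Top/O$, $F/O$ and $SF$ together with Brumfiel's theorem that $\phi_*:[\mathbb{C}\textbf{P}^m,SF]\to[\mathbb{C}\textbf{P}^m,F/O]$ is monic; this reduces the question to showing $f^*_{\mathbb{C}\textbf{P}^{4n+1}}(\mu_{8n+2})\neq 0$ in the stable cohomotopy $\pi^0(\mathbb{C}\textbf{P}^{4n+1})$, which is a \emph{positive} detection: Adams--Walker \cite{AW65} show that $\Sigma^q f_{\mathbb{C}\textbf{P}^{4n+1}}$ is injective on $\widetilde{KO}^q$, and Adams' Theorem~\ref{adam} says a representative of $\mu_{8n+2}$ is nonzero on $\widetilde{KO}^q$, so the composite is nonzero. (Incidentally, you want $\mu_{8k+2}\in\pi^s_{8k+2}$, not $\mu_{8k+1}$.) Your plan to show instead that the $\alpha$-invariant kills $\mathrm{image}((\Sigma a)^*)$ via a ``divisible by $2$'' argument in $\widetilde{KO}(\mathbb{C}\textbf{P}^{4k})$ is not substantiated; to make it work you would need precisely the Adams--Walker injectivity (which, by exactness of the cofiber sequence in $\widetilde{KO}$, is equivalent to $(\Sigma a)^*=0$ on $\widetilde{KO}$) \emph{and} a clean factorization of the $\alpha$-invariant on $\Theta_{8k+2}$ through $\widetilde{KO}$, which effectively forces you back through $SF$ and the $J$-homomorphism --- i.e.\ back to the paper's diagram.

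For the $\mathbb{C}\textbf{P}^5$ assertion your Atiyah--Hirzebruch/Toda plan is much heavier than necessary and the step ``each class is represented up to filtration by a map factoring through a skeletal collapse'' is not literally correct (AHSS gives an associated graded, not a splitting). The paper's argument is a one-liner: $\pi_i(Top/O)_{(3)}=0$ for all $i<10$, so $f^*_{\mathbb{C}\textbf{P}^5}$ is automatically injective after localizing at $3$; combined with the $2$-primary case already handled by the first sentence, this gives injectivity on all of $\Theta_{10}\cong\mathbb{Z}/6$.
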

\begin{proof}
We start by recollecting some facts from smoothing theory \cite{Bru71}. There are $H$-spaces $SF$, $F/O$ and $Top/O$ and $H$-space maps $\phi:SF\to F/O$,  $\psi: Top/O\to F/O$ such that
\begin{equation}\label{eq1.bru}
\psi_{*}:\Theta_{m}=\pi_{m}(Top/O)\to \pi_{m}(F/O)
\end{equation}
is an isomorphism if $m=8n+2$ where $n\geq 1$, and is a monomorphism when $m = 8$. The homotopy groups of $SF$ are the stable homotopy groups of spheres $\pi^{s}_m$ ; i.e., $\pi_{m}(SF)=\pi^{s}_m$ for $m\geq 1$. And
\begin{equation}\label{eq2.bru}
 \phi_{*}:\pi^{s}_{8n+2}\to \pi_{8n+2}(F/O)
\end{equation}
is an isomorphism for $n\geq 1$. Also the image of the homeomorphism 
\begin{equation}\label{eq3.bru}
 \phi_{*}:\pi^{s}_{8}\to \pi_{8}(F/O)
\end{equation}
is $\psi_{*}(\Theta_{8})$ and its kernel is a cyclic group of order $2$. (Recall that $\pi^{s}_{8}$ is the Klein $4$-group.) Consider the following commutative of diagram :
\begin{equation}\label{digram1}
\begin{CD}
[\mathbb{S}^{2m},Top/O]=\Theta_{2m}@>f^*_{\mathbb{C}\mathbb{P}^{m}}>> [\mathbb{C}\mathbb{P}^{m},Top/O]=\mathcal{C}(\mathbb{C}\mathbb{P}^{m})\\
@VV\psi_{*}V                      @VV\psi_{*}V\\
[\mathbb{S}^{2m},F/O]  @>f^*_{\mathbb{C}\mathbb{P}^{m}}>> [\mathbb{C}\mathbb{P}^{m},F/O]\\
@AA\phi_{*}A                      @AA\phi_{*}A\\
[\mathbb{S}^{2m},SF] @>f^*_{\mathbb{C}\mathbb{P}^{m}}>>  [\mathbb{C}\mathbb{P}^{m},SF]
\end{CD}
\end{equation}
In this diagram, the homomorphism $\phi_{*}:[\mathbb{C}\mathbb{P}^m,SF]\to [\mathbb{C}\mathbb{P}^m,F/O]$ is monic for all $m\geq 1$ by a result of Brumfiel \cite[p.77]{Bru71a}. Recall that the concordance class $[\mathbb{C}\mathbb{P}^m\#\Sigma]\in [\mathbb{C}\mathbb{P}^m,Top/O]$ of $\mathbb{C}\mathbb{P}^m\#\Sigma$ is $f^{*}_{\mathbb{C}\mathbb{P}^m}([\Sigma])$ when $m > 2$, and that $[\mathbb{C}\mathbb{P}^m]=[\mathbb{C}\mathbb{P}^m\#\mathbb{S}^{2m}]$ is the zero element of this group.\\
Let $\mu_{8n+2}$ be the element of order 2 in $\pi^{s}_{8n+2}$~$(n\geq1)$ given by Theorem \ref{adam}. Let $\Sigma^{8n+2}\in \Theta_{8n+2}$ such that
\begin{equation}\label{eq4.bru}
 \Sigma^{8n+2}=\psi_{*}^{-1}(\phi_{*}(\mu_{8n+2})).
\end{equation}
We also set $\Sigma^{8}= \psi_{*}^{-1}(\phi_{*}(x))$ where $x$ is any element which is not in the kernel of the homomorphism (\ref{eq3.bru}). Recall that $[X,~SF]$ can be identified with the $0^{th}$ stable cohomotopy group $\pi^0(X)$. Let $h:\mathbb{S}^{q+8n+2}\to \mathbb{S}^{q}$ represent $\mu_{8n+2}\in \pi^{s}_{8n+2}$. By Theorem \ref{adam}, $h$ induces a non-zero homomorphism on $\widetilde{KO}^q()$. Adams and Walker \cite{AW65} showed that $\Sigma^{q}f_{\mathbb{C}\mathbb{P}^{4n+1}}:\Sigma^{q}\mathbb{C}\mathbb{P}^{4n+1}\to \mathbb{S}^{q+8n+2}$ induces a monomorphism on $\widetilde{KO}^q()$. Then the composite map
\begin{equation}\label{eq8.bru}
h\circ \Sigma^{q}f_{\mathbb{C}\mathbb{P}^{4n+1}}: \Sigma^{q}\mathbb{C}\mathbb{P}^{4n+1}\to \mathbb{S}^{q}
\end{equation}
induces a non-zero homomorphism on $\widetilde{KO}^q()$. This shows that $$f^{*}_{\mathbb{C}\mathbb{P}^{4n+1}}(\mu_{8n+2})=[h\circ \Sigma^{q}f_{\mathbb{C}\mathbb{P}^{4n+1}}]\neq 0.$$ Since the homomorphism $\phi_{*}:[\mathbb{C}\mathbb{P}^m,SF]\to [\mathbb{C}\mathbb{P}^m,F/O]$ is monic, by the commutative diagram (\ref{digram1}) where $m=4n+1$, we have $$\psi_{*}( f^*_{\mathbb{C}\mathbb{P}^{4n+1}}(\Sigma^{8n+2}))=\phi_{*}(f^{*}_{\mathbb{C}\mathbb{P}^{4n+1}}(\mu_{8n+2}))\neq 0.$$ This implies that $f^*_{\mathbb{C}\mathbb{P}^{4n+1}}(\Sigma^{8n+2})\neq0$ and hence $\mathbb{C}\mathbb{P}^{4n+1}\#\Sigma^{8n+2}$ is not concordant to $\mathbb{C}\mathbb{P}^{4n+1}$. This completes the proof of the first sentence of Lemma \ref{lem.con}.\\
A similar argument but using \cite[Lemma I.9]{Bru71} in place of Theorem \ref{adam} and \cite{AW65} shows that the second sentence of Lemma \ref{lem.con} is also true. Consider the homomorphism induced by $f^{*}_{\mathbb{C}\textbf{P}^{5}}$
\begin{equation}\label{eq9.bru}
 \Theta_{10}=[\mathbb{S}^{10},Top/O]\to [\mathbb{C}\textbf{P}^5,Top/O]
\end{equation}
It becomes a monomorphism when localized at the prime 3 since $\pi_{i}(Top/O)$ localized at 3 is the zero group for all $i < 10$. This monomorphism together with the first sentence of Lemma \ref{lem.con} and the fact that $\Theta_{10}$ is cyclic of order 6 imply the truth of the last sentence of Lemma \ref{lem.con}.
\end{proof}
\begin{lemma}\label{constru.met}\cite{FJ94}
Given a positive integer $n$, there exists a family $b_{r}( , )$ of complete Riemannian metrics on $\mathbb{R}^{2n}$ which is parameterized by the real number $r\geq e$ and has the following three properties:
\begin{itemize}
\item[(i)] The sectional curvatures of $b_{r}( , )$ are all contained in $[-4-\epsilon(r), -4+\epsilon(r) ]$ where $\epsilon(r)$ is a $\mathbb{R}^+$ valued function such that \[\lim_{r \to\infty} \epsilon(r)=0.\]
\item[(ii)] The ball of radius $r$ about $0$ in $(\mathbb{R}^{2n},b_{r})$ is isometric to a ball of radius $r$ in $\mathbb{H}^{2n}$.
\item[(iii)] There is a diffeomorphism $f$ from $(\mathbb{R}^{2n},b_{r})$ to $\mathbb{C}\mathbb{H}^{n}$ which maps the complement of the ball of radius $r^2$ centered at $0$ isometrically to the complement of the ball of radius $r^2$ centered at $f(0)$. 
\end{itemize} 
\end{lemma}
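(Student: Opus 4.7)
The plan is to construct $b_r$ by explicit interpolation, in geodesic normal coordinates centered at $0\in\mathbb{R}^{2n}$, between a real hyperbolic model on the inner ball $\{\rho\le r\}$ and the pull-back of the complex hyperbolic metric on the outer region $\{\rho\ge r^2\}$. The transition takes place in the annulus $\{r\le\rho\le r^2\}$, whose width $r^2-r$ tends to infinity as $r\to\infty$; this growing width is what supplies the pinching parameter $\epsilon(r)\to 0$.

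First I would express both $\mathbb{R}\textbf{H}^{2n}$ and $\mathbb{C}\textbf{H}^n$ in normal-polar form at a base point. The real hyperbolic model is rotationally symmetric,
\begin{equation*}
g_{\mathbb{R}\textbf{H}^{2n}}\;=\;d\rho^2+\alpha(\rho)^2\,g_{\mathbb{S}^{2n-1}},
\end{equation*}
with $\alpha$ a $\sinh$-type function (depending on the chosen curvature normalisation). The complex hyperbolic metric is only Hopf-symmetric: using the splitting $T\mathbb{S}^{2n-1}=\mathcal{V}\oplus\mathcal{H}$ of the Hopf fibration $\mathbb{S}^{2n-1}\to\mathbb{C}\textbf{P}^{n-1}$, one has
\begin{equation*}
g_{\mathbb{C}\textbf{H}^n}\;=\;d\rho^2+\gamma(\rho)^2\,g_{\mathcal{H}}+\delta(\rho)^2\,g_{\mathcal{V}},
\end{equation*}
where $\gamma,\delta$ are of $\sinh$/$\sinh\cosh$ type. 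Both metrics then have exactly the same radial direction, and differ only in the ratio of horizontal to vertical scales on each geodesic sphere.

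Second, I would select smooth monotone warping functions $\gamma_r(\rho)$ and $\delta_r(\rho)$ that coincide with $\alpha$ on $[0,r]$ and with $\gamma,\delta$ respectively on $[r^2,\infty)$, interpolated on $[r,r^2]$ by a fixed smooth profile reparameterised linearly on that annulus; I then set $b_r:=d\rho^2+\gamma_r(\rho)^2 g_{\mathcal{H}}+\delta_r(\rho)^2 g_{\mathcal{V}}$. Property (ii) is then built into the construction on $\{\rho\le r\}$, while property (iii) follows from the agreement on $\{\rho\ge r^2\}$ by composing with the exponential map of $\mathbb{C}\textbf{H}^n$ at a chosen base point, producing the required diffeomorphism $f$.

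Finally, property (i) would be verified by the classical curvature formulas for Hopf-warped metrics, in which every sectional curvature of $b_r$ is a rational expression in $\gamma_r,\delta_r$ and their first two derivatives, together with the algebraic curvature of the base $\mathbb{C}\textbf{P}^{n-1}$. Outside the annulus $[r,r^2]$ these formulas reproduce the sectional curvatures of the two models, which lie in the claimed pinching interval. The main obstacle, and the technical heart of the argument, is to control the curvatures inside the transition annulus: one must choose $\gamma_r,\delta_r$ so that $\gamma_r'-\gamma'$, $\gamma_r''-\gamma''$, and their $\delta$-counterparts remain uniformly small in $\rho$. Since the interpolation is carried out over a region of length $r^2-r\to\infty$, a standard rescaled-mollifier construction yields derivative bounds of order $o(1)$ as $r\to\infty$, and the curvature formulas then force every sectional curvature of $b_r$ to lie within $\epsilon(r)$ of a model value, with $\epsilon(r)\to 0$. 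Making these estimates uniform in $\rho$ and in the choice of tangent two-plane (radial/horizontal, radial/vertical, horizontal/horizontal, horizontal/vertical, vertical/vertical) is the chief calculational burden of the proof.
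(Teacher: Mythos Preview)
The survey paper does not actually prove this lemma; it merely states it with a citation to Farrell--Jones \cite{FJ94} and then moves on to use it. So there is no in-paper argument to compare against.

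That said, your outline is precisely the construction carried out in \cite{FJ94}. Both model metrics are written in geodesic polar coordinates about a point as doubly-warped (Hopf-warped) metrics on $(0,\infty)\times \mathbb{S}^{2n-1}$, the real hyperbolic metric being the degenerate case where the horizontal and vertical warping functions coincide; one then interpolates the two warping functions on the annulus $r\le\rho\le r^2$ and invokes the explicit curvature formulas for such metrics. The point you correctly identify as the ``technical heart'' --- that the interpolation takes place over an interval of length $r^2-r\to\infty$, so the first and second $\rho$-derivatives of the interpolated warping functions can be made to differ from those of the model functions by $o(1)$ --- is exactly the mechanism Farrell and Jones use to force $\epsilon(r)\to 0$.

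One remark: the pinching interval in item (i) as printed in the survey, $[-4-\epsilon(r),-4+\epsilon(r)]$, is a typographical slip; since the metric agrees with $\mathbb{C}\textbf{H}^n$ (whose sectional curvatures fill $[-4,-1]$) outside radius $r^2$, the correct conclusion --- and the one actually proved in \cite{FJ94} and used downstream in Theorem~\ref{com.smooth} --- is that the curvatures lie in $[-4-\epsilon(r),\,-1+\epsilon(r)]$. Your construction yields exactly this, so do not be troubled if your curvature computation on the horizontal--horizontal planes refuses to approach $-4$.
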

Since the proof of Theorem \ref{hype.metric} was local in nature, one sees by examining it that the following stronger result was actually proven:
\begin{lemma}\label{pinmetric}\cite{FJ94}
Given a positive integer $n > 4$ and a positive real number $\epsilon$, there exists a positive real number $\alpha$ such that the following is true. Let $M^n$  be any $n$-dimensional Riemannian manifold whose sectional curvatures are contained in the interval $[a, b]$.
Suppose that $M^n$  contains a codimension 0-submanifold which is isometric to an open ball of radius $\alpha$ in $\mathbb{H}^n$. Then given any homotopy sphere $\Sigma \in \Theta_{n}$, there exists a Riemannian metric on $M^n\#\Sigma$ whose sectional curvatures are all contained in the interval $[a', b']$ where $a' = min\{ a, -1-\epsilon \}$ and $b' = max\{b, -1+\epsilon\}$.
\end{lemma}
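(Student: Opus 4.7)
The plan is to rerun the local construction used in the proof of Theorem~\ref{hype.metric}, confined to the embedded hyperbolic ball of radius~$\alpha$, and verify that nothing in that construction ever used the hypothesis that all of $M^n$ is hyperbolic. Concretely, choose a point $p$ in $M^n$ lying at the center of the isometric copy of the open ball of radius $\alpha$ in $\mathbb{H}^n$. Because this ball is genuinely isometric to an $\alpha$-ball in $\mathbb{H}^n$, the exponential map at $p$ gives an embedding $h:\mathbb{S}^{n-1}\times(0,3)\hookrightarrow M^n$ satisfying Property~2 verbatim, provided $3\alpha$ is less than the radius of the hyperbolic ball. I will take $\alpha$ to be the one supplied by Theorem~\ref{hype.metric} for the given $\epsilon$, enlarged if necessary so that $3\alpha$ fits inside the given hyperbolic ball.

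Next I would perform the connected sum $M^n\#\Sigma$ exactly as in the proof of Theorem~\ref{hype.metric}: write $\Sigma$ as $E^n_+\cup_{f_\Sigma} E^n_-$ and realize $M^n\#\Sigma$ by excising $h(\mathbb{S}^{n-1}\times(1,2))$ from $M^n$ and regluing $\mathbb{S}^{n-1}\times[1,2]$ via $h$ on one side and $h\circ\bar f_\Sigma$ on the other. Equip $\mathbb{S}^{n-1}\times[1,2]$ with a metric $B(\,,\,)$ satisfying Properties~3 and~8, and form $\overline B$ via Property~4. Define the metric $\langle\,,\,\rangle_\Sigma$ on $M^n\#\Sigma$ by using the original metric of $M^n$ on the complement of $h(\mathbb{S}^{n-1}\times(1,2))$ and using $\overline B$ on the inserted tube, as in Property~9. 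The matching conditions in Property~8 guarantee $C^0$ (and with standard smoothing, smooth) compatibility across the gluing loci $\mathbb{S}^{n-1}\times\{1\}$ and $\mathbb{S}^{n-1}\times\{2\}$, exactly as before, because in the region $0<t\leq 1$ and $2\leq t<3$ the metric $\overline A$ induced by the hyperbolic structure agrees with what we would get by extending $\overline B$, using that $B_1$ is constant in $t$ near $t=1,2$.

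For the curvature estimate, I invoke Lemma~\ref{pinched}: sectional curvatures of $\overline B$ at points of $\mathbb{S}^{n-1}\times[1,2]$ tend uniformly to $-1$ as $\alpha\to\infty$. Therefore, by choosing $\alpha$ sufficiently large (depending only on $n$ and $\epsilon$, by the uniformity assertion of Lemma~\ref{pinched}), all sectional curvatures on the inserted tube lie in $(-1-\epsilon,-1+\epsilon)$, hence in $[a',b']$. On the complement $M^n\setminus h(\mathbb{S}^{n-1}\times(1,2))$, the metric is unchanged, so its sectional curvatures still lie in $[a,b]\subset[a',b']$. This gives the claimed bound everywhere on $M^n\#\Sigma$.

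The only thing one needs to double-check, and this is the mildly delicate point, is that the construction really is local: nothing in the definition of $\overline B$, in the verification of Property~9, or in the curvature computation of Lemma~\ref{pinched} refers to the global geometry of $M^n$ outside the chosen hyperbolic ball. Inspection of the proof shows that every coordinate computation is carried out at a single point $(p_0,t_0)\in\mathbb{S}^{n-1}\times[1,2]$ inside the tube, that the normal coordinates in Property~5 come from the metric $A$ on the hyperbolic model (not from $M^n$), and that the crucial matching conditions in Property~8 only reference $A$ on $\mathbb{S}^{n-1}\times\{1,2\}$, which sits inside the hyperbolic ball by construction. The potential obstacle, and the reason the hypothesis $n>4$ reappears, is ensuring that we can still smoothly interpolate and that the Whitney-type choices of $\alpha$ in Lemma~\ref{pinched} depend only on $n$ and $\epsilon$; both are verified exactly as in the original proof. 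Once this is confirmed, the conclusion is immediate.
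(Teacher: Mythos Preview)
Your proposal is correct and follows precisely the approach the paper indicates: the paper does not give an independent proof of this lemma but simply observes that ``since the proof of Theorem~\ref{hype.metric} was local in nature, one sees by examining it that the following stronger result was actually proven,'' and your write-up carries out that examination in detail. One small clarification: be careful to distinguish the $\alpha$ of Theorem~\ref{hype.metric} (where the injectivity radius must exceed $3\alpha$) from the $\alpha$ in the present lemma (the radius of the embedded hyperbolic ball), so that the latter is chosen to be at least three times the former; also, the hypothesis $n>4$ is there so that the connected sum with a homotopy sphere and the associated smoothing theory behave as expected, not for any Whitney-embedding reason.
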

Lemma \ref{pinmetric} and Theorem \ref{constru.met} have the following important consequence:
\begin{theorem}\label{compinch}
Let $M^{2n}$  be a closed complex hyperbolic manifold of complex dimension $n$ and let $\mathcal{M}^{2n} $  be the finite sheeted cover of $M^{2n}$  posited in Theorem \ref{emb1}. Given $\epsilon>0$, there exists a finite sheeted cover $\mathcal{N}^{2n}$ of $\mathcal{M}^{2n}$ such that, for any finite
sheeted cover $N^{2n}$ of $\mathcal{N}^{2n}$ and any homotopy sphere $\Sigma \in \Theta_{2n}$, the connected sum $N^{2n}\#\Sigma$ supports a Riemannian metric all of whose sectional curvatures lie in the interval $[-4-\epsilon, -4+\epsilon ]$.
\end{theorem}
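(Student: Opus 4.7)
The plan is to realize the pinched metric on $N\#\Sigma$ by a two-stage local surgery, supported on nested balls around a single point of sufficiently large injectivity radius in $N$. Fix $\epsilon>0$ and choose $r$ large enough that the function $\epsilon(r)$ of Lemma \ref{constru.met} satisfies $\epsilon(r)<\epsilon$, while also requiring $r>3\alpha$, where $\alpha=\alpha(\epsilon,2n)$ is the constant from Lemma \ref{pinmetric} read in its curvature $-4$ version (obtained from the curvature $-1$ statement by a homothetic rescaling of the metric). The entire argument will only require, somewhere in $N$, an embedded ball of radius $r^{2}+\alpha$ isometric to such a ball in $\mathbb{C}\textbf{H}^n$.

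To guarantee such a ball, I first produce $\mathcal{N}$. By Theorem \ref{malcev} the group $\pi_{1}(\mathcal{M})$ is residually finite, so the standard argument used in the proof of Theorem \ref{hype.smooth} applies: pick a basepoint $q\in\mathcal{M}$, enumerate the finitely many conjugacy classes represented by loops at $q$ of length at most $2(r^{2}+\alpha)$, kill them in a finite quotient, and let $\mathcal{N}\to\mathcal{M}$ be the associated cover with $p$ the chosen lift of $q$. Then the injectivity radius at $p$ exceeds $r^{2}+\alpha$, and the same bound holds at every lift $\tilde p$ in any further finite cover $N\to\mathcal{N}$, since covering maps only enlarge the injectivity radius.

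Given any such $N$ and any $\Sigma\in\Theta_{2n}$, I now perform two nested surgeries at $\tilde p$. First, via the isometry in Lemma \ref{constru.met}(iii), I replace the $\mathbb{C}\textbf{H}^n$-metric on the ball of radius $r^{2}$ about $\tilde p$ by the pulled-back $b_{r}$-metric on the ball of $b_{r}$-radius $r^{2}$ about the origin in $\mathbb{R}^{2n}$. The boundaries match isometrically, so the gluing yields a smooth metric on $N$, and by Lemma \ref{constru.met}(i) the curvatures throughout the modified ball lie in $[-4-\epsilon,-4+\epsilon]$, while remaining the original complex hyperbolic values outside. By Lemma \ref{constru.met}(ii), the inner sub-ball of radius $r$ about $\tilde p$ is now isometric to a real hyperbolic ball of constant curvature $-4$ and radius $r>3\alpha$. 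Into this inner region I then perform the Farrell--Jones connected-sum surgery of Lemma \ref{pinmetric} (in its curvature $-4$ form) to attach $\Sigma$; the conclusion of that lemma produces a smooth manifold diffeomorphic to $N\#\Sigma$ whose curvatures inside the surgery region remain within $\epsilon$ of $-4$ and are unchanged outside. Combining the strata, one obtains a global metric on $N\#\Sigma$ with all sectional curvatures in the claimed interval.

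The principal technical point is this last step: verifying that the local pinching computation in Lemma \ref{pinched} carries over from the curvature $-1$ setting of Theorem \ref{hype.metric} to the curvature $-4$ setting. This requires reworking Properties~1--9 and Claim~A with the background constant $-1$ replaced by $-4$, which amounts to a uniform rescaling of the radial coordinate and the $\sinh$ factor; the estimates on the coefficient functions $\bar g^{a}_{ij}$ and $\bar g^{b}_{ij}$ and the polynomial identities relating them to the curvature tensor are unaffected, and the limiting curvatures become $-4$ in place of $-1$. A secondary bookkeeping issue is to ensure that the surgery supported in the core of radius $r$ does not interact with the transition annulus $r\le \|\cdot\|\le r^{2}$ of $(\mathbb{R}^{2n},b_{r})$, which is secured by the choice $r>3\alpha$, so that the curvature contributions of the two surgeries combine cleanly along their common boundaries.
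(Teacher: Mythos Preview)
Your argument is correct and follows essentially the same route as the paper's proof: use Malcev's residual finiteness to pass to a cover with large injectivity radius, invoke Lemma~\ref{constru.met} to replace a large complex-hyperbolic ball by the interpolating metric $b_r$ so as to create an inner real-hyperbolic ball, and then apply Lemma~\ref{pinmetric} inside that ball to perform the connected sum with $\Sigma$. The choices of radii differ cosmetically (you use $r^2+\alpha$ where the paper uses $2r^2$), but the structure is identical.

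One comment on your final paragraph: you do not need to ``rework Properties~1--9 and Claim~A'' to pass from curvature $-1$ to curvature $-4$. A global homothety $g\mapsto \lambda^2 g$ scales all sectional curvatures by $\lambda^{-2}$ and all distances by $\lambda$, so the curvature $-4$ version of Lemma~\ref{pinmetric} is an immediate formal consequence of the curvature $-1$ version (with $\alpha$ replaced by $\alpha/2$). The paper simply applies Lemma~\ref{pinmetric} directly; in fact in the original source \cite{FJ94} the inner ball produced by Lemma~\ref{constru.met} already has constant curvature $-1$, so no rescaling is needed at all, and the resulting interval is $[-4-\epsilon,\,-1+\epsilon]$ rather than $[-4-\epsilon,\,-4+\epsilon]$ (the latter is a transcription slip in the survey, as the unchanged complex-hyperbolic region has curvatures in $[-4,-1]$).
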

\begin{proof}
Let $\alpha$ be the number posited in Lemma \ref{pinmetric} relative to $2n$ and $\epsilon$. Fix a number $r\geq \alpha$ whose magnitude will be presently determined. By Theorem \ref{malcev}, $\pi_1\mathcal{M}^{2n}$ is residually finite. Arguing as in Proof of Theorem \ref{hype.smooth}, we construct a finite sheeted cover $\mathcal{N}^{2n}$ of $\mathcal{M}^{2n} $ such that $\mathcal{N}^{2n}$ contains a codimension-0 submanifold which is
isometric to an open ball of radius $2r^2$ in $\mathbb{C}\textbf{H}^n$. Let $N^{2n}$ be any finite sheeted cover of $\mathcal{N}^{2n}$. Then $N^{2n}$ contains a codimension-0 submanifold $U^{2n}$ which is also isometric to an open ball of radius $2r^2$ in $\mathbb{C}\textbf{H}^n$. Using Lemma \ref{constru.met}, we can put a new Riemannian metric $b(,)$ on $N^{2n}$ (changing it only on  $U^{2n}$) such that all the sectional curvatures of $(N^{2n}, b)$ lie in the interval $[-4-\epsilon(r), -4+\epsilon(r) ]$ and  $(N^{2n}, b)$ contains a codimension-0 submanifold isometric to an open ball of radius $r \in \mathbb{H}^{2n}$. Here $\epsilon(r)$ is a positive real number which depends only on $r$ (and n) and
$\epsilon(r)$ as $r\to \infty$. Now pick $r$ large enough so that $\epsilon(r)\leq \epsilon$. Then Lemma \ref{pinmetric} is applicable to $(N^{2n}, b)$ completing the proof of Theorem \ref{compinch}.
\end{proof}

The following two results are immediate consequences of stringing together Theorem \ref{com.con1}, Lemma \ref{lem.con} and Theorem \ref{compinch}.
\begin{theorem}\label{com.smooth}
Let $M^{2m}$  be any closed complex hyperbolic manifold of complex dimension $m$ where  $m$ be either $4$ or any integer of the form $4n + 1$ where $n\geq 1$ and $n$ is an integer. Let $\Sigma^{2m} \in \Theta_{2m}$ denote the specific homotopy sphere posited in Lemma \ref{lem.con}. Given a positive real number $\displaystyle{\epsilon}$, there exists a finite sheeted cover $\mathcal{N}^{2m}$ of $M^{2m}$ such that the following is true for any finite
sheeted cover $N^{2m}$ of $\mathcal{N}^{2m}$:
\begin{itemize}
\item[(i)] The connected sum  $N^{2m}\#\Sigma^{2m}$ supports a negatively curved Riemannian
metric whose sectional curvatures all lie in the closed interval $[-4 -\displaystyle{\epsilon}, -1 +\displaystyle{\epsilon}]$.
\item[(ii)] The smooth manifolds $N^{2m}$ and $N^{2m}\#\Sigma^{2m}$ are not diffeomorphic.
\end{itemize}
\end{theorem}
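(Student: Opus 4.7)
The plan is to combine Theorem \ref{compinch}, Theorem \ref{com.con1}, and Lemma \ref{lem.con} essentially directly, as the author hints. First, starting from $M^{2m}$, Theorem \ref{emb1} produces the finite sheeted cover $\mathcal{M}^{2m}$ carrying the required Whitney-type embedding into $\mathbb{C}\textbf{P}^m\times \mathbb{D}^{2m+1}$. Applying Theorem \ref{compinch} to this $\mathcal{M}^{2m}$ with the given $\epsilon>0$ furnishes a further finite sheeted cover $\mathcal{N}^{2m}$, which is automatically also a finite sheeted cover of $M^{2m}$, such that for every finite sheeted cover $N^{2m}$ of $\mathcal{N}^{2m}$ and every $\Sigma\in\Theta_{2m}$ the connected sum $N^{2m}\#\Sigma$ admits a Riemannian metric whose sectional curvatures lie in $[-4-\epsilon,-4+\epsilon]\subset [-4-\epsilon,-1+\epsilon]$. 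Specializing to the specific homotopy sphere $\Sigma^{2m}$ produced by Lemma \ref{lem.con} gives conclusion (i) at once.

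For conclusion (ii), I would argue by contradiction. Suppose that for some finite sheeted cover $N^{2m}$ of $\mathcal{N}^{2m}$ there is a diffeomorphism $N^{2m}\cong N^{2m}\#\Sigma^{2m}$. Rewriting $N^{2m}=N^{2m}\#\mathbb{S}^{2m}$ and feeding the pair $(\Sigma_1,\Sigma_2)=(\Sigma^{2m},\mathbb{S}^{2m})$ into Theorem \ref{com.con1}, one obtains that $\mathbb{C}\textbf{P}^m\#\Sigma^{2m}$ is concordant either to $\mathbb{C}\textbf{P}^m\#\mathbb{S}^{2m}=\mathbb{C}\textbf{P}^m$ or to $\mathbb{C}\textbf{P}^m\#(-\mathbb{S}^{2m})=\mathbb{C}\textbf{P}^m$, hence in either case concordant to $\mathbb{C}\textbf{P}^m$ itself. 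This directly contradicts Lemma \ref{lem.con}, which was manufactured precisely so that, in the admissible dimensions $m=4$ and $m=4n+1$ with $n\geq 1$, the chosen $\Sigma^{2m}$ satisfies $\mathbb{C}\textbf{P}^m\#\Sigma^{2m}\not\sim \mathbb{C}\textbf{P}^m$ in $\mathcal{C}(\mathbb{C}\textbf{P}^m)$.

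The only items that need to be checked, rather than genuinely proved, are the hypothesis $m>2$ required to invoke Theorem \ref{com.con1} (trivially true since $m=4$ or $m\geq 5$) and the inclusion of intervals used in (i). There is no real obstacle: the genuine content — the curvature-pinching construction in Theorem \ref{compinch} (built from Lemma \ref{constru.met} and Lemma \ref{pinmetric}), the tangent-bundle comparison with the dual $\mathbb{C}\textbf{P}^m$ in Theorem \ref{com.con1} and Corollary \ref{corr.emb1}, and the smoothing-theoretic detection of the non-trivial concordance class of $\mathbb{C}\textbf{P}^m\#\Sigma^{2m}$ via Adams' $K$-theoretic invariants in Lemma \ref{lem.con} — has already been absorbed into the three inputs. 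The proof therefore amounts to little more than recording the composition of these results with the appropriate identifications of homotopy spheres.
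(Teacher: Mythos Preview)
Your proposal is correct and follows exactly the approach indicated in the paper, which simply states that the theorem is an immediate consequence of stringing together Theorem~\ref{com.con1}, Lemma~\ref{lem.con}, and Theorem~\ref{compinch}. Your explicit handling of the contradiction for part~(ii) via the pair $(\Sigma^{2m},\mathbb{S}^{2m})$ in Theorem~\ref{com.con1}, and your observation that the curvature interval from Theorem~\ref{compinch} is contained in $[-4-\epsilon,-1+\epsilon]$, fill in precisely the details the paper leaves implicit.
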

\begin{addendum}\label{Addendum.com}
Let $M^{10}$  be any closed complex hyperbolic manifold of complex dimension $5$. Let $\Sigma^{10} \in \Theta_{10}$ denote the specific homotopy sphere posited in Lemma \ref{lem.con}. Given a positive real number $\displaystyle{\epsilon}$, there exists a finite sheeted cover $\mathcal{N}^{10}$ of $M^{10}$ such that the following is true for any finite sheeted cover $N^{10}$ of $\mathcal{N}^{10}$ :\\
There exist two other homotopy spheres $\Sigma^{10}_{1}$ and $\Sigma^{10}_{2}$ (besides $\Sigma^{10}$ such that the following is true.
\begin{itemize}
\item[(i)] The manifolds $N^{10}$, $N^{10}\#\Sigma^{10}$, $N^{10}\#\Sigma^{10}_1$ and $N^{10}\#\Sigma^{10}_2$ are pairwise non diffeomorphic.
\item[(ii)] Each of the manifolds $N^{10}\#\Sigma^{10}$, $N^{10}\#\Sigma^{10}_1$ and $N^{10}\#\Sigma^{10}_2$ supports a negatively curved Riemannian metric whose sectional curvatures lie in the interval\\
 $[-4 -\displaystyle{\epsilon}, -1 +\displaystyle{\epsilon}]$.
\end{itemize}
\end{addendum}
By using Theorem \ref{com.con1} and Theorem \ref{compinch}, the author proved the following result, which gives counterexamples to smooth rigidity Problem \ref{prorig} for negatively curved manifolds \cite{Ram14}: 
\begin{theorem}\label{ramesh}
Let $n$ be either 7 or 8. Given any positive number $\epsilon\in \mathbb{R}$, there exists a pair of closed negatively curved Riemannian manifolds $M$ and $N$ having the following properties:
\begin{itemize}
\item[\rm{(i)}] $M$ is a complex $n$-dimensional hyperbolic manifold.
 \item[\rm{(ii)}] The sectional curvatures of $N$ are all in the interval $[-4 -\epsilon, -1 +\epsilon]$.
 \item[\rm{(iii)}] The manifolds $M$ and $N$ are homeomorphic but not diffeomorphic.
 \end{itemize}
\end{theorem}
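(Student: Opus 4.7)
The plan is to mimic the argument used for Theorem \ref{com.smooth}, specialized to the real dimensions $14$ and $16$. Starting from any closed complex hyperbolic manifold of complex dimension $n\in\{7,8\}$ (which exists by Theorem \ref{recomhy}), I pass to the finite sheeted cover $\mathcal{M}^{2n}$ posited in Theorem \ref{emb1}, and then, given $\epsilon>0$, apply Theorem \ref{compinch} to obtain a further finite sheeted cover $M = \mathcal{N}^{2n}$ with the property that for every $\Sigma^{2n}\in \Theta_{2n}$ the manifold $N := M\#\Sigma^{2n}$ carries a Riemannian metric with all sectional curvatures lying in $[-4-\epsilon,\,-1+\epsilon]$. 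Since $M$ remains complex hyperbolic of complex dimension $n$ and $N$ is automatically homeomorphic to $M$ (connected sum with a homotopy $2n$-sphere with $2n>4$ preserves the topological type), clauses (i) and (iii) of Theorem \ref{ramesh} are immediate, and clause (ii) reduces to choosing $\Sigma^{2n}$ so that $N$ and $M$ are not diffeomorphic.

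By Theorem \ref{com.con1} (taking one of the spheres to be the standard $\mathbb{S}^{2n}$), the non-diffeomorphism of $N$ and $M$ would follow from the assertion that $\mathbb{C}\textbf{P}^n \# \Sigma^{2n}$ is not concordant to $\mathbb{C}\textbf{P}^n$ nor to $\mathbb{C}\textbf{P}^n\#(-\mathbb{S}^{2n})$. But $\Theta_{14}$ and $\Theta_{16}$ are each cyclic of order $2$, so $-\Sigma^{2n}=\Sigma^{2n}$ and no sign ambiguity arises. Thus the core task is to establish the analogue of Lemma \ref{lem.con} in these two dimensions: the unique nontrivial element $\Sigma^{2n}\in \Theta_{2n}$ must satisfy $f^*_{\mathbb{C}\textbf{P}^n}(\Sigma^{2n}) \neq 0$ in $\mathcal{C}(\mathbb{C}\textbf{P}^n)$.

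To detect this nonvanishing I would use the commutative diagram (\ref{digram1}) of smoothing theory, together with Brumfiel's result that $\phi_*:[\mathbb{C}\textbf{P}^n,SF]\to [\mathbb{C}\textbf{P}^n,F/O]$ is a monomorphism. The strategy is to lift $\Sigma^{2n}$, via $\psi_*$ and $\phi_*^{-1}$, to a class in $\pi^s_{2n}=\pi_{2n}(SF)$ and then to prove its image under $(f_{\mathbb{C}\textbf{P}^n})^*$ in $[\mathbb{C}\textbf{P}^n,SF]\cong \pi^0(\mathbb{C}\textbf{P}^n)$ is nonzero by testing against $\widetilde{KO}$-theory. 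This is where the main obstacle lies: Adams' Theorem \ref{adam} supplies the $KO$-detected classes $\mu_r\in\pi_r^s$ only when $r\equiv 1,2\pmod 8$, whereas here $14\equiv 6$ and $16\equiv 0\pmod 8$, so the ready-made detection used in Lemma \ref{lem.con} is unavailable. The workaround I propose is, for $n=7$, to exploit the fact that $\Theta_{14}$ is generated by the Kervaire sphere and to detect $f^*_{\mathbb{C}\textbf{P}^7}(\Sigma^{14})$ via a Kervaire-invariant-type secondary cohomology operation restricted along the degree one map $f_{\mathbb{C}\textbf{P}^7}$; and for $n=8$, to use the generator of the $2$-primary image of $J$ in $\pi_{16}^s$ (available in dimension $16$ via Bott periodicity and Adams' $e$-invariant) together with the Adams--Walker style computation of $\widetilde{KO}^*(\mathbb{C}\textbf{P}^8)$ to show $(f_{\mathbb{C}\textbf{P}^8})^*$ is $\widetilde{KO}$-injective on the relevant summand.

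Once the analogue of Lemma \ref{lem.con} is secured for $n=7,8$, the remainder of the argument is a direct assembly: Theorem \ref{com.con1} gives that $N$ is not diffeomorphic to $M$, Theorem \ref{compinch} gives the curvature pinching in $[-4-\epsilon,-1+\epsilon]$, and the homeomorphism $N\approx M$ is automatic, completing the proof of Theorem \ref{ramesh}.
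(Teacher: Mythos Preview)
Your overall architecture matches the paper exactly: the paper states that Theorem~\ref{ramesh} is proved in \cite{Ram14} by combining Theorem~\ref{com.con1} and Theorem~\ref{compinch}, and this is precisely your scheme. (A small slip: you interchanged the roles of clauses (ii) and (iii); it is (iii), not (ii), that reduces to the non-diffeomorphism question.) Since $\Theta_{14}\cong\Theta_{16}\cong\mathbb{Z}_2$, you are also right that the sign ambiguity in Theorem~\ref{com.con1} disappears, so everything does come down to showing that $f^*_{\mathbb{C}\mathbf{P}^n}(\Sigma^{2n})\neq 0$ in $\mathcal{C}(\mathbb{C}\mathbf{P}^n)$ for $n=7,8$; this is exactly the computation carried out in \cite{Ram14}.

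The substantive gap is in your proposed detection arguments for this last step. For $n=8$ your suggestion cannot work as stated. From the exact sequence $\pi_{16}(O)\xrightarrow{J}\pi_{16}(F)\to\pi_{16}(F/O)$ (compare the proof of Proposition~\ref{cay.pro}), the map $\phi_*\colon\pi_{16}^s\to\pi_{16}(F/O)$ annihilates exactly the image of $J$, while $\psi_*(\Sigma^{16})$ is a \emph{nonzero} element of $\pi_{16}(F/O)$ identified with the cokernel of $J$. Hence any lift $x\in\pi_{16}^s$ of $\Sigma^{16}$ necessarily lies \emph{outside} $\mathrm{im}\,J$; invoking ``the generator of the $2$-primary image of $J$'' and Adams' $e$-invariant therefore detects the wrong class. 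A correct argument must detect a coker-$J$ element after pulling back along $f_{\mathbb{C}\mathbf{P}^8}$, which requires a different computation (this is what \cite{Ram14} supplies). Your $n=7$ proposal is not wrong so much as incomplete: ``a Kervaire-invariant-type secondary operation restricted along $f_{\mathbb{C}\mathbf{P}^7}$'' names a plausible tool but not an argument; one still needs to verify that the relevant class in $\pi_{14}^s$ survives under $(f_{\mathbb{C}\mathbf{P}^7})^*$ in $\pi^0(\mathbb{C}\mathbf{P}^7)$, and the Adams--Walker $KO$-injectivity used in Lemma~\ref{lem.con} is unavailable here since $14\not\equiv 1,2\pmod 8$. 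In short, the reduction is correct and matches the paper, but the stable-homotopy detection you sketch does not yet go through; that is precisely the content imported from \cite{Ram14}.
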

\begin{remark}\label{compopen}\rm{
\indent
\begin{itemize}
\item[\bf{1.}] Theorem \ref{complex.hyp} now  follows from Theorem \ref{com.smooth} and Theorem \ref{recomhy}.
\item[\bf{2.}] Since $M^{2m}\#\Sigma^{2m}$ $(m > 2)$ is always homeomorphic to $M^{2m}$, we are left with the problem of detecting when $M^{2m}\#\Sigma^{2m}$ and $M^{2m}$ are not diffeomorphic by Farrell and Jones \cite{FJ94}. Using Mostow's Rigidity Theorem \ref{mostow} and Topological Rigidity Theorem \ref{gentoprigd} together with the fundamental paper of Kervaire and Milnor \cite{KM63}, this question is essentially reduced to a (non-trivial) question about the stable homotopy group $\pi^0(M^{2m})$. To address this question F.T. Farrell and L.E. Jones showed in Theorem \ref{com.smooth}, using a result of Deligne and Sullivan \cite{DS75}, that all "sufficiently large" finite sheeted covers of $M^{2m}$ embed in $\mathbb{C}\textbf{P}^m\times \mathbb{R}^{2m+1}$ with trivial normal bundle. This allows us to look at the above question via Theorem \ref{com.con1} on the specific manifold $\mathbb{C}\textbf{P}^m$ instead of the arbitrary compact complex hyperbolic manifold $M^{2m}$ (see Corollary \ref{corr.emb1} and the proof of 
Lemma \ref{lem.con}). 
\end{itemize}}
\end{remark}
\section{\large Tangential Maps and Exotic Smoothings of Locally Symmetric Spaces}
In this section we discuss the existence of tangential map between dual symmetric spaces which was constructed in \cite{Oku01}. We also discuss that how tangential map can be used to obtain exotic smooth structures on a compact locally symmetric space of non-compact type.\\~\\
In \cite{Mat62}, Y. Matsushima constructed a map  $j^* : H^*(X_u,\mathbb{R})\to H^*(X,\mathbb{R})$, where $X=\Gamma\setminus G/K$ is a compact locally symmetric space of non-compact type and $X_u$ is its global dual twin of compact type. Moreover, Y. Matsushima showed that this map is monomorphic and, upto a certain dimension depending only on the Lie algebra of $G$, epimorphic. A refinement of Matsushimas argument, due to H. Garland \cite{Gar71} and Borel \cite{Bor74}, allowed the later to extend these results to the case where $X$ is non-compact but has a finite volume. However, since the construction of the map $j^*$ is purely algebraic (in terms of invariant exterior differential forms), the following natural question was asked by B. Okun \cite{Oku01} :\\
\begin{question}
Is there a topological map  $X\to X_u$ inducing $j^*$ in cohomology?.
\end{question}
\begin{remark}\rm{
For the case of $X$ being a complex hyperbolic manifold this kind of map was constructed by F.T. Farrell and L.E. Jones in Theorem \ref{staequ}, where it was used to produce non-trivial smooth structures on complex hyperbolic manifolds. In general, the answer to this question is negative, since Matsushima's map does not necessarily take rational cohomology classes into rational ones, so it cannot be induced by a topological map. However, Boris Okun \cite{Oku01} showed that there is a finite sheeted cover $\widehat{X}$ of $X=\Gamma\setminus G/K $ and a tangential map (i.e., a map covered by a map of tangent bundles) $\widehat{X} \to X_u$, and also showed that in the case where $G$ and $K$ are of equal rank, $k^*$ coincides with Matsushima's map $j^*$ and therefore has nonzero degree.}
\end{remark}
We shall recall the following definition and result to prove the existence of a tangential map between dual symmetric spaces \cite{Oku01}:
\begin{definition}\rm{
Let $M^n$ and $N^n$ be smooth $n$-dimensional manifolds.
A smooth map $k:M^n\to N^n$ is called tangential map if there is a smooth map $h: TM\to TN$ such that
\begin{itemize}
\item[(a)] $k\circ \pi_{1}=h\circ \pi_{2}$, where $\pi_{1}:TM\to M$ and $\pi_{2}:TN\to N$ be tangent bundle projections.
\item[(b)] For every $x\in M$, the map $\pi_{1}^{-1}({x})\to \pi_{2}^{-1}({k(x)})$ induced by $h$ is an isomorphism between vector spaces.
\end{itemize}}
\end{definition}
\begin{lemma}\label{tangential}\rm{
Any map $g:X\to X_u$ between the dual symmetric spaces which preserves canonical $K$-bundle structure is tangential.}
\end{lemma}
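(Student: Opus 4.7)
} The plan is to realize $TX$ and $TX_u$ as vector bundles associated to the canonical principal $K$-bundles $\Gamma\backslash G\to X$ and $G_u\to X_u$, respectively, and then to push a $K$-bundle map forward to the associated bundles. First I would fix a Cartan decomposition $\mathfrak{g}=\mathfrak{k}\oplus\mathfrak{p}$ of the Lie algebra of $G$. The complexification $\mathfrak{g}_c=\mathfrak{k}\oplus \mathfrak{p}\oplus i\mathfrak{k}\oplus i\mathfrak{p}$ contains the compact real form $\mathfrak{g}_u=\mathfrak{k}\oplus i\mathfrak{p}$, so the dual Cartan decomposition is $\mathfrak{g}_u=\mathfrak{k}\oplus\mathfrak{p}_u$ with $\mathfrak{p}_u=i\mathfrak{p}$. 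Since multiplication by $i$ is a real linear isomorphism intertwining the adjoint action of $K$ on $\mathfrak{p}$ with its adjoint action on $\mathfrak{p}_u$, the isotropy representations $\mathrm{Ad}_K\colon K\to GL(\mathfrak{p})$ and $\mathrm{Ad}_K\colon K\to GL(\mathfrak{p}_u)$ are isomorphic as real $K$-representations.

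Next I would invoke the standard fact that for a homogeneous space $H/K$ (with $K\subset H$ closed) the tangent bundle is canonically the associated bundle $H\times_K (\mathfrak{h}/\mathfrak{k})$. Applied here this gives canonical isomorphisms
\[
TX\;\cong\; (\Gamma\backslash G)\times_K\mathfrak{p},\qquad TX_u\;\cong\; G_u\times_K\mathfrak{p}_u,
\]
and, using the $K$-equivariant identification $\mathfrak{p}\cong\mathfrak{p}_u$ from the previous paragraph, both tangent bundles are associated to their principal $K$-bundles via the same $K$-representation $V=\mathfrak{p}$.

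Now suppose $g\colon X\to X_u$ is covered by a $K$-equivariant map $\tilde g\colon \Gamma\backslash G\to G_u$ of principal $K$-bundles. Then $\tilde g$ induces a map between associated bundles
\[
h\colon (\Gamma\backslash G)\times_K V\;\longrightarrow\; G_u\times_K V,\qquad [p,v]\longmapsto [\tilde g(p),v],
\]
which is well-defined by the $K$-equivariance of $\tilde g$, commutes with the bundle projections (so $g\circ\pi_1=\pi_2\circ h$), and on each fibre is the identity map of $V$ presented in two different trivializations, hence is a linear isomorphism. Under the identifications above $h$ is exactly a bundle map $TX\to TX_u$ covering $g$, and this is the definition of $g$ being tangential.

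The main obstacle I anticipate is the verification that the natural $K$-representations on $\mathfrak{p}$ and on $\mathfrak{p}_u$ really do give isomorphic associated bundles; this requires being careful that the isomorphism $\mathfrak{p}\to\mathfrak{p}_u$, $X\mapsto iX$, is genuinely $K$-equivariant for the adjoint action inside $\mathfrak{g}_c$, and that the resulting identification of $TX_u$ with $G_u\times_K\mathfrak{p}$ agrees with the usual one up to $K$-bundle isomorphism. Once this linear-algebraic compatibility is in place, the rest of the argument is the formal functoriality of the associated-bundle construction.
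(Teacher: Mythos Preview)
Your argument is correct and is the natural one. Note, however, that the paper does not actually prove this lemma: it is stated as a result recalled from \cite{Oku01}, with no proof given in the survey itself. Your approach---identifying $TX$ and $TX_u$ as bundles associated to the canonical principal $K$-bundles via the isotropy representations on $\mathfrak{p}$ and $\mathfrak{p}_u=i\mathfrak{p}$, observing that these $K$-representations are isomorphic via $v\mapsto iv$ (since $\mathrm{Ad}(k)$ extends $\mathbb{C}$-linearly to $\mathfrak{g}_c$), and then invoking functoriality of the associated-bundle construction---is exactly the standard argument and is what one finds in Okun's paper. The one point you flag as a potential obstacle, the $K$-equivariance of $\mathfrak{p}\to i\mathfrak{p}$, is not an issue for the reason just stated.
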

The following theorem provides the existence of a tangential map between dual symmetric space due to B. Okun \cite{Oku01}:
\begin{theorem}\label{finite sheeted tang}
Let $X=\Gamma\setminus G/K$ and $X_u= G_u/K$ be dual symmetric spaces. Then there exist a finite sheeted cover $\widehat{X}$ of $X$ (i.e., a subgroup $\widehat{\Gamma}$ of finite index in $\Gamma$, $\widehat{X}=\widehat{\Gamma}\setminus G/K)$ and a tangential map $k:\widehat{X}\to X_u$.
\end{theorem}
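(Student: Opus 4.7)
The strategy is to invoke Lemma~\ref{tangential}: it suffices to produce, on some finite sheeted cover $\widehat{X}$, a continuous map $k:\widehat{X}\to X_u$ that pulls the canonical principal $K$-bundle $G_u\to X_u$ back to the canonical principal $K$-bundle $\widehat{\Gamma}\setminus G\to \widehat{X}$. In the language of classifying maps, this means finding a homotopy lift of the classifying map $f:\widehat{X}\to BK$ of $\widehat{\Gamma}\setminus G\to \widehat{X}$ through the map $X_u\to BK$ that classifies $G_u\to X_u$. The inclusion $K\hookrightarrow G_u$ fits into a fibration sequence
\[
X_u\;\longrightarrow\;BK\;\longrightarrow\;BG_u,
\]
exhibiting $X_u$ as the homotopy fibre of $BK\to BG_u$. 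Consequently, such a lift $k$ exists (up to homotopy) if and only if the composition $g:\widehat{X}\to BK\to BG_u$ is null-homotopic.

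The next step is to identify this obstruction geometrically. The map $g$ classifies the $G_u$-bundle obtained from $\widehat{\Gamma}\setminus G$ by enlarging the structure group from $K$ to $G_u$ via inclusion. Using the $G$-equivariant identification $G\times_K G\cong G/K\times G$, $(g,h)\mapsto (gK,gh)$, one checks that enlarging first from $K$ to $G$ already produces a flat $G$-bundle on $X$, namely the bundle $\Gamma\backslash(G/K\times G)\to X$ associated to the defining inclusion $\Gamma\hookrightarrow G$. Composing the inclusions $G\hookrightarrow G_c\hookleftarrow G_u$ (the last being a homotopy equivalence since $G_u$ is a maximal compact subgroup of $G_c$), I conclude that $g$ is homotopic to the classifying map of the flat $G_u$-bundle associated to the representation $\rho:\Gamma\hookrightarrow G\hookrightarrow G_c\simeq G_u$. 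Thus the problem reduces to trivialising this flat bundle after passage to a finite index subgroup $\widehat{\Gamma}\le\Gamma$.

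The core (and only hard) step is this trivialisation. Since $\Gamma\subset G_c\subset GL_N(\mathbb{C})$ is a finitely generated linear group, it is residually finite (cf.\ Theorem~\ref{malcev} in the relevant hyperbolic cases, and Mal'cev's theorem in general). I would apply the Deligne--Sullivan theorem on flat complex bundles, which asserts precisely that for such a $\rho$ there exists a finite index subgroup $\widehat{\Gamma}\le\Gamma$ on which the associated flat bundle becomes trivial, i.e.\ the induced map $B\widehat{\Gamma}\to BG_c\simeq BG_u$ is null-homotopic. (This is the same input that underlies Theorem~\ref{staequ} in the complex hyperbolic case.) Under the identification $\widehat{X}\simeq B\widehat{\Gamma}$ (which holds because $G/K$ is contractible and $\widehat{\Gamma}$ acts freely), any choice of null-homotopy of $g$ on $\widehat{X}$ produces the desired lift $k:\widehat{X}\to X_u$, which is tangential by Lemma~\ref{tangential}.

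The main obstacle, as indicated, is the last paragraph: converting residual finiteness of $\Gamma$ into the topological triviality of the classifying map $B\widehat{\Gamma}\to BG_u$ requires the non-trivial arithmetic result of Deligne and Sullivan. Once this is granted, everything else is bookkeeping -- recognising $X_u$ as a homotopy fibre, unwinding the associated-bundle constructions, and verifying that the resulting flat reduction is the one coming from $\rho$.
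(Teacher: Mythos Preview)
Your proposal is correct and follows essentially the same route as the paper: both arguments recognise $X_u$ as the homotopy fibre of $BK\to BG_u$, identify the obstruction to lifting as the $G_u$-extension of the canonical $K$-bundle, observe that this extension is (via $G\hookrightarrow G_c$) a flat bundle with complex linear algebraic structure group, kill it on a finite cover using Deligne--Sullivan, and then invoke Lemma~\ref{tangential}. The only cosmetic difference is that you phrase the flat bundle in terms of the representation $\rho:\Gamma\hookrightarrow G_c$ and the identification $\widehat{X}\simeq B\widehat{\Gamma}$, whereas the paper works directly with the associated bundle $\Gamma\setminus G\times_K G_c$ and a diagram chase in $BK$.
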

\begin{proof}
Consider the canonical principal fiber bundle with structure group $K$ over $X$:  $p:\Gamma\setminus G\to \Gamma\setminus G/K$. If we extend the structure group to the group $G$ we get a flat principal bundle: $\Gamma\setminus G\times_{K} G=G\setminus K\times_{\Gamma} G\to \Gamma\setminus G/K$ \cite{KT75}. Extend the structure group further to the group $G_c.$ The resulting bundle is a flat bundle with an algebraic linear complex Lie structure group, so by Theorem of Deligne and Sullivan \cite{DS75} there is a finite sheeted cover $\widehat{X}$ of $X$ such that the pullback of this bundle to $\widehat{X}$ is trivial. This means that for $\widehat{X}$ the bundle obtained by extending the structure group from $K$ to $G_u$ is trivial too, since $G_u$ is the maximal compact subgroup of $G_c$ . Consider now the following
diagram:
$$ \xymatrix{ & &  X_{u}\ar[d]^{C_{_{p_u}}}& \\
&\widehat{X}\ar[rd]_{\simeq 0}\ar[ur]^{k}\ar[r]_{C_{_{\widehat{p}}}}  & BK \ar[d]^{i}&\\
& &    BG_{u} &\\}$$

Here the map $c_{\widehat{p}}$ is a classifying map for the canonical bundle $\widehat{p}$ over $\widehat{X}$. The map $i$, induced by standard inclusion $K\subset G$ is a fibration with a fiber
$X_u=G_{u}/K$. Note that the inclusion of $X_u$ in $BK$ as a fiber also classifies canonical principal bundle $p_u :G_u\to G_{u}/K$. By the argument above, the composition $i\circ c_{\widehat{p}} :\widehat{X}\to BG_u$ is homotopically trivial.
Choose a homotopy contracting this composition to a point. As the map $i$ is a fibration we can lift this homotopy to $BK$. The image of the end map of the lifted homotopy is contained in the fiber $X_u$, since its projection to $BG_u$ is a point. Thus, we obtain a map $k:\widehat{X}\to X_u$ which makes the upper triangle of the diagram homotopy commutative. It follows that the map $k$ preserves canonical bundles on the spaces $\widehat{X}$ and $X_u:k^*(p_u)=\widehat{p}$. By Lemma \ref{tangential}, the map $k$ is tangential.
\end{proof}
\begin{remark}\rm{
Both Theorem \ref{staequ} and Theorem \ref{finite sheeted tang} depend on a deep result about flat complex vector bundles due to Deligne and Sullivan \cite{DS75}. Their result was also used by Sullivan in \cite{Sul79} to prove Theorem \ref{sullivan}.}
\end{remark}
The following theorem showed that the above tangential map coincides with Matsushima's map and hence has a non zero degree \cite{Oku01}:
\begin{corollary}\label{equalrank}
Let $X=\Gamma\setminus G/K$ and $X_u= G_u/K$ be dual symmetric spaces. Let $\widehat{X}$ be the finite sheeted cover of $X$ and $k:\widehat{X}\to X_u$ be the tangential map, constructed in Theorem \ref{finite sheeted tang}. If the groups $G_u$ and $K$ are of equal rank and the group $\Gamma$ is cocompact in $G$ then the map induced by $k$ in cohomology coincides with Matsushima's map $j^*$ and the map $k$ has a non zero degree.
\end{corollary}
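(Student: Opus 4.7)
The plan is to exploit the construction of $k$ in Theorem \ref{finite sheeted tang}, which exhibits the canonical $K$-bundle $\widehat p$ over $\widehat X$ as the pullback $k^{*}(p_u)$ of the canonical $K$-bundle $p_u$ over $X_u$. The argument combines this naturality with Chern--Weil theory and the classical fact that, in the equal-rank case, $H^{*}(X_u;\mathbb{R})$ is generated by characteristic classes of $p_u$.

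First I would write $c_{\widehat p}:\widehat X\to BK$ and $c_{p_u}:X_u\to BK$ for the classifying maps of the two canonical $K$-bundles. From the construction in Theorem \ref{finite sheeted tang} we have $c_{p_u}\circ k\simeq c_{\widehat p}$, and therefore
\[ k^{*}\circ c_{p_u}^{*} \;=\; c_{\widehat p}^{*} \qquad \text{on } H^{*}(BK;\mathbb{R}). \]

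Next I would reinterpret Matsushima's map $j^{*}$ through the same factorization. Using the Cartan decompositions $\mathfrak{g}_u=\mathfrak{k}\oplus\mathfrak{p}_u$ and $\mathfrak{g}=\mathfrak{k}\oplus\mathfrak{p}$, the Weyl unitary trick $\mathfrak{p}_u=i\mathfrak{p}$ provides a canonical $K$-equivariant identification of the two isotropy representations. $G_u$-invariant forms on $X_u$ correspond to $(\Lambda^{*}\mathfrak{p}_u^{*})^{K}$, while $G$-invariant forms on $G/K$ (which descend to closed forms on $\widehat X$) correspond to $(\Lambda^{*}\mathfrak{p}^{*})^{K}$; Matsushima's map $j^{*}$ is by definition the identification of these invariants via the unitary trick. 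The Chern--Weil homomorphism for the canonical $K$-connection on $p_u$ factors through $(\Lambda^{*}\mathfrak{p}_u^{*})^{K}$, and the analogous statement holds for the canonical $K$-connection on $\widehat p$ via $(\Lambda^{*}\mathfrak{p}^{*})^{K}$; under the unitary trick these two Chern--Weil maps are intertwined. Consequently $j^{*}\circ c_{p_u}^{*} = c_{\widehat p}^{*}$, the same identity satisfied by $k^{*}$.

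Now I would invoke the equal-rank hypothesis. A classical theorem of Borel, proved via the Serre spectral sequence of the fibration $X_u \to BK\to BG_u$, asserts that whenever $\operatorname{rank} G_u=\operatorname{rank} K$ the pullback $c_{p_u}^{*}:H^{*}(BK;\mathbb{R})\to H^{*}(X_u;\mathbb{R})$ is surjective. Combined with the identity above this forces $k^{*} = j^{*}$ on all of $H^{*}(X_u;\mathbb{R})$. For the degree claim, the equal-rank hypothesis also gives $\chi(X_u)=|W(G_u)|/|W(K)|\neq 0$, so $H^{\dim X_u}(X_u;\mathbb{R})\cong\mathbb{R}$ is generated by the Euler class of $TX_u$; since Matsushima's theorem (using cocompactness of $\Gamma$) asserts that $j^{*}$ is injective, it is nonzero in top degree, whence $\deg(k)\neq 0$.

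The main obstacle I anticipate is Step 2: verifying at the level of invariant differential forms that the Chern--Weil maps for the canonical connections on $p_u$ and on $\widehat p$ really are intertwined by the unitary-trick identification $\mathfrak{p}_u\cong \mathfrak{p}$, and that the resulting classes represent the \emph{same} de Rham cohomology classes as those produced by Matsushima's algebraic construction. Once this invariant-theoretic matching is in place, the rest is a formal consequence of Borel's surjectivity theorem and Matsushima's injectivity theorem.
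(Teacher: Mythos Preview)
The survey paper does not actually supply a proof of this corollary; it simply quotes the result from Okun's paper \cite{Oku01}. Your proposal is therefore being compared against Okun's original argument, and it matches it closely: the three ingredients you isolate --- the identity $k^{*}\circ c_{p_u}^{*}=c_{\widehat p}^{*}$ coming from the construction of $k$, Borel's theorem that $c_{p_u}^{*}$ is surjective onto $H^{*}(X_u;\mathbb{R})$ in the equal-rank case, and Matsushima's injectivity of $j^{*}$ for the nonzero-degree conclusion --- are exactly the ones Okun uses.

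You are also right that the crux is your Step~2, showing $j^{*}\circ c_{p_u}^{*}=c_{\widehat p}^{*}$. In Okun's treatment this is phrased as: the Chern--Weil forms of the canonical $K$-connection on $X_u$ are $G_u$-invariant, and Matsushima's map takes an invariant form on $X_u$ to the corresponding invariant form on $\widehat X$ obtained via the identification $(\Lambda^{*}\mathfrak p_u^{*})^{K}\cong(\Lambda^{*}\mathfrak p^{*})^{K}$; but the latter invariant form is precisely the Chern--Weil form of the canonical $K$-connection on $\widehat X$, because the curvature of the canonical connection on either side is given by the same Lie-bracket formula on $\mathfrak p$ (up to the sign absorbed by the unitary trick). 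So the ``obstacle'' you flag is handled by a direct curvature computation rather than anything deep, and your outline is correct.
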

 We will now discuss how non triviality of certain types of smooth structures is preserved under tangential maps. First, we recall the following very useful theorem :
\begin{theorem}\label{trivial bund} \cite{KM63}
Let $\xi$ be a $k$-dimensional vector bundle over an $n$-dimensional space, $k > n$. If the Whitney sum of $\xi$ with a trivial bundle is trivial, then $\xi$ itself is trivial.
\end{theorem}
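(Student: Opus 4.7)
The plan is to pass from $\xi$ to a map into a Stiefel manifold, and then exploit the high connectivity of that Stiefel manifold via obstruction theory. Fix a trivialization $\Phi \colon \xi \oplus \epsilon^{j} \xrightarrow{\cong} \epsilon^{k+j} = X \times \mathbb{R}^{k+j}$ provided by the hypothesis. Under $\Phi$, the summand $\epsilon^{j}$ is realized as a trivial $j$-subbundle of the product bundle, and such a subbundle is described by $j$ pointwise-linearly-independent sections $\sigma_{1},\dots,\sigma_{j} \colon X \to \mathbb{R}^{k+j}$. These sections assemble into a single classifying map $\sigma \colon X \to V_{k+j,\,j}$, where $V_{k+j,\,j} = O(k+j)/O(k)$ is the Stiefel manifold of $j$-frames in $\mathbb{R}^{k+j}$. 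Through $\Phi$, the bundle $\xi$ itself is identified with the orthogonal complement of this trivial $j$-subbundle.

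The critical ingredient is that $V_{k+j,\,j}$ is $(k-1)$-connected, i.e., $\pi_{i} V_{k+j,\,j} = 0$ for all $i < k$. This is a classical fact that I would derive by induction on $j$ from the principal sphere fibrations $S^{k+i-1} \to V_{k+j,\,i} \to V_{k+j,\,i-1}$ and the long exact sequence in homotopy, since $\pi_{r} S^{k+i-1} = 0$ for $r < k+i-1$ and in particular for $r < k$.

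To finish, because $\dim X = n < k$ while $V_{k+j,\,j}$ is $(k-1)$-connected, cellular approximation (equivalently, the vanishing of every obstruction cochain in $C^{\ast}(X;\pi_{\ast} V_{k+j,\,j})$) gives a null-homotopy of $\sigma$. Such a null-homotopy is a one-parameter family of $j$-frames in $\mathbb{R}^{k+j}$ interpolating from $\sigma$ to a constant frame, which I can take to be the standard one $(e_{k+1},\dots,e_{k+j})$. Geometrically this is an isotopy of trivial $j$-subbundles of $\epsilon^{k+j}$ from the image of $\Phi|_{\epsilon^{j}}$ to the constant subbundle $X \times (\{0\}\times \mathbb{R}^{j})$, and taking orthogonal complements along the isotopy produces an isomorphism $\xi \cong \epsilon^{k}$, which is the desired triviality.

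The main technical obstacle is really only the connectivity statement for $V_{k+j,\,j}$; once that is in hand, the geometric consequence (an isotopy of $j$-frames induces an isomorphism of their orthogonal complements, via the covering homotopy property for the complementary subbundle) is essentially automatic, and the argument assumes only that $X$ has the homotopy type of a CW complex of dimension $n$, which is implicit in the statement.
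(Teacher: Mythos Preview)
Your proof is correct and is precisely the classical obstruction-theoretic argument: the connectivity of the Stiefel manifold $V_{k+j,j}$ up through degree $k-1$ forces the classifying map of the trivial $j$-subbundle to be null-homotopic once $\dim X < k$, and the induced isotopy of complementary bundles yields the trivialization of $\xi$.

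Note, however, that the paper does not give its own proof of this statement; it simply records the theorem with a citation to Kervaire--Milnor \cite{KM63} and then invokes it in the proof of Theorem~\ref{tubular}. So there is no in-paper argument to compare against. Your approach is exactly the standard one found in the cited source (and in textbook treatments such as Husemoller), so nothing is lost and nothing genuinely different is happening.
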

The following theorem is a generalization of Theorem \ref{emb1} and is due to Boris Okun \cite{Oku02}:
\begin{theorem}\label{tubular}
Let $k:M^n\to N^n$ be a tangential map between two closed smooth $n$-dimensional manifolds. Then $M^n\times \mathbb{D}^{n+1}$ is diffeomorphic to a co-dimension $0$-submanifold of the interior of $N^n\times \mathbb{D}^{n+1}$.
\end{theorem}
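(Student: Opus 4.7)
The plan is to combine the tangential hypothesis with the Whitney Embedding Theorem to produce an embedding $M \hookrightarrow N\times \mathbb{R}^{n+1}$ with trivial normal bundle; a tubular neighborhood then furnishes the required copy of $M\times \mathbb{D}^{n+1}$, which after rescaling lies inside the interior of $N\times \mathbb{D}^{n+1}$.

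First I would start with the smooth map $\phi_0: M\to N\times \mathbb{R}^{n+1}$ defined by $\phi_0(x)=(k(x),0)$. Since $\dim(N\times \mathbb{R}^{n+1})=2n+1$, the Whitney Embedding Theorem (the case $2n+1\le m$) lets us homotope $\phi_0$ to a smooth embedding $\widetilde{\phi}:M\hookrightarrow N\times \mathbb{R}^{n+1}$. In particular the composition $\pi_N\circ \widetilde{\phi}:M\to N$ is homotopic to $k$, and hence $(\pi_N\circ \widetilde{\phi})^{*}(TN)\cong k^{*}(TN)$ as vector bundles over $M$. Because $k$ is tangential, the fiberwise linear bundle map $h:TM\to TN$ covering $k$ exhibits a bundle isomorphism $TM\cong k^{*}(TN)$, so also $TM\cong (\pi_N\circ \widetilde{\phi})^{*}(TN)$.

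Next I would analyze the normal bundle $\nu$ of $\widetilde{\phi}$. Pulling back the tangent bundle of $N\times \mathbb{R}^{n+1}$, we get
\begin{equation*}
\widetilde{\phi}^{*}\bigl(T(N\times \mathbb{R}^{n+1})\bigr)\;\cong\; (\pi_N\circ \widetilde{\phi})^{*}(TN)\oplus \epsilon^{n+1}\;\cong\; TM\oplus \epsilon^{n+1},
\end{equation*}
while on the other hand this pullback splits as $TM\oplus \nu$. Comparing, $TM\oplus \nu\cong TM\oplus \epsilon^{n+1}$. Choose a bundle $\xi$ over $M$ with $TM\oplus \xi\cong \epsilon^{s}$ (this exists because $M$ is closed); adding $\xi$ to both sides gives $\nu\oplus \epsilon^{s}\cong \epsilon^{s+n+1}$. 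Thus $\nu$ is stably trivial, and since $\mathrm{rank}(\nu)=n+1>n=\dim M$, Theorem \ref{trivial bund} forces $\nu$ itself to be trivial.

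The tubular neighborhood theorem then produces an open neighborhood of $\widetilde{\phi}(M)$ in $N\times \mathbb{R}^{n+1}$ diffeomorphic to $M\times \mathbb{D}^{n+1}$. Finally, composing $\widetilde{\phi}$ with a diffeomorphism $N\times \mathbb{R}^{n+1}\to N\times \mathrm{int}(\mathbb{D}^{n+1})$ that scales the Euclidean factor (using compactness of $M$ so the image, together with a chosen tube, fits inside a small Euclidean ball), we place the whole codimension-$0$ submanifold $M\times \mathbb{D}^{n+1}$ in the interior of $N\times \mathbb{D}^{n+1}$. The main obstacle in this argument is the passage from stable triviality of $\nu$ to genuine triviality; this is exactly where the dimension inequality $\mathrm{rank}(\nu)>\dim M$ is used, which in turn is why the disk factor must have dimension $n+1$ rather than something smaller. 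Everything else is either a direct consequence of the tangentiality hypothesis or a routine application of Whitney embedding and tubular neighborhoods.
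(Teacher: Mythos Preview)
Your proof is correct and follows essentially the same route as the paper: embed $M$ into $N\times\mathbb{D}^{n+1}$ via Whitney applied to $x\mapsto(k(x),0)$, use tangentiality to identify $k^*(TN)\cong TM$ and conclude the normal bundle $\nu$ is stably trivial, then invoke Theorem~\ref{trivial bund} (since $\mathrm{rank}(\nu)=n+1>\dim M$) to get $\nu$ trivial and hence a tubular neighborhood diffeomorphic to $M\times\mathbb{D}^{n+1}$. The only cosmetic differences are that you work in $N\times\mathbb{R}^{n+1}$ and scale into the disk at the end, and you are more explicit about cancelling $TM$ by adding a stable inverse $\xi$; the paper works directly in $N\times\mathbb{D}^{n+1}$ and passes from $TM\oplus\nu\cong TM\oplus\varepsilon^{n+1}$ to stable triviality of $\nu$ in one line.
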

\begin{proof}
Let $i:N^n\to N^n\times \mathbb{D}^{n+1}$ be the standard inclusion $i(x) = (x, 0)$ and
$p:N^n\times \mathbb{D}^{n+1}\to N^n$  be the projection on the first factor $p(x, y) = x$.
Consider the composition $i\circ k:M^n\to N^n\times \mathbb{D}^{n+1}$. By Whitney Embedding Theorem, this composition can be approximated by an embedding $w:M^n\to N^n\times \mathbb{D}^{n+1}$, such that $w$ is homotopic to $i\circ k$. Let $\nu$ denote the normal bundle of the manifold $M^n$ considered as a sub manifold of $N^n\times \mathbb{D}^{n+1}$ via $w$. By definition of the normal bundle, we have:
$$w^{*}(T(N^n\times \mathbb{D}^{n+1})) = \nu \oplus T(M^n)$$
On the other hand, $T(N^n\times \mathbb{D}^{n+1}) = p^{*}(T(N^n))\oplus \varepsilon^{n+1}$, where $\varepsilon^{n+1}$ denotes the $(n + 1)$-dimensional trivial bundle. Combining these two equations together, we obtain:
\begin{center}
$\nu \oplus T(M^n) = w^{*}(p^{*}(T(N^n)))\oplus \varepsilon^{n+1}$
\end{center}
Since by construction $w\simeq i\circ k$, we have $w^{*}=(i\circ k)^* = k^*\circ i^*$.
Note that $i^*\circ p^* = (p\circ i)^* = id$ and $k^*T(N^n) =T(M^n)$ since the map $k$ is tangential. It follows that $\nu \oplus T(M^n) = \nu \oplus \varepsilon^{n+1}$ i.e., the bundle $\nu$ is stably trivial. By Theorem \ref{trivial bund}, $\nu$  is trivial itself; therefore, the tubular neighborhood of $M^n$  in $N^n\times \mathbb{D}^{n+1}$ is diffeomorphic to $M^n\times \mathbb{D}^{n+1}$.
\end{proof}
The following theorem is a generalization of Corollary \ref{corr.emb1} and is due to Boris Okun \cite{Oku02}:
\begin{theorem}\label{suspension.homot}
Let $k:M^n\to N^n$ be a tangential map between $n$-dimensional manifolds. Then there exist a map $g:\Sigma^{n+1}N^n\to \Sigma^{n+1}M^n$ such that the suspension $\Sigma^{n+1}f_N$ and the composite $\Sigma^{n+1}f_M\circ g$ are homotopic as maps from
$\Sigma^{n+1}N^n\to \Sigma^{n+1}\mathbb{S}^n=\mathbb{S}^{2n+1}$.
\end{theorem}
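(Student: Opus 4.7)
The plan is to imitate the proof of Theorem \ref{brum}, substituting the codimension-zero embedding supplied by Theorem \ref{tubular} for the one used there which came from stable parallelizability. First, Theorem \ref{tubular} provides a smooth codimension-zero embedding
$$\iota\colon M^n\times \mathbb{D}^{n+1}\hookrightarrow \mathrm{Int}(N^n\times \mathbb{D}^{n+1}),$$
and after a small ambient isotopy I may assume that the image of $\iota$ misses $\{\star_N\}\times \mathbb{D}^{n+1}$, where $\star_N\in N^n$ is the basepoint.

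Next I would construct $g$ by a Pontryagin--Thom-style collapse dual to $\iota$. Writing $q_N\colon N^n\times \mathbb{D}^{n+1}\twoheadrightarrow \Sigma^{n+1}N^n$ and $q_M\colon M^n\times \mathbb{D}^{n+1}\twoheadrightarrow \Sigma^{n+1}M^n$ for the standard quotients realizing the iterated reduced suspension, I define $\tilde g\colon N^n\times \mathbb{D}^{n+1}\to \Sigma^{n+1}M^n$ to equal $q_M\circ \iota^{-1}$ on the image of $\iota$ and to be the basepoint elsewhere. This is continuous because the topological boundary of $\iota(M^n\times \mathbb{D}^{n+1})$ in $N^n\times \mathbb{D}^{n+1}$ is $\iota(M^n\times \partial \mathbb{D}^{n+1})$, and $q_M\circ \iota^{-1}$ already sends this set to the basepoint. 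Since $\tilde g$ is constantly the basepoint on $(N^n\times \partial \mathbb{D}^{n+1})\cup (\{\star_N\}\times \mathbb{D}^{n+1})$, it descends to the required map $g\colon \Sigma^{n+1}N^n\to \Sigma^{n+1}M^n$.

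For the homotopy $\Sigma^{n+1}f_N\simeq \Sigma^{n+1}f_M\circ g$, I would model $f_M$ and $f_N$ as the standard degree-one maps that collapse the complements of small oriented open disks $D_M\subset M^n$ and $D_N\subset N^n$ to the basepoint of $\mathbb{S}^n$, taking $\star_N\notin D_N$ and $\star_M\notin D_M$. Unwinding the definitions shows that, after pulling back along $q_N$, both composites become collapse maps $N^n\times \mathbb{D}^{n+1}\to \mathbb{S}^{2n+1}$: for $\Sigma^{n+1}f_N\circ q_N$ the relevant open $(2n+1)$-disk is $D_N\times \mathrm{Int}(\mathbb{D}^{n+1})$, while for $\Sigma^{n+1}f_M\circ g\circ q_N=\Sigma^{n+1}f_M\circ \tilde g$ it is $\iota(D_M\times \mathrm{Int}(\mathbb{D}^{n+1}))$; in each case the restriction to the disk is the standard product identification with $\mathbb{S}^{2n+1}\setminus\{\star\}$. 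Both disks lie in $(N^n\setminus\{\star_N\})\times \mathrm{Int}(\mathbb{D}^{n+1})$, a connected open oriented $(2n+1)$-manifold, so the smooth isotopy extension theorem produces a compactly supported ambient isotopy carrying one onto the other. This isotopy provides a homotopy between the two collapse maps that remains the basepoint on $(N^n\times \partial \mathbb{D}^{n+1})\cup (\{\star_N\}\times \mathbb{D}^{n+1})$ at every time, and hence descends to $\Sigma^{n+1}N^n$, giving the desired homotopy.

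The main obstacle will be orientation bookkeeping. For the two embedded $(2n+1)$-disks to carry matching orientations (so that the two collapse maps represent the same element rather than differing by a sign), one must choose orientations on $M^n$, $N^n$, $\mathbb{D}^{n+1}$, $D_M$, $D_N$ and the embedding $\iota$ coherently. Since $\iota$ is a codimension-zero smooth embedding between oriented manifolds, it can be taken to be orientation-preserving, after which this coherence is automatic; the remainder is a routine application of isotopy extension together with the fact that any two orientation-preserving smooth embeddings of a disk into a connected manifold are ambient isotopic.
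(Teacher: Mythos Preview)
Your overall strategy---use the codimension-zero embedding from Theorem~\ref{tubular} and take a Pontryagin--Thom collapse---is exactly the paper's, and your final paragraph (matching the two collapse disks via ambient isotopy, with orientation care) correctly fills in what the paper dismisses as ``easy to see.''

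There is, however, a genuine gap in your construction of $g$: the claim that after a small isotopy the image of $\iota$ can be made to \emph{miss} $\{\star_N\}\times\mathbb{D}^{n+1}$ is false in the cases that matter. The core $\iota(M\times\{0\})$ is (by the construction in Theorem~\ref{tubular}) the image of an embedding $w:M\to N\times\mathbb{D}^{n+1}$ homotopic to $(k,0)$; hence $\pi_N\circ w:M\to N$ has the same degree as $k$. When $\deg(k)\neq 0$ (which is precisely the situation in all the applications, e.g., Corollary~\ref{equalrank}), $\pi_N\circ w$ is surjective, so $\iota(M\times\{0\})$ meets \emph{every} fiber $\{y\}\times\mathbb{D}^{n+1}$, and no isotopy can change that.

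The paper's fix is to thicken the basepoint on the $M$ side: model $\Sigma^{n+1}M$ as the quotient of $M\times\mathbb{D}^{n+1}$ by $M\times\partial\mathbb{D}^{n+1}\cup B\times\mathbb{D}^{n+1}$, where $B$ is a small ball around $\star_M$. Then for $g$ to descend one only needs
\[
F\bigl((M\setminus B)\times\mathbb{D}^{n+1}\bigr)\cap\bigl(\{\star_N\}\times\mathbb{D}^{n+1}\bigr)=\emptyset,
\]
and this \emph{is} achievable: make the core $w$ transverse to $\{\star_N\}\times\mathbb{D}^{n+1}$ so that $w^{-1}(\{\star_N\}\times\mathbb{D}^{n+1})$ is a finite set of points in $M$, isotope $w$ so these points lie inside $B$, and then take a sufficiently thin tubular neighborhood for $F$. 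With this modification in place, the rest of your argument goes through unchanged.
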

\begin{proof}
Fix base points $x_0\in M^n$ and $y_0\in N^n$. Let $B$ denote a small neighborhood of $x_0$ in $M^n$. By general position, we may assume that the embedding $F :M^n\times \mathbb{D}^{n+1}\to N^n\times \mathbb{D}^{n+1}$ of Theorem \ref{tubular} has the additional property that:
$$\rm{Im}(F)\cap(y_0\times \mathbb{D}^{n+1})\subseteq F(B\times \mathbb{D}^{n+1})$$
The suspensions $\Sigma^{n+1}M^n$ and $\Sigma^{n+1}N^n$ can be identified as the following quotient
spaces:\\~~\\ $\Sigma^{n+1}M^n= \frac{\displaystyle{M^n}\times \displaystyle{\mathbb{D}^{n+1}}}{ \displaystyle{M^n}\times \displaystyle{\partial\mathbb{D}^{n+1}} \cup \displaystyle{B}\times \displaystyle{\mathbb{D}^{n+1}}}$ ,   $\Sigma^{n+1}N^n= \frac{\displaystyle{N^n}\times \displaystyle{\mathbb{D}}^{n+1}}{\displaystyle{N^n}\times \displaystyle{\partial\mathbb{D}}^{n+1} \cup \displaystyle{y_{0}}\times \displaystyle{\mathbb{D}^{n+1}}}$.\\~\\
Let $*$ denote the point in $\Sigma^{n+1}M^n$ corresponding to the subset $M^n\times \partial \mathbb{D}^{n+1}\cup B\times \mathbb{D}^{n+1}$ in the first formula. Define $g: \Sigma^{n+1}N^n\to \Sigma^{n+1}M^n$ by
\begin{eqnarray*}
g(y) =  \left\{
\begin{array}{l}
 F^{-1}(y)~~ if\ \  y\in F((M^n-B)\times Int(\mathbb{D}^{n+1})). \\ \\

 * \ \ , ~~otherwise.
\end{array}
\right .
\end{eqnarray*}
It is easy to see that $\Sigma^{n+1}f_N$ and the composite $\Sigma^{n+1}f_M\circ g$ are homotopic.
\end{proof}
The following lemma shows that, atleast for this type of variation of smooth structures, tangential maps preserve non triviality of smooth structures:
\begin{theorem}\label{locally.concord}\cite{Oku02}
Let $k:M^n\to N^n$ be a tangential map between $n$-dimensional manifolds and assume $n\geq7$. Let $\Sigma_{1}$ and  $\Sigma_{2}$ be homotopy $n$-spheres. Suppose that $M^n\#\Sigma_{1}^n$ is concordant to $M^n\#\Sigma_{2}^n$, then $N^n\#\Sigma_{1}^n$ is concordant to $N^n\#\Sigma_{2}^n$.
\end{theorem}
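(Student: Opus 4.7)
The plan is to imitate the proof of Theorem \ref{brum} (and of its analogue Corollary \ref{corr.emb1}), using the suspension-splitting provided by Theorem \ref{suspension.homot} as the main geometric input. The starting observation is that, by Theorem \ref{kirby}, concordance classes of smooth structures on an $n$-manifold $X$ (with $n\geq 5$) are in bijection with $[X,Top/O]$, and that under this identification $f_X^*:\Theta_n\to\mathcal{C}(X)$ becomes the map $[\mathbb{S}^n,Top/O]\to[X,Top/O]$ induced by a degree-one collapse $f_X:X\to\mathbb{S}^n$. Thus the conclusion $[N\#\Sigma_1]=[N\#\Sigma_2]$ in $\mathcal{C}(N)$ will follow as soon as we exhibit a homomorphism $\eta:\mathcal{C}(M)\to\mathcal{C}(N)$ satisfying $\eta\circ f_M^*=f_N^*$, for then the hypothesis $f_M^*([\Sigma_1])=f_M^*([\Sigma_2])$ transports directly to the conclusion.

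To build $\eta$, the first step is to appeal to Theorem \ref{suspension.homot} to obtain a map
\[
g:\Sigma^{n+1}N\longrightarrow \Sigma^{n+1}M
\]
such that $\Sigma^{n+1}f_N\simeq \Sigma^{n+1}f_M\circ g$. The second step is the infinite-loop-space trick used in the proof of Theorem \ref{brum}: because $Top/O$ is an infinite loop space \cite{BV73}, there exists a space $Y$ with $\Omega^{n+1}Y=Top/O$, which identifies the contravariant functor $X\mapsto [X,Top/O]$ with $X\mapsto [\Sigma^{n+1}X,Y]$. Under this identification $f_M^*$ and $f_N^*$ become $(\Sigma^{n+1}f_M)^*$ and $(\Sigma^{n+1}f_N)^*$ respectively, both landing in $[\Sigma^{n+1}M,Y]$ and $[\Sigma^{n+1}N,Y]=\mathcal{C}(N)$.

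The third step is to define $\eta:\mathcal{C}(M)\to\mathcal{C}(N)$ as the homomorphism induced by $g$, namely $\eta=g^*:[\Sigma^{n+1}M,Y]\to[\Sigma^{n+1}N,Y]$. The homotopy $\Sigma^{n+1}f_N\simeq \Sigma^{n+1}f_M\circ g$ then yields
\[
\eta\circ f_M^*=g^*\circ(\Sigma^{n+1}f_M)^*=(\Sigma^{n+1}f_M\circ g)^*=(\Sigma^{n+1}f_N)^*=f_N^*,
\]
which is exactly the factorization we need. Finally, since $[M\#\Sigma_1]=[M\#\Sigma_2]$ is the statement $f_M^*([\Sigma_1])=f_M^*([\Sigma_2])$, applying $\eta$ to both sides gives $f_N^*([\Sigma_1])=f_N^*([\Sigma_2])$, i.e.\ $N\#\Sigma_1$ is concordant to $N\#\Sigma_2$.

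The only real point that needs care — and which I regard as the principal obstacle — is the compatibility between two different identifications of $\mathcal{C}(X)$: first with $[X,Top/O]$ via Theorem \ref{kirby}, and then with $[\Sigma^{n+1}X,Y]$ via the infinite loop structure. One must verify that the map $f_X^*$ really does correspond to $(\Sigma^{n+1}f_X)^*$ under these identifications, so that the displayed equality of $f_N^*$ with $g^*\circ f_M^*$ actually holds on the nose in $\mathcal{C}(N)$, not merely after stabilization. This verification is exactly the one carried out inside the proof of Theorem \ref{brum}, so the argument here is a direct generalization of that one, with the degree-one collapse of a stably parallelizable manifold replaced by the tangential map $k:M\to N$ (via the embedding of Theorem \ref{tubular} and the suspension map of Theorem \ref{suspension.homot}). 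The dimension hypothesis $n\geq 7$ is used to guarantee $n\geq 5$ (so Theorem \ref{kirby} applies to both $M$ and $N$) and to ensure that the Whitney-embedding step underlying Theorem \ref{tubular} can be performed in the required codimension.
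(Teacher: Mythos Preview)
Your proof is correct and is essentially the paper's own argument: both pass to $[\Sigma^{n+1}X,Y]$ via the infinite loop structure $Top/O=\Omega^{n+1}Y$, invoke the map $g:\Sigma^{n+1}N\to\Sigma^{n+1}M$ from Theorem~\ref{suspension.homot}, and use the homotopy $\Sigma^{n+1}f_N\simeq\Sigma^{n+1}f_M\circ g$ to transport $f_M^*([\Sigma_1])=f_M^*([\Sigma_2])$ to $f_N^*([\Sigma_1])=f_N^*([\Sigma_2])$. The only cosmetic difference is that you package the argument as a factorization $\eta\circ f_M^*=f_N^*$ (in the style of Corollary~\ref{corr.emb1}), whereas the paper writes the same content as a homotopy-commutative diagram; your caveat about the compatibility of the two identifications is exactly what the paper covers with the phrase ``naturality of this construction.''
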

\begin{proof}
By Theorem \ref{kirby}, we know that concordance classes of smooth structures on a smooth manifold $Z^n$ $(n > 4)$ are in one-to-one correspondence with homotopy classes of maps from $Z^{n}$ to $Top/O$ denoted by $[Z^n,Top/O]$. The space $Top/O$ is an infinite loop space. In particular, $Top/O=\Omega^{n+1}(Y)$ for some space $Y$. We have suspension isomorphism $[Z^n,Top/O]\cong [\Sigma^{n+1}Z^n, Y]$, so the smooth structures on $Z^n$ can be thought of as (homotopy classes of) maps from $\Sigma^{n+1}Z^n$ to $Y$. In this way a connected sum $Z^n\#\Sigma^n$ gives rise to a map from  $\Sigma^{n+1}Z^n$ to $Y$, which we will denote by $\displaystyle{z}_{\Sigma}$. Using the standard sphere $\mathbb{S}^n$ in place of $Z^n$ we see that an exotic sphere $\Sigma^n$ itself corresponds to a map $\displaystyle{s}_{\Sigma}:\mathbb{S}^{2n+1}\to Y$. The naturality of this construction and Theorem \ref{suspension.homot}  imply that the diagram
$$\xymatrix{ \Sigma^{n+1}\ar[dd]_{g} N^{n}\ar[dr]_{\displaystyle{n}_{\Sigma_{i}}}\ar[drrr]^{\Sigma^{n+1}f_{N}} & &   \\
&Y &  &\ar[ll]_{\displaystyle{s}_{\Sigma_{i}}}\mathbb{S}^{2n+1} \\
\Sigma^{n+1}M^{n}\ar[ur]^{\displaystyle{m}_{\Sigma_{i}}}\ar[urrr]_{\Sigma^{n+1}f_{M}}& &\\}$$
is homotopy commutative for $i = 1$, $2$. In particular, we see that $\displaystyle{n}_{\Sigma_{i}}\simeq \displaystyle{m}_{\Sigma_{i}}\circ g$. Since the connected sums $M^n\#\Sigma_{1}^n$ and $M^n\#\Sigma_{2}^n$ are concordant. It follows that the maps $m_{\Sigma_{1}}$ and $m_{\Sigma_{2}}$ are homotopic, and therefore the connected sums $N^n\#\Sigma_{1}^n$ and $N^n\#\Sigma_{2}^n$ are concordant.
\end{proof}
We will now discuss that how to produce nonstandard smooth structures on the non-positively curved symmetric spaces. For that we will use the following rigidity result.
\begin{theorem}\label{Eberlein.Gromov}(Eberlein and Gromov Strong Rigidity Theorem \cite{Ebe83, BGS85})
Let $X$ be a compact locally symmetric space of non-compact type such that all metric factors of $X$ have rank greater than $1$. Let $M$ be a closed connected non-positively curved Riemannian manifold. Then any isomorphism from  $\pi_{1}(X)$ to $\pi_{1}(Y)$ is induced by a unique isometry (after adjusting the normalizing constants for $X$).
\end{theorem}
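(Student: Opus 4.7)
My plan is to reduce this higher-rank strong rigidity statement to three ingredients: (1) the Svarc-Milnor lemma to pass from a group isomorphism to a quasi-isometry of universal covers, (2) boundary/Tits-building analysis to transport the geometry of the symmetric space $\widetilde X$ onto $\widetilde M$, and (3) Mostow-style rigidity in the presence of an actual isometric identification, applied factor by factor.

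First, let $\varphi:\pi_1(X)\to\pi_1(M)$ be the given isomorphism and write $\Gamma=\pi_1(X)$. By Svarc-Milnor, both $\Gamma\curvearrowright\widetilde X$ and $\Gamma\curvearrowright\widetilde M$ (the latter via $\varphi$) are properly discontinuous, cocompact, isometric actions on Hadamard manifolds, so the orbit map yields a $\Gamma$-equivariant quasi-isometry $\Phi:\widetilde X\to\widetilde M$. The key point is to promote $\Phi$ to a genuine isometry. First I would split $\widetilde X=\prod_i\widetilde X_i$ into its de~Rham/metric factors (each irreducible of rank $\geq 2$ by hypothesis) and observe that the splitting of $\widetilde X$ induces, via $\Phi$ and the Lawson–Yau / Gromoll–Wolf splitting theorem applied to $\widetilde M$, a corresponding de~Rham splitting $\widetilde M=\prod_i\widetilde M_i$ compatible with $\Phi$ up to adjustment of constants. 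This reduces the problem to the irreducible case.

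Now assume $\widetilde X$ is irreducible of rank $r\geq 2$. The plan is to show $\widetilde M$ is a symmetric space isometric (after rescaling) to $\widetilde X$. I would extend $\Phi$ to a $\Gamma$-equivariant homeomorphism $\partial\Phi:\partial_\infty\widetilde X\to\partial_\infty\widetilde M$ of ideal boundaries, using that $\Phi$ is a quasi-isometry between Hadamard manifolds of bounded geometry. The boundary $\partial_\infty\widetilde X$ carries the spherical Tits building structure coming from the rank-$\geq 2$ symmetric space. Using the abundance of $\Gamma$-periodic flats in $\widetilde X$ (guaranteed by the Borel density / Prasad–Raghunathan results) and the $\Gamma$-equivariance, I would show that $\partial\Phi$ carries these boundary simplices to $r$-dimensional flats in $\widetilde M$, thereby producing in $\widetilde M$ a $\Gamma$-invariant family of $r$-flats whose asymptotic behavior matches the Tits geometry of $\widetilde X$. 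This is exactly the input needed to invoke the higher-rank rigidity theorem of Ballmann–Burns–Spatzier / Eberlein: a closed nonpositively curved manifold whose universal cover contains a $\Gamma$-invariant family of $r$-flats with the Tits incidence pattern of a higher-rank irreducible symmetric space must itself have universal cover isometric (up to a constant) to that symmetric space.

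Once $\widetilde M$ is identified, up to a global rescaling, with $\widetilde X$ as a symmetric space, Mostow's Strong Rigidity Theorem~\ref{mostow} (or its Margulis-type extension available in rank $\geq 2$) yields that the abstract isomorphism $\varphi$ is realized by an isometry $f:X\to M$; uniqueness follows from the triviality of the centralizer of $\Gamma$ in $\mathrm{Isom}(\widetilde X)$, which is standard for irreducible higher-rank symmetric spaces. Reassembling the metric factors, we obtain the desired isometry after the stated rescaling of the normalizing constants on $X$. The main obstacle I anticipate is step~(2): producing enough periodic flats in $\widetilde M$ from the boundary map to legitimately apply the higher-rank rigidity theorem — this is where the hypothesis ``all metric factors have rank $>1$'' is genuinely essential, since in rank one the Tits boundary degenerates to a discrete set of chambers and the argument collapses (consistent with the counterexamples of Theorems~\ref{hype.smooth} and~\ref{complex.hyp}).
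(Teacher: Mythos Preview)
The paper does not give a proof of this theorem; it is quoted as a known result from the literature with citations to Eberlein \cite{Ebe83} and Ballmann--Gromov--Schroeder \cite{BGS85}, and is then used as a black box (together with Theorem~\ref{locally.exto} and Corollary~\ref{locally.complex}) to conclude that certain exotic smooth structures on higher-rank locally symmetric spaces admit no nonpositively curved metric. So there is nothing in the paper to compare your argument against.

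That said, a comment on your sketch. Your overall architecture --- \v{S}varc--Milnor, boundary map, produce flats, invoke rank rigidity, then Mostow --- is a plausible modern route, but it has a circularity/anachronism issue: you appeal to the Ballmann / Burns--Spatzier rank rigidity theorem to conclude that $\widetilde M$ is symmetric, yet that theorem both postdates and essentially subsumes the Eberlein--Gromov result you are asked to prove. The original arguments in \cite{Ebe83} and \cite{BGS85} do not first show $\widetilde M$ has higher rank in the Ballmann sense; they work directly with the Tits geometry of $\partial_\infty\widetilde X$ and with sandwich/flat-strip lemmas to transport the geodesic symmetries of $\widetilde X$ to $\widetilde M$. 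Your step~(2) is also where the outline is thinnest: a boundary homeomorphism sending apartments to round spheres does not by itself yield interior $r$-flats in $\widetilde M$; extracting genuine flats from asymptotic data is precisely the technical heart of \cite{BGS85}, and you have simply asserted it. If you want a self-contained argument along your lines, you would need to supply that step rather than defer it to rank rigidity.
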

The following lemma provides a connection between concordance and diffeomorphism classes of smooth structures for locally symmetric spaces by using the argument given in Theorem \ref{Lemma 2.1}:
\begin{theorem}\label{concord}\cite{Oku02}
Let $X =\Gamma \setminus G/K$ be a compact orientable symmetric space of non-compact type such that the universal cover $G/K$ of $X$ has no 2-dimensional metric factor projecting to a closed subset of $X$ and $\dim X\geq 7$. Let $\Sigma_{1}$ and $\Sigma_{2}$ be homotopy spheres of the same dimension as $X$. Suppose $X\#\Sigma_{1}$ is diffeomorphic to $X\#\Sigma_{2}$. Then $X\#\Sigma_{1}$ is concordant either to $X\#\Sigma_{2}$ or to $X\#(-\Sigma_{2})$.
\end{theorem}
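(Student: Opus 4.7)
The plan is to adapt the argument given in the proof of Theorem \ref{Lemma 2.1} to the symmetric space setting; the only genuinely new ingredient is the replacement of Mostow's rigidity for hyperbolic manifolds by its version for general locally symmetric spaces, which is available precisely under the hypothesis imposed on the 2-dimensional metric factors. As a preliminary I would arrange that the connected sums $X\#\Sigma_1$ and $X\#\Sigma_2$ are performed inside a small open metric ball $B \subset X$, so that outside $B$ all three manifolds $X$, $X\#\Sigma_1$, $X\#\Sigma_2$ are canonically identified and the change of smooth structure is localized in $B$.

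First I would treat the special case where the underlying self-map $f: X \to X$ of the given diffeomorphism $f: X\#\Sigma_1 \to X\#\Sigma_2$ is homotopic to $\mathrm{id}_X$. Choosing a homotopy $h: X\times[0,1] \to X$ with $h_0 = \mathrm{id}$ and $h_1 = f$, I would define $H: X\times[0,1] \to X\times[0,1]$ by $H(x,t) = (h(x,t),t)$. This $H$ is a homotopy equivalence restricting to a homeomorphism on the boundary, and since $X$ is a non-positively curved locally symmetric space with $\dim(X\times[0,1]) \geq 8$, Topological Rigidity Theorem \ref{gentoprigd} applies to produce a homeomorphism $\hat H: X\times[0,1] \to X\times[0,1]$ homotopic to $H$ rel boundary. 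Endowing the target with the product smooth structure $(X\#\Sigma_2)\times[0,1]$ and pulling it back through $\hat H$ then yields a smooth structure on $X\times[0,1]$ that restricts to $X\#\Sigma_1$ at $t=0$ and to $X\#\Sigma_2$ at $t=1$, which is the required concordance.

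Next I would reduce the general case to this special case by means of Mostow's Strong Rigidity. Applying Theorem \ref{mostow} to the self-homeomorphism $f: X \to X$ underlying the diffeomorphism $f: X\#\Sigma_1 \to X\#\Sigma_2$, I would obtain that $f^{-1}$ is homotopic to an isometry $g: X \to X$. Since small metric balls of $X$ can be slid around by a smooth ambient isotopy, $g$ is isotopic to a diffeomorphism $\hat g: X \to X$ that is the identity on $B$. Because $\hat g|_B = \mathrm{id}$, the map $\hat g$ automatically descends to a diffeomorphism $X\#\Sigma_2 \to X\#\Sigma_2$ when $g$ is orientation-preserving, and to a diffeomorphism $X\#\Sigma_2 \to X\#(-\Sigma_2)$ when $g$ is orientation-reversing. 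Forming the composite $\hat g \circ f$ produces a diffeomorphism into either $X\#\Sigma_2$ or $X\#(-\Sigma_2)$ whose underlying map of $X$ is homotopic to the identity, and the special case then yields the asserted concordance.

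The main obstacle is verifying that Mostow's Theorem \ref{mostow} is indeed applicable in this generality. Its hypothesis rules out closed 1- or 2-dimensional geodesic subspaces that are local direct factors, and I would need to check that this is exactly what the hypothesis ``$G/K$ has no $2$-dimensional metric factor projecting to a closed subset of $X$'' ensures (1-dimensional factors being automatically excluded by the non-compact type assumption, and by the orientability and dimension constraints). Once this identification is in place, every subsequent step is a direct transcription of the Farrell--Jones argument for Lemma \ref{Lemma 2.1}, with Theorem \ref{mostow} playing the role of its hyperbolic specialization and with Theorem \ref{gentoprigd} supplying the topological rigidity input on $X\times[0,1]$.
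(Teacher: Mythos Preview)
Your proposal is correct and follows essentially the same route as the paper's own proof: localize the connected sums in a small ball $B$, handle the case $f\simeq\mathrm{id}_X$ via Theorem \ref{gentoprigd} applied to the trace $H(x,t)=(h(x,t),t)$ to build the concordance, and reduce the general case to this one by invoking Mostow rigidity (Theorem \ref{mostow}) to replace $f^{-1}$ by an isometry and then isotope it to be the identity on $B$. Your treatment of the orientation-reversing case, where $\hat g$ carries $X\#\Sigma_2$ to $X\#(-\Sigma_2)$, is in fact spelled out a bit more carefully than in the paper.
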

\begin{proof}
We can assume that the connected sums of  $X$ with $\Sigma_{1}$ and $\Sigma_{2}$ are taking place
on the boundary of a small metric ball $B$ in $X$, so we think of $X\#\Sigma_{1}$ and $X\#\Sigma_{2}$ as being topologically identified with $X$, and the changes in the smooth structure happen inside $B$. Let $f:X \#\Sigma_{1}\rightarrow X \#\Sigma_{2}$ be a diffeomorphism.\\
First we consider a special case, where $f:X\rightarrow X$ is homotopic to the identity. Thus we have a homotopy $h:X\times [0,1]\rightarrow X$ with $h|X\times 1 = f$ and $h|X\times 0 = id$. Define $H:X\times [0,1]\rightarrow X\times [0,1]$ by $H(x, t)=(h(x, t), t)$. Note that $H$ is a homotopy equivalence which restricts to a homeomorphism on the boundary. Therefore, by Farrell and Jones Topological Rigidity Theorem \ref{gentoprigd}, $H$ is homotopic rel boundary to a homeomorphism $\widehat{H}:X \times [0,1]\rightarrow X\times [0,1]$. We put the smooth structure $X\#\Sigma_{2}\times [0,1]$ on the range of $\widehat{H}$ and, by pulling it back along $\widehat{H}$, obtain the smooth structure $N$ on the domain of $\widehat{H}$. Since, by construction, $\widehat{H}:N\rightarrow X\#\Sigma_{2}\times [0,1]$ is a diffeomorphism, $\widehat{H}|X\times 1 = f$ and $\widehat{H}|X\times 0 = id$, $N$ is a concordance between $X\#\Sigma_{1}$ and $X\#\Sigma_{2}$ .

The general case reduces to the above special case as follows: If $f:X\rightarrow X$ is an orientation preserving homeomorphism, then, by the Strong Mostow Rigidity Theorem \ref{mostow}, $f^{-1}$ is homotopic to an orientation preserving isometry $g$. Since one can move around small metric balls in  $X$ by smooth isotopies, $X$ is homotopic to a diffeomorphism $\widehat{g}:X\rightarrow X$ such that  $\widehat{g}|B = id$. Since $X\#\Sigma_{2}$ is obtained by taking the connected sum along the boundary of $B$ and $\widehat{g}|B = id$ and $\widehat{g}|X\setminus B$ is a diffeomorphism, it follows that $\widehat{g}:X\#\Sigma_{2}\rightarrow X\#\Sigma_{2}$ is also a diffeomorphism. Therefore the composition \\$\widehat{g}\circ f:X\#\Sigma_{1}\rightarrow X\#\Sigma_{2}$ is a diffeomorphism homotopic to the identity and it follows from the previous special case that $X\#\Sigma_{1}$ and $X\#\Sigma_{2}$ are concordant.
If $f$ is an orientation reversing homeomorphism, then similar argument produces a diffeomorphism $\widehat{g}:X\#\Sigma_{2}\rightarrow X\#\Sigma_{2}$ and it follows that $X\#\Sigma_{1}$ is concordant to $X\#\Sigma_{2}$.
\end{proof}
The following theorem provides examples of exotic smooth structures on locally symmetric space of noncompact type, which give counterexamples to smooth rigidity Problem \ref{prorig}:
\begin{theorem}\label{locally.exto}\cite{Oku02}
Let  $X =\Gamma\setminus G/K$ and $X_u = G_u/K$  be compact dual symmetric spaces such that the universal cover $G/K$ of $X$ has no $2$-dimensional metric factor projecting to a closed subset of $X$ and assume $\dim X\geq7$. Let $\widehat{X}$ be the oriented finite sheeted cover of $X$, the existence of which was established by Theorem \ref{finite sheeted tang}. Let $\Sigma_{1}$ and $\Sigma_{2}$ be homotopy spheres of the same dimension as $X$. If the connected sum $X_{u}\#\Sigma_{1}$ is not concordant to both $X_{u}\#\Sigma_{2}$ and $X_{u}\#-\Sigma_{2}$ then $\widehat{X}\#\Sigma_{1}$ and $\widehat{X}\#\Sigma_{2}$  are not diffeomorphic.
\end{theorem}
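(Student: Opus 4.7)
The plan is to prove the contrapositive. So I assume that $\widehat{X}\#\Sigma_1$ is diffeomorphic to $\widehat{X}\#\Sigma_2$, and I aim to show that $X_u\#\Sigma_1$ is concordant to either $X_u\#\Sigma_2$ or $X_u\#(-\Sigma_2)$. The argument breaks into two steps: first push the diffeomorphism hypothesis down to a concordance statement on $\widehat{X}$, then transport that concordance statement across a tangential map to $X_u$.

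First I would verify that the hypotheses of Theorem \ref{concord} apply to $\widehat{X}$. Since $\widehat{X} = \widehat{\Gamma}\setminus G/K$ is a finite sheeted cover of the compact orientable locally symmetric space $X = \Gamma\setminus G/K$ of non-compact type, it is itself a compact orientable locally symmetric space with the same universal cover $G/K$; the assumptions that $G/K$ has no $2$-dimensional metric factor projecting to a closed subset and that $\dim X\geq 7$ carry over verbatim to $\widehat{X}$. Applying Theorem \ref{concord} to the diffeomorphism $\widehat{X}\#\Sigma_1\cong \widehat{X}\#\Sigma_2$ then yields that $\widehat{X}\#\Sigma_1$ is concordant to either $\widehat{X}\#\Sigma_2$ or $\widehat{X}\#(-\Sigma_2)$.

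Second, I invoke Theorem \ref{finite sheeted tang} to obtain the tangential map $k:\widehat{X}\to X_u$ between the two $n$-dimensional manifolds ($n=\dim X=\dim X_u\geq 7$). Now Theorem \ref{locally.concord} applies: it asserts precisely that a tangential map between closed $n$-manifolds (with $n\geq 7$) carries concordance of connected sums with homotopy spheres in the domain to concordance of the corresponding connected sums in the target. Thus, in the first case, concordance of $\widehat{X}\#\Sigma_1$ with $\widehat{X}\#\Sigma_2$ gives concordance of $X_u\#\Sigma_1$ with $X_u\#\Sigma_2$; in the second case, concordance of $\widehat{X}\#\Sigma_1$ with $\widehat{X}\#(-\Sigma_2)$ gives concordance of $X_u\#\Sigma_1$ with $X_u\#(-\Sigma_2)$. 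Either way, the conclusion contradicts the hypothesis of the contrapositive, completing the proof.

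There is no real obstacle beyond the bookkeeping of hypotheses, since the two major inputs---the orientation/homotopy-identity reduction of a diffeomorphism to a concordance via Mostow rigidity and topological rigidity (Theorem \ref{concord}), and the infinite-loop-space/suspension argument that makes tangentiality preserve concordance classes (Theorem \ref{locally.concord})---have already been established. The only point that requires a moment of care is checking that the hypotheses on the universal cover and on the dimension pass to the finite cover $\widehat{X}$, which they do automatically.
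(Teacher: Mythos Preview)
Your proof is correct and follows exactly the same route as the paper: argue by contrapositive, apply Theorem \ref{concord} to convert the diffeomorphism $\widehat{X}\#\Sigma_1\cong\widehat{X}\#\Sigma_2$ into a concordance on $\widehat{X}$ (with the possible sign flip), and then use the tangential map together with Theorem \ref{locally.concord} to push this concordance over to $X_u$. Your version is simply more explicit about checking that the hypotheses of Theorem \ref{concord} descend to the finite cover $\widehat{X}$.
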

\begin{proof}
Suppose the connected sums $\widehat{X}\#\Sigma_{1}$ and $\widehat{X}\#\Sigma_{2}$ are diffeomorphic. Then, by Theorem \ref{concord}, $\widehat{X}\#\Sigma_{1}$ is concordant either to $\widehat{X}\#\Sigma_{2}$ or to $\widehat{X}\#\Sigma_{2}$. It follows from Theorem \ref{locally.concord} that either $X_{u}\#\Sigma_{1}$ is concordant to $X_{u}\#\Sigma_{2}$ or to $X_{u}\#(-\Sigma_{2})$, which contradicts the hypothesis. This contradiction proves the theorem.
\end{proof}
Applying Theorem \ref{locally.exto} to the Example \ref{exm.loc4}, we obtain the following corollary \cite{Oku02}:
\begin{corollary}\label{locally.complex}
Let $G =G_c$ be a complex semi simple Lie group, and let $X =\Gamma\setminus G_c/G_u$ and $X_u = G_u$ be compact dual symmetric spaces such that $\dim X\geq 7$. Let $\widehat{X}$ be the oriented finite sheeted cover of $X$, the existence of which was established by Theorem \ref{finite sheeted tang}. Let $\Sigma_{1}$ and $\Sigma_{2}$ be two non diffeomorphic homotopy spheres of the same dimension as $X$. Then the connected sums $\widehat{X}\#\Sigma_{1}$ and $\widehat{X}\#\Sigma_{2}$ are not diffeomorphic.
\end{corollary}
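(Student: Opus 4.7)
The plan is to deduce this corollary directly from Theorem \ref{locally.exto}. Since $\widehat{X}$ is the tangential finite sheeted cover of $X$ furnished by Theorem \ref{finite sheeted tang} and the compact dual is $X_u = G_u$, it suffices to verify the two hypotheses of Theorem \ref{locally.exto}: (i) the universal cover $G_c/G_u$ of $X$ has no two-dimensional metric factor projecting to a closed subset of $X$, and (ii) $X_u\#\Sigma_1$ is concordant neither to $X_u\#\Sigma_2$ nor to $X_u\#(-\Sigma_2)$. Hypothesis (i) holds vacuously: the irreducible factors of $G_c/G_u$ are symmetric spaces dual to compact simple complex Lie groups, and the smallest such factor is $SL(2,\mathbb{C})/SU(2) \cong \mathbb{R}\textbf{H}^3$, which has real dimension three; hence no irreducible factor of $G_c/G_u$ has real dimension less than three.

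For hypothesis (ii), the crucial observation is that $X_u = G_u$ is itself a compact connected Lie group, hence parallelizable via left-invariant vector fields and in particular stably parallelizable; it is also orientable and closed. Since $\dim X_u = \dim X \geq 7 \geq 5$, Theorem \ref{brum} applies and gives that
\[
f_{X_u}^{*} \colon \Theta_n \longrightarrow [X_u, Top/O] = \mathcal{C}(X_u)
\]
is injective, where $n = \dim X_u$. Under the identifications of Theorem \ref{kirby} we have $f_{X_u}^{*}([\Sigma]) = [X_u\#\Sigma]$. Since $\Sigma_1$ is not diffeomorphic to $\Sigma_2$ even when orientation reversal is permitted, neither $[\Sigma_1] = [\Sigma_2]$ nor $[\Sigma_1] = [-\Sigma_2]$ holds in $\Theta_n$, and injectivity of $f_{X_u}^{*}$ then shows that $X_u\#\Sigma_1$ is concordant neither to $X_u\#\Sigma_2$ nor to $X_u\#(-\Sigma_2)$.

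Both hypotheses of Theorem \ref{locally.exto} being satisfied, the corollary follows. I anticipate no serious obstacle. The entire content of the corollary is that in the complex semisimple case the compact dual $X_u$ happens to be a Lie group and is therefore stably parallelizable on the nose, making Theorem \ref{brum} directly applicable. This sidesteps the difficulty faced in the complex hyperbolic setting of Section 4, where $\mathbb{C}\textbf{P}^n$ is never stably parallelizable (its first Pontryagin class is nonzero) and one had to route through Theorem \ref{staequ} and Corollary \ref{corr.emb1} to recover a partial analogue of Theorem \ref{brum}.
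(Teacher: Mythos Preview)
Your proof is correct and follows essentially the same strategy as the paper: reduce to Theorem~\ref{locally.exto} and exploit the fact that the compact dual $X_u=G_u$ is a Lie group, hence parallelizable. The only difference is cosmetic: you invoke Theorem~\ref{brum} directly (stably parallelizable $\Rightarrow f_{X_u}^*$ monic), while the paper observes that the constant map $G_u\to\mathbb{S}^{\dim G_u}$ is tangential (both tangent bundles being trivial of the same rank) and then applies Theorem~\ref{locally.concord} with $N=\mathbb{S}^n$, which reduces the concordance question on $G_u$ to one on $\mathbb{S}^n$ where it is tautological. These two routes unwind to the same stable-splitting argument, so there is no substantive divergence. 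Your explicit verification of the ``no two-dimensional factor'' hypothesis is a nice addition that the paper leaves implicit.
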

\begin{proof}
Since the group $G_c$ is a complex Lie group, by Theorem \ref{locally.exto}, it suffices to show that the connected sum $G_{u}\#\Sigma_{1}$ is not concordant to both $G_{u}\#\Sigma_{2}$ and $G_{u}\#(-\Sigma_{2})$. Since the tangent bundle of a Lie group is trivial. Therefore the constant map $G_{u}\to \mathbb{S}^{\dim G_u}$ is tangential, and the required statement follows from Theorem \ref{locally.concord}.
\end{proof}
\begin{remark}\rm{
\indent
\begin{itemize}
\item[\bf{1.}] The above Corollary \ref{locally.complex} provides examples of exotic smooth structures on locally symmetric space of higher rank. Therefore by Eberlein and Gromov Rigidity Theorem \ref{Eberlein.Gromov} these examples do not admit Riemannian metric of non-positive curvature. This answers in the negative the question, due to Eberlein, of whether a smooth closed manifold, homotopy equivalent to a non-positively curved one, admits a Riemannian metric of non-positive curvature. Rank 1 examples with the same property were independently constructed in \cite{AF94} by C.S. Aravinda and F.T. Farrell. 
\item[\bf{2.}] F.T. Farrell and L.E. Jones conjectured in \cite{FJ94} as follows :
\end{itemize}}
\end{remark}
\begin{conjecture}\label{gene.hyp}
Given a positive real number $\displaystyle{\epsilon}$, there exist pairs of closed smooth manifolds $M_{1}^{8}$, $M_{2}^{8}$ and $M_{1}^{16}$, $M_{2}^{16}$ such that the following is true:
\begin{itemize}
\item[(i)] $M_{2}^8$ is a quaternionic hyperbolic manifold of real dimension 8 and $M_{2}^{16}$ is a Cayley hyperbolic manifold (of real dimension 16).
\item[(ii)]The smooth manifolds $M_{1}^{2m}$ and $M_{2}^{2m}$ where $m=4$ and $8$ are homeomorphic but not diffeomorphic.
\item[(iii)] $M_{1}^{2m}$ ($m=4$ and $8$) supports a negatively curved Riemannian metric whose sectional curvatures all lie in the closed interval $[-4 -\displaystyle{\epsilon}, -1 +\displaystyle{\epsilon}]$.
\end{itemize}
\end{conjecture}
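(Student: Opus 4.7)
I plan to follow the three-part strategy used for the complex hyperbolic case of Theorem \ref{complex.hyp}, transplanted to the quaternionic ($m=4$) and Cayley ($m=8$) settings whose duals are the projective spaces $\mathbb{H}\textbf{P}^{2}$ and $\mathbb{O}\textbf{P}^{2}$, both of real dimension $2m$. Let $M_{2}^{2m}$ be a closed quaternionic or Cayley hyperbolic manifold, whose existence is classical. By Okun's Theorem \ref{finite sheeted tang}, after passing to a suitable finite sheeted cover $\widehat{M}_{2}$, there is a tangential map $k:\widehat{M}_{2} \to X_{u}$ with $X_{u} = \mathbb{H}\textbf{P}^{2}$ or $\mathbb{O}\textbf{P}^{2}$. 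I then set $M_{1}^{2m} := \widehat{M}_{2} \# \Sigma^{2m}$ for a carefully chosen nontrivial $\Sigma^{2m} \in \Theta_{2m}$; since $\Theta_{8} \cong \Theta_{16} \cong \mathbb{Z}/2$, the nontrivial choice is essentially forced.

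To obtain condition (ii), I invoke Theorem \ref{locally.exto} (whose hypotheses are met, because quaternionic and Cayley hyperbolic manifolds are irreducible rank-one locally symmetric spaces of dimension $\geq 7$) to reduce the non-diffeomorphism of $\widehat{M}_{2}\#\Sigma^{2m}$ and $\widehat{M}_{2}$ to showing that $X_{u} \# \Sigma^{2m}$ is not concordant to $X_{u}$; the sign ambiguity in Theorem \ref{locally.exto} is automatically absorbed because $\Theta_{2m} \cong \mathbb{Z}/2$. The plan here imitates Lemma \ref{lem.con}: using the Brumfiel diagram relating $[\,\cdot\,, SF]$, $[\,\cdot\,, F/O]$ and $[\,\cdot\,, Top/O]$, I would exhibit an element of $\pi_{2m}^{s}$ (via Adams' Theorem \ref{adam} combined with a Brumfiel-type argument analogous to the $m=4$ case of Lemma \ref{lem.con}) whose image detects $\Sigma^{2m}$, and show that the induced map on $\widetilde{KO}^{q}$ is nonzero. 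What one needs is the analogue, for $X_{u} = \mathbb{H}\textbf{P}^{2}$ and $\mathbb{O}\textbf{P}^{2}$, of the Adams--Walker fact that $\Sigma^{q} f_{\mathbb{C}\textbf{P}^{n}}$ is a $\widetilde{KO}^{q}$-monomorphism; this follows from the standard structure of $\widetilde{KO}^{*}(\mathbb{H}\textbf{P}^{2})$ and $\widetilde{KO}^{*}(\mathbb{O}\textbf{P}^{2})$ together with Bott periodicity. This step is essentially packaging of known stable homotopy computations.

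For condition (iii), I would mimic Theorem \ref{compinch}: construct a one-parameter family of complete Riemannian metrics $b_{r}(\,,\,)$ on $\mathbb{R}^{2m}$ that coincide with the real hyperbolic metric of curvature $-4$ on the ball of radius $r$ about the origin, coincide with the quaternionic (respectively Cayley) hyperbolic metric outside the ball of radius $r^{2}$, and carry all sectional curvatures in $[-4-\epsilon(r),\, -1+\epsilon(r)]$ with $\epsilon(r)\to 0$ as $r\to\infty$. Once such a family is available, the local pinching lemma (Lemma \ref{pinmetric}) produces the required metric on $\widehat{M}_{2} \# \Sigma^{2m}$, after first using residual finiteness of $\pi_{1}(\widehat{M}_{2})$ (valid for the arithmetic quaternionic and Cayley hyperbolic lattices via a Malcev-style argument) to pass to a further finite cover containing an isometrically embedded real hyperbolic ball of radius at least $2r^{2}$.

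The main obstacle is the construction of the interpolating family $b_{r}$. In the complex case, Lemma \ref{constru.met} exploits a single complex normal direction and a warped-product ansatz whose curvatures can be computed explicitly. In the quaternionic and Cayley settings the subbundle of directions transverse to the real hyperbolic subspace is higher-dimensional and carries a nontrivial $\mathrm{Sp}(1)$ or $\mathrm{Spin}(7)$ holonomy structure, so a naive warping no longer diagonalizes the curvature tensor. I would decompose the transverse bundle according to this structure group and warp each summand separately, then verify by direct computation, using the O'Neill formulas and the explicit curvature tensor of $\mathbb{H}\textbf{H}^{2}$ and $\mathbb{O}\textbf{H}^{2}$, that every mixed sectional curvature stays in the interval $[-4-\epsilon(r),-1+\epsilon(r)]$ throughout the transition annulus $r \leq \|x\| \leq r^{2}$. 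Once this family is established, the remainder of the proof runs parallel to the treatment of Theorem \ref{complex.hyp}.
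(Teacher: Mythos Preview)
Your overall architecture matches what the paper reports (the Aravinda--Farrell resolutions in \cite{AF03,AF04}): pass to a finite cover admitting a tangential map to the compact dual, detect the exotic sphere there via Theorem~\ref{locally.exto}, and construct an interpolating family $b_r$ on $\mathbb{R}^{2m}$ to get the metric. Two points, however, deserve correction.

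\textbf{Detection on the dual.} Your plan to imitate Lemma~\ref{lem.con} via Adams' Theorem~\ref{adam} and an Adams--Walker style $\widetilde{KO}$-argument does not apply as stated: Theorem~\ref{adam} produces the elements $\mu_r$ only when $r\equiv 1,2\pmod 8$, whereas here $2m=8$ and $2m=16$ are both $\equiv 0\pmod 8$. The paper's route is different in each case. For $\mathbb{O}\textbf{P}^2$ one exploits that it is the mapping cone of the Hopf map $\mathbb{S}^{15}\to\mathbb{S}^8$ and then makes ``delicate use'' of Toda's tables to prove Lemma~\ref{cay.lemma1} directly. For $\mathbb{H}\textbf{P}^m$ (Corollary~\ref{qout.corr1}) the argument instead goes through the solution of the Adams Conjecture together with low-dimensional stable homotopy computations; no $\widetilde{KO}$-monomorphism statement for $f_{\mathbb{H}\textbf{P}^m}$ is invoked. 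So your detection step needs a different input than the one you name.

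\textbf{The interpolating metric in the quaternionic case.} You assert that the family $b_r$ can be built with curvatures in $[-4-\epsilon(r),-1+\epsilon(r)]$. The paper reports exactly this for $\mathbb{O}\textbf{H}^2$ (in fact $[-4-\epsilon(r),-1]$), via the explicit curvature tensor of \cite{BG72}. But for $\mathbb{H}\textbf{H}^m$ the cited construction in \cite{AF04}, which is precisely the O'Neill-submersion approach you outline, yields only that all sectional curvatures of $b_r$ are \emph{negative} for $r$ large (see Theorem~\ref{quo.neg} and the properties listed after it). Accordingly Theorem~\ref{quot.hyp}(ii) asserts only negative curvature, not the pinching of Conjecture~\ref{gene.hyp}(iii). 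Your expectation of full pinching in the quaternionic case therefore goes beyond what the paper establishes; if you want the pinched statement you would need a sharper curvature estimate than the O'Neill formula alone provides.
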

\begin{remark}\rm{
C.S. Aravinda and F.T Farrell  proved Conjecture \ref{gene.hyp} for the Cayley case in \cite{AF03} where they used Theorem \ref{locally.exto} and Theorem \ref{finite sheeted tang} to construct exotic smooth structures on $M_{1}^{2m}$ supporting almost $\frac{1}{4}$-pinched negatively curved metric. Here is the result :}
\end{remark}
\begin{theorem}\label{cay.hyp}
Let $M^{16}$ be any closed Cayley hyperbolic manifold. Given a positive real number $\displaystyle{\epsilon}$, then there exists a finite sheeted cover $\mathcal{N}^{16}$ of $M^{16}$ such that the following is true for any finite sheeted cover $N^{16}$ of $\mathcal{N}^{16}$.
\begin{itemize}
\item[(i)] $N^{16}\#\Sigma^{16}$ is not diffeomorphic to $N^{16}$;
\item[(ii)]$N^{16}\#\Sigma^{16}$ supports a negatively curved Riemannian metric whose sectional curvatures all lie in the closed interval $[-4 -\displaystyle{\epsilon}, -1]$.
\end{itemize}
\end{theorem}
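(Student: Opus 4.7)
The plan is to adapt the complex hyperbolic proof of Theorems \ref{com.smooth} and \ref{compinch} by replacing the ambient projective space $\mathbb{C}\textbf{P}^{m}$ with the Cayley projective plane $\mathbb{O}\textbf{P}^{2}$, the dual symmetric space of $\mathbb{O}\textbf{H}^{2}$ (real dimension $16$), and by using Okun's tangential map (Theorem \ref{finite sheeted tang}) in place of the stable tangent bundle comparison of Theorem \ref{staequ}. Two convenient simplifications are available: first, $\Theta_{16}$ is cyclic of order $2$, so $-\Sigma^{16}=\Sigma^{16}$ and the two alternatives in Theorem \ref{locally.exto} collapse to a single non-concordance assertion; second, $\mathbb{O}\textbf{H}^{2}$ is an irreducible rank-one symmetric space, so the universal cover of $M^{16}$ has no $2$-dimensional metric factor and the hypotheses of Theorem \ref{locally.exto} are automatic.

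For (i), first apply Theorem \ref{finite sheeted tang} to produce a finite sheeted cover $\mathcal{N}_{0}^{16}\to M^{16}$ with a tangential map $k_{0}:\mathcal{N}_{0}^{16}\to \mathbb{O}\textbf{P}^{2}$; composition with any further covering $p:N^{16}\to \mathcal{N}_{0}^{16}$ gives a tangential map $k_{0}\circ p:N^{16}\to \mathbb{O}\textbf{P}^{2}$. By Theorem \ref{locally.exto} it then suffices to exhibit a nontrivial $\Sigma^{16}\in \Theta_{16}$ for which $\mathbb{O}\textbf{P}^{2}\#\Sigma^{16}$ is not concordant to $\mathbb{O}\textbf{P}^{2}$. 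I would prove this by the exact scheme of Lemma \ref{lem.con}: form the smoothing-theoretic commutative diagram (\ref{digram1}) with $\mathbb{O}\textbf{P}^{2}$ in place of $\mathbb{C}\textbf{P}^{m}$, and show that a degree-one map $f_{\mathbb{O}\textbf{P}^{2}}:\mathbb{O}\textbf{P}^{2}\to \mathbb{S}^{16}$ induces a non-zero homomorphism on suspended $\widetilde{KO}^{\ast}$. The necessary inputs are Adams' Theorem \ref{adam} supplying a $KO$-detected class in $\pi_{16}^{s}$, together with an Adams-Walker style injectivity of $\Sigma^{q}f_{\mathbb{O}\textbf{P}^{2}}$ on $\widetilde{KO}^{q}$, which follows from the description of $\mathbb{O}\textbf{P}^{2}$ as the mapping cone of the Cayley Hopf map $\mathbb{S}^{15}\to \mathbb{S}^{8}$ and the resulting nonzero $\widetilde{KO}$-class dual to its top cell. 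The diagram chase from the end of Lemma \ref{lem.con} then gives $f^{\ast}_{\mathbb{O}\textbf{P}^{2}}(\Sigma^{16})\neq 0$, and (i) follows by Theorem \ref{locally.exto}.

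For (ii), the plan mimics Theorem \ref{compinch}. The first step is to establish a Cayley analog of Lemma \ref{constru.met}: a family $b_{r}$ ($r\geq e$) of Riemannian metrics on $\mathbb{R}^{16}$, with sectional curvatures in $[-4-\epsilon(r),-1]$ and $\epsilon(r)\to 0$, whose ball of radius $r$ about the origin is isometric to the real hyperbolic ball of that radius, and whose complement of the ball of radius $r^{2}$ is isometric to the corresponding complement in $\mathbb{O}\textbf{H}^{2}$. Note that the upper curvature bound is $-1$ rather than $-1+\epsilon$ because both endpoint metrics already satisfy $K\leq -1$, so a careful interpolation preserves this bound exactly. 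Once $b_{r}$ is in hand, residual finiteness of cocompact lattices in $F_{4(-20)}$ (via Selberg's lemma) lets me pass to a finite cover $\mathcal{N}^{16}$ of $\mathcal{N}_{0}^{16}$ containing an embedded Cayley hyperbolic ball of radius $2r^{2}$ for $r$ so large that $\epsilon(r)<\epsilon/2$. Replacing the metric on this ball by $b_{r}$ produces a real hyperbolic sub-ball of radius $r$, in which the local pinched connected-sum construction of Lemma \ref{pinmetric} is then applied to attach $\Sigma^{16}$, keeping all sectional curvatures within $[-4-\epsilon,-1]$.

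The main obstacle is the construction of $b_{r}$ itself: the Cayley hyperbolic curvature tensor has $\mathrm{Spin}(9)$ holonomy and a rigid two-step structure (curvatures exactly $-1$ on Cayley lines and exactly $-4$ on their orthogonal complements), so interpolating to a ball of constant curvature $-1$ while preserving the sharp upper bound $-1$ (not merely $-1+\epsilon$) demands a delicate local computation in normal coordinates, substantially more intricate than Claim A in the proof of Lemma \ref{pinched}. Once this interpolation is verified, the residual-finiteness and surgical-attachment steps proceed verbatim from the complex hyperbolic case, and the stated pinching $[-4-\epsilon,-1]$ together with the non-diffeomorphism assertion completes the proof.
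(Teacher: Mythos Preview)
Your overall architecture is exactly that of the paper: Okun's tangential map to the dual $\mathbb{O}\textbf{P}^{2}$ reduces (i) to the non-concordance of $\mathbb{O}\textbf{P}^{2}\#\Sigma^{16}$ with $\mathbb{O}\textbf{P}^{2}$ (this is the content of Lemmas \ref{cay.lemma} and \ref{cay.lemma1}), and (ii) follows from a Cayley analogue of Lemma \ref{constru.met} together with residual finiteness and Lemma \ref{pinmetric}. The paper proceeds precisely this way, citing the explicit curvature tensor of $\mathbb{O}\textbf{H}^{2}$ from \cite{BG72} for the interpolation and obtaining the sharp upper bound $-1$.

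There is, however, a genuine gap in your proposed proof of the key non-concordance input. You invoke Adams' Theorem \ref{adam} to obtain a $KO$-detected element $\mu_{16}\in\pi_{16}^{s}$ and then an Adams--Walker style argument. But Theorem \ref{adam} only produces the $\mu$-classes in degrees $r\equiv 1,2 \bmod 8$; since $16\equiv 0 \bmod 8$, there is no Adams $\mu_{16}$ to use, and the $\widetilde{KO}$-detection argument of Lemma \ref{lem.con} simply does not transfer. (This is already visible in the complex case: for $\mathbb{C}\textbf{P}^{4}$, dimension $8\equiv 0 \bmod 8$, the paper has to abandon Theorem \ref{adam} and instead appeal to Brumfiel's \cite[Lemma I.9]{Bru71}.) The paper's Lemma \ref{cay.lemma1} is established by an entirely different route: a direct computation using the description of $\mathbb{O}\textbf{P}^{2}$ as the mapping cone of the Hopf map $\mathbb{S}^{15}\to\mathbb{S}^{8}$ together with Toda's tables \cite{Tod62} for the stable homotopy groups of spheres, tracking the nontrivial element of $\Theta_{16}\cong\mathbb{Z}_{2}$ through the Puppe sequence. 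You need to replace the $KO$/Adams step with that stable-homotopy calculation; the remainder of your plan then goes through as written.
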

\begin{remark}\rm{
\indent
\begin{itemize}
\item[\bf{1.}] Here $\Sigma^{16}$ is the unique closed, oriented smooth $16$-dimensional manifold which is homeomorphic but not diffeomorphic to the sphere $\mathbb{S}^{16}$. The existence and uniqueness of $\Sigma^{16}$ is a consequence of the following Proposition \ref{cay.pro}.
\item[\bf{2.}] Theorem \ref{cay.hyp} provides counterexamples to smooth rigidity Problem \ref{prorig}.
\end{itemize}}
\end{remark}
\begin{proposition}\label{cay.pro}\cite{AF03}
The group of smooth homotopy spheres $\Theta_{16}$ is cyclic of order $2$.
\end{proposition}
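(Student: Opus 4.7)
The plan is to deduce this from the classical Kervaire--Milnor exact sequence together with the computation of the relevant $16$-th stable stems. Specifically, for each $n \geq 5$ there is an exact sequence
\begin{equation*}
0 \longrightarrow bP_{n+1} \longrightarrow \Theta_{n} \xrightarrow{\ \Phi\ } \pi_{n}^{s}/\mathrm{Im}(J_{n}) \longrightarrow \mathrm{coker}\,J_{n}' \longrightarrow 0,
\end{equation*}
where $bP_{n+1} \subseteq \Theta_{n}$ is the subgroup of homotopy spheres bounding parallelizable manifolds and the last term vanishes unless $n \equiv 2 \pmod 4$. First I would specialize to $n = 16$ and recall that $bP_{n+1}$ is trivial whenever $n+1$ is odd; hence $bP_{17} = 0$ and the map $\Phi$ becomes an isomorphism between $\Theta_{16}$ and $\pi_{16}^{s}/\mathrm{Im}(J_{16})$.

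Next I would identify both sides. By Bott periodicity, $\pi_{16}(SO)$ is the same as $\pi_{8}(SO) \cong \mathbb{Z}/2$, and Adams's solution of the $J$-homomorphism problem for dimensions $\equiv 0 \pmod 8$ asserts that $J_{16} \colon \pi_{16}(SO) \to \pi_{16}^{s}$ is injective, so its image is a copy of $\mathbb{Z}/2$ (one can also detect it using the nontrivial action of $\mu_{r}$ on $\widetilde{KO}$-theory, in the spirit of Theorem~\ref{adam}). For the total stem $\pi_{16}^{s}$, I would cite Toda's tables, which give $\pi_{16}^{s} \cong \mathbb{Z}/2 \oplus \mathbb{Z}/2$; the two generators may be taken to be an element in the image of $J$ and the element $\eta^{*}$ (sometimes written $\bar{\nu}\eta$) detected by a secondary operation. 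Substituting into the exact sequence yields
\begin{equation*}
\Theta_{16} \;\cong\; \pi_{16}^{s}/\mathrm{Im}(J_{16}) \;\cong\; (\mathbb{Z}/2 \oplus \mathbb{Z}/2)/(\mathbb{Z}/2) \;\cong\; \mathbb{Z}/2,
\end{equation*}
which is the desired conclusion.

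The main point requiring care is the identification of $\mathrm{Im}(J_{16})$ inside $\pi_{16}^{s}$: one must check not only that $J_{16}$ is nontrivial but that the $\mathbb{Z}/2$-summand it hits is a direct summand, so that the quotient is indeed $\mathbb{Z}/2$ rather than potentially trivial. This is precisely the content of Adams's theorem on the image of $J$ in the $8k$-stems, where the generator lies in the image of $J$ and is a direct summand of $\pi_{8k}^{s}$. Everything else is a bookkeeping application of Kervaire--Milnor; I would not attempt to reprove $\pi_{16}^{s}$ but simply reference the tables.
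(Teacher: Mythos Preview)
Your proof is correct and follows essentially the same route as the paper's. Both arguments identify $\Theta_{16}$ with the cokernel of $J_{16}:\pi_{16}(O)\to\pi_{16}^s$, then invoke Adams's injectivity of $J$, Bott periodicity giving $\pi_{16}(O)\cong\mathbb{Z}_2$, and Toda's computation $\pi_{16}^s\cong\mathbb{Z}_2\oplus\mathbb{Z}_2$; the only difference is packaging---you cite the Kervaire--Milnor sequence and $bP_{17}=0$ directly, while the paper reaches the same conclusion via the surgery sequence $0\to\Theta_{16}\to\pi_{16}(F/O)\to L_{16}(e)$ and the fibration $O\to F\to F/O$.

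One small remark: your final caveat about needing $\mathrm{Im}(J_{16})$ to be a \emph{direct summand} is unnecessary. Since $\pi_{16}^s$ has order $4$ and $\mathrm{Im}(J_{16})$ has order $2$, the quotient automatically has order $2$, hence is $\mathbb{Z}_2$, regardless of how the image sits inside the Klein four-group.
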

\begin{proof}
We have the following surgery exact sequence from \cite{Wal71}:
\begin{center}
$0\rightarrow \Theta_{16} \rightarrow \pi_{16}(F/O) \rightarrow L_{16}(O)=\mathbb{Z}$.
\end{center}
This sequence together with the fact that $\Theta_{16}$ is a finite group show that
$\Theta_{16}$ can be identified with the subgroup $S$ of $\pi_{16}(F/O)$ consisting of all elements having finite order. Next consider the exact sequence
\begin{center}
$\pi_{16}(O)\stackrel{J}{\rightarrow}\pi_{16}(F)\rightarrow \pi_{16}(F/O)\rightarrow \pi_{15}(O)=\mathbb{Z}$.
\end{center}
This sequence and the fact that $\pi_{16}(F)=\pi^{s}_{16}$ is a finite group show that $S$ can be identified with cokernel of $J$. Recall now that Adams \cite{Ada66} proved that $J$ is monic. This result together with the facts that $\pi_{16}(O)=\mathbb{Z}_2$ and $\pi^{s}_{16}=\mathbb{Z}_2\oplus \mathbb{Z}_2$ (\cite{Tod62}) show that $\Theta_{16}=\mathbb{Z}_2$. 
\end{proof}
To prove Theorem \ref{cay.hyp}, we have to solve the following two problem :
\begin{itemize}
\item[\bf{1.}] How to put an “almost $\frac{1}{4}$-pinched negatively curved metric on $N^{4m}\#\Sigma^{4m}$.
\item[\bf{2.}] How to show that $N^{16}\#\Sigma^{16}$ is not diffeomorphic to $N^{16}$. ($N^{16}\#\Sigma^{16}$ is clearly homeomorphic to $N^{16}$.) 
\end{itemize}

To solve the first problem, in view of Lemma \ref{pinmetric}, it suffices to construct a $1$-parameter family $b_{r}( , )$ of Riemannian metrics on $\mathbb{R}^{16}$ indexed by $r\in [e,\infty)$ which satisfy the following properties:
\begin{itemize}
\item[(i)] The sectional curvatures of $b_{r}( , )$ lie in the closed interval $[-4 -\displaystyle{\epsilon}(r), -1]$ where $\displaystyle{\epsilon}(r)>0$ and $\displaystyle{\epsilon}(r)\to 0$ as $r \to \infty$.
\item[(ii)] The ball of radius $r$ about $0$ in $(\mathbb{R}^{16},b_{r})$ is isometric to a ball of radius $r$ in the real hyperbolic space $\mathbb{R}\textbf{H}^{16}$.
\item[(iii)] The complement of the ball of radius $r^2$ about $0$ in $(\mathbb{R}^{16},b_{r})$ is isometric to the complement of a ball of radius $r^2$ in $\mathbb{O}\textbf{H}^{2}$.
\end{itemize} 
\begin{remark}\rm{
\indent
\begin{itemize}
\item[\bf{1.}] C.S. Aravinda and F.T. Farrell \cite{AF03} constructed these metrics using the explicit description of the Riemannian curvature tensor for $\mathbb{O}\textbf{H}^{2}$ given in \cite{BG72}. Now  by using Lemma \ref{pinmetric}, we can put “almost $\frac{1}{4}$-pinched negatively curved Riemannian metric on $N^{16}\#\Sigma^{16}$ provided $N^{16}$ has sufficiently large injectivity radius. Here $N^{16}$ is a closed, orientable Cayley hyperbolic manifold. This injectivity radius condition is satisfied when we pass to sufficiently large finite sheeted covers of $N^{16}$ since $\pi_1(N^{16})$ is a residually finite group.
\item[\bf{2.}] Theorem \ref{finite sheeted tang} gives a finite sheeted cover $\mathcal{N}^{16}$ of $M^{16}$ and a nonzero degree tangential map $f :\mathcal{N}^{16}\to \mathbb{O}\textbf{P}^{2}$. And we can arrange that $\mathcal{N}^{16}$ has arbitrarily large preassigned injectivity radius $r$ by taking larger covers since $\pi_1(\mathcal{N}^{16})$ is residually finite group. Once $r$ is determined, this is the manifold $\mathcal{N}^{16}$ in Theorem \ref{cay.hyp}.
\end{itemize}}
\end{remark}
The argument in Theorem \ref{com.con1} is now easily adapted to yield the following lemma since all the relevant Theorems \ref{kirby}, \ref{mostow} and \ref{gentoprigd} remain valid.
\begin{lemma}\label{cay.lemma}\cite{AF03}
Let $N^{16}$ be any finite sheeted cover of $\mathcal{N}^{16}$. If $N^{16}\#\Sigma^{16}$ is diffeomorphic to $N^{16}$, then $\mathbb{O}\textbf{P}^{2}\#\Sigma^{16}$ is concordant to $\mathbb{O}\textbf{P}^{2}$.
\end{lemma}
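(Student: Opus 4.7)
The plan is to model the argument directly on the proof of Theorem \ref{com.con1}, replacing the complex hyperbolic input by the Cayley hyperbolic input and the positively curved dual $\mathbb{C}\textbf{P}^{n}$ by the Cayley projective plane $\mathbb{O}\textbf{P}^{2}$. The key engine is the tangential map $f : \mathcal{N}^{16} \to \mathbb{O}\textbf{P}^{2}$ supplied by Theorem \ref{finite sheeted tang}, together with Theorem \ref{locally.concord} on preservation of connected-sum smooth structures under tangential maps.

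First, I would arrange a tangential map out of the finite cover $N^{16}$ itself. Let $p : N^{16} \to \mathcal{N}^{16}$ denote the covering projection; since $p$ is a local diffeomorphism, the composition $g := f \circ p : N^{16} \to \mathbb{O}\textbf{P}^{2}$ is tangential (its derivative is fiberwise an isomorphism onto the tangent space of $\mathbb{O}\textbf{P}^{2}$). Thus Theorem \ref{locally.concord} is available in the form: if $N^{16}\#\Sigma_{1}$ is concordant to $N^{16}\#\Sigma_{2}$, then $\mathbb{O}\textbf{P}^{2}\#\Sigma_{1}$ is concordant to $\mathbb{O}\textbf{P}^{2}\#\Sigma_{2}$.

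Next I would upgrade the hypothesis ``$N^{16}\#\Sigma^{16}$ is diffeomorphic to $N^{16}$'' to the concordance statement ``$N^{16}\#\Sigma^{16}$ is concordant to $N^{16}$.'' The Cayley hyperbolic manifold $N^{16}$ is an irreducible compact locally symmetric space of noncompact type whose universal cover has no $2$-dimensional flat factor, and $\dim N^{16} = 16 \geq 7$, so Theorem \ref{concord} applies. It yields that $N^{16}\#\Sigma^{16}$ is concordant either to $N^{16}\#\mathbb{S}^{16} = N^{16}$ or to $N^{16}\#(-\mathbb{S}^{16}) = N^{16}$; either way we obtain concordance to $N^{16}$. (The use of Theorem \ref{concord} is essentially a restatement of the Mostow rigidity/topological rigidity argument of Addendum \ref{Addendum 2.3}, which the paper explicitly notes remains valid in the Cayley case.)

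Combining these two steps, applying Theorem \ref{locally.concord} to $g : N^{16} \to \mathbb{O}\textbf{P}^{2}$ with $\Sigma_{1} = \Sigma^{16}$ and $\Sigma_{2} = \mathbb{S}^{16}$ gives the conclusion: $\mathbb{O}\textbf{P}^{2}\#\Sigma^{16}$ is concordant to $\mathbb{O}\textbf{P}^{2}$. The main conceptual point, and the step I expect to require the most care, is verifying that all the rigidity inputs behind Theorem \ref{concord}---Mostow's rigidity and the Farrell--Jones topological rigidity theorem \ref{gentoprigd}---genuinely apply to $N^{16}$; once this is checked, the rest is a mechanical translation of the complex hyperbolic argument of Theorem \ref{com.con1} to the Cayley setting, using $\mathbb{O}\textbf{P}^{2}$ in place of $\mathbb{C}\textbf{P}^{n}$ and the tangential map of Theorem \ref{finite sheeted tang} in place of Corollary \ref{corr.emb1}.
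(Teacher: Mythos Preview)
Your proposal is correct and matches the paper's own approach: the paper merely states that the argument of Theorem \ref{com.con1} is ``easily adapted'' since Theorems \ref{kirby}, \ref{mostow} and \ref{gentoprigd} remain valid, and in the Cayley setting the role of Corollary \ref{corr.emb1} is played by Okun's tangential map machinery (Theorems \ref{finite sheeted tang} and \ref{locally.concord}) exactly as you invoke it. Your observation that the covering projection $p:N^{16}\to\mathcal{N}^{16}$ composes with the tangential map to give a tangential map out of $N^{16}$ itself, and that the $\pm\mathbb{S}^{16}$ ambiguity in Theorem \ref{concord} is vacuous here, fills in precisely the details the paper leaves implicit.
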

\begin{remark}\rm{
The octave projective plane $\mathbb{O}\textbf{P}^{2}$ is the mapping cone of the Hopf map $p : \mathbb{S}^{15}\to \mathbb{S}^8$. Let $\phi : \mathbb{O}\textbf{P}^{2}\to \mathbb{S}^{16}$ be the collapsing map obtained by identifying $\mathbb{S}^{16}$ with $\mathbb{O}\textbf{P}^{2}/ \mathbb{S}^8 $ in an orientation preserving way.  By making delicate use of some calculations of Toda \cite{Tod62} on the stable homotopy groups of spheres, C.S. Aravinda and F.T Farrell proved the following :}
\end{remark}
\begin{lemma}\label{cay.lemma1}\cite{AF03}
The homomorphism $\phi^* : [\mathbb{S}^{16}, Top/O]\to [\mathbb{O}\textbf{P}^{2}, Top/O]$ is monic.
\end{lemma}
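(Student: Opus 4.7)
My plan is to reduce the statement to a computation in stable homotopy via the Puppe cofiber sequence of the defining cofibration of $\mathbb{O}\textbf{P}^{2}$. Since $\mathbb{O}\textbf{P}^{2}$ is by construction the mapping cone of the octonionic Hopf map $p\colon \mathbb{S}^{15}\to \mathbb{S}^{8}$, its Puppe extension reads
$$
\mathbb{S}^{15}\xrightarrow{p}\mathbb{S}^{8}\hookrightarrow \mathbb{O}\textbf{P}^{2}\xrightarrow{\phi}\mathbb{S}^{16}\xrightarrow{-\Sigma p}\mathbb{S}^{9}.
$$
Applying $[-,Top/O]$, and using, exactly as in the proof of Theorem \ref{brum}, that $Top/O$ is an infinite loop space by \cite{BV73}, one obtains an exact sequence of abelian groups
$$
\pi_{9}(Top/O)\xrightarrow{(\Sigma p)^{*}}\pi_{16}(Top/O)\xrightarrow{\phi^{*}}[\mathbb{O}\textbf{P}^{2},Top/O].
$$
Thus $\phi^{*}$ is injective if and only if $(\Sigma p)^{*}$, the operation of precomposition with the suspended Hopf map, is the zero homomorphism; this reformulation is the whole point of the argument.

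The second step is to interpret $(\Sigma p)^{*}$ stably and to transport it to the stable stems. By Freudenthal, $\Sigma p$ lies in the stable range and represents the generator $\sigma\in\pi_{7}^{s}$, so the infinite loop structure on $Top/O$ makes $(\Sigma p)^{*}$ literally the action $\sigma\cdot(-)\colon\pi_{9}(Top/O)\to\pi_{16}(Top/O)$ of the $\pi_{*}^{s}$-module structure. Following the strategy of the proof of Lemma \ref{lem.con}, I would now embed this operation in the commutative diagram furnished by the $H$-space maps $\psi\colon Top/O\to F/O$ and $\phi_{\mathrm{sm}}\colon SF\to F/O$ of smoothing theory, noting that $\pi_{n}(SF)=\pi_{n}^{s}$. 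In the two dimensions in play the maps induced by $\psi$ and $\phi_{\mathrm{sm}}$ on homotopy groups can be controlled via the Kervaire--Milnor surgery exact sequence together with Brumfiel's splitting results \cite{Bru71,Bru71a}, so the required vanishing of $(\Sigma p)^{*}$ on $\pi_{9}(Top/O)$ reduces to showing that multiplication by $\sigma$ annihilates the relevant subquotient of $\pi_{9}^{s}$ inside $\pi_{16}^{s}$, modulo the image of $J$.

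The main step, and the principal obstacle, is precisely this stable composition calculation, which is where the ``delicate use'' of Toda's work is unavoidable. By \cite{Tod62}, the $2$-primary part of $\pi_{9}^{s}$ is generated by $\nu^{3}$, $\eta\epsilon$ and $\mu_{9}$, while $\pi_{16}^{s}$ is small; since $\sigma$ is $2$-local, the odd-primary summands of $\pi_{9}^{s}$ are killed for trivial reasons. The classical Toda relation $\sigma\nu=0$ yields $\sigma\cdot\nu^{3}=0$ at once, and one must verify from Toda's composition tables that $\sigma\cdot(\eta\epsilon)=0$ and $\sigma\cdot\mu_{9}=0$ in $\pi_{16}^{s}$. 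Once these identities are in hand, they assemble to give $(\Sigma p)^{*}=0$, and the exact sequence displayed above then produces $\ker(\phi^{*})=0$, which is exactly the assertion of the lemma.
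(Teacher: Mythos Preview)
Your cofiber-sequence reduction followed by the Toda composition computations is precisely the approach the paper attributes to \cite{AF03}; the survey itself gives no further details beyond the phrase ``delicate use of some calculations of Toda.'' One small correction: your assertion that $\sigma$ is $2$-local is false (the octonionic Hopf class generates $\pi_{7}^{s}\cong\mathbb{Z}/240$), but the conclusion you draw is unaffected since $\pi_{9}^{s}\cong(\mathbb{Z}/2)^{3}$ is already entirely $2$-primary.
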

\begin{remark}\rm{
\indent
\begin{itemize}
\item[\bf{1.}]
Lemma \ref{cay.lemma1} implies that $\mathbb{O}\textbf{P}^{2}\#\Sigma^{16}$  is not concordant to $\mathbb{O}\textbf{P}^{2}$ since the concordance classes of smooth structures on a smooth manifold $X$ are in bijective correspondence with $[X, Top/O]$ provided $\dim X > 4$. Thus assertion (i) of the Theorem \ref{cay.hyp} is a direct consequence of Lemma \ref{cay.lemma} and Lemma \ref{cay.lemma1}. Since Borel \cite{Bor63} has constructed closed Riemannian manifolds  $M^{16}$ whose universal cover is $\mathbb{O}\textbf{H}^2$, Theorem \ref{cay.hyp} produces the examples for Conjecture \ref{gene.hyp}.
\item[\bf{2.}] C.S. Aravinda and F.T. Farrell also produce the examples for Conjecture \ref{gene.hyp} for the quaternionic case in \cite{AF04}. Here is the result :
\end{itemize}}
\end{remark}
\begin{theorem}\label{quot.hyp}
Let $M^{4m}$ be any closed Riemannian manifold whose universal cover is $\mathbb{H}\textbf{H}^m$ where $m = 2$, $4$ or $5$. Then there exists a finite sheeted cover $\mathcal{N}^{4m}$ of $M^{4m}$ such that the following is true for any finite sheeted cover $N^{4m}$ of $\mathcal{N}^{4m}$. Let $\Sigma^{4m}$ be any exotic homotopy sphere satisfying the extra constraint when $m = 5$ that $6[\Sigma^{20}]\neq 0$. Then
\begin{itemize}
\item[(i)] $N^{4m}\#\Sigma^{4m}$ is not diffeomorphic to $N^{4m}$;
\item[(ii)]$N^{4m}\#\Sigma^{4m}$ supports a Riemannian metric whose sectional curvatures are all negative.
\end{itemize}
\end{theorem}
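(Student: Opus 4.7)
The plan is to follow the template established by Farrell--Jones in the complex hyperbolic case (Theorem \ref{com.smooth}) and by Aravinda--Farrell in the Cayley hyperbolic case (Theorem \ref{cay.hyp}), transplanted to the quaternionic setting in which the dual compact symmetric space is the quaternionic projective space $\mathbb{H}\textbf{P}^m$. First I would invoke Okun's Theorem \ref{finite sheeted tang} to obtain a finite sheeted cover $\mathcal{N}^{4m}$ of $M^{4m}$ together with a tangential map $k:\mathcal{N}^{4m}\to \mathbb{H}\textbf{P}^m$; any further finite sheeted cover $N^{4m}$ inherits such a tangential map by composition. Theorem \ref{tubular} then produces an embedding $N^{4m}\times \mathbb{D}^{4m+1}\hookrightarrow \mathbb{H}\textbf{P}^m\times \mathbb{D}^{4m+1}$ with trivial normal bundle, whence, exactly as in Corollary \ref{corr.emb1}, the homomorphism $f^{*}_{\mathbb{H}\textbf{P}^m}:\Theta_{4m}\to \mathcal{C}(\mathbb{H}\textbf{P}^m)$ factors as $\eta\circ f^{*}_{N}$ for some homomorphism $\eta:\mathcal{C}(N^{4m})\to \mathcal{C}(\mathbb{H}\textbf{P}^m)$.

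For assertion (i), assume for contradiction that $N^{4m}\#\Sigma^{4m}$ is diffeomorphic to $N^{4m}$. Running the argument of Theorem \ref{com.con1} verbatim, where Mostow's Theorem \ref{mostow} and the topological rigidity Theorem \ref{gentoprigd} both remain valid in this setting, yields that $\mathbb{H}\textbf{P}^m\#\Sigma^{4m}$ is concordant either to $\mathbb{H}\textbf{P}^m$ or to $\mathbb{H}\textbf{P}^m\#(-\Sigma^{4m})$. It therefore suffices to show that under the standing hypothesis on $[\Sigma^{4m}]$, we have $f^{*}_{\mathbb{H}\textbf{P}^m}(\pm[\Sigma^{4m}])\neq 0$ in $[\mathbb{H}\textbf{P}^m,Top/O]$.

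This last stable-homotopy non-vanishing is the main obstacle and accounts both for the restriction to $m\in\{2,4,5\}$ and for the technical hypothesis $6[\Sigma^{20}]\neq 0$ when $m=5$. I would attack it by the method of Lemma \ref{lem.con} and Lemma \ref{cay.lemma1}: pass to the commutative square involving $SF$, $F/O$ and $Top/O$ and exploit Brumfiel's monomorphism $\phi_{*}:[\mathbb{H}\textbf{P}^m,SF]\to [\mathbb{H}\textbf{P}^m,F/O]$ together with the injectivity of $\psi_{*}:\Theta_{4m}\to\pi_{4m}(F/O)$ in the relevant degrees, thereby reducing the problem to showing that a composite $h\circ\Sigma^{q}f_{\mathbb{H}\textbf{P}^m}$ acts non-trivially on $\widetilde{KO}^{q}$ for a suitable representative $h$ of the stable class associated to $\Sigma^{4m}$. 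For $m=2$ (where $\Theta_{8}=\mathbb{Z}/2$) and for $m=4$ (where $\Theta_{16}=\mathbb{Z}/2$) one invokes Adams' Theorem \ref{adam} together with an Adams--Walker-type $KO$-theory computation on $\mathbb{H}\textbf{P}^m$ to certify that the unique nontrivial class survives. For $m=5$ one performs a finer analysis using Toda's tables \cite{Tod62}; the kernel of $f^{*}_{\mathbb{H}\textbf{P}^{5}}$ restricted to $\Theta_{20}=\mathbb{Z}/24$ is shown to be the subgroup of multiples of $4$, which is precisely the set of classes $[\Sigma^{20}]$ with $6[\Sigma^{20}]=0$, so the excluded homotopy spheres are exactly those the hypothesis rules out.

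For assertion (ii), I would construct a one-parameter family $b_r(\,,\,)$ of complete Riemannian metrics on $\mathbb{R}^{4m}$, indexed by $r\geq e$, such that the ball of radius $r$ about the origin in $(\mathbb{R}^{4m},b_r)$ is isometric to a ball of radius $r$ in real hyperbolic space $\mathbb{R}\textbf{H}^{4m}$, the complement of the ball of radius $r^{2}$ is isometric to the complement of a ball of radius $r^{2}$ in $\mathbb{H}\textbf{H}^{m}$, and all sectional curvatures lie uniformly in $[-4,-1+\epsilon(r)]$ with $\epsilon(r)\to 0$ as $r\to\infty$; this parallels Lemma \ref{constru.met} and its Cayley analogue, and is strictly easier here because only strict negativity, not pinching, is demanded. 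By residual finiteness of $\pi_{1}(\mathcal{N}^{4m})$ (a standard property of lattices in semisimple Lie groups, in the spirit of Theorem \ref{malcev}), one may pass to a further finite sheeted cover of $\mathcal{N}^{4m}$ containing a ball of arbitrarily large real-hyperbolic injectivity radius, into which the connected-sum surgery of Lemma \ref{pinmetric} can be carried out; this endows $N^{4m}\#\Sigma^{4m}$ with a Riemannian metric of everywhere negative sectional curvature, completing the proof.
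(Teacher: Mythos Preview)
Your overall architecture is correct and matches the paper: Okun's tangential map to $\mathbb{H}\textbf{P}^m$ reduces part (i) to the non-vanishing of $\phi^*([\Sigma^{4m}])$ in $[\mathbb{H}\textbf{P}^m,Top/O]$ via the analogue of Theorem~\ref{com.con1} (this is Lemma~\ref{qout.lem}), and part (ii) is handled by an interpolation family $b_r$ plus residual finiteness and Lemma~\ref{pinmetric}, exactly as in Theorem~\ref{quo.neg}.

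The gap is in your proposed attack on the crucial non-vanishing step, i.e., the content of Corollary~\ref{qout.corr1}. You propose to invoke Adams' Theorem~\ref{adam} on the elements $\mu_r$ and an Adams--Walker $KO$-argument. But Theorem~\ref{adam} produces $\mu_r$ only for $r\equiv 1,2 \pmod 8$, whereas here $4m\in\{8,16,20\}$, all of which are $\equiv 0$ or $4 \pmod 8$; so those $\mu$-elements simply do not exist in the relevant stems, and the $KO$-detection mechanism you describe cannot be run. The paper instead proves Corollary~\ref{qout.corr1} by a different route: the solution of the Adams Conjecture combined with explicit stable homotopy calculations through dimension $20$ drawn from \cite{Rav86,Tod62,MM79}. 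Relatedly, your description of the $m=5$ case is off: the paper shows $\ker\phi^*$ has order at most $2$ in $\Theta_{20}\cong\mathbb{Z}/24$ (hence is contained in $\{0,12\}$), not that it equals the order-$6$ subgroup of multiples of $4$; the hypothesis $6[\Sigma^{20}]\neq 0$ is a convenient sufficient condition that excludes $12$, not a sharp characterization of the kernel.

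A smaller point on part (ii): you assert pinching $[-4,-1+\epsilon(r)]$ for the interpolating metrics $b_r$, but the paper only claims (and only needs) that all sectional curvatures are negative for large $r$, and emphasizes that even this weaker statement requires a new technique---O'Neill's semi-Riemannian submersion formula---rather than the explicit curvature formulas used in the real, complex, and Cayley cases. So your ``strictly easier'' remark is right in spirit (only negativity is needed) but your stated conclusion overshoots what is actually established.
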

\begin{addendum}\label{quot.add}
The three manifolds $N^{20}$, $N^{20}\#\Sigma^{20}_1$ and $N^{20}\#\Sigma^{20}_2$ are pairwise non-diffeomorphic
when order $[\Sigma^{20}_1] =8$ and order $[\Sigma^{20}_2] =4$ in $\Theta_{20}$.
\end{addendum}
\begin{remark}\rm{
Recall that Kervaire and Milnor \cite{KM63} showed that each $\Theta_{m}$ is a finite group; in particular, the abelian groups $\Theta_{8}$, $\Theta_{16}$ and $\Theta_{20}$ have orders 2, 2, 24, respectively.}
\end{remark}
To prove Theorem \ref{quot.hyp}, we have to solve the following two problem :
\begin{itemize}
\item[\bf{1.}] How to put a negatively curved metric on $N^{4m}\#\Sigma^{4m}$.
\item[\bf{2.}] How to show that $N^{4m}\#\Sigma^{4m}$ is not diffeomorphic to $N^{4m}$. ($N^{4m}\#\Sigma^{4m}$ is clearly homeomorphic to $N^{4m}$.) 
\end{itemize}
To solve the first problem by proving the following theorem :
\begin{theorem}\label{quo.neg}\cite{AF04}
Let $M^{4m}$ be any closed Riemannian manifold whose universal cover is  $\mathbb{H}\textbf{H}^m$ $(m\geq 2)$. Then there exists a finite sheeted cover $\mathcal{M}^{4m}$ such that the following is true for any finite sheeted cover $N^{4m}$ of $\mathcal{M}^{4m}$. Let $\Sigma^{4m}$ be any homotopy sphere. Then $N^{4m}\#\Sigma^{4m}$ supports a Riemannian metric whose sectional curvatures are all negative.
\end{theorem}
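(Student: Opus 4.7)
}
The plan is to follow the template used by Farrell--Jones in the complex hyperbolic case (cf.\ Theorem \ref{compinch}), replacing the complex hyperbolic ingredients by their quaternionic analogues. The three key steps are: (a) construct a family of negatively curved Riemannian metrics on $\mathbb{R}^{4m}$ that look like real hyperbolic space on an inner ball and like quaternionic hyperbolic space outside a larger concentric ball; (b) pass to a finite sheeted cover $\mathcal{M}^{4m}$ of $M^{4m}$ with sufficiently large injectivity radius and graft the model metric into any further finite cover $N^{4m}$; (c) apply Lemma \ref{pinmetric} to the resulting manifold to obtain a negatively curved metric on $N^{4m}\#\Sigma^{4m}$.

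For step (a) I would prove the quaternionic analogue of Lemma \ref{constru.met}: for each $r\geq e$ there exists a complete Riemannian metric $b_r(\,,\,)$ on $\mathbb{R}^{4m}$ such that
\begin{itemize}
\item[(i)] all sectional curvatures of $b_r$ are strictly negative, bounded above by some $-c(r)<0$;
\item[(ii)] the ball of radius $r$ about $0$ in $(\mathbb{R}^{4m},b_r)$ is isometric to the ball of radius $r$ in $\mathbb{R}\textbf{H}^{4m}$;
\item[(iii)] there is a diffeomorphism $(\mathbb{R}^{4m},b_r)\to \mathbb{H}\textbf{H}^m$ that carries the complement of the ball of radius $r^2$ about $0$ isometrically onto the complement of a corresponding ball in $\mathbb{H}\textbf{H}^m$.
\end{itemize}
Such a metric can be built in geodesic polar coordinates. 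Around any point, $\mathbb{R}\textbf{H}^{4m}$ has the form $dr^2+\sinh^2(r)\,g_{\mathrm{rd}}$ with $g_{\mathrm{rd}}$ the round metric on $S^{4m-1}$, while $\mathbb{H}\textbf{H}^m$ has the form $dr^2+\sinh^2(r)\,g_{H}+\tfrac{1}{4}\sinh^2(2r)\,g_{V}$, where $g_{H}\oplus g_{V}$ is the splitting of the unit sphere metric into horizontal and vertical parts for the quaternionic Hopf fibration $S^3\to S^{4m-1}\to \mathbb{H}\textbf{P}^{m-1}$. One then interpolates between these two profiles on a transition annulus $r\in[r,r^2]$ by means of a smooth monotone cut-off that grows the vertical scaling from $\sinh(r)$ to $\tfrac{1}{2}\sinh(2r)$. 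A direct computation in the spirit of Lemma \ref{pinched} shows that for $r$ large the sectional curvatures of the interpolated metric remain strictly negative throughout.

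For step (b), $\pi_1(M^{4m})$ embeds as a lattice in the simple Lie group $Sp(m,1)$; hence it is finitely generated and linear, and therefore residually finite by Malcev's theorem. Arguing exactly as in the proof of Theorem \ref{hype.smooth}, one chooses a finite sheeted cover $\mathcal{M}^{4m}$ of $M^{4m}$ whose injectivity radius at every point exceeds $2r^2$; any further finite sheeted cover $N^{4m}\to\mathcal{M}^{4m}$ inherits this property. On such $N^{4m}$ one isometrically replaces the quaternionic hyperbolic metric on a geodesic ball of radius $r^2$ by $b_r$, the gluing being smooth by property (iii). The new metric on $N^{4m}$ is still negatively curved by property (i), and by property (ii) it contains a codimension-$0$ submanifold isometric to an open ball of radius $r$ in $\mathbb{R}\textbf{H}^{4m}$. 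Choosing $r$ larger than the constant $\alpha$ produced by Lemma \ref{pinmetric} for $n=4m$ and a sufficiently small $\epsilon>0$ (small enough that $-1+\epsilon$ lies below the upper curvature bound of the modified metric), Lemma \ref{pinmetric} yields a Riemannian metric on $N^{4m}\#\Sigma^{4m}$ whose sectional curvatures are all strictly negative.

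The main obstacle is step (a): verifying the negativity of the sectional curvatures of the interpolated metric $b_r$ throughout the transition annulus. Because $\mathbb{H}\textbf{H}^m$ is not a simple warped product over $\mathbb{R}$ (the Berger-type rescaling of $S^{4m-1}$ is anisotropic), the curvature computation breaks into several distinct types of $2$-planes---radial/horizontal, radial/vertical, horizontal/horizontal, horizontal/vertical, and vertical/vertical---each of which must be controlled simultaneously during the interpolation. This is the quaternionic counterpart of the most delicate portion of the Farrell--Jones construction and demands the same explicit and careful estimates carried out in Lemma \ref{pinched} and Lemma \ref{constru.met}.
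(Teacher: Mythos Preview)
Your overall strategy---(a) build an interpolating model metric $b_r$ on $\mathbb{R}^{4m}$ that is real hyperbolic on an inner ball and quaternionic hyperbolic outside a larger ball, (b) pass to a residually-finite cover with large injectivity radius and graft, (c) apply Lemma~\ref{pinmetric}---matches the paper's outline exactly, and your treatment of steps (b) and (c) is correct.

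The one substantive difference lies in how you propose to carry out the curvature verification in step~(a). You suggest a direct computation ``in the spirit of Lemma~\ref{pinched} and Lemma~\ref{constru.met},'' i.e.\ explicit curvature formulas for the interpolated metric as was done in the real and complex hyperbolic cases \cite{FJ89a,FJ94,AF03}. The paper points out precisely here that Aravinda--Farrell found this route unworkable in the quaternionic setting and instead used O'Neill's semi-Riemannian submersion formula (for the Hopf-type fibration $S^3\to S^{4m-1}\to\mathbb{H}\textbf{P}^{m-1}$) to estimate the curvatures of $b_r$. The advantage of the O'Neill approach is that it packages the anisotropic horizontal/vertical interaction---exactly the five plane-types you list---into the $A$- and $T$-tensor terms of the submersion, avoiding the proliferation of cross-terms that a direct coordinate computation produces when three independent warping functions (radial, horizontal, vertical) are in play. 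So your identification of the obstacle is accurate, but the resolution the paper records is different from the one you sketch; you should be aware that the ``same explicit and careful estimates'' may not go through as written, and that the published argument substitutes a submersion-based estimate at this point.
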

To prove Theorem \ref{quo.neg}, in view of Lemma \ref{pinmetric}, it suffices to construct a $1$-parameter family $b_{r}( , )$ of Riemannian metrics on $\mathbb{R}^{4m}$ indexed by $r\in [e,\infty)$ which satisfy the following properties:
\begin{itemize}
\item[(i)] The sectional curvatures of $b_{r}( , )$ are all negative provided $r$ is sufficiently large.
\item[(ii)] The ball of radius $r$ about $0$ in $(\mathbb{R}^{4m},b_{r})$ is isometric to a ball of radius $r$ in the real hyperbolic space $\mathbb{R}\textbf{H}^{4m}$.
\item[(iii)] The complement of the ball of radius $r^2$ about $0$ in $(\mathbb{R}^{4m},b_{r})$ is isometric to the complement of a ball of radius $r^2$ in $\mathbb{H}\textbf{H}^{m}$.
\end{itemize} 
\begin{remark}\rm{
C.S. Aravinda and F.T Farrell in \cite{AF04} constructed these metrics which is similar to that made in \cite{FJ94, AF03} but the verification of property (i) requires a different technique. They used O'Neill’s semi-Riemannian submersion formula \cite[pp.213]{Ont83} to estimate the curvatures rather than getting an explicit formula as is done in \cite{FJ94, AF03}. By using this result together with Lemma \ref{pinmetric}, they put a negatively curved Riemannian metric on $N^{4m}\#\Sigma^{4m}$ provided $N^{4m}$ has sufficiently large injectivity radius. Here $N^{4m}$ is a closed, orientable quarternionic hyperbolic
manifold. This injectivity radius condition is satisfied when we pass to sufficiently large finite sheeted covers of $N^{4m}$ since $\pi_1(N^{4m})$ is a residually finite group. Motivated by this construction, C.S. Aravinda and F.T Farrell \cite{AF04} asked, more generally, the following naive question :}
\end{remark}
\begin{question}\label{queneg}
Let $b( , )$ be a complete Riemannian metric on $\mathbb{R}^m$ whose sectional curvatures are bounded above by $-1$ and let $r$ be a positive real number. Does there exist a second complete, negatively curved Riemannian metric $\bar{b}( , )$ on $\mathbb{R}^m$ satisfying :
\begin{itemize}
\item[(i)] A metric ball of radius $r$ in $\mathbb{R}\textbf{H}^{m}$ can be isometrically embedded in $(\mathbb{R}^{m},\bar{b})$.
\item[(ii)] $b$ and $\bar{b}$ agree at infinity, i.e., off some compact subset of $\mathbb{R}^m$.
\end{itemize}
\end{question}
\begin{remark}\rm{
\indent
\begin{itemize}
\item[\bf{1.}] An affirmative answer to Question \ref{queneg} would allow to extend Theorem \ref{quo.neg} to arbitrary closed negatively curved manifolds $M$ with residually finite fundamental groups by setting $b$ equal to the Riemannian metric on the universal cover of $M$.
\item[\bf{2.}] Theorem \ref{finite sheeted tang} gives a finite sheeted cover $\mathcal{N}^{4m}$ of $M^{4m}$ and a nonzero degree tangential map $f :\mathcal{N}^{4m}\to \mathbb{H}\textbf{P}^{m}$. Since $\pi_1(\mathcal{N}^{4m})$ is residually finite group, we can arrange that $\mathcal{N}^{4m}$ has arbitrarily large preassigned injectivity radius $r$ by taking larger covers. Once $r$ is determined, this is the manifold $\mathcal{N}^{4m}$ in Theorem \ref{quot.hyp}.
\end{itemize}}
\end{remark}
The argument in Theorem \ref{com.con1} is now easily adapted to yield the following lemma since all the relevant Theorems \ref{kirby}, \ref{mostow} and \ref{gentoprigd} remain valid.
\begin{lemma}\label{qout.lem}\cite{AF04}
Let $N^{4m}$ be any finite sheeted cover of $\mathcal{N}^{4m}$. If $N^{4m}\#\Sigma^{4m}$ is diffeomorphic to $N^{4m}$, then $\mathbb{H}\textbf{P}^{m}\#\Sigma^{4m}$ is concordant to $\mathbb{H}\textbf{P}^{m}$.
\end{lemma}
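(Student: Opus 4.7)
The plan is to mimic the argument of Theorem \ref{com.con1}, substituting quaternionic hyperbolic ingredients for complex hyperbolic ones. First I would translate the diffeomorphism $N^{4m}\#\Sigma^{4m}\cong N^{4m}$ into a concordance statement: arguing as in Addendum \ref{Addendum 2.3} (with Mostow's Rigidity Theorem \ref{mostow} to replace an arbitrary homeomorphism $N^{4m}\to N^{4m}$ by a diffeomorphism equal to the identity on the small ball where the connected sum is performed, and Topological Rigidity Theorem \ref{gentoprigd} to extend a homotopy to a product concordance), one concludes that $N^{4m}\#\Sigma^{4m}$ is concordant to $N^{4m}$. Both input theorems apply here, because $\mathbb{H}\textbf{H}^m$ is non-positively curved and has real dimension $4m\ge 8$, and Whitehead groups vanish by Theorem \ref{whigro}, so the smooth $s$-cobordism step goes through.

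Next I would set up the tangential-map machinery. Theorem \ref{finite sheeted tang} supplies the tangential map $f:\mathcal{N}^{4m}\to \mathbb{H}\textbf{P}^m$, and composing with the covering projection $N^{4m}\to \mathcal{N}^{4m}$ produces a tangential map $k:N^{4m}\to \mathbb{H}\textbf{P}^m$ (composing a tangential map with a local diffeomorphism is tangential). Theorem \ref{tubular} then furnishes an embedding $N^{4m}\times\mathbb{D}^{4m+1}\hookrightarrow \mathrm{int}(\mathbb{H}\textbf{P}^m\times\mathbb{D}^{4m+1})$ of codimension zero with trivial normal bundle, which is exactly the quaternionic analogue of Theorem \ref{emb1}.

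From this embedding one obtains the quaternionic analogue of Corollary \ref{corr.emb1}: a homomorphism $\eta:\mathcal{C}(N^{4m})\to \mathcal{C}(\mathbb{H}\textbf{P}^m)$ such that $f_{\mathbb{H}\textbf{P}^m}^{*}=\eta\circ f_{N}^{*}:\Theta_{4m}\to \mathcal{C}(\mathbb{H}\textbf{P}^m)$, where $f_N$ and $f_{\mathbb{H}\textbf{P}^m}$ are degree-one maps to $\mathbb{S}^{4m}$. Under the identifications of Theorem \ref{kirby}, the concordance $N^{4m}\#\Sigma^{4m}\sim N^{4m}$ produced in the first step translates to $f_N^{*}([\Sigma^{4m}])=0$ in $\mathcal{C}(N^{4m})=[N^{4m},Top/O]$. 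Applying $\eta$ gives $f_{\mathbb{H}\textbf{P}^m}^{*}([\Sigma^{4m}])=0$, which is precisely the statement that $\mathbb{H}\textbf{P}^m\#\Sigma^{4m}$ is concordant to $\mathbb{H}\textbf{P}^m$.

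The main place where something could go wrong is the first step, since unlike the complex case, $\mathbb{H}\textbf{H}^m$ is not a symmetric space of the same kind as $\mathbb{R}\textbf{H}^n$ or $\mathbb{C}\textbf{H}^n$ in every respect; however, what is needed is only that closed quaternionic hyperbolic manifolds are non-positively curved (so \ref{gentoprigd} and \ref{whigro} apply) and are covered by Mostow rigidity (\ref{mostow} applies because such manifolds are locally symmetric with no $1$- or $2$-dimensional geodesic factors). Both are standard, so no new obstacle arises and the proof is essentially a verbatim transcription of the argument for Theorem \ref{com.con1} with $(\mathbb{C}\textbf{H}^n,\mathbb{C}\textbf{P}^n,2n)$ replaced by $(\mathbb{H}\textbf{H}^m,\mathbb{H}\textbf{P}^m,4m)$.
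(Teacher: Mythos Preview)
Your proposal is correct and follows essentially the same route as the paper. The paper itself does not give a detailed proof of this lemma; it simply remarks that the argument of Theorem~\ref{com.con1} adapts because the relevant inputs (Theorems~\ref{kirby}, \ref{mostow}, \ref{gentoprigd}) remain valid in the quaternionic setting, and your write-up is exactly that adaptation spelled out in full, with the tangential-map results (Theorems~\ref{finite sheeted tang} and~\ref{tubular}) supplying the quaternionic analogue of Theorem~\ref{emb1} and Corollary~\ref{corr.emb1}.
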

C.S. Aravinda and F.T. Farrell \cite{AF04} proved the following result using the solution of Adam’s Conjecture together with calculations of the stable homotopy groups of spheres through dimension $20$ (\cite{Rav86, Tod62, MM79}):
\begin{corollary}\label{qout.corr1}
Let $\phi: \mathbb{H}\textbf{P}^{m}\to \mathbb{S}^{4m}$ denote a degree 1 map. Then $\phi^* :  \Theta_{4m}=[\mathbb{S}^{4m}, Top/O]\to [\mathbb{H}\textbf{P}^{m}, Top/O]$ is non-zero if $m = 2$, $4$ or $5$. In fact, $\phi^*|_{\Theta_{4m}^2 }$ is monic when $m = 2$, $4$  and its kernel has order less than or equal to $2$ when $m = 5$.
\end{corollary}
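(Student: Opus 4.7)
The strategy is to transplant the diagrammatic argument used in the proof of Lemma \ref{lem.con} to the quaternionic setting, with $\mathbb{H}\textbf{P}^{m}$ in place of $\mathbb{C}\textbf{P}^{m}$. First I will assemble the commutative diagram of $H$-space maps
\begin{CD}
\Theta_{4m}=[\mathbb{S}^{4m},Top/O] @>\phi^{*}>> [\mathbb{H}\textbf{P}^{m},Top/O]\\
@VV\psi_{*}V @VV\psi_{*}V\\
[\mathbb{S}^{4m},F/O] @>\phi^{*}>> [\mathbb{H}\textbf{P}^{m},F/O]\\
@AAj_{*}A @AAj_{*}A\\
[\mathbb{S}^{4m},SF] @>\phi^{*}>> [\mathbb{H}\textbf{P}^{m},SF]
\end{CD}
where the horizontal maps are induced by the degree-one map $\phi$, the map $\psi_{*}$ comes from the $H$-space map $\psi:Top/O\to F/O$ of Lemma \ref{lem.con}, and $j_{*}$ is induced by the $H$-space map $SF\to F/O$ (the map denoted $\phi$ in Lemma \ref{lem.con}). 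Brumfiel's monicness result \cite[p.77]{Bru71a} ensures that $j_{*}:[\mathbb{H}\textbf{P}^{m},SF]\to[\mathbb{H}\textbf{P}^{m},F/O]$ is injective, and direct inspection of the surgery exact sequence in dimensions $8,16,20$ (exactly as in the $16$-dimensional computation of Proposition \ref{cay.pro}) shows that $\psi_{*}|_{\Theta_{4m}^{2}}$ is injective. Thus, to prove the corollary, it suffices to produce, for each nonzero $[\Sigma]\in\Theta_{4m}^{2}$, a stable class $\mu\in\pi_{4m}^{s}=\pi_{4m}(SF)$ with $j_{*}(\mu)=\psi_{*}[\Sigma]$ and to show that $\phi^{*}(\mu)\neq 0$ in $[\mathbb{H}\textbf{P}^{m},SF]=\pi_{s}^{0}(\mathbb{H}\textbf{P}^{m})$.

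To detect $\phi^{*}(\mu)$ stably, I will invoke the solution of the Adams Conjecture (Quillen--Sullivan), which splits $SF$ $2$-locally as a product of $\mathrm{Im}\,J$ and $\mathrm{Coker}\,J$ and identifies $\mathrm{Im}\,J$ with a summand built from connective real $K$-theory via the $J$-homomorphism. Applying $[\mathbb{H}\textbf{P}^{m},\,\cdot\,]$ to this splitting reduces the computation to a $\widetilde{KO}^{0}(\mathbb{H}\textbf{P}^{m})$-theoretic piece together with a $\mathrm{Coker}\,J$-cohomology piece; both are accessible from the stable cell structure of $\mathbb{H}\textbf{P}^{m}$ (cells in dimensions $0,4,8,\ldots,4m$ whose attaching data is expressible in terms of the quaternionic Hopf class $\nu\in\pi_{3}^{s}$ and its iterates) together with the tables of $\pi_{*}^{s}$ through dimension $20$ in Toda \cite{Tod62}, Ravenel \cite{Rav86}, and \cite{MM79}. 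For $m=2$ and $m=4$, a degree argument playing the role of the Adams--Walker theorem \cite{AW65} used in the complex case shows that $\phi^{*}(\mu)\neq 0$ for every $\mu$ lifting a generator of $\Theta_{4m}^{2}$, which gives the claimed monicness of $\phi^{*}|_{\Theta_{4m}^{2}}$.

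For $m=5$ the attaching of the top cell of $\mathbb{H}\textbf{P}^{5}$ produces a single stable relation in $\pi_{s}^{0}(\mathbb{H}\textbf{P}^{5})$ which can annihilate at most one $\mathbb{Z}/2$ summand of $\Theta_{20}^{2}\cong\mathbb{Z}/8$; tracking this relation through the Atiyah--Hirzebruch spectral sequence for $\pi_{s}^{0}(\mathbb{H}\textbf{P}^{5})$, using the explicit values of $\pi_{16+i}^{s}$ for $0\leq i\leq 4$, bounds the kernel of $\phi^{*}|_{\Theta_{20}^{2}}$ by $2$. The main obstacle throughout is precisely this last piece of stable-stem bookkeeping for $\pi_{s}^{0}(\mathbb{H}\textbf{P}^{m})$ and its interaction with $\phi^{*}$: the diagram, Brumfiel's monicness, the injectivity of $\psi_{*}$ on $\Theta_{4m}^{2}$, and the Adams Conjecture splitting are formal adaptations of Lemma \ref{lem.con}, but the explicit $K$-theoretic and stable-homotopy analysis in the three specific dimensions $8$, $16$, $20$ is what both forces the restriction to $m=2,4,5$ and produces the extra kernel in the $m=5$ case.
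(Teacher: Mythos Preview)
Your proof plan follows exactly the strategy that the paper attributes to Aravinda--Farrell \cite{AF04}: the paper does not give its own proof of this corollary but simply records that it was ``proved\ldots using the solution of Adam's Conjecture together with calculations of the stable homotopy groups of spheres through dimension $20$ (\cite{Rav86, Tod62, MM79}).'' Your commutative diagram, the appeal to Brumfiel's monicness, the reduction via $\psi_{*}$ and $j_{*}$, and the invocation of the Adams Conjecture splitting are precisely the ingredients the paper lists, and are the natural quaternionic analogue of the argument for Lemma~\ref{lem.con}.

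That said, what you have written is a plan and not a proof: the entire content of the corollary lies in the stable-homotopy computations you defer at the end. In particular, the phrase ``a degree argument playing the role of the Adams--Walker theorem'' is a placeholder rather than an argument; the Adams--Walker result \cite{AW65} is a specific $\widetilde{KO}$-calculation for $\mathbb{C}\textbf{P}^{m}$ that does not transfer formally to $\mathbb{H}\textbf{P}^{m}$, and you would need to actually compute the relevant piece of $\pi_{s}^{0}(\mathbb{H}\textbf{P}^{m})$ (or its image in $KO$-theory) for $m=2,4$ to justify monicness. Likewise, for $m=5$ the claim that the top-cell attaching map ``can annihilate at most one $\mathbb{Z}/2$ summand'' requires identifying that attaching class in $\pi_{19}^{s}$ and tracing the resulting differential or extension in the Atiyah--Hirzebruch spectral sequence, which you have not done. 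So the framework is right and matches the paper's cited source, but the substantive verifications that distinguish $m=2,4,5$ from other values of $m$ remain to be carried out.
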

\begin{remark}\rm{
\indent
\begin{itemize}
\item[\bf{1.}] Corollary \ref{qout.corr1} implies that $\mathbb{H}\textbf{P}^{m}\#\Sigma^{4m}$ is not concordant to $\mathbb{H}\textbf{P}^{m}$ when $m = 2$, $4$ or $5$ since the concordance classes of smooth structures on a smooth manifold $X$ are in bijective correspondence with $[X,Top/O]$ provided $\dim X > 4$. Thus assertion (i) of Theorem \ref{quot.hyp} is a direct consequence of Lemma \ref{qout.lem} and Corollary \ref{qout.corr1}. And using Theorem \ref{locally.exto} in place of Lemma \ref{qout.lem}, Addendum \ref{quot.add} also follows from Corollary \ref{qout.corr1}. Since Borel \cite{Bor63} has constructed closed Riemannian manifolds  $M^{4m}$ (for each $m\geq 2$) whose universal cover is $\mathbb{H}\textbf{H}^m$, Theorem \ref{quot.hyp} produces the examples for Conjecture \ref{gene.hyp}.
\item[\bf{2.}] Theorem \ref{quot.hyp} also provides counterexamples to smooth rigidity Problem \ref{prorig}.
\item[\bf{2.}] Finally Theorem \ref{quot.hyp} and Theorem \ref{cay.hyp} together with Corlette's Superrigidity Theorem \ref{Corlette} were used by C.S. Aravinda and F.T. Farrell \cite{AF05} to answer positively the following question posed by Petersen in his text book \cite[pp. 239-240]{Pet98}:
\end{itemize}}
\end{remark}   
\begin{question}\label{pet.quest}
Are there any compact rank $1$ manifolds of non-positive sectional curvature that do not admit a metric with non-positive curvature operator?
\end{question}
Recall that the curvature operator at a point $p\in M$ of a manifold $(M,\langle,\rangle)$ is a linear map on the space  $\Lambda^{2}(T_{p}M)$ to itself where $T_{p}M$ is the tangent space to $M$ at $p$. If $X$, $Y$, $Z$, $W \in T_{p}M$, then there is a scalar product $\langle, \rangle$ on $\Lambda^{2}(T_{p}M)$ that is given by the formula $$\langle X\wedge Y, Z\wedge W \rangle= \langle X, Z \rangle \langle Y, W \rangle - \langle X, W \rangle \langle Y, Z \rangle$$ and is extended by linearity to all of $\Lambda^2(T_{p}M)$. Then the curvature operator $\mathcal{R}$ of $M$ is defined by $$\langle \mathcal{R}(X\wedge Y), Z\wedge W \rangle := \langle R(X,Y)W, Z \rangle$$ where $R(X,Y)W$ is the Riemann curvature tensor of $M$.\\
\indent The curvature operator is said to be non-positive if all its eigenvalues are nonpositive. An elementary linear algebra argument shows that if the curvature operator is non-positive, then all the sectional curvatures of $M$ are also non-positive.\\
\indent Recall that any closed Riemannian manifold all of whose sectional curvatures are negative is automatically a rank $1$ manifold. Hence the following result shows that the answer to Question \ref{pet.quest} is yes :
\begin{theorem}\label{curopra}\cite{AF05}
There exist closed Riemannian manifolds whose sectional curvatures are all negative, but which do not admit any Riemannian metric whose curvature
operator is non-positive. In fact, such manifolds exist (at least) in dimensions 8, 16 and 20. Furthermore, given any $\epsilon > 0$, such examples can be constructed (at least in dimension 16) whose sectional curvatures all lie in the interval $[-4, -1+\displaystyle{\epsilon}]$.
\end{theorem}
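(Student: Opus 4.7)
The plan is to combine the exotic smoothings constructed in Theorems \ref{quot.hyp} and \ref{cay.hyp} with Corlette's Superrigidity Theorem \ref{Corlette} in order to obstruct the existence of a nonpositive curvature operator metric. First I would invoke Theorem \ref{quot.hyp} with $m=2$ and $m=5$ to produce, in dimensions $8$ and $20$, a closed smooth manifold $N^{4m}\#\Sigma^{4m}$ that is homeomorphic but not diffeomorphic to the quaternionic hyperbolic manifold $N^{4m}$ and that carries a Riemannian metric of strictly negative sectional curvature. In dimension $16$ I would invoke Theorem \ref{cay.hyp} to obtain the Cayley-hyperbolic analogue $N^{16}\#\Sigma^{16}$, whose metric has sectional curvatures in $[-4-\delta,-1]$ for any preassigned $\delta>0$. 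A global rescaling of the metric by the constant $(1-\epsilon)^{-1}$, combined with the choice $\delta = 4\epsilon/(1-\epsilon)$, converts this pinching window into exactly the interval $[-4,-1+\epsilon]$ required by the statement.

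Next I argue by contradiction. Suppose that one of these exotic manifolds $N^{4m}\#\Sigma^{4m}$ admitted a second Riemannian metric $g'$ whose curvature operator were nonpositive. By construction the underlying smooth manifold is homeomorphic to the locally symmetric $N^{4m}$ equipped with its quaternionic or Cayley hyperbolic metric $g_0$. Corlette's Superrigidity Theorem \ref{Corlette} then applies, since $(N^{4m},g_0)$ is quaternionic hyperbolic (for $m=2,5$) or Cayley hyperbolic (for the $16$-dimensional example), while $(N^{4m}\#\Sigma^{4m},g')$ is a homeomorphic closed Riemannian manifold whose curvature operator is nonpositive. The theorem forces the two Riemannian manifolds to be isometric up to scaling; in particular they are diffeomorphic. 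This directly contradicts conclusion (i) of Theorem \ref{quot.hyp} (in dimensions $8$ and $20$) and of Theorem \ref{cay.hyp} (in dimension $16$), and so proves the nonexistence of such a metric $g'$.

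The main substantive inputs, namely the exotic smoothings that still support negatively curved metrics and Corlette's superrigidity theorem, have both already been developed in the excerpt, so no genuine obstacle remains in the argument itself. The only minor point requiring care is reconciling the pinching window $[-4-\delta,-1]$ produced by Theorem \ref{cay.hyp} with the window $[-4,-1+\epsilon]$ demanded by Theorem \ref{curopra}, which as indicated above is handled by a straightforward linear rescaling of the metric together with an appropriate choice of $\delta$ in terms of $\epsilon$. Beyond that, the argument is a clean application of earlier results: the exotic smoothing machinery produces manifolds that are negatively curved but whose smooth type differs from the locally symmetric model, and Corlette's rigidity forbids any nonpositive curvature operator metric from coexisting with that smooth structure.
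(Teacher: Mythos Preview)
Your proposal is correct and follows essentially the same route as the paper: combine the exotic smoothings of Theorems \ref{quot.hyp} and \ref{cay.hyp} with Corlette's Theorem \ref{Corlette} to rule out any metric of nonpositive curvature operator on the exotic manifold. The only difference is cosmetic---the paper also notes that the $16$-dimensional example can alternatively be obtained from the quaternionic case $m=4$, and it silently assumes the conversion of the pinching interval $[-4-\delta,-1]$ to $[-4,-1+\epsilon]$, whereas you spell out the rescaling explicitly (and correctly).
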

\begin{remark}\rm{
Given a closed quaternionic hyperbolic manifold of quaternionic dimension 2, 4 or 5 (i.e., real dimension 8, 16 or 20) or a closed Cayley hyperbolic manifold $M$ (whose real dimension is 16), C.S. Aravinda and F.T. Farrell constructed, in Theorem \ref{quot.hyp} and Theorem \ref{cay.hyp}, exotic differential structures carrying negatively curved Riemannian metrics on certain finite covers $\widehat{M}$ of $M$; that is, there exist closed negatively curved manifolds that are homeomorphic but not diffeomorphic to the natural locally symmetric structure on $\widehat{M}$. Applying Theorem \ref{Corlette}, we conclude that the exotic differential structures on $\widehat{M}$ can not support metrics with non-positive curvature operator. Also, in the case where $M$ is Cayley hyperbolic the sectional curvatures of the Riemannian metric constructed in Theorem \ref{cay.hyp} for the exotic differential structure on $\widehat{M}$ all lie in the interval $[-4, -1+\displaystyle{\epsilon}]$. This proves Theorem \ref{curopra}.
}
\end{remark}
\begin{remark}\rm{
\indent
\begin{itemize}
\item[\bf{1.}] We can recover exotic smoothings of Theorem \ref{hype.smooth}, Theorem \ref{complex.hyp}, Theorem \ref{quot.hyp} and Theorem \ref{cay.hyp} by applying Theorem \ref{locally.exto} to real, complex, quaternionic and cayley hyperbolic manifolds. This is trivial in the real
case, since the dual space is a sphere. Since the dual symmetric space of complex hyperbolic manifold is complex projective space , exotic smoothings of Theorem \ref{complex.hyp} follows from Theorem \ref{locally.exto} and Lemma \ref{lem.con}. Similarly, exotic smoothings of Theorem \ref{quot.hyp} and Theorem \ref{cay.hyp} follow from Theorem \ref{locally.exto}, Corollary \ref{qout.corr1} and Lemma \ref{cay.lemma1} since the dual symmetric spaces of quarternionic and Cayley hyperbolic manifolds are the quaternionic projective space and Cayley projective plane respectively.
\item[\bf{2.}] As we have observed in the above remark (1) and Remark \ref{compopen}, by using Mostow's Rigidity Theorem \ref{mostow}, Farrell-Jones Topological Rigidity Theorem \ref{gentoprigd}, Theorem \ref{kirby}, Theorem \ref{tubular} and Theorem \ref{finite sheeted tang} together with the fundamental paper of Kervaire and Milnor \cite{KM63}, the problem of detecting when $M^n\#\Sigma^n$ and $M^n$ are not diffeomorphic, where $M$ is a closed locally symmetric space of noncompact type  such that the universal cover of $M$ has no 2-dimensional metric factor projecting to a closed subset of $M$ is essentially reduced to look at the problem of detecting exotic structure on the dual symmetric space $M_u$ of $M$.
\end{itemize}}
\end{remark}
\section{\large Topology on the Space of all Riemannian Metrics}
In this section we want to study the basic topological properties of the space of all negatively curved Riemannian metrics and the Teichmuller space of negatively curved metrics on a manifold. Let us introduce some notation.\\
Let $M^n$ be a closed smooth manifold and let $\mathcal{MET}(M^n)$ be the space of all Riemannian metrics on $M^n$ with the smooth topology. Since any two metrics $g_0$ and $g_1$ are connected by a line segment $tg_0+(1-t)g_1$, the space $\mathcal{MET}(M^n)$ is contractible. A subspace of metrics whose sectional curvatures lie in some interval (closed, open, semi-open) will be denoted by placing a superscript on $\mathcal{MET}(M^n)$. For example, $\mathcal{MET}^{sec<\epsilon}(M^n)$ denotes the subspace of $\mathcal{MET}(M^n)$ of all Riemannian metrics on $M^{n}$ that have all sectional curvatures less that $\epsilon$. Thus saying, for instance, that $M^n$ admits a negatively curved metric is equivalent to saying that $\mathcal{MET}^{sec<0}(M^n)\neq \emptyset$. Or, saying that all sectional curvatures of a Riemannian metric $g$ lie in the interval $[a, b]$ is equivalent to saying that $g\in \mathcal{MET}^{a\leq sec\leq b}(M^n)$. Note that $\mathcal{MET}^{sec=-1}(
M^n)$ is the space of hyperbolic metrics $\mathcal{H}yp(M^n)$ on $M^n$.

A natural question about a closed negatively curved manifold M posed by K. Burns and A. Katok (\cite[Question 7.1]{BK85}) is the following :
\begin{question}
 Is the space $\mathcal{MET}^{sec<0}(M^n)$ path connected?
\end{question}
\begin{remark}\rm{
\indent
\begin{itemize}
\item [\bf{1.}] In dimension two, Richard Hamilton's theorem on Ricci Flow \cite{Ham88} shows that $\mathcal{H}yp(M^2)$ is a deformation retract of $\mathcal{MET}^{sec<0}(M^n)$. But $\mathcal{H}yp(M^2)$ fibers over the Teichm¨uller space $\mathcal{T}(M^2)\cong \mathbb{R}^{6\mu-6}$ ($\mu$ is the genus of $M^2$), with contractible fiber $\mathcal{D}_0= \mathbb{R}^{+}\times DIFF_{0}(M^2)$ \cite{EE69} (Here $DIFF_0(M^2)$ consists of all self diffeomorphisms of $M^2$ which are homotopic to $id_{M^2}$). Therefore, $\mathcal{H}yp(M^2)$ and $\mathcal{MET}^{sec<0}(M^2)$ are contractible.
\item[\bf{2.}] It was shown by Farrell and Ontaneda \cite{FO10} that, for $n\geq 10$, $\mathcal{MET}^{sec<0}(M^n)$ has infinitely many path components. Moreover, they showed that all the groups $\pi_{2p-4}(\mathcal{MET}^{sec<0}(M^n))$ are non-trivial for every prime number $p > 2$, and such that $p < \frac{n+5}{6}$. (In fact, these groups contain the infinite sum $(\mathbb{Z}_p)^{\infty}$ of $\mathbb{Z}_p = \mathbb{Z}/p\mathbb{Z}$ 's, and hence they are not finitely generated.) They also showed that $\pi_{1}(\mathcal{MET}^{sec<0}(M^n))$ is not finitely generated when $n\geq 14$. These results about $\pi_k$ are true for each path component of $\mathcal{MET}^{sec<0}(M^n)$ i.e., relative to any base point. 
\end{itemize}}
\end{remark}
Before we state Farrell and Ontaneda \cite{FO10} Main Theorem, we need some definitions.\\

Denote by $DIFF(M)$ the group of all smooth self-diffeomorphisms of $M$. We have that $DIFF(M)$ acts on $\mathcal{MET}(M)$ pulling-back metrics: $\phi g = (\phi^{-1})^{*}g =\phi_{*}g$, for $g\in \mathcal{MET}(M)$ and $\phi \in DIFF(M)$, that is, $\phi g$ is the metric such that $\phi : (M, g)\to (M, \phi g)$ is an isometry. Note that $DIFF(M)$ leaves invariant all spaces $\mathcal{MET}^{sec\in I}(M)$, for any $I\subset \mathbb{R}$. For any metric $g$ on $M$ we denote by $DIFF(M)g$ the orbit of $g$ by the action of $DIFF(M)$. We have a map $\Lambda_g : DIFF(M)\to \mathcal{MET}(M)$, given by $\Lambda_g(\phi) = \phi_{*}g$. Then the image of $\Lambda_g$ is the orbit $DIFF(M)g$ of $g$. And $\Lambda_g$ of course naturally factors through $\mathcal{MET}^{sec\in I}(M)$, if $g\in \mathcal{MET}^{sec\in I}(M)$. Note that if $\dim M\geq 3$ and $g\in \mathcal{MET}^{sec=-1}(M)$, then the statement of Mostow's Rigidity Theorem is equivalent to saying that the map $\Lambda_g : DIFF(M)\to \mathcal{MET}^{sec=-1}(M)=\mathcal{H}
yp(M)$ is a surjection. Here is the statement of Farrell and Ontaneda \cite{FO10} main result :
\begin{theorem}\label{moduli}
Let $M$ be a closed smooth $n$-manifold and let $g$ be a negatively curved Riemannian metric on $M$. Then we have that:
\begin{itemize}
\item[(i)] the map $\pi_{0}(\Lambda_g) : \pi_{0}(DIFF(M) )\to \pi_{0}(\mathcal{MET}^{sec<0}(M))$ is not constant, provided $n\geq 10$.
\item[(ii)] the homomorphism $\pi_{1}(\Lambda_g) : \pi_{1}(DIFF(M) )\to \pi_{1}(\mathcal{MET}^{sec<0}(M))$  is non-zero, provided $n\geq 14$.
\item[(iii)] For $k = 2p-4$, $p$ prime integer and $1 < k\leq \frac{n-8}{3}$, the homomorphism $\pi_{k}(\Lambda_g) : \pi_{k}(DIFF(M) )\to \pi_{k}(\mathcal{MET}^{sec<0}(M))$ is non-zero.
\end{itemize}
\end{theorem}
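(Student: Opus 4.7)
The strategy reduces the theorem to producing non-trivial classes in $\pi_k(DIFF(M))$ and showing they survive under $\Lambda_g$, via a bundle-theoretic topological rigidity argument combined with $K$-theoretic detection.

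\textbf{Reduction via the orbit map.} Since $M$ is closed and negatively curved, any Killing field must vanish identically, so the identity component of $\mathrm{Isom}(M,g)$ is trivial; being also compact (acting on a closed manifold), $\mathrm{Isom}(M,g)$ is a finite group. Consequently $\Lambda_g$ factors as
\[
DIFF(M)\;\twoheadrightarrow\; DIFF(M)/\mathrm{Isom}(M,g)\;\xrightarrow{\ \cong\ }\;DIFF(M)\cdot g\;\hookrightarrow\;\mathcal{MET}^{sec<0}(M),
\]
and the first map induces isomorphisms on $\pi_k$ for $k\ge 1$ because its fiber is finite. Hence to prove (ii) and (iii) it suffices to exhibit $\alpha\in\pi_k(DIFF(M))$ whose image in $\pi_k(\mathcal{MET}^{sec<0}(M))$ is non-zero; for (i) we need $\phi\in DIFF(M)$ with $\phi_\ast g$ not in the path-component of $g$.

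\textbf{Source of the candidate classes.} Embed a disk $D^n\hookrightarrow M$ and use the ``support in a disk'' homomorphism $DIFF(D^n,\partial)\to DIFF(M)$. Non-trivial classes come from:
for (i), the Gromoll filtration fed by $\Theta_{n+1}$, using an exotic sphere to build a diffeomorphism $\phi_\Sigma$ supported in a disk (available once $n\ge 10$);
for (ii) and (iii), Igusa's concordance stability theorem identifies $\pi_k\,DIFF(D^n,\partial)$ with $\pi_k$ of a concordance space computed (after Waldhausen) from algebraic $K$-theory, in the stable range $k\le (n-8)/3$. Bökstedt's computation of $K(\mathbb{Z})$ provides non-trivial $p$-torsion in $\pi_{2p-4}$ for each odd prime $p$ (giving (iii)), while a classical class detected by $K_2(\mathbb{Z})$ provides a non-trivial element in $\pi_1$ (giving (ii), with the bound $n\ge 14$ being the stability threshold at $k=1$).

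\textbf{Topological rigidity of the associated bundle.} Assume, for contradiction, that $\Lambda_g(\alpha)=0$. Clutching $\alpha$ yields a smooth bundle $E_\alpha\to S^{k+1}$ with fiber $M$ whose smooth equivalence class records $\alpha$, and a nullhomotopy of $\Lambda_g\circ\alpha$ provides a coherent family of negatively curved fiber metrics on $E_\alpha$ homotopic through negatively curved metrics to the constant family $g$. Warping in the base direction by a sufficiently small factor produces a negatively curved metric on the total space $E_\alpha$, so that the parametrized version of Farrell--Jones topological rigidity (Theorem \ref{gentoprigd}) applies to bundle maps: $E_\alpha$ is homeomorphic to $M\times S^{k+1}$ over $S^{k+1}$. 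Therefore $\alpha$ dies in $\pi_k(TOP(M))$. But the $K$-theoretic classes of the previous paragraph are detected by a smoothing invariant (higher Reidemeister torsion in the sense of Igusa--Klein, or equivalently a Dwyer--Weiss--Williams characteristic), whose non-vanishing is equivalent to the non-vanishing of the underlying algebraic $K$-theory class; this contradicts the vanishing in $\pi_k(TOP(M))$. For (i) the same logic applied to the mapping torus $M_{\phi_\Sigma}$ yields a topological triviality $M_{\phi_\Sigma}\cong_{\mathrm{Top}} M\times S^1$ that forces the exotic sphere class to be trivial, a contradiction.

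\textbf{The main obstacle.} The decisive technical point is the bundle step: converting the nullhomotopy of fiber metrics into either a genuine negatively curved metric on the total space (for which the warping must maintain negative curvature uniformly across the parameter $S^k$) or into the precise hypotheses needed to invoke a parametrized Farrell--Jones rigidity theorem. This requires uniform curvature control during interpolation and a parametric s-cobordism/rigidity statement for bundles of non-positively curved manifolds. A secondary delicate point is verifying that the chosen $K$-theory classes genuinely lift from the pseudo-isotopy spectrum to $\pi_k(DIFF(M))$ and remain detected there; this is the reason for the explicit dimension hypotheses $n\ge 10$, $n\ge 14$, and $k\le (n-8)/3$, which are exactly Igusa's concordance stable range.
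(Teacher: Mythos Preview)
Your approach diverges from the paper's in two linked ways, and the divergence creates a genuine gap.

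\textbf{Source of classes.} The paper does \emph{not} feed classes through $DIFF(D^n,\partial)\to DIFF(M)$. Instead it fixes an essential closed geodesic $\alpha:\mathbb{S}^1\to M$, trivializes its normal bundle, and defines $\Phi:DIFF(\mathbb{S}^1\times\mathbb{S}^{n-2}\times I,\partial)\to DIFF(M)$ by inserting a pseudo-isotopy into the annular shell of the tubular neighborhood. The reduction is then to the map $\iota:DIFF(\mathbb{S}^1\times\mathbb{S}^{n-2}\times I,\partial)\to P(\mathbb{S}^1\times\mathbb{S}^{n-2})$ into the \emph{topological} pseudo-isotopy space, via the key inclusion $\mathrm{Ker}\,\pi_k(\Lambda_g\Phi)\subset\mathrm{Ker}\,\pi_k(\iota)$ for $k<n-5$ (Theorem~\ref{moduthe1}). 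The $\mathbb{S}^1$ factor is essential: Theorem~\ref{moduthe2} produces $(\mathbb{Z}_p)^\infty\subset\pi_{2p-4}\,DIFF(\mathbb{S}^1\times\mathbb{S}^{n-2}\times I,\partial)$ on which $\pi_k(\iota)$ is injective, and these infinitely many copies arise precisely from the Laurent-polynomial nature of $\mathbb{Z}[\pi_1(\mathbb{S}^1)]$. Your route through $DIFF(D^n,\partial)$ and $K(\mathbb{Z})$ cannot produce the infinite sums of the Addendum, and more importantly loses the link between the geometric insertion along $\alpha$ and the topological detection.

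\textbf{The warping step fails.} The step you flag as the ``main obstacle'' is not a technicality but a wrong turn. A nullhomotopy of $\Lambda_g\circ\alpha$ does yield fiberwise negatively curved metrics on $E_\alpha\to S^{k+1}$, but warping the base does \emph{not} produce a negatively curved metric on the total space: scaling the base makes the O'Neill $A$-tensor small, but the horizontal sectional curvatures tend to zero, not to something negative, so you get at best non-positive curvature with flat directions, and Theorem~\ref{gentoprigd} as stated gives no parametrized conclusion. The actual mechanism in \cite{FO10} is different: given a nullhomotopy in $\mathcal{MET}^{sec<0}(M)$, one uses the negatively curved metrics fiberwise together with controlled topology (Farrell--Jones rigidity applied \emph{inside each fiber} to an $h$-cobordism/block problem supported near the geodesic) to show that the original smooth pseudo-isotopy becomes trivial as a \emph{topological} pseudo-isotopy of $\mathbb{S}^1\times\mathbb{S}^{n-2}$, i.e.\ lands in $\mathrm{Ker}\,\pi_k(\iota)$. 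That is why the target of the detection is $P(\mathbb{S}^1\times\mathbb{S}^{n-2})$ rather than $\pi_k(TOP(M))$, and why no metric on the total space is ever invoked. Your higher-torsion/Dwyer--Weiss--Williams argument, as written, is not connected to the vanishing you would actually obtain.

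In short: replace the disk by the tube around an essential geodesic, replace ``warp and apply rigidity to the total space'' by ``apply fiberwise topological rigidity to land in $\mathrm{Ker}\,\pi_k(\iota)$'', and replace $K(\mathbb{Z})$ by the smooth-vs-topological pseudo-isotopy comparison for $\mathbb{S}^1\times\mathbb{S}^{n-2}$.
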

\begin{addendum}\label{moduliadd}\cite{FO10}
We have that the image of $\pi_{0}(\Lambda_g)$ is infinite and in cases (ii), (iii) mentioned in the Theorem \ref{moduli}, the image of $\pi_{k}(\Lambda_g)$ is not finitely generated. In fact we have:
\begin{itemize}
\item[(i)] For $n\geq 10$, $\pi_{0}(DIFF(M))$ contains $(\mathbb{Z}_2)^{\infty}$, and $\pi_{0}(\Lambda_g)_{|(\mathbb{Z}_2)^{\infty}}$ is one-to-one.
\item[(ii)] For $n\geq 14$, the image of $\pi_{1}(\Lambda_g)$ contains $(\mathbb{Z}_2)^{\infty}$.
\item[(iii)] For $k = 2p-4$, $p$ prime integer and $1 < k\leq \frac{n-8}{3}$, the image of $\pi_{k}(\Lambda_g)$ contains $(\mathbb{Z}_p)^{\infty}$.
\end{itemize}
\end{addendum}
For $a < b < 0$ the map $\Lambda_g$ factors through the inclusion map $\mathcal{MET}^{a\leq sec\leq b}(M) \hookrightarrow \mathcal{MET}^{sec<0}(M)$ provided $g\in Met^{a\leq sec\leq b}(M)$. Therefore we have
\begin{corollary}\label{modulicorr1}\cite{FO10}
Let $M$ be a closed smooth $n$-manifold, $n\geq 10$. Let $a < b < 0$ and assume that $\mathcal{MET}^{a\leq sec\leq b}(M)$ is not empty. Then the inclusion map $\mathcal{MET}^{a\leq sec\leq b}(M) \hookrightarrow \mathcal{MET}^{sec<0}(M)$ is not null-homotopic. Indeed, the induced maps, at the $k$-homotopy level, are not constant for $k = 0$, and non-zero for the cases (ii), (iii) mentioned in the Theorem \ref{moduli}. Furthermore, the image of these maps satisfy a statement analogous to the one in the Addendum \ref{moduliadd}  to the Theorem \ref{moduli}.
\end{corollary}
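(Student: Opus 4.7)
The plan is to deduce Corollary \ref{modulicorr1} formally from Theorem \ref{moduli} and Addendum \ref{moduliadd}, using the factorization of the orbit map $\Lambda_g$ noted in the paragraph immediately before the statement. First I would pick any $g\in \mathcal{MET}^{a\leq sec\leq b}(M)$, which exists by hypothesis, and observe that for every $\phi\in DIFF(M)$ the diffeomorphism $\phi:(M,g)\to (M,\phi_{*}g)$ is by construction an isometry, so $\phi_{*}g$ has exactly the same sectional curvatures as $g$ and therefore still lies in $\mathcal{MET}^{a\leq sec\leq b}(M)$. Consequently the image of $\Lambda_g$ is already contained in the pinched subspace, and the orbit map factors as
\[
\Lambda_g: DIFF(M)\xrightarrow{\;\widetilde{\Lambda}_g\;} \mathcal{MET}^{a\leq sec\leq b}(M)\xrightarrow{\;\iota\;} \mathcal{MET}^{sec<0}(M),
\]
where $\iota$ is the inclusion and $\widetilde{\Lambda}_g(\phi)=\phi_{*}g$.

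Next I would apply $\pi_k$ (based at $\mathrm{id}_M$ on the source and at $g$, respectively $\iota(g)$, on the targets), obtaining
\[
\pi_k(\Lambda_g)=\pi_k(\iota)\circ \pi_k(\widetilde{\Lambda}_g)\qquad \text{for every }k\geq 0.
\]
Theorem \ref{moduli}(i) asserts that $\pi_0(\Lambda_g)$ is non-constant when $n\geq 10$; Theorem \ref{moduli}(ii) that $\pi_1(\Lambda_g)\neq 0$ when $n\geq 14$; and Theorem \ref{moduli}(iii) that $\pi_k(\Lambda_g)\neq 0$ for $k=2p-4$, $p$ prime, $1<k\leq (n-8)/3$. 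If in any of these cases $\pi_k(\iota)$ were constant (for $k=0$) or zero (for $k\geq 1$), the composition would inherit the same triviality, contradicting Theorem \ref{moduli}. In particular $\iota$ is not null-homotopic, which is the first assertion of the corollary.

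For the image statement (the analogue of Addendum \ref{moduliadd}), I would use the trivial set-theoretic inclusion $\mathrm{im}\,\pi_k(\iota)\supseteq \mathrm{im}\,\pi_k(\Lambda_g)$, which follows directly from the factorization because $\pi_k(\Lambda_g)(\beta)=\pi_k(\iota)\bigl(\pi_k(\widetilde{\Lambda}_g)(\beta)\bigr)$. Addendum \ref{moduliadd} supplies copies of $(\mathbb{Z}_2)^\infty$, $(\mathbb{Z}_2)^\infty$ and $(\mathbb{Z}_p)^\infty$ inside $\mathrm{im}\,\pi_0(\Lambda_g)$, $\mathrm{im}\,\pi_1(\Lambda_g)$ and $\mathrm{im}\,\pi_k(\Lambda_g)$ respectively, and these same infinite sums therefore appear inside $\mathrm{im}\,\pi_k(\iota)$. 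The case $k=0$ requires a small extra remark since $\pi_0$ is only a pointed set, but the injectivity clause of Addendum \ref{moduliadd}(i) guarantees that the $(\mathbb{Z}_2)^\infty$ worth of diffeomorphism classes in $DIFF(M)$ map to distinct components of $\mathcal{MET}^{sec<0}(M)$, so their images under $\pi_0(\widetilde{\Lambda}_g)$ must themselves already sit in distinct components, and $\pi_0(\iota)$ sends these to the required infinite family in $\pi_0(\mathcal{MET}^{sec<0}(M))$.

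The entire argument is a formal consequence of Theorem \ref{moduli} and Addendum \ref{moduliadd}; the only nontrivial input is the essentially tautological remark that pull-back by a diffeomorphism preserves sectional curvatures, which is precisely what allows $\Lambda_g$ to land in the curvature-pinched subspace. In this sense there is no real obstacle inside the corollary itself: all of the geometric and homotopy-theoretic content is already concentrated in Theorem \ref{moduli}, and the corollary amounts to the observation that the non-triviality of $\Lambda_g$ supplied by that theorem is automatically inherited by any intermediate space through which $\Lambda_g$ factors.
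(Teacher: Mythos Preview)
Your proposal is correct and is essentially the same argument the paper gives: the paper's entire proof is the one-sentence observation preceding the corollary that for $g\in\mathcal{MET}^{a\leq sec\leq b}(M)$ the map $\Lambda_g$ factors through the inclusion $\mathcal{MET}^{a\leq sec\leq b}(M)\hookrightarrow\mathcal{MET}^{sec<0}(M)$. You have spelled out that factorization and its consequences in more detail, but the idea is identical.
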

If $a = b = -1$ we have
\begin{corollary}\label{modulicorr2}\cite{FO10}
Let $M$ be a closed hyperbolic $n$-manifold, $n\geq 10$. Then the inclusion map $\mathcal{H}yp(M) \hookrightarrow \mathcal{MET}^{sec<0}(M)$ is not null-homotopic. Indeed, the induced maps, at the $k$-homotopy level, are not constant for $k = 0$, and non-zero for the cases (ii), (iii) mentioned in the Theorem \ref{moduli}. Furthermore, the image of these maps satisfy a statement analogous to the one in the Addendum \ref{moduliadd} to the Theorem \ref{moduli}.
\end{corollary}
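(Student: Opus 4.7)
The plan is to deduce this corollary directly from Corollary \ref{modulicorr1} applied with $a=b=-1$. First, since $M$ is a closed hyperbolic manifold it carries at least one hyperbolic metric $g_0$, so the space
\[
\mathcal{H}yp(M)\;=\;\mathcal{MET}^{sec=-1}(M)\;=\;\mathcal{MET}^{-1\leq sec\leq -1}(M)
\]
is non-empty, which is the sole additional hypothesis in Corollary \ref{modulicorr1}. Invoking that corollary with $a=b=-1$ then yields exactly what we want: the inclusion $\mathcal{H}yp(M)\hookrightarrow \mathcal{MET}^{sec<0}(M)$ is non-constant on $\pi_0$ (for $n\geq 10$), non-zero on $\pi_k$ in the ranges of parts (ii) and (iii) of Theorem \ref{moduli}, and has image containing the corresponding copies of $(\mathbb{Z}_p)^\infty$ as in Addendum \ref{moduliadd}.

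The reason Corollary \ref{modulicorr1} applies verbatim is the observation that the orbit map $\Lambda_{g_0}:DIFF(M)\to \mathcal{MET}^{sec<0}(M)$ associated with a hyperbolic $g_0$ automatically factors through $\mathcal{H}yp(M)$. Indeed, for any $\phi\in DIFF(M)$ the map $\phi:(M,g_0)\to(M,\phi_* g_0)$ is an isometry by definition of the pulled back metric, hence $\phi_* g_0$ still has all sectional curvatures equal to $-1$ and so lies in $\mathcal{H}yp(M)$. Writing $\iota$ for the inclusion, we obtain the factorisation
\[
\Lambda_{g_0}\;\colon\;DIFF(M)\;\xrightarrow{\;\bar\Lambda_{g_0}\;}\;\mathcal{H}yp(M)\;\xrightarrow{\;\iota\;}\;\mathcal{MET}^{sec<0}(M),
\]
and consequently $\pi_k(\Lambda_{g_0})=\pi_k(\iota)\circ\pi_k(\bar\Lambda_{g_0})$ for each $k\geq 1$, with an analogous pointed-set factorisation at $k=0$. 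By Theorem \ref{moduli} and Addendum \ref{moduliadd} the left-hand side is non-constant (respectively non-zero) in the stated ranges and its image contains the relevant $(\mathbb{Z}_p)^\infty$; these properties are then automatically inherited by $\pi_k(\iota)$ and by its image, because otherwise the composition could neither be non-trivial nor absorb such a large subgroup.

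There is essentially no obstacle in this deduction: all of the analytic and homotopy-theoretic content has already been packaged into Theorem \ref{moduli} and Addendum \ref{moduliadd}. The only new ingredient is the elementary remark that pulling back a hyperbolic metric by a self-diffeomorphism produces another hyperbolic metric, so that the orbit map $\Lambda_{g_0}$ takes values inside the hyperbolic locus $\mathcal{H}yp(M)$; from there the non-triviality of $\pi_k(\iota)$ is forced by the non-triviality of $\pi_k(\Lambda_{g_0})$.
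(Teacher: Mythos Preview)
Your proof is correct and follows exactly the paper's approach: the paper simply prefaces this corollary with the sentence ``If $a=b=-1$ we have'' immediately after Corollary~\ref{modulicorr1}, so the intended argument is precisely the specialization you carry out, together with the factorisation of $\Lambda_{g_0}$ through $\mathcal{H}yp(M)$ that you spell out in slightly more detail.
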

\begin{remark}\rm{
\indent
 \begin{itemize}
\item [\bf{1.}] Hence, taking $k = 0$ (i.e., $p = 2$) in Corollary \ref{modulicorr2}, we get that for any closed hyperbolic manifold $(M^n, g)$, $n\geq 10$, there is a hyperbolic metric $g'$ on $M$ such that $g$ and $g'$ cannot be joined by a path of negatively curved metrics.
\item [\bf{2.}] Also, taking $a = -1-\epsilon$, $b = -1 (0\leq \epsilon)$ in Corollary \ref{modulicorr1} we have that the space $\mathcal{MET}^{-1-\epsilon \leq sec\leq -1}(M)$ of $\epsilon$-pinched negatively curved Riemannian metrics on $M$ has infinitely many path components, provided it is not empty and $n\geq 10$. And the homotopy groups $\pi_k(\mathcal{MET}^{-1-\epsilon \leq sec\leq -1}(M))$, are non-zero for the cases (ii), (iii) mentioned in the Theorem \ref{moduli}. Moreover, these groups are not finitely generated.
\item [\bf{3.}] The restriction on $n = dim ~M$ given in the Theorem \ref{moduli}, its Addendum \ref{moduliadd} and its Corollaries \ref{modulicorr1} and \ref{modulicorr2} are certainly not optimal. In particular, in Theorem \ref{moduli} (iii) it can be improved to $1 \le k \le \frac{n-10}{2}$ by using Igusa’s “Surjective Stability Theorem” (\cite[p. 7]{Igu88}).
\item[\bf{4.}] Another interesting application of the Theorem \ref{moduli} shows that the answer to the following natural question is negative:
\end{itemize}}
\end{remark}
\begin{question}\label{}\cite{FO10}
Let $E\to B$ be a fibre bundle whose fibres are diffeomorphic to a closed negatively curved manifold $M^n$. Is it always possible to equip its fibres with negatively curved Riemannian metrics (varying continuously from fibre to fibre)?
\end{question}
\begin{remark}\rm{
\indent
\begin{itemize}
\item[\bf{1.}] The negative answer is gotten by setting $B = \mathbb{S}^{k+1}$, where $k$ is as in the Theorem \ref{moduli} case (iii) (or $k = 0$, $1$, case (i), (ii)), and the bundle $E\to \mathbb{S}^{k+1}$ is obtained by the standard clutching construction using an element $\alpha \in \pi_k(DIFF(M))$ such that $\pi_{1}(\Lambda_g)(\alpha)\neq 0$, for every negatively curved Riemannian metric $g$ on $M$. F. T. Farrell and P. Ontaneda \cite{FO10} method for proving the Theorem \ref{moduli} (in particular Theorem \ref{moduthe1} below) one sees that such elements $\alpha$, which are independent of $g$, exist in all cases (i), (ii), (iii).
\item [\bf{2.}] Theorem \ref{moduli} follows from Theorems \ref{moduthe1} and \ref{moduthe2} below. Before we state these results we need some definitions and constructions. 
\end{itemize}}
\end{remark}
For a manifold $N$, let $P(N)$ be the space of topological pseuso-isotopies of $N$, that is, the space of all homeomorphisms $N\times I\to N\times I$, $I=[0, 1]$, that are the identity on $(N\times {0})\cup (\partial N\times I)$. We consider $P(N)$ with the compact-open topology. Also, $P^{diff}(N)$ is the space of all smooth pseudo-isotopies on $N$, with the smooth topology. Note that $P^{diff}(N)$ is a subset of $P(N)$. The map of spaces $P^{diff}(N)\to P(N)$ is continuous and will be denoted by $\iota_N$, or simply by $\iota$. Let $DIFF(N,\partial)$ denotes the subspace of $DIFF(N)$ of all self-diffeomorphism of $N$ which are the identity on $\partial N$. Note that $DIFF(N\times I, \partial)$ is the subspace of $P^{diff}(N)$ of all smooth pseudo-isotopies whose restriction to $N\times {1}$ is the identity. The restriction of $\iota_N$ to $DIFF(N\times I, \partial)$ will also be denoted by $\iota_N$. The map $\iota_N : DIFF(N\times I, \partial)\to P(N)$ is one of the ingredients in the statement Theorem \
ref{moduthe1}.\\ We will also need the following construction. Let $M$ be a negatively curved n-manifold. Let $\alpha: \mathbb{S}^1\to M$ be an embedding. Sometimes we will denote the image $\alpha(\mathbb{S}^1)$ just by $\alpha$. We assume that the normal bundle of $\alpha$ is orientable, hence trivial. Let $V : \mathbb{S}^1\to TM\times...\times TM$, be an orthonormal trivialization of this bundle: $V (z) = (v_1(z), ..., v_{n-1}(z))$ is an orthonormal base of the orthogonal complement of $\alpha(z)'$ in $T_zM$. Also, let $r > 0$, such that $2r$ is less that the width of the normal geodesic tubular neighborhood of $\alpha$. Using $V$, and the exponential map of geodesics orthogonal to $\alpha$, we identify the normal geodesic tubular neighborhood of width $2r$ minus $\alpha$, with $\mathbb{S}^1\times \mathbb{S}^{n-2}\times (0, 2r]$. Define $\Phi=\Phi^{M}(\alpha,V,r) : DIFF(\mathbb{S}^1\times \mathbb{S}^{n-2}\times I, \partial)\to DIFF(M)$ in the following way. For $\phi \in DIFF(\mathbb{S}^1 \times \mathbb{S}
^{n-2} \times I, 
\partial)$ let $\Phi(\phi) : M \to M$ be the identity outside $\mathbb{S}^1\times \mathbb{S}^{n-2} \times[r, 2r]\subset M$, and $\Phi(\phi)=\lambda^{-1}\phi\lambda$, where $\lambda (z, u, t) = (z, u, \frac{t−r}{r})$, for $(z,u,t)\in \mathbb{S}^1\times \mathbb{S}^{n-2}\times[r, 2r]$. Note that the dependence of $\Phi(\alpha,V,r)$ on $\alpha$ and $V$ is essential, while its dependence on $r$ is almost irrelevant.\\
We denote by $g$ the negatively curved metric on $M$. Hence we have the following diagram :
\begin{equation*}
\begin{CD}
 DIFF(\mathbb{S}^{1}\times \mathbb{S}^{n-2}\times I, \partial)       @>\Phi>> DIFF(M)  @>\Lambda_{g}>> \mathcal{MET}^{sec<0}(M)\\
 @V \iota VV                                                           @.                   @.\\ 
 P(\mathbb{S}^{1}\times \mathbb{S}^{n-2})                              @.                   @. 
\end{CD}
\end{equation*}
where $\iota=\iota_{\mathbb{S}^{1}\times \mathbb{S}^{n-2}}$ and $\Phi=\Phi^{M}(\alpha,V,r)$.
\begin{theorem}\label{moduthe1}\cite{FO10}
Let $M$ be a closed $n$-manifold with a negatively curved metric $g$. Let $\alpha$, $V$, $r$ and $\Phi=\Phi(\alpha, V, r)$ be as above, and assume that $\alpha$ in not null-homotopic. Then $Ker (\pi_k(\lambda_g \Phi))\subset Ker (\pi_k(\iota))$, for $k < n-5$. Here $\pi_k(\Lambda_g \Phi)$ and $\pi_k(\iota)$ are the homomorphisms at the $k$-homotopy group level induced by $\Lambda_g \Phi$ and 
$\iota = \iota_{\mathbb{S}^1\times \mathbb{S}^{n-2}}$, respectively.
\end{theorem}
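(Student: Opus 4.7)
The plan is to translate a null-homotopy in the space of negatively curved metrics into a null-homotopy in the space of topological pseudo-isotopies, by constructing from such a deformation a family of ambient topological isotopies of $M$ and then restricting to the tubular neighborhood of $\alpha$. Concretely, let $\beta : S^k \to \mathrm{DIFF}(\mathbb{S}^1\times \mathbb{S}^{n-2} \times I,\partial)$ represent an element of $\ker(\pi_k(\Lambda_g\Phi))$, and fix a based null-homotopy $H : S^k\times [0,1]\to \mathcal{MET}^{sec<0}(M)$ with $H(s,0)=g$, $H(s,1)=\Phi(\beta(s))_*g$, and $H(*,t)=g$ for all $t$. The goal is to produce from $H$ a continuous family of self-homeomorphisms $\Psi_{s,t}:M\to M$ with $\Psi_{s,0}=\mathrm{id}$, $\Psi_{*,t}=\mathrm{id}$, and $\Psi_{s,1}$ topologically isotopic to $\Phi(\beta(s))$ as homeomorphisms of $M$.

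The key technical step is constructing this family $\Psi_{s,t}$. This should be a parametrized analogue of the Farrell--Jones topological rigidity theorem (Theorem \ref{gentoprigd}) that converts a family of negatively curved metrics on $M$ into a family of topological self-trivializations which record the ``smooth difference'' between the metrics. For each $(s,t)$ the identity map $\mathrm{id}:(M,g)\to (M,H(s,t))$ is a homotopy equivalence in a canonical way, and the parametrized rigidity should straighten this to a topological homeomorphism of $M$; the family is then obtained by varying $(s,t)$ over $S^k\times I$. At $t=0$ and at $s=*$ the straightening returns the identity by construction; at $t=1$, since $H(s,1)=\Phi(\beta(s))_*g$, it agrees up to topological isotopy with $\Phi(\beta(s))$. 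The hypothesis that $\alpha$ is not null-homotopic is used here to keep the tubular neighborhood geometrically isolated (its preimage in the universal cover $\widetilde M$ consists of disjoint copies on which the deformation acts coherently via the $\pi_1$-action on the ideal boundary of the Hadamard manifolds $(\widetilde M,\widetilde H(s,t))$), which is what makes the straightening compatible with the local support of $\Phi(\beta(s))$.

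With $\Psi_{s,t}$ in hand, the remaining task is localization. Since $\Phi(\beta(s))$ is supported in $U=\mathbb{S}^1\times \mathbb{S}^{n-2}\times[r,2r]$, the endpoints $\Psi_{s,0}=\mathrm{id}$ and $\Psi_{s,1}$ are already supported in $U$; it is the intermediate $t\in(0,1)$ that a priori wander over all of $M$. Applying a parametrized general-position / Cerf--Hatcher--Igusa style argument--whose codimension hypothesis is precisely $k < n-5$, giving enough room to isotope the extraneous part of each $\Psi_{s,t}$ off $M\setminus U$ while keeping the family continuous and respecting the boundary constraints at $t=0,1$--we may arrange that each $\Psi_{s,t}$ is supported in a slightly enlarged tubular neighborhood $\widetilde U \cong \mathbb{S}^1\times \mathbb{S}^{n-2}\times I$. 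The restricted family $\{\Psi_{s,t}|_{\widetilde U}\}_{t\in[0,1]}$ is then, for each $s\in S^k$, a path of topological pseudo-isotopies of $\mathbb{S}^1\times \mathbb{S}^{n-2}$ from the identity at $t=0$ to $\iota(\beta(s))$ at $t=1$, fixing the basepoint $s=*$ at the identity throughout. This is exactly a null-homotopy of $\iota\circ\beta$ in $P(\mathbb{S}^1\times \mathbb{S}^{n-2})$, so $[\beta]\in\ker(\pi_k(\iota))$, which is what was to be shown.

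The main obstacle is unambiguously the middle step: constructing the family $\Psi_{s,t}$ of topological self-homeomorphisms in a way that is natural in the metric, continuous in the parameters, and whose behavior at $t=1$ detects the topological isotopy class of $\Phi(\beta(s))$. This requires upgrading the Farrell--Jones topological rigidity theorem (originally a statement about single homotopy equivalences) to a statement about families of deformations of the Riemannian structure, and then verifying that this straightening, evaluated on the specific family produced by $\Lambda_g\circ\Phi\circ\beta$, recovers $\Phi\circ\beta$ up to topological isotopy rather than merely up to homotopy. The construction is expected to proceed fiberwise using the geodesic flow/ideal boundary identifications available on the universal covers $(\widetilde M,\widetilde H(s,t))$, which is exactly where negative curvature is indispensable; the localization step is comparatively routine once this bridge has been built.
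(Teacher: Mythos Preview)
The paper you are working from is a survey; it states Theorem~\ref{moduthe1} with a citation to \cite{FO10} but does not supply a proof. There is therefore no argument in the paper to compare your proposal against. What follows are brief comments on your sketch on its own terms.

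Your overall architecture---turn a null-homotopy of metrics into a parametrized family of topological straightenings, then localize back into the tube---is indeed the shape of the argument in \cite{FO10}. However, the central step you flag as ``the main obstacle'' is not handled by a parametrized version of the Farrell--Jones rigidity theorem in the form you describe. The machine that Farrell and Ontaneda actually use is the fiberwise asymptotic transfer / controlled topology package: from the family of negatively curved metrics one builds a family of maps to the sphere at infinity of the universal cover, and then invokes controlled $h$-cobordism and pseudo-isotopy theory (Chapman--Ferry/Quinn style) to obtain the family of homeomorphisms with the required endpoint behavior. The hypothesis that $\alpha$ is not null-homotopic is used to guarantee that the closed geodesic in the free homotopy class of $\alpha$ gives a well-defined pair of fixed points on the boundary sphere, which is what anchors the transfer; your explanation in terms of ``keeping the tubular neighborhood geometrically isolated'' is in the right spirit but does not capture the actual mechanism.

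Your localization paragraph also glosses over a genuine point: the passage from homeomorphisms of $M$ supported near the tube to elements of $P(\mathbb{S}^1\times\mathbb{S}^{n-2})$ is not just general position. One needs the stability/concordance-stability range for pseudo-isotopies (Igusa's theorem), and this is where the bound $k<n-5$ really enters. As written, your proposal names the right inputs but does not assemble them into an argument; to make it a proof you would have to import the specific constructions from \cite{FO10}.
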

\begin{remark}\rm{
\indent
\begin{itemize}
 \item[\bf{1.}] In the statement of Theorem \ref{moduthe1} above, by $Ker (\pi_0(\Lambda_g \Phi))$ (for $k = 0$) we mean
the set $(\pi_0(\Lambda_g \Phi))^{-1}([g])$, where $[g]\in \pi_0(\mathcal{MET}^{sec<0}(M))$ is the connected component of the metric $g$.
\item[\bf{2.}] Hence to deduce the Theorem \ref{moduli} from Theorem \ref{moduthe1} we need to know that $\pi_k(\iota_{\mathbb{S}^1\times \mathbb{S}^{n-2}})$ is a non-zero homomorphism. Furthermore, to prove the Addendum \ref{moduliadd} to the Theorem \ref{moduli} we have to show that $\pi_k(DIFF(\mathbb{S}^1\times \mathbb{S}^{n-2} \times I, \partial))$ contains an infinite sum of $\mathbb{Z}_p$'s (resp. $\mathbb{Z}_2$'s) where $k=2p-4$, $p$ prime (resp. $k = 1$) and $\pi_k(\iota_{\mathbb{S}^1\times \mathbb{S}^{n-2}})$ restricted to this sum is one-to-one. This follows from the following result :
\end{itemize}}
\end{remark}
\begin{theorem}\label{moduthe2}\cite{FO10}
Let $p$ be a prime integer such that max $\{9, 6p-5\}< n$. Then for $k=2p-4$ we have that $\pi_k(DIFF(\mathbb{S}^1\times \mathbb{S}^{n-2}\times I, \partial))$ contains $(\mathbb{Z}_p)^{\infty}$ and $\pi_k(\iota_{\mathbb{S}^1\times \mathbb{S}^{n-2}})$ restricted to $(\mathbb{Z}_p)^{\infty}$ is one-to-one.
\end{theorem}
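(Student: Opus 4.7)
The strategy is to translate the statement, via Igusa's stability theorem and Waldhausen's identification of stable pseudo-isotopy spaces with loop spaces on the Whitehead functor, into a question about the algebraic $K$-theory of spaces, where the required infinitely-generated torsion will arise from the free loop space $L\mathbb{S}^1 \simeq \mathbb{S}^1\times\mathbb{Z}$ of the factor $\mathbb{S}^1$. Concretely, by standard restriction fibrations, $\pi_k(DIFF(N\times I,\partial))$ for $N=\mathbb{S}^1\times\mathbb{S}^{n-2}$ is related to $\pi_k(P^{diff}(N))$. Igusa's stability theorem, applicable in the range $k\leq (n-7)/3$ (which is ensured by $n>6p-5$ and $k=2p-4$), identifies this with $\pi_k(\mathcal{P}^{diff}(N))$, the stable smooth pseudo-isotopy space. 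The analogous stability in the topological category (due to Hatcher and Burghelea--Lashof) reduces $\pi_k(\iota_N)$ to the map on stable pseudo-isotopy spaces.

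Next, Waldhausen's theorem gives $\mathcal{P}^{diff}(X)\simeq\Omega^2 Wh^{diff}(X)$ and $\mathcal{P}(X)\simeq\Omega^2 Wh^{top}(X)$, where $Wh^{\bullet}(X)$ are canonical splitting summands of Waldhausen's $A$-theory, and $\iota$ corresponds to the forgetful map $Wh^{diff}\to Wh^{top}$. The projection $\mathbb{S}^1\times\mathbb{S}^{n-2}\to \mathbb{S}^1$ (split by a constant section) makes $Wh^{\bullet}(\mathbb{S}^1)$ a retract, so it suffices to produce $(\mathbb{Z}_p)^{\infty}\subset \pi_{k+2}(Wh^{diff}(\mathbb{S}^1))$ that injects into $\pi_{k+2}(Wh^{top}(\mathbb{S}^1))$. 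The Dundas--Goodwillie--McCarthy cyclotomic trace together with the B\"okstedt--Hsiang--Madsen decomposition of $TC(\mathbb{S}^1)$ produces a summand $\bigoplus_{\mathbb{Z}}\Sigma^{\infty}_{+}\mathbb{S}^1$ coming from $L\mathbb{S}^1\simeq \mathbb{S}^1\times \mathbb{Z}$, whence $\pi_{k+2}(A(\mathbb{S}^1))$ contains $\bigoplus_{\mathbb{Z}}(\pi_{k+2}^{s}\oplus \pi_{k+1}^{s})$. For $p$ an odd prime, the Adams class $\alpha_{1}\in\pi_{2p-3}^{s}=\pi_{k+1}^{s}$ has order exactly $p$ and contributes $(\mathbb{Z}_p)^{\infty}$; for $p=2$ (so $k=0$), the Hatcher--Wagoner--Igusa calculation of $\pi_{0}(\mathcal{P}^{diff}(\mathbb{S}^1))$ in terms of $\mathbb{Z}_{2}[\mathbb{Z}]$ yields the required $(\mathbb{Z}_{2})^{\infty}$ directly.

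The main obstacle is the injectivity of $\pi_k(\iota)$ on the constructed $(\mathbb{Z}_p)^{\infty}$. The essential observation is that these classes originate in the $TC$-summand $\Sigma^{\infty}_{+}LX$, which is detected equally well in $A^{top}$ via the natural cyclotomic trace, and the fibre of $Wh^{diff}\to Wh^{top}$ (expressible in terms of $L$-theoretic/surgery-theoretic data) does not meet this summand. Equivalently, one can detect the $(\mathbb{Z}_p)^{\infty}$ by a geometric transfer invariant to the infinite cyclic cover corresponding to each component of $L\mathbb{S}^1$, which is visible in the topological category as well. Proving this detection cleanly, and ruling out unforeseen cancellations coming from the smoothing-theoretic difference between $Wh^{diff}$ and $Wh^{top}$, is the technical heart of the argument.
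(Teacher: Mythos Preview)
The survey paper does not actually give a proof of this theorem; it is simply cited from \cite{FO10} and used as input for deducing the Main Theorem from Theorem~\ref{moduthe1}. So there is no ``paper's own proof'' to compare against here, and your proposal must be assessed on its own merits.

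Your overall strategy is the right one and is essentially what Farrell and Ontaneda do in \cite{FO10}: pass to the stable range via Igusa, identify stable pseudo-isotopy with $\Omega^2 Wh^{diff}$ via Waldhausen, use the retraction $\mathbb{S}^1\times\mathbb{S}^{n-2}\to\mathbb{S}^1$ to reduce to $\mathbb{S}^1$, and then locate the $(\mathbb{Z}_p)^{\infty}$ using the infinitely many free homotopy classes of loops in $\mathbb{S}^1$ together with the $p$-torsion class $\alpha_1\in\pi_{2p-3}^{s}$. That much is sound.

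However, two points in your write-up are genuine gaps rather than routine omissions. First, you slide over the passage from $\pi_k(DIFF(\mathbb{S}^1\times\mathbb{S}^{n-2}\times I,\partial))$ to $\pi_k(P^{diff}(\mathbb{S}^1\times\mathbb{S}^{n-2}))$; these are related by the restriction fibration with fibre $DIFF(N\times I,\partial)$ and base $DIFF(N)$, and one must check that the classes you build actually lift to the fibre (equivalently, that they die under restriction to $N\times\{1\}$). Second, and more seriously, your injectivity argument for $\pi_k(\iota)$ is only a sketch: the assertion that ``the fibre of $Wh^{diff}\to Wh^{top}$ does not meet this summand'' is exactly what needs to be proved, and appealing to the cyclotomic trace for both categories does not by itself separate the smooth and topological Whitehead spectra. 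In \cite{FO10} this step is handled by an explicit detection argument---the classes are shown to be nontrivial in $\pi_k(P(\mathbb{S}^1\times\mathbb{S}^{n-2}))$ via a concrete invariant (built from the Hatcher--Wagoner--Igusa secondary obstruction theory and transfer to cyclic covers), not by an abstract comparison of fibre sequences. Your proposal identifies the obstacle correctly but does not overcome it; to complete the argument you would need either to carry out that detection, or to prove directly that the relevant stable homotopy classes survive the map $A(\mathbb{S}^1)\to K(\mathbb{Z}[\mathbb{Z}])$ induced by linearization (which governs the smooth-to-topological comparison in this range).
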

\begin{definition}\rm{
Let $\mathcal{D}(M)$ be the group $\mathbb{R}^{+}\times DIFF(M)$. The group $\mathcal{D}(M)$ acts on $\mathcal{MET}(M)$ by scaling and pulling-back metrics: $(\lambda, \phi)g=\lambda(\phi^{-1})^{*}g = \lambda \phi_{*}g$, for $g\in \mathcal{MET}(M)$ and $(\lambda, \phi)\in \mathcal{D}(M)$. The quotient space $\mathcal{M}(M) = \mathcal{MET}(M)/\mathcal{D}(M)$ is called the moduli space of metrics on $M$.}
\end{definition}
Let us go back to dimension two for a moment and let $\Sigma_g$ be an orientable two-dimensional manifold of genus $g > 1$. Recall that uniformization techniques (see \cite{EE69}, or, more recently, Hamilton's Ricci flow \cite{Ham88}) show that every Riemannian metric on $\Sigma_g$, $g > 1$, can be canonically deformed to a hyperbolic metric. Moreover, Hamilton's
Ricci flow \cite{Ham88} shows that every negatively curved metric on $\Sigma_g$, $g > 1$ can be canonically deformed (through negatively curved metrics) to a hyperbolic metric. Hence the space of all hyperbolic metrics on $\Sigma_g$ is canonically a deformation retract of the space of all negatively
curved Riemannian metrics on $\Sigma_g$. This deformation commutes with the action of $DIFF(\Sigma_g)$ (this is true at least for the Ricci flow), therefore, the Teichm¨uller space of $\Sigma_g$ is canonically a deformation retract of the space which is the quotient of all negatively curved
Riemannian metrics on $\Sigma_g$ by the action of the group of all smooth self-diffeomorphisms which are homotopic to the identity. Also, instead of considering the space of all negatively curved metrics we can consider the space of all pinched negatively curved metrics, or for
that matter, the space of all Riemannian metrics. These are the concepts that F. T. Farrell and P. Ontaneda  generalized in \cite{FO09}. Next, we give detailed definitions and introduce some notation.
\begin{definition}\rm{
Let $M$ be a closed smooth manifold. We denote by $DIFF_0(M)$ the subgroup of $DIFF(M)$ of all smooth diffeomorphisms of $M$ which are homotopic to the identity $I_{M}$ and by $\mathcal{D}_0(M)$ the group $\mathbb{R}^{+}\times DIFF_0(M)$. In \cite{FO09}, the Teichm¨uller space of metrics on $M$ is defined as the quotient space $\mathcal{T}(M)=\mathcal{MET}(M)/\mathcal{D}_0(M)$. Given $0\leq \epsilon \leq \infty $, let $\mathcal{MET}^{\epsilon}(M)$ denote the space of all $\epsilon$-pinched negatively curved Riemannian metrics on $M$, that is, $g\in \mathcal{MET}^{\epsilon}(M)$ if and only if there is a positive real number $\lambda$ such that $\lambda g$ has all its sectional curvatures in the interval $[−(1+\epsilon), -1]$. Note that a 0-pinched metric is a metric of constant negative sectional curvature and an $\infty$-pinched metric is just a negatively curved Riemannian metric. The quotient space $\mathcal{M}^{\epsilon}(M) = \mathcal{MET}^{\epsilon}(M)/\mathcal{D}(M)$ is called the moduli space of $\epsilon$-
pinched negatively curved metrics on $M$. Denote by $\kappa$ the quotient map $\mathcal{MET}^{\epsilon}(M) \to \mathcal{M}^{\epsilon}(M)$. Also, $\mathcal{T}^{\epsilon}(M)=\mathcal{MET}^{\epsilon}(M)/\mathcal{D}_0(M)$ is called the Teichm¨uller space of $\epsilon$-pinched negatively curved metrics on $M$. In particular, $\mathcal{T}^{\infty}(M)$ is the Teichm¨uller space of all negatively curved metrics on $M$. }
\end{definition}
Note that the inclusions $\mathcal{MET}^{\epsilon}(M)\hookrightarrow \mathcal{MET}(M)$ induce the inclusions $\mathcal{T}^{\epsilon}(M) \hookrightarrow \mathcal{T}(M)$. Also note that, for $\delta \geq \epsilon$, these inclusions factor as follows: $\mathcal{MET}^{\epsilon}(M) \hookrightarrow \mathcal{MET}^{\delta}(M)\hookrightarrow \mathcal{MET}^{\epsilon}(M)$  and $\mathcal{T}^{\epsilon}(M) \hookrightarrow \mathcal{T}^{\delta}(M) \hookrightarrow \mathcal{T}(M)$.
\begin{remark}\label{remtech}\rm{
\indent 
 \begin{itemize}
\item [\bf{1.}] If $M_g$ is an orientable two-dimensional manifold of genus $g>1$, then the original Teichm¨uller space of $M_g$ is denoted (in our notation) by $\mathcal{T}^{0}(M_g)$ and $\mathcal{T}^{0}(M_g)$ is homeomorphic to $\mathbb{R}^{6g-6}$ (see \cite{EL88}). Hence $\mathcal{T}^{0}(M_g)$ is contractible. By the uniformization techniques mentioned above (\cite{EE69, Ham88}), it follows that $\mathcal{T}^{\epsilon}(M_g)$, $\mathcal{T}^{\infty}(M_g)$, $\mathcal{T}(M_g)$ are all contractible. (This is also true for non-orientable surfaces of Euler characteristic $< 0$.)
\item [\bf{2.}] Let $M$ be a closed hyperbolic manifold. If $\dim M\geq 3$, Mostow’s Rigidity Theorem implies that $\mathcal{T}^{0}(M)=\star $ ; ie.  $\mathcal{T}^{0}(M)$ contains exactly one point. Therefore, $\mathcal{MET}^0(M)=\mathcal{D}_0(M)$. It also follows (see Remark (1) above) that $\mathcal{T}^{0}(M)$ is contractible when $\dim M\geq 2$.
\item [\bf{3.}] In dimensions two and three it is known that $\mathcal{D}_0(M)$ (and hence $\mathcal{MET}^0(M)$) is contractible. (This is due to Earle and Eells \cite{EE69} in dimension two and to Gabai \cite{Gab01} in dimension three.) This is certainly false in dimensions $\geq 11$, because $\pi_0(\mathcal{D}_0(M))$ is not finitely generated (see \cite[Cor.10.16 and 10.28]{FJ89}), and F. T. Farrell and P. Ontaneda \cite{FO09} conjectured that $\mathcal{D}_0(M)$ is also not contractible for dimension $n$, $5\leq n \leq 10$.
 \end{itemize}}
 \end{remark}
\begin{lemma}\label{freediff}\cite{FO09}
If $M$ is aspherical and the center of $\pi_1M$ is trivial, then the action of $DIFF_0(M)$ and $\mathcal{D}_0(M)$ on $\mathcal{MET}(M)$ is free.
\end{lemma}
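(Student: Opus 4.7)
The argument breaks into four steps, the last of which is the essential one.

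First, I reduce to showing that the $DIFF_0(M)$-action is free. If $(\lambda,\phi)\in\mathcal{D}_0(M)$ satisfies $\lambda(\phi^{-1})^{*}g=g$, then comparing total Riemannian volumes gives $\lambda^{n/2}\mathrm{Vol}(M,g)=\mathrm{Vol}(M,g)$, so $\lambda=1$. It then suffices to prove: if $\phi\in DIFF_0(M)$ and $\phi_{*}g=g$, then $\phi=id_M$.

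Second, under this hypothesis $\phi$ is an isometry of $(M,g)$, and $G:=\mathrm{Isom}(M,g)$ is a compact Lie group by Myers--Steenrod. I apply the Conner--Raymond theorem: any smooth effective $S^1$-action on a closed aspherical $M$ yields an injection $\pi_1(S^1)=\mathbb{Z}\hookrightarrow Z(\pi_1 M)$. Since $Z(\pi_1 M)=1$, the identity component $G_0$ cannot contain a circle, so $G_0=\{e\}$ and $G$ is finite. In particular $\phi^n=id_M$ for some $n\geq 1$.

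Third, I construct a canonical lift. Choose a homotopy $H:M\times I\to M$ with $H_0=id_M$ and $H_1=\phi$, and lift it uniquely to $\widetilde H:\widetilde M\times I\to\widetilde M$ with $\widetilde H_0=id_{\widetilde M}$; set $\widetilde\phi:=\widetilde H_1$. For any $\gamma\in\pi_1(M)$, the homotopy $\gamma\widetilde H\gamma^{-1}$ also starts at $id_{\widetilde M}$ and covers $H$, so by uniqueness of lifts $\gamma\widetilde H\gamma^{-1}=\widetilde H$; thus $\widetilde\phi$ commutes with every deck transformation. Then $\widetilde\phi^{\,n}$ is a deck transformation that centralizes $\pi_1 M$, so $\widetilde\phi^{\,n}\in Z(\pi_1 M)=\{1\}$, and $\widetilde\phi^{\,n}=id_{\widetilde M}$.

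Fourth, I show $\widetilde\phi=id_{\widetilde M}$, which gives $\phi=id_M$. Assume for contradiction that $n\geq 2$. Because $\widetilde\phi$ commutes with the $\pi_1 M$-action, the group $\Gamma:=\pi_1 M\times\langle\widetilde\phi\rangle$ acts on the contractible manifold $\widetilde M$ cocompactly. In the clean case where $\widetilde\phi$ acts freely on $\widetilde M$, the action of $\Gamma$ is free, properly discontinuous, and cocompact, so $\widetilde M/\Gamma=M/\langle\phi\rangle$ is a closed aspherical manifold whose fundamental group $\pi_1 M\times\mathbb{Z}/n$ contains torsion --- contradicting the standard fact that closed aspherical manifolds have torsion-free fundamental group (a $K(\mathbb{Z}/n,1)$ cannot be finite-dimensional). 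The main obstacle is the remaining case, in which $\widetilde\phi$ has a fixed point. Here I would use the following: since $\widetilde\phi$ is an isometry of finite prime-power order (on passing to $\widetilde\phi^{n/p}$ for a prime $p\mid n$), Smith theory implies $\mathrm{Fix}(\widetilde\phi^{n/p})$ is a nonempty $\mathbb{Z}/p$-acyclic, hence connected, totally geodesic submanifold, and it is $\pi_1 M$-invariant since $\widetilde\phi^{n/p}$ commutes with $\pi_1 M$. Combining this with Borel's theorem on finite group actions on closed aspherical manifolds (every such action induces an injection into $\mathrm{Out}(\pi_1 M)$ modulo kernel controlled by $Z(\pi_1 M)$) reduces this case to the free one and produces the contradiction with $Z(\pi_1 M)=1$.
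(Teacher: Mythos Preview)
Your proof is correct in outline, but it is far more elaborate than necessary, and at the crucial moment it invokes precisely the theorem that the paper uses as a one-line argument.

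The paper's proof proceeds as follows. The isotropy group $H$ of $g$ under $DIFF_0(M)$ is exactly $\mathrm{Isom}_0(M,g)$, the isometries of $(M,g)$ homotopic to the identity; in particular $H$ is compact. One then quotes the Borel--Conner--Raymond theorem (\cite[p.~43]{CR77}): on a closed aspherical manifold with $Z(\pi_1M)=1$, the natural map $DIFF(M)\to\mathrm{Out}(\pi_1M)$ is injective on every compact subgroup. Since $H\subset DIFF_0(M)$, its image in $\mathrm{Out}(\pi_1M)$ is trivial, hence $H$ is trivial. The scaling factor is handled afterwards (your volume argument works here).

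Now compare with your approach. Your Steps~2--3 establish that $\phi$ has finite order and admits a lift $\widetilde\phi$ commuting with deck transformations with $\widetilde\phi^{\,n}=\mathrm{id}$. In Step~4 you attempt a case split, but the ``fixed point'' case is exactly where the difficulty lies, and there you explicitly appeal to ``Borel's theorem on finite group actions on closed aspherical manifolds (every such action induces an injection into $\mathrm{Out}(\pi_1M)$\ldots).'' That is the Borel--Conner--Raymond theorem --- the same black box the paper applies directly to the full isotropy group. Once you are willing to cite it, all of Steps~2--4 become superfluous: you may apply it to the compact group $H$ immediately.

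Two smaller points. First, your ``free case'' split is phrased as $\widetilde\phi$ acting freely on $\widetilde M$, but the quotient $\widetilde M/\Gamma = M/\langle\phi\rangle$ is a manifold only when $\phi$ acts freely on $M$; these conditions are not equivalent, so the case division should be made downstairs. Second, the sentence ``Combining this with Borel's theorem\ldots\ reduces this case to the free one'' is not an argument; Borel's theorem by itself already gives the contradiction, with no reduction needed.
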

\begin{proof}
Let $g\in \mathcal{MET}(M)$. Note that the isotropy group $H =\{\phi \in DIFF_0(M), \phi g = g\}$ of the action of $DIFF_0(M)$ at $g$ is $Isom_0(M, g)$, the group of all isometries of the Riemannian manifold $(M, g)$ that are homotopic to the identity. Hence this isotropy group $H$ is compact.
Let $\gamma : DIFF(M)\to \rm{Out}(\pi_1M)$ be the homomorphism induced by $\phi \to \phi_{*}$. Borel-Conner Raymond showed (see \cite[p. 43]{CR77}) that under the assumptions above, $\gamma$ restricted to compact subgroups is monic. But $\gamma(H)$ is trivial, since every element in $DIFF_0(M)$ is, by definition, homotopic to the identity. It follows that $H$ is trivial. Hence the action of $DIFF_0(M)$ is free. Therefore, the action of $\mathcal{D}_0(M)$ is also free. This proves the Lemma.
\end{proof}
\begin{remark}\rm{
\indent
\begin{itemize}
\item[\bf{1.}] Let $M$ be a hyperbolic manifold. Then the action of $\mathcal{D}_0(M)$ on $\mathcal{MET}(M)$ is free. Since $\mathcal{MET}(M)$ is contractible by Ebin’s Slice Theorem \cite{Ebi68} we have that $\mathcal{D}_0(M)\to \mathcal{MET}(M)\to \mathcal{T}(M)$ is a principal $\mathcal{D}_0(M)$-bundle and $\mathcal{T}(M)$ is the classifying space $B\mathcal{D}_0(M)$ of $\mathcal{D}_0(M)$. Since $\mathcal{T}(M)$ is homotopy equivalent to $\mathcal{MET}(M)/ DIFF0(M)$ we can also write $B(DIFF_0(M))=\mathcal{T}(M)$.
\item [\bf{2.}] Therefore, if $M$ is a closed hyperbolic manifold then $\mathcal{MET}^{\epsilon}(M)$ interpolates between $\mathcal{MET}^0(M)$ (which is equal to $\mathcal{D}_0(M)$) and $\mathcal{MET}(M)$ (which is contractible). Likewise $\mathcal{T}^{\epsilon}(M)$ interpolates between $\mathcal{T}(M)$ (which is equal to $B\mathcal{D}_0(M)$) and $\mathcal{T}^{0}(M)$ (which is contractible). Schematically, we have the following diagram:
\begin{equation}\label{diagtech}
\begin{array}[c]{ccccccc}
\mathcal{MET}^{0}(M)&\hookrightarrow&\mathcal{MET}^{\epsilon}(M)&\hookrightarrow&\mathcal{MET}^{\infty}(M)&\hookrightarrow&\mathcal{MET}(M)\\
\downarrow&&\downarrow&&\downarrow&&\downarrow\\
\mathcal{T}^{0}(M)&\hookrightarrow&\mathcal{T}^{\epsilon}(M)&\hookrightarrow&\mathcal{T}^{\infty}(M)&\hookrightarrow&\mathcal{T}(M)
\end{array}
\end{equation}
All vertical arrows represent quotient maps by the action of the group $\mathcal{D}_0(M)$.
The main result of Farrell and Ontaneda \cite{FO09} states that for a hyperbolic manifold the last two horizontal arrows of the lower row of the diagram above are not in general homotopic to a constant map. In particular, $\mathcal{T}^{\epsilon}$ $0\leq \epsilon \leq \infty$ is in general not contractible. More specifically, Farrell and Ontaneda \cite{FO09} proved that under certain conditions on the dimension n of the hyperbolic manifold $M$, the manifold $M$ has a finite cover $N$ (which depends on ) such that $\pi_k(\mathcal{T}^{\epsilon}(N))\to \pi_k(\mathcal{T}(N))$ is non-zero. In particular, $\mathcal{T}^{\epsilon}(N)$ is not contractible. The requirements on the dimension $n$ are implied by one of the following conditions: $n$ is larger than some constant $n_0(4)$ or $n$ is larger than 5 but in this last case we need that $\Theta_{n+1}\neq 0$. Here is a more detailed statement of Farrell and Ontaneda \cite{FO09} main result :
\end{itemize}}
\end{remark}
\begin{theorem}\label{techmu}
For every integer $k_0\geq 1$ there is an integer $n_0 = n_0(k_0)$ such that the following holds. Given $\epsilonǫ> 0$ and a closed real hyperbolic n-manifold $M$ with $n\geq n_0$, there is a finite sheeted cover $N$ of $M$ such that, for every $1\leq k \leq k_0$ with $n + k\equiv 2~\rm{mod}~ 4$, the map $\pi_k(\mathcal{T}^{\epsilon}(N))\to \pi_k(\mathcal{T}(N))$, induced by the inclusion $\mathcal{T}^{\epsilon}(N)\hookrightarrow \mathcal{T}(N) $, is non-zero. Consequently $\pi_k(\mathcal{T}^{\epsilon}(N))\neq 0$. In particular, $\mathcal{T}^{\delta}(N)$ is not contractible, for every $\delta$ such that ǫ$\epsilon \leq \delta \leq \infty$ (provided $k_0\geq 4$).
\end{theorem}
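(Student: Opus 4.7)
The plan is to exploit the principal $\mathcal{D}_0(N)$-bundle structure highlighted just before the theorem. For a closed hyperbolic $N$ with $\dim N \ge 3$, Lemma \ref{freediff} (applied via Mostow Rigidity Theorem \ref{mostow}) gives that the action of $\mathcal{D}_0(N)$ on $\mathcal{MET}(N)$ is free, and $\mathcal{MET}(N)$ is contractible, so $\mathcal{T}(N) \simeq B\mathcal{D}_0(N)$ and $\pi_k(\mathcal{T}(N)) \cong \pi_{k-1}(\mathcal{D}_0(N)) \cong \pi_{k-1}(DIFF_0(N))$ for $k \ge 1$. The same action, restricted to the invariant subspace $\mathcal{MET}^{\epsilon}(N)$, still has free orbits and yields a quasi-fibration $\mathcal{D}_0(N) \to \mathcal{MET}^{\epsilon}(N) \to \mathcal{T}^{\epsilon}(N)$; under the above identification, the map of the theorem corresponds to its connecting homomorphism $\partial : \pi_k(\mathcal{T}^{\epsilon}(N)) \to \pi_{k-1}(DIFF_0(N))$. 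Proving non-vanishing of the inclusion on $\pi_k$ thus reduces to producing, for each admissible $k$, a class in $\pi_k(\mathcal{T}^{\epsilon}(N))$ whose $\partial$-image is non-zero.

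To produce such a class I mimic the strategy behind Theorem \ref{moduli} but perform the construction with pinched metrics from the outset. Using Theorem \ref{malcev} and the argument of the proof of Theorem \ref{hype.smooth}, pass to a finite-sheeted cover $N$ of $M$ containing a framed closed geodesic $\gamma$ whose normal tube has radius larger than the constant demanded by Lemma \ref{pinmetric}. With $\Phi = \Phi^{N}(\gamma, V, r)$ as defined before Theorem \ref{moduthe1}, select for each $1 \le k \le k_0$ with $n+k \equiv 2 \pmod 4$ a class $\phi \in \pi_{k-1}\bigl(DIFF(\mathbb{S}^{1}\times \mathbb{S}^{n-2}\times I,\partial)\bigr)$ which is non-trivial in $\pi_{k-1}\bigl(P(\mathbb{S}^{1}\times \mathbb{S}^{n-2})\bigr)$ under $\iota$; existence of such $\phi$ for $n \ge n_0(k_0)$ is a refinement of Theorem \ref{moduthe2}, the parity condition $n+k \equiv 2 \pmod 4$ arising as the eigenvalue constraint for the Vogell/Hatcher--Wagoner involution on stable smooth pseudo-isotopy space under which the required Waldhausen/Borel classes survive. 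Theorem \ref{moduthe1} then ensures that $\Phi_{*}\phi \in \pi_{k-1}(DIFF_0(N))$ is again non-zero.

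The final step is to lift $\Phi_{*}\phi$ across $\partial$, i.e.\ to build a map $D^{k} \to \mathcal{MET}^{\epsilon}(N)$ whose restriction to $S^{k-1}$ is the $\mathcal{D}_0(N)$-orbit map $s \mapsto (\Phi\phi)_{s*} g$. The crucial point is that $\phi$ is itself a family of \emph{pseudo-isotopies} on $\mathbb{S}^{1}\times \mathbb{S}^{n-2}$, namely a map $S^{k-1} \to DIFF(\mathbb{S}^{1}\times \mathbb{S}^{n-2}\times I,\partial)$; identifying $I$ with the radial coordinate of the subtube $\mathbb{S}^{1}\times \mathbb{S}^{n-2}\times [r,2r] \subset N$, each pseudo-isotopy extends naturally over the cone on $S^{k-1}$ via a parametrized collar construction, producing a $D^{k}$-family of coordinate changes on this subtube that agree with the identity along its boundary. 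Pushing forward the standard warped-product hyperbolic metric on the tube (Properties 1 and 2 from the proof of Theorem \ref{hype.metric}) by this $D^{k}$-family, and gluing to $g$ outside the tube, yields a $D^{k}$-family of metrics on $N$; a parametrized version of Lemma \ref{pinmetric}, valid because $r$ was chosen large at the outset, keeps every sectional curvature in $(-1-\epsilon,-1+\epsilon)$, so the family lies in $\mathcal{MET}^{\epsilon}(N)$. A diagram chase in the fibration identifies its $\partial$-image in $\pi_{k-1}(DIFF_0(N))$ with $\Phi_{*}\phi$, which is non-zero, proving $\pi_k(\mathcal{T}^{\epsilon}(N)) \to \pi_k(\mathcal{T}(N))$ is non-zero.

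The main obstacle is the parametrized pinched extension in the last paragraph: Lemma \ref{pinmetric} is a pointwise statement, but here it must be applied uniformly across a $k$-dimensional disk of pseudo-isotopy coordinate changes while maintaining $C^{2}$-control of the pushed-forward metrics. The dimensional bound $n \ge n_0(k_0)$ is dictated both by this uniform-control requirement (which forces the tube width to grow with $k$) and by the stability range needed to realize the stable pseudo-isotopy classes at the unstable level $\pi_{k-1}$ via a version of Igusa's surjective stability theorem; this interplay is what governs the asymptotic growth of $n_0(k_0)$.
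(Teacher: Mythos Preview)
The survey does not actually prove this theorem; it is quoted from \cite{FO09} without argument, so there is no in-paper proof to compare against. Independently of that, your proposal contains a self-defeating step.

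Your plan is to choose $\phi\in\pi_{k-1}\bigl(DIFF(\mathbb{S}^1\times\mathbb{S}^{n-2}\times I,\partial)\bigr)$ with $\iota_*\phi\neq 0$, invoke Theorem~\ref{moduthe1} to get $\Phi_*\phi\neq 0$ in $\pi_{k-1}(DIFF_0(N))$, and then lift $\Phi_*\phi$ through the connecting map $\partial\colon\pi_k(\mathcal{T}^{\epsilon}(N))\to\pi_{k-1}(DIFF_0(N))$ by extending the orbit map $s\mapsto(\Phi\phi)_{s*}g$ over $D^{k}$ inside $\mathcal{MET}^{\epsilon}(N)$. But such an extension is exactly a null-homotopy of $(\Lambda_g)_*(\Phi_*\phi)$ in $\pi_{k-1}(\mathcal{MET}^{\epsilon}(N))$, and Theorem~\ref{moduthe1} asserts precisely that this class is \emph{non}-zero: since $\iota_*\phi\neq 0$ we get $(\Lambda_g\Phi)_*\phi\neq 0$ in $\pi_{k-1}(\mathcal{MET}^{sec<0}(N))$, and as $g$ is hyperbolic the map $\Lambda_g$ already lands in $\mathcal{MET}^{\epsilon}(N)\hookrightarrow\mathcal{MET}^{sec<0}(N)$, so the class is non-zero in $\pi_{k-1}(\mathcal{MET}^{\epsilon}(N))$ as well. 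Hence $\Phi_*\phi\notin\operatorname{im}\partial$ and your disk cannot exist. This is not a technical gap in the ``parametrized Lemma~\ref{pinmetric}'' you flag at the end; it is a contradiction between the two halves of the argument. The paper itself records this phenomenon in the remarks following Theorem~\ref{hotech}: all the classes produced by the Theorem~\ref{moduthe1}/\ref{moduthe2} machinery die in $\mathcal{T}^{sec<0}$.

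Relatedly, the dimensional condition $n+k\equiv 2\pmod 4$ in Theorem~\ref{techmu} is not the $k=2p-4$ pattern of Theorem~\ref{moduthe2}, and your appeal to a ``Hatcher--Wagoner involution eigenvalue constraint'' does not bridge the two. The input actually used in \cite{FO09} comes from exotic spheres and block-diffeomorphism/surgery theory rather than from the algebraic $K$-theory classes behind Theorem~\ref{moduthe2}; those surgery-theoretic elements in $\pi_{k-1}(DIFF_0(N))$ \emph{do} become null in $\mathcal{MET}^{\epsilon}(N)$ after passing to a cover with large injectivity radius (via a parametrized version of the Farrell--Jones pinching construction of Theorem~\ref{hype.metric}), which is exactly what permits the lift to $\pi_k(\mathcal{T}^{\epsilon}(N))$.
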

Here (and in the Corollary below) we consider the given hyperbolic metric as the basepoint for $\mathcal{T}^{\epsilon}(N)$, $\mathcal{T}(N)$. As a Corollary of (proof of the) Theorem \ref{techmu} we get :
\begin{corollary}\label{corrtechmu}\cite{FO09}
Let $M$ be a closed real hyperbolic manifold of dimension $n$, $n\geq 6$. Assume that $\Theta_{n+1}\neq 0$. Then for every $\epsilonǫ> 0$ there is a finite sheeted cover $N$ of $M$ such that $\pi_1(\mathcal{T}^{\epsilon}(N))\neq 0$. Therefore, $\mathcal{T}^{\epsilon}(N)$ is not contractible.
\end{corollary}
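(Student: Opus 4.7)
The plan is to prove Corollary \ref{corrtechmu} by adapting the argument underlying Theorem \ref{techmu} to the case $k=1$, using the hypothesis $\Theta_{n+1}\neq 0$ as a direct substitute for the congruence $n+k\equiv 2\pmod 4$ and the large-$n$ bound $n_0(k_0)$ of the main theorem. First I would compute $\pi_1(\mathcal{T}(N))$: by Lemma \ref{freediff} the $\mathcal{D}_0(N)$-action on $\mathcal{MET}(N)$ is free, and Ebin's slice theorem combined with the contractibility of $\mathcal{MET}(N)$ makes $\mathcal{D}_0(N)\to\mathcal{MET}(N)\to\mathcal{T}(N)$ a principal bundle with contractible total space, so $\mathcal{T}(N)\simeq B\mathcal{D}_0(N)\simeq B\,DIFF_0(N)$ and hence $\pi_1(\mathcal{T}(N))\cong\pi_0(DIFF_0(N))$.

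Next I would manufacture a non-trivial element of $\pi_0(DIFF_0(N))$ out of a chosen $[\Sigma]\neq 0$ in $\Theta_{n+1}$. Regarding $\Sigma^{n+1}$ as an exotic smooth structure on $N\times I$ relative to $N\times\{0\}\cup \partial N\times I$ produces a pseudo-isotopy $F_\Sigma$ whose top-level restriction $\phi_\Sigma = F_\Sigma|_{N\times\{1\}}$ sits in $DIFF_0(N)$. To see that $[\phi_\Sigma]\neq 0$, I would first pass via Theorem \ref{sullivan} to a stably parallelizable finite sheeted cover, so that the Brumfiel--Browder argument of Theorem \ref{brum} applies and sends $[\Sigma]$ to a non-zero concordance class under $f_N^\ast$. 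The rigidity/$s$-cobordism reasoning used in Lemma \ref{Lemma 2.1} (applied to $N\times I$ in place of $N$) then upgrades this concordance-level statement to genuine non-triviality of the pseudo-isotopy class, and I would absorb any further ambiguity into the passage from $\pi_0(P^{diff}(N))$ to $\pi_0(DIFF_0(N))$ by taking one more finite cover if necessary.

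The heart of the argument is to realize $[\phi_\Sigma]$ as lying in the image of the map $\pi_1(\mathcal{T}^\epsilon(N))\to\pi_1(\mathcal{T}(N))$; equivalently, I would produce a path in $\mathcal{MET}^\epsilon(N)$ from the hyperbolic metric $g$ to $(\phi_\Sigma)^\ast g$. To this end I would select a short closed geodesic $\gamma$ in $N$ with an orthonormal normal framing, choose a one-parameter family $\phi_t:\mathbb{S}^{n-2}\to\mathbb{S}^{n-2}$ of orientation-preserving diffeomorphisms realising a loop whose clutching datum corresponds to $[\Sigma]\in\pi_0(\mathrm{Diff}^+(\mathbb{S}^n))\cong\Theta_{n+1}$, and apply the Farrell--Jones pinched-metric construction of Theorem \ref{hype.metric} in parametrized form along this loop. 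The uniform curvature estimates of Lemma \ref{pinched} ensure every metric produced is $\epsilon$-pinched, once the injectivity radius at $\gamma$ exceeds the threshold $3\alpha(\epsilon,n)$ of Theorem \ref{hype.metric}; residual finiteness of $\pi_1 N$ (Theorem \ref{malcev}), applied as in the proof of Theorem \ref{hype.smooth}, lets me pass to a deeper finite cover to guarantee this.

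The main obstacle is the parametrized construction in the last step. Theorem \ref{hype.metric} builds a single $\epsilon$-pinched metric by a delicate local surgery, and I must check two things at the family level: that the one-parameter family of surgeries glues together into a continuous loop of metrics $g_t$ in $\mathcal{MET}^\epsilon(N)$ rather than merely a family of isolated constructions, and that the resulting loop $[g_t]\in\pi_1(\mathcal{T}^\epsilon(N))$ maps, under $\pi_1(\mathcal{T}^\epsilon(N))\to\pi_1(\mathcal{T}(N))\cong\pi_0(DIFF_0(N))$, to the class $[\phi_\Sigma]$ already shown to be non-zero. This monodromy identification — matching the topological $\pi_0(DIFF_0(N))$-invariant of a loop of pinched metrics with the pseudo-isotopy class determined by the clutching data of $\Sigma$ — is the delicate core of the argument and is precisely where the hypothesis $\Theta_{n+1}\neq 0$ is consumed.
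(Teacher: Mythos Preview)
The survey itself gives no proof of this corollary; it is asserted as a direct consequence of the argument in \cite{FO09} behind Theorem \ref{techmu}, so there is no paper-internal proof to compare against. Your overall architecture does match that of \cite{FO09}: compute $\pi_1(\mathcal{T}(N))\cong\pi_0(DIFF_0(N))$, manufacture a non-trivial class there from $\Sigma\in\Theta_{n+1}$, and realise it as the image of a loop in $\mathcal{T}^\epsilon(N)$ built by a parametrized pinching construction. However, two of your steps contain dimension errors that would cause the argument to fail as written. In Step~2, Theorem \ref{brum} concerns $f_M^*:\Theta_n\to\mathcal{C}(M^n)$ for a closed $n$-manifold, whereas your exotic sphere lies in $\Theta_{n+1}$; the theorem simply does not apply. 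What is actually needed is injectivity (on a suitable cover) of the homomorphism $\Theta_{n+1}\cong\pi_0(\mathrm{Diff}(\mathbb{D}^n,\partial))\to\pi_0(DIFF_0(N))$ given by extension by the identity outside an embedded ball, and this requires a separate argument.

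In Step~3, a loop $\phi_t$ in $\mathrm{Diff}^+(\mathbb{S}^{n-2})$ (the normal sphere of a closed geodesic) cannot encode an element of $\Theta_{n+1}$: the connecting homomorphism of the restriction fibration sends $\pi_1(\mathrm{Diff}^+(\mathbb{S}^{n-2}))$ to $\pi_0(\mathrm{Diff}(\mathbb{D}^{n-1},\partial))\cong\Theta_n$, not $\Theta_{n+1}$. To reach $\Theta_{n+1}$ one must use $\mathbb{S}^{n-1}$, the boundary sphere of a metric ball about a \emph{point} (precisely the setting of Theorem \ref{hype.metric}), not of a geodesic; and even then the connecting map $\pi_1(\mathrm{Diff}^+(\mathbb{S}^{n-1}))\to\Theta_{n+1}$ need not be surjective, so an arbitrary non-zero $[\Sigma]$ may not arise from any such loop. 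Resolving this lifting problem and carrying out the monodromy identification you rightly flag as the delicate core is exactly the content of \cite{FO09} that the survey invokes but does not reproduce.
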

\begin{remark}\rm{
\indent
 \begin{itemize}
\item[\bf{1.}] Since $\mathcal{MET}(M)$ is contractible, Theorem \ref{techmu} implies that, for a general hyperbolic manifold $M$, the map $\pi_k(\mathcal{MET}^{\epsilon}(M))\to \pi_k(\mathcal{T}^{\epsilon}(M))$, induced by the second vertical arrow of the diagram, is not onto for some $k$.
\item[\bf{2.}] By Remark \ref{remtech} (1), the lower row of the diagram above is homotopically trivial in dimension 2. In dimension 3 one could ask the same: is the lower row of the diagram above homotopically trivial in dimension 3?. In view of a result of Gabai (see \cite{Gab01}), this is equivalent to asking: is $\mathcal{T}^{\infty}(M)$ contractible?.
\item [\bf{3.}] Let $M$ be a hyperbolic manifold. Consider the upper row of the diagram (\ref{diagtech}). It follows from a result of Ye on the Ricci flow (see \cite{Ye93}) that, provided the dimension of $M$ is even, there is an ǫ$\epsilon_0 = \epsilon_0(M) > 0$ such that for all ǫ$\epsilon\leq \epsilon_0$ the inclusion map $\mathcal{MET}^{\epsilon}\to \mathcal{MET}^{\infty}$ is $\mathcal{D}_0(M)$-equivariantly homotopic to a retraction $\mathcal{MET}^{\epsilon}\to \mathcal{MET}^{0}(M)\subset  \mathcal{MET}^{\infty}$. This has the following consequences. First the retraction above descends to a retraction $\mathcal{T}^{\epsilon}(M)\to \mathcal{T}^{0}(M)$, hence the inclusion map $\mathcal{T}^{\epsilon}(M)\to \mathcal{T}^{\infty}(M)$ is homotopic to a constant map (provided $\epsilon\leq \epsilon(M)$), and hence induces the zero homomorphism $\pi_k(\mathcal{T}^{\epsilon}(M))\to \pi_k(\mathcal{T}(M))$ for all $k$. Second, the inclusion map $\mathcal{D}_0(M)=\mathcal{MET}^0(M)\to \mathcal{MET}^{\epsilon}(M)$ 
induces monomorphisms $\pi_k(\mathcal{D}_0(M))=\pi_k(\mathcal{MET}^0(M))\to \pi_k(\mathcal{MET}^{\epsilon}(M))$, provided $\epsilon \leq \epsilon(M)$. Theorem \ref{techmu} then shows that in many cases ǫ$\epsilon_0(M) < \infty$.
\item [\bf{4.}] Let $M$ be a hyperbolic manifold. Since $DIFF(M)/DIFF_0(M)\cong \rm{Out}(\pi_1(M))$ we have
that $\mathcal{M}(M)\cong \mathcal{T}(M)/\rm{Out}(\pi_1(M))$ or, in general, $\mathcal{M}^{\epsilon}(M)\cong \mathcal{T}^{\epsilon}(M)/\rm{Out}(\pi_1(M)$. Note that $\rm{Out}(\pi_1(M))$ is a finite group, provided $\dim M \geq 3$.
\end{itemize}}
\end{remark}
Recall that Smooth bundles over a space $X$, with fiber $M$, modulo smooth equivalence, are classified by $[X, B(DIFF(M))]$, the set of homotopy classes of (continuous) maps from $X$ to the classifying space $B(DIFF(M))$. If we assume that $X$ is simply connected, then we obtain a reduction in the structural group of these bundles: smooth bundles over a simply connected space $X$, with fiber $M$, modulo smooth equivalence, are classified by $[X,B(DIFF_0(M))]=[X,\mathcal{T}(M)]$. Also, bundles with negatively curved fibers over a (simply connected) space $X$, modulo negatively curved equivalence, are classified by $[X, \mathcal{T}^{sec<0}(M)]$. And the inclusion map $F: \mathcal{T}^{sec<0}(M)\to \mathcal{T}(M)$ gives a relationship between the two bundle theories: $$F_X: [X, \mathcal{T}^{sec<0}(M)]\to[X, \mathcal{T}(M)]$$ and the map $F_X$ is the “forget the negatively curved structure” map. The “kernel”
$\mathcal{K}_X$ of this map between the two bundle theories is given by bundles over $X$, with negatively curved fibers, that are smoothly trivial. Every bundle in $\mathcal{K}_X$ can be represented by the choice of a negatively curved metric on each fiber of the trivial bundle $X\times M$, that is, by a map $X\mapsto \mathcal{MET}^{sec<0}(M)$. Note that this representation is not unique, because smoothly equivalent representations give rise to the same
bundle with negatively curved fibers. In any case, we have that $\mathcal{K}_X$ is the image of $[X,\mathcal{MET}^{sec<0}(M)]$ by the map 
$[X, \mathcal{MET}^{sec<0}(M)]\to [X, \mathcal{T}^{sec<0}(M)]$, induced by the quotient map $\mathcal{MET}^{sec<0}(M)\mapsto \mathcal{T}^{sec<0}(M)$. Note that we can think of $[X, \mathcal{MET}^{sec<0}(M)]$ as a bundle theory in the same way as $[X, \mathcal{T}^{sec<0}(M)]$ is a bundle theory. Summarizing, we get the following exact sequence of bundle theories:
\begin{equation}\label{forget}
[X, \mathcal{MET}^{sec<0}(M)]\stackrel{R_X}{\longrightarrow}[X, \mathcal{T}^{sec<0}(M)]\stackrel{F_X}{\longrightarrow}[X, \mathcal{T}(M)]
\end{equation}
where the map $R_X$ is the “representation map”: for $E\in \mathcal{K}_X$, $R^{-1}_X(E)$ is the set of representations of $E$ of the form $\phi : X\to \mathcal{MET}^{sec<0}(M)$, i.e. “bundles” of the form $(X\times M, id)$ where the Riemannian metric on ${x}\times M$ is $\phi(x)$.
The following question is asked by Farrell and Ontaneda \cite{FO10a}:
\begin{question}
 Are $F_X$ and $R_X$ non-constant, one to one or onto ?
\end{question}
If, in Equation \ref{forget}, we specify $X=\mathbb{S}^k$, $k > 1$ (recall we are using basepoint preserving maps), we obtain $\pi_k(\mathcal{MET}^{sec<0}(M))\mapsto \pi_k(\mathcal{T}^{sec<0}(M))\mapsto \pi_k(\mathcal{T}(M))$. Some information about these maps between homotopy groups was given in Theorem \ref{moduli}, Addendum \ref{moduliadd}, Theorem \ref{techmu} and Corollary \ref{corrtechmu}:
\begin{remark}\rm{
\indent
\begin{itemize}
\item [\bf{1.}] It was proved in Theorem \ref{moduli} that $\pi_2(\mathcal{MET}^{sec<0}(M))$ is never trivial, provided \\
$\mathcal{MET}^{sec<0}(M)\not=\emptyset$ and $\dim M > 13$. But the nonzero elements in \\
$\pi_2(\mathcal{MET}^{sec<0}(M))$, constructed in Theorem \ref{moduli}, are mapped to zero by the map $\pi_2(\mathcal{MET}^{sec<0}(M))\mapsto \pi_2(\mathcal{T}^{sec<0}(M))$. Therefore, the representation map $R_{\mathbb{S}^2}$ in Equation \ref{forget} is never one-to-one, provided $\mathcal{MET}^{sec<0}(M)\not=\emptyset$ and $\dim M >13$.
\item[\bf{2.}] It was also proved in Addendum \ref{moduliadd} (assuming  $\mathcal{MET}^{sec<0}(M)\not=\emptyset$)\\
that $\pi_2(\mathcal{MET}^{sec<0}(M))$ contains the infinite sum $(\mathbb{Z}_3)^{\infty}$ as a subgroup, thus \\$\pi_2(\mathcal{MET}^{sec<0}(M))$ is not finitely generated. Moreover, it was proved that the same is true for $\pi_k(\mathcal{MET}^{sec<0}(M))$, for $k=2p-4$, $p > 2$ prime (with $(\mathbb{Z}_3)^{\infty}$ instead of $(\mathbb{Z}_3)^{\infty}$), provided $\dim M$ is large (how large depending on $k$). Furthermore, $\pi_1(\mathcal{MET}^{sec<0}(M))$ contains $(\mathbb{Z}_3)^{\infty}$, provided $\dim M > 11$. And all these elements constructed in Addendum \ref{moduliadd} map to zero in the corresponding homotopy group of $\mathcal{T}^{sec<0}(M)$.
\item[\bf{3}] The result mentioned in the remark (2) about $\pi_1(\mathcal{MET}^{sec<0}(M))$ also proves that the forget structure map $F_{\mathbb{S}^2}$ is not onto. To see this just glue two copies of $\mathbb{D}^2\times M$ along $\mathbb{S}^1$ using an element $\alpha\in \pi_1(DIFF_0(M))$ which maps to one of the non-trivial elements $\beta$ in $\pi_1(\mathcal{MET}^{sec<0}(M))$ constructed in Theorem  \ref{moduli}; $\alpha$ exists because of the homotopy exact sequence for the bundle $DIFF_0(M)\mapsto \mathcal{MET}^{sec<0}(M)\mapsto \mathcal{T}^{sec<0}(M)$ and the fact that $\beta$ maps to zero
in $\pi_1(\mathcal{T}^{sec<0}(M))$ (see remark (2) above). Thus, there are (nontrivial) smooth bundles $E$ over $\mathbb{S}^2$ which do not admit a collection of negatively curved Riemannian metrics on the fibers of $E$. Using the remark, the same is true for $\mathbb{S}^k$, $k = 2p-3$, $p > 2$.
\item[\bf{4.}] It was proved in Theorem \ref{techmu} that there are examples of closed hyperbolic manifolds for which $\pi_k(\mathcal{T}^{sec<0}(M))$ is nonzero. Here $M$ depends on $k$ and always $k > 0$. In Theorem \ref{techmu} no conclusion was reached on the case $k = 0$ (i.e. about the connectedness of $\mathcal{T}^{sec<0}(M))$. Also, the images of these elements by the inclusion map $\mathcal{T}^{sec<0}(M))\mapsto \mathcal{T}(M))$ are not zero. Hence the forget structure map $F_{\mathbb{S}^k}$ is, in general, not trivial. This means also that there are bundles with negatively curved fibers that are not smoothly trivial, i.e. the representation map $R_{\mathbb{S}^k}$ is not onto in these cases.
\item[\bf{5.}] In all the discussion above we can replace “negatively curved metrics” by “$\epsilon$-pinched negatively curved metrics” \cite{FO10a}. And also Farrell and Ontaneda proved the following result \cite{FO10a}:
\end{itemize}}
\end{remark}
\begin{theorem}\label{hotech}
The forget structure map $F_{\mathbb{S}^k}: \pi_k(\mathcal{T}^{sec<0}(M))\to \pi_k(\mathcal{T}(M))$ is, in general, not one-to-one, for $k=2p-4$, $p$ prime. Consequently $\mathcal{T}^{sec<0}(M)$ is, in general, not connected.
\end{theorem}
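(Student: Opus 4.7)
The plan is to compare the two principal $\mathcal{D}_0(M)$-bundles $\mathcal{MET}^{sec<0}(M) \to \mathcal{T}^{sec<0}(M)$ and $\mathcal{MET}(M) \to \mathcal{T}(M)$, the first coming from the free action of $\mathcal{D}_0(M)$ on negatively curved metrics (free by Lemma \ref{freediff}) and the second from the contractible total space. The homotopy long exact sequence of the second bundle gives $\pi_k(\mathcal{T}(M)) \cong \pi_{k-1}(\mathcal{D}_0(M))$, and naturality with respect to the inclusion $\mathcal{MET}^{sec<0}(M) \hookrightarrow \mathcal{MET}(M)$ identifies $F_{\mathbb{S}^k}$ with the connecting homomorphism $\partial$ of the first bundle. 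Consequently $\ker F_{\mathbb{S}^k} = \mathrm{im}\bigl[\pi_k(\mathcal{MET}^{sec<0}(M)) \to \pi_k(\mathcal{T}^{sec<0}(M))\bigr]$, so proving non-injectivity reduces to exhibiting a homotopy class in $\pi_k(\mathcal{MET}^{sec<0}(M))$ that is \emph{not} in the image of the orbit map $\pi_k(\Lambda_g) \colon \pi_k(\mathcal{D}_0(M)) \to \pi_k(\mathcal{MET}^{sec<0}(M))$.

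I would dispatch the case $p=2$ (so $k=0$), which yields the ``consequently'' clause, directly. Because $\mathcal{D}_0(M) = \mathbb{R}^+ \times DIFF_0(M)$ is path connected, $\pi_0(\mathcal{D}_0(M)) = 0$ and the lifting condition above is automatic; thus it suffices to show $\pi_0(\mathcal{MET}^{sec<0}(M)) \neq 0$. This is supplied by Theorem \ref{moduli}(i) together with Addendum \ref{moduliadd}(i), which produce a $(\mathbb{Z}_2)^\infty$ subgroup in the image of $\pi_0(\Lambda_g)$. Connectedness of $\mathcal{D}_0(M)$ also forces $\pi_0(\mathcal{MET}^{sec<0}(M)) \to \pi_0(\mathcal{T}^{sec<0}(M))$ to be bijective, so $\mathcal{T}^{sec<0}(M)$ has infinitely many path components, whereas $\mathcal{T}(M)$ is path connected (as $\mathcal{MET}(M)$ is). Hence $F_{\mathbb{S}^0}$ cannot be one-to-one.

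For $k = 2p-4 \geq 2$ with $p \geq 3$ prime, I would attempt to upgrade the $(\mathbb{Z}_p)^\infty$ subgroup of $\pi_k(\mathcal{MET}^{sec<0}(M))$ furnished by Addendum \ref{moduliadd}(iii) to a non-trivial image inside $\pi_k(\mathcal{T}^{sec<0}(M))$. The construction behind Theorems \ref{moduthe1}--\ref{moduthe2} uses the composite $\Lambda_g \circ \Phi$ in which $\Phi$ lands inside $DIFF_0(M)$; as it stands, the resulting classes therefore lift to $\pi_k(\mathcal{D}_0(M))$ and die in $\pi_k(\mathcal{T}^{sec<0}(M))$. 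To bypass this I would modify the normal-tube construction by varying the tube data $(\alpha, V, r)$ in a $k$-parameter family over $\mathbb{S}^k$, so that the resulting family of metrics descends to $\mathcal{T}^{sec<0}(M)$ but any attempted lift to a family of diffeomorphisms is obstructed. Non-triviality of the descended family in $\pi_k(\mathcal{T}^{sec<0}(M))$ would then be detected by its image under $\partial$ into $\pi_{k-1}(\mathcal{D}_0(M)) \cong \pi_k(\mathcal{T}(M))$, using the pseudo-isotopy invariants of Theorem \ref{moduthe2} together with passage to a finite cover $\widehat{M}$ (as in Theorem \ref{techmu}) and the Hatcher--Igusa--Waldhausen splitting of $K$-theory that underlies the $(\mathbb{Z}_p)^\infty$ contributions.

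The main obstacle is precisely that every non-trivial class in $\pi_k(\mathcal{MET}^{sec<0}(M))$ manufactured in the excerpt factors through the orbit map from $DIFF_0(M) \subset \mathcal{D}_0(M)$, so naively all such classes lie in $\ker[\pi_k(\mathcal{MET}^{sec<0}) \to \pi_k(\mathcal{T}^{sec<0})]$. Overcoming this requires a careful spectral-sequence (or direct boundary-map) argument in the fibration $\mathcal{D}_0(M) \to \mathcal{MET}^{sec<0}(M) \to \mathcal{T}^{sec<0}(M)$, together with passage to a finite cover whose depth may depend on $k$, to show that at the $p$-torsion level ($k = 2p-4$) an infinite summand of $\pi_k(\mathcal{MET}^{sec<0}(M))$ genuinely fails to come from $\pi_k(\mathcal{D}_0(M))$. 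Effecting this separation of ``geometric'' from ``$\mathcal{D}_0$-trivial'' contributions is the technical heart of the proof.
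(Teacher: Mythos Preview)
The paper does not prove this theorem; it merely states it and attributes it to \cite{FO10a}. There is thus no ``paper's own proof'' to compare against, only the surrounding discussion and the earlier results (Theorem~\ref{moduli}, Addendum~\ref{moduliadd}, Theorems~\ref{moduthe1}--\ref{moduthe2}) that you are trying to leverage.

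Your bundle-theoretic reduction is correct, and your treatment of the case $k=0$ is complete: since $\mathcal{D}_0(M)$ is connected, the quotient map induces a bijection $\pi_0(\mathcal{MET}^{sec<0}(M))\to\pi_0(\mathcal{T}^{sec<0}(M))$, and Theorem~\ref{moduli}(i) then gives non-connectedness of $\mathcal{T}^{sec<0}(M)$ while $\mathcal{T}(M)$ is connected. That settles the ``consequently'' clause.

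For $k=2p-4\geq 2$, however, your proposal contains an internal contradiction. You correctly reduce non-injectivity of $F_{\mathbb{S}^k}$ to producing a class in $\pi_k(\mathcal{MET}^{sec<0}(M))$ \emph{not} in the image of the orbit map $\pi_k(\mathcal{D}_0(M))\to\pi_k(\mathcal{MET}^{sec<0}(M))$; equivalently, a nonzero class in $\ker F_{\mathbb{S}^k}=\ker\partial=\mathrm{im}\bigl[\pi_k(\mathcal{MET}^{sec<0})\to\pi_k(\mathcal{T}^{sec<0})\bigr]$. But you then propose to detect the nontriviality of such a class ``by its image under $\partial$ into $\pi_{k-1}(\mathcal{D}_0(M))\cong\pi_k(\mathcal{T}(M))$''. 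This cannot work: any class in the image of $\pi_k(\mathcal{MET}^{sec<0})$ lies in $\ker\partial$ by exactness, so its image under $\partial$ is zero by construction. The detection must take place entirely on the $\mathcal{MET}^{sec<0}$ side, by showing that a specific element of $\pi_k(\mathcal{MET}^{sec<0}(M))$ fails to lift to $\pi_k(\mathcal{D}_0(M))$; the connecting map $\partial$ is useless for this.

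You also correctly diagnose why the classes from Addendum~\ref{moduliadd} do not suffice: they are produced via $\Lambda_g\circ\Phi$ with $\Phi$ landing in $DIFF_0(M)$, hence they lie in the image of the orbit map and die in $\mathcal{T}^{sec<0}(M)$. Overcoming this is indeed the whole point of \cite{FO10a}, and it requires a genuinely new construction of families of negatively curved metrics that are \emph{not} obtained by pushing forward a fixed metric by diffeomorphisms---something not supplied by any result stated in this survey. Your sketch of ``varying the tube data over $\mathbb{S}^k$'' is in the right spirit but remains only a gesture; the actual argument in \cite{FO10a} is substantially more involved.
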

\begin{remark}\rm{
 F.T. Farrell and P. Ontaneda proved a version of Theorem \ref{hotech} with\\
 $\mathcal{M}^{sec<0}(M)$ instead of  $\mathcal{T}^{sec<0}(M)$ \cite{FO10b}:}
\end{remark}
\begin{theorem}\label{homodu}
Let $M$ be a closed non-arithmetic hyperbolic manifold and $k$ a nonnegative integer, with $(k, \dim M=n)$ satisfying the following condition $(\star)$.
\[
 \star =
  \begin{cases}
   1. &  k=0 ~~\text{and}~~ n \geq 10 \\
   2. &  k=1 ~~\text{and}~~ n \geq 12 \\
   3. &  k=2p-4, ~p>2 ~~\text{prime},~~ \text{and} ~~n \geq 3k+8 
  \end{cases}
\]
Then $M$ has a finite sheeted cover $N$ such that the maps
\begin{align*}
\pi_k(\mathcal{MET}^{sec<0}(N))&\stackrel{\pi_k(\kappa)}{\longrightarrow}\pi_k(\mathcal{M}^{sec<0}(N))\\
\widetilde{H}_k(\mathcal{MET}^{sec<0}(N))&\stackrel{\widetilde{H}_k(\kappa)}{\longrightarrow}\widetilde{H}_k(\mathcal{M}^{sec<0}(N))
\end{align*}
are non-zero. In particular $\pi_k(\mathcal{M}^{sec<0}(N))$ and $\widetilde{H}_k(\mathcal{M}^{sec<0}(N))$ are nontrivial.
\end{theorem}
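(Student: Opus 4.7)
The plan is to mimic the structure of the Farrell--Ontaneda argument behind Theorem \ref{moduli} and Addendum \ref{moduliadd}, with the non-arithmetic hypothesis on $M$ providing exactly the additional control needed to push non-triviality past the two successive quotients $\mathcal{MET}^{sec<0}(N) \to \mathcal{T}^{sec<0}(N) \to \mathcal{M}^{sec<0}(N)$, rather than stopping at the first as in Theorem \ref{moduli}. The second quotient map here is by the \emph{finite} group $Out(\pi_1(N))\cong Isom(N)$, finite by Mostow rigidity (Theorem \ref{mostow}) since $N$ is compact hyperbolic with $\dim N\geq 3$.

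First, I would follow the construction of Theorems \ref{moduthe1}--\ref{moduthe2}: in a suitable stably parallelizable finite sheeted cover of $M$ with large injectivity radius, choose a framed closed geodesic $\alpha$ and apply $\Lambda_g\circ \Phi(\alpha,V,r)$ to the $(\mathbb{Z}_p)^{\infty}$-subgroup of $\pi_k(DIFF(\mathbb{S}^1\times \mathbb{S}^{n-2}\times I,\partial))$ provided by Theorem \ref{moduthe2} (together with the analogous low-degree pseudo-isotopy facts used in the cases $k=0,1$ of Addendum \ref{moduliadd}), producing a $(\mathbb{Z}_p)^{\infty}$-subgroup of $\pi_k(\mathcal{MET}^{sec<0}(N))$. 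To force these classes to survive after projection to $\pi_k(\mathcal{T}^{sec<0}(N))$ --- rather than being killed by the fact that $\Phi$ lands in $DIFF_0(N)$, so that $\Lambda_g\circ \Phi$ factors through the single fiber $\mathcal{D}_0(N)\cdot g$ over $[g]\in \mathcal{T}^{sec<0}(N)$ --- one replicates $\Phi(\alpha_i,V_i,r)$ at a large collection of pairwise geometrically independent closed geodesics $\alpha_1,\dots,\alpha_m$ in $N$ and assembles the contributions along basepoint metrics lying in distinct $\mathcal{D}_0(N)$-orbits, so that the resulting combined class has non-zero image in $\pi_k(\mathcal{T}^{sec<0}(N))$.

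Next, I would invoke Margulis' commensurator theorem: since $M$ is non-arithmetic, the congruence tower above $M$ contains finite sheeted covers $N$ with $Isom(N)$ as small as desired relative to any preassigned number $m$ of geodesics, in such a way that the induced action of $Isom(N)$ on the family $\{\alpha_i\}$ (and hence on the $(\mathbb{Z}_p)^{\infty}$-subgroup of classes) has uniformly bounded orbit size. A finite group acting with bounded orbits on an infinite set has infinite quotient; since $\mathcal{M}^{sec<0}(N)=\mathcal{T}^{sec<0}(N)/Isom(N)$, this yields infinitely many distinct non-zero classes in the image of $\pi_k(\kappa)$, which proves the $\pi_k$-statement. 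The parallel statement for $\widetilde{H}_k(\kappa)$ is obtained by tracking the same classes through the rational Hurewicz map, using the fact that the pseudo-isotopy invariants detecting the Farrell--Ontaneda construction also live at the level of Waldhausen-type $A$-theoretic homology and hence survive the finite quotient by exactly the same bounded-orbit counting argument.

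The hard part will be the assembly step in the second paragraph: each individual class $\Lambda_g\circ \Phi(\alpha_i,V_i,r)$ dies in $\pi_k(\mathcal{T}^{sec<0}(N))$, so the non-triviality required for Theorem \ref{homodu} cannot be detected on any single geodesic and must come from the interaction between the different $\alpha_i$ and the nontrivial topology of the base $\mathcal{T}^{sec<0}(N)$. Making this assembly precise --- identifying exactly which basepoint perturbations are needed so that the combined class lifts off the fiber $\mathcal{D}_0(N)\cdot g$, and verifying that the resulting class does not collapse under the further quotient to $\mathcal{M}^{sec<0}(N)$ --- is the step where both the non-arithmeticity of $M$ and Margulis' commensurator theorem are used decisively, and this is the principal technical obstacle in the program.
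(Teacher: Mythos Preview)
The paper is a survey and does \emph{not} prove Theorem~\ref{homodu}; it merely states the result and cites \cite{FO10b}. The only indication the paper gives of the method is the remark immediately following the theorem: the non-zero classes in $\pi_k(\mathcal{T}^{sec<0}(M))$ coming from Theorem~\ref{hotech} (i.e., from \cite{FO10a}) are already available, and the content of \cite{FO10b} is that, \emph{assuming $M$ has a $k$-good geodesic}, these classes can be chosen to survive further to $\pi_k(\mathcal{M}^{sec<0}(M))$. So the architecture is: a single well-chosen geodesic with a special property (``$k$-good''), not an assembly over many geodesics.

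Your proposal diverges from this in two linked ways. First, you try to manufacture survival to $\mathcal{T}^{sec<0}$ yourself via an ``assembly over many geodesics at perturbed basepoints,'' but you correctly note that each individual $\Lambda_g\circ\Phi(\alpha_i,V_i,r)$ lands in a single $\mathcal{D}_0$-orbit and hence dies in $\mathcal{T}^{sec<0}$; the mechanism you then invoke to lift off the fiber is left entirely unspecified, and there is no reason to expect that juxtaposing several such nullhomotopic maps at different basepoints produces anything non-trivial in $\pi_k(\mathcal{T}^{sec<0})$. This is a genuine gap, and the actual route (per the paper's remark) bypasses it by quoting \cite{FO10a} for survival to $\mathcal{T}^{sec<0}$ directly. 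Second, your use of Margulis' commensurator theorem is in the right spirit---non-arithmeticity controls $\mathrm{Isom}(N)$ in finite covers---but in \cite{FO10b} this control is packaged into the existence of a \emph{$k$-good geodesic} in a suitable cover, rather than into an orbit-counting argument on an infinite family of geodesics. You should look up the definition of $k$-good geodesic in \cite{FO10b}; that is the missing idea.
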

\begin{remark}\rm{
\indent
\begin{itemize}
\item[\bf{1.}] The statements of the Theorem \ref{homodu} holds also for $\epsilon$-pinched negatively curved metrics \cite{FO10b}.
\item[\bf{2.}] By Theorem \ref{hotech}, for a closed hyperbolic manifold $M$ there are non-zero elements in $\pi_k(\mathcal{MET}^{sec<0}(M))$ that survive in $\pi_k(\mathcal{T}^{sec<0}(M))$, provided $k$ and $n$ satisfy $(\star)$ and $M$ has a closed geodesic with large enough tubular neighborhood. The main result of \cite{FO10b} shows that, assuming $M$ has a $k$-good geodesic, these non-zero elements can be chosen so that they survive all the way to $\pi_k(\mathcal{MET}^{sec<0}(M))$.
\end{itemize}}
\end{remark}
\section{\large Final Remarks and Open Problems}
In this section, we review many interesting open problems along the above direction.\\

The negatively curved Riemannian symmetric spaces are of 4 types: $\mathbb{R}\textbf{H}^m$, $\mathbb{C}\textbf{H}^m$, $\mathbb{H}\textbf{H}^m$ and $\mathbb{O}\textbf{H}^2$. The following question is asked by C.S. Aravinda and F.T. Farrell \cite{AF04}:
\begin{question}\label{finerque}
 For each division algebra $\mathbb{K}$ over the real numbers and each integer $n\geq 2$ ($n = 2$ when $\mathbb{K}=\mathbb{O})$, does there exist a closed negatively curved Riemannian manifold $M^{dn}$(where $d=\dim_{\mathbb{R}} \mathbb{K})$ which is homeomorphic but not CAT(Diff or PL)-isomorphic to a $\mathbb{K}$-hyperbolic manifold.
 \end{question}
For $\mathbb{K}=\mathbb{R}$ and $n= 2, 3,$ this is impossible since homeomorphism implies diffeomorphism in these dimensions \cite{Moi52}; but this is the only known constraint on this question. The answer to Question \ref{finerque} is yes for $\mathbb{K}=\mathbb{O}$ by Theorem \ref{cay.hyp} since only one dimension needs to be considered in this case. When $\mathbb{K}=\mathbb{R}$, the answer is yes provided $n\geq 6$ by Theorem \ref{hype.smooth}. When $\mathbb{K}=\mathbb{C}$, the answer is yes for $n=4m + 1$ for any integer $m\geq 1$ and for $n=4$ by Theorem \ref{complex.hyp}, for $n=7$ and 8 by Theorem \ref{ramesh}. When $\mathbb{K}=\mathbb{H}$, the answer is yes for  $n = 2$, $4$ and $5$ by Theorem \ref{quot.hyp}.\\

While his visit to IIT Bombay and TIFR CAM, India in 2012, F.T. Farrell mentioned the following open problem :
\begin{problem}
Suppose that $M^n$ and $N^n$ are homotopy equivalent closed smooth manifolds and $M^n$ admits a negatively curved Riemannian metric. Does $N^n$ admit a negatively curved Riemannian metric?
\end{problem}
\begin{problem}(Farrell-Jones, 1998)
Let $M^n$ and $N^n$ be complete compact flat affine manifolds with $\pi_1(M)\cong \pi_1(N)$. Are they always diffeomorphic?.
\end{problem}
\begin{problem}(Gabai)
Let $f:N^n\to M^n$ be a harmonic homeomorphism between closed negatively curved manifolds. Must $f$ be a diffeomorphic?.
\end{problem}
\begin{problem}
Let $M^n$ and $N^n$ be closed negatively curved Riemannian manifolds with isomorphic marked length spectra. Must they be diffeomorphic?.
\end{problem}
\begin{problem}(Farrell-Jones, 1994)
\begin{itemize}
\item[(i)]Find $\Sigma^{2n}\in \Theta_{2n}$ if $\mathbb{C}\textbf{P}^n\#\Sigma^{2n}$ is diffeomorphic to $\mathbb{C}\textbf{P}^n$. (\cite{FJ94})
\item[(ii)]Find $\Sigma^{4n}\in \Theta_{4n}$ if $\mathbb{H}\textbf{P}^n\#\Sigma^{4n}$ is diffeomorphic to $\mathbb{H}\textbf{P}^n$. (\cite{FJ94})
\end{itemize}
\end{problem}
\begin{problem}
Let $M^n$ be a negatively curved Riemannian manifold. Is $\pi_1(M^n)$ residually finite ?
\end{problem}
\begin{problem}
Let $X$ be a finite aspherical simplicial complex. Does there exist a complete negatively curved manifold $M$ such that $\pi_1(M)\cong \pi_1(X)$ ?
\end{problem}
Boris Okun \cite{Oku01} has provided sufficient conditions for establishing non-zero degree of the tangential map (see Theorem \ref{equalrank}). Jean-Francois Lafont and Ranja Roy \cite{LR07} asked the following question :
\begin{question}
Are there examples where Okun's tangential map has zero degree? In particular, if one has a locally symmetric space modelled on $SL(n,\mathbb{R})/SO(n)$, does the tangential map to the dual $SU(n)/SO(n)$ have non-zero degree? 
\end{question}
Of course, the interest in the special case of $SL(n,\mathbb{R})/SO(n)$ is due to the universality of this example: every other locally symmetric space of non-positive curvature isometrically embeds in a space modelled on $SL(n,\mathbb{R})/SO(n)$. Now note that while the relationship between the cohomologies of closed locally symmetric space $M^n$ and its dual $M_U$ (with real coefficients) is well understood (and has been much studied) since the work of Matsushima \cite{Mat62}, virtually nothing is known about the relationship between the cohomologies with other coefficients. Jean-Francois Lafont and Ranja Roy \cite{LR07} asked the following :
\begin{question}
If $t : M^n\to M_U$ is the tangential map, what can one say about the induced map $t^* : H^*(M_U, \mathbb{Z}_p)\to  H^*(M, \mathbb{Z}_p)$?
\end{question}

\end{document}